\newcommand*{\rom}[1]{\expandafter\@slowromancap\romannumeral #1@}
\newtheorem{theorem}{Theorem}[section]
\newtheorem{lemma}[theorem]{Lemma}
\newtheorem{corollary}[theorem]{Corollary}
\newtheorem{definition}{Definition}[section]
\newtheorem{proposition}[theorem]{Proposition}
\newtheorem{conjecture}[theorem]{Conjecture}
\newcounter{lthm}
\newtheorem{letterthm}[lthm]{Theorem}
\newcounter{lconj}
\newtheorem{letterconj}[lconj]{Conjecture}
\begin{document}

\title{Coefficient systems on the $\widetilde{A}_2$-Bruhat-Tits building}

\author{Adam Jones}

\date{\today}

\maketitle

\begin{abstract}
\noindent We address a conjecture (referred to as \textbf{sur} in \cite{torsion}) in the representation theory of a reductive $p$-adic Lie group $G$ which has important implications for the relationship between mod-$p$ smooth representations and pro-$p$ Iwahori-Hecke modules, and is currently only known for $G$ of rank 1. We prove that \textbf{sur} follows from exactness of the associated oriented chain complex of a coefficient system, when restricted to a local region of the Bruhat-Tits building for $G$. Our main result gives strong evidence towards this exactness in the case where $G=SL_3(K)$ for $K$ a totally ramified extension of $\mathbb{Q}_p$. We also develop new combinatorial techniques for analysing the geometric realisation of the $\widetilde{A}_2$ Bruhat-Tits building, which are fundamental to the proof of our main result, and which we hope will inspire further investigation in Bruhat-Tits theory.
\end{abstract}

\tableofcontents

\section{Background}

Throughout, we will let $p$ be prime, and let $K/\mathbb{Q}_p$ be a finite field extension with valuation ring $\mathcal{O}$, uniformiser $\pi$, residue field $\mathbb{F}_q=\mathcal{O}/\pi\mathcal{O}$. Fix $\mathbb{G}$ a split semisimple, simply connected algebraic group over $K$, and we will set $G:=\mathbb{G}(K)$.

\subsection{Smooth representations and Hecke modules}

\noindent For a field $k$, an ongoing project in number theory is to understand the smooth $k$-linear representation theory of $G$, which is of course essential within the Langlands programme. Indeed, when $k=\mathbb{C}$, the classical local Langlands correspondence yields a bijection between the irreducible, smooth $\mathbb{C}$-linear representations of $GL_n(K)$, and $n$-dimensional representations of the Weyl-Deligne group \cite{Harris-Taylor},\cite{Henniart}. 

A key ingredient in the proof of this correspondence is the relationship between the category Rep$_k^{\infty}(G)$ of smooth, $k$-linear representations of $G$, and the category of modules over the pro-$p$ Iwahori-Hecke algebra.\\ 

\noindent Throughout the paper, we will let $I$ be a pro-$p$ Iwahori subgroup of $G$ and let $\mathbb{X}:=k[G/I]$ be the standard module for $I$, which is of course a smooth $G$-representation. The \emph{pro-$p$ Iwahori-Hecke algebra} $\mathcal{H}$ is defined as $$\mathcal{H}=\mathcal{H}_I(G):=End_{k[G]}(\mathbb{X})^{\text{op}}$$ which is canonically isomorphic to $\mathbb{X}^I$ as a $k$-vector space, and clearly $\mathbb{X}$ has the structure of a right $\mathcal{H}$-module. To describe the important relationship between Rep$_k^\infty(G)$ and Mod$(\mathcal{H})$, consider the canonical adjunction between these categories:

\tikzset{every picture/.style={line width=0.75pt}} 
\begin{equation}\label{adjunction}
\begin{tikzpicture}[x=0.75pt,y=0.75pt,yscale=-1,xscale=1]

\draw    (260.76,163) -- (352,163.59) ;
\draw [shift={(354,163.6)}, rotate = 180.37] [color={rgb, 255:red, 0; green, 0; blue, 0 }  ][line width=0.75]    (10.93,-3.29) .. controls (6.95,-1.4) and (3.31,-0.3) .. (0,0) .. controls (3.31,0.3) and (6.95,1.4) .. (10.93,3.29)   ;
\draw    (353.24,176.6) -- (262,176.01) ;
\draw [shift={(260,176)}, rotate = 0.37] [color={rgb, 255:red, 0; green, 0; blue, 0 }  ][line width=0.75]    (10.93,-3.29) .. controls (6.95,-1.4) and (3.31,-0.3) .. (0,0) .. controls (3.31,0.3) and (6.95,1.4) .. (10.93,3.29)   ;
\draw    (260.48,214.43) -- (358,214.42) ;
\draw [shift={(360,214.41)}, rotate = 179.99] [color={rgb, 255:red, 0; green, 0; blue, 0 }  ][line width=0.75]    (10.93,-3.29) .. controls (6.95,-1.4) and (3.31,-0.3) .. (0,0) .. controls (3.31,0.3) and (6.95,1.4) .. (10.93,3.29)   ;
\draw    (260,209.6) -- (260,219.6) ;
\draw    (359,253.7) -- (262,254.58) ;
\draw [shift={(260,254.6)}, rotate = 359.48] [color={rgb, 255:red, 0; green, 0; blue, 0 }  ][line width=0.75]    (10.93,-3.29) .. controls (6.95,-1.4) and (3.31,-0.3) .. (0,0) .. controls (3.31,0.3) and (6.95,1.4) .. (10.93,3.29)   ;
\draw    (359,248.8) -- (359,258.6) ;

\draw (187,157) node [anchor=north west][inner sep=0.75pt]   [align=left] {$\displaystyle\text{Rep}_{k}^{\infty }( G)$};
\draw (362,161) node [anchor=north west][inner sep=0.75pt]   [align=left] {$\displaystyle\text{ Mod}(\mathcal{H})$};
\draw (161,158) node [anchor=north west][inner sep=0.75pt]   [align=left] {$\displaystyle \mathfrak{h} :$};
\draw (430,162) node [anchor=north west][inner sep=0.75pt]   [align=left] {$\displaystyle :\mathfrak{t}$};
\draw (205,207) node [anchor=north west][inner sep=0.75pt]   [align=left] {$\displaystyle V$};
\draw (382,207) node [anchor=north west][inner sep=0.75pt]   [align=left] {$\displaystyle V^{I}$};
\draw (382,245) node [anchor=north west][inner sep=0.75pt]   [align=left] {$\displaystyle M$};
\draw (170,246) node [anchor=north west][inner sep=0.75pt]   [align=left] {$\displaystyle \mathbb{X} \otimes _{\mathcal{H}} M$};

\end{tikzpicture}
\end{equation}

\noindent In the case where $k$ has characteristic $\ell\neq p$, this pair of functors yields an equivalence between Mod$(\mathcal{H})$ and the category Rep$_k^{\infty}(G)^I$ of representations generated by their pro-$p$ Iwahori fixed vectors. When $k$ has characteristic $p$, this equivalence holds when $G=GL_2(\mathbb{Q}_p)$ \cite{equiv GL_2} or $SL_2(\mathbb{Q}_p)$ \cite{equiv SL_2}, which allows us to recover the classification of smooth, admissible irreducible representations of these groups obtained in \cite{BL} and \cite{Abd'}. 

However, in all other cases where char$(k)=p$, the invariance functor $\mathfrak{h}$ fails to even be right exact \cite{exact}. In \cite{dupre}, the functors were lifted to the associated \emph{model categories} of $Rep_k^{\infty}(G)$ and Mod$(\mathcal{H})$, obtaining an adjunction which is far better behaved homologically, and a derived version of the equivalence does hold \cite[Theorem 9]{derived}. But since we do not yet have a proper understanding of the \emph{d.g. graded pro-$p$ Iwahori-Hecke algebra} \cite[Section 3]{derived}, which is crucial to this derived equivalence, this does not necessarily resolve the problem.\\ 

\noindent Our aim is to develop our understanding of these functors in natural characteristic $p$ on a more explicit level, which we anticipate will advance the mod-$p$ local Langlands programme.

\subsection{The torsionfree category}

The largest obstacle to understanding the relationship between smooth representations and Hecke modules in characteristic $p$, and understanding the mod-$p$ representation theory of $G$ more generally when $G\neq GL_2(\mathbb{Q}_p)$ or $SL_2(\mathbb{Q}_p)$, seems to be the notion of supersingular representations. We will not give a precise definition (see \cite[section 1.2.1]{Herzig}), but in the case when $G=GL_n(K)$, an irreducible, admissible representation $V$ of $G$ is \emph{supersingular} if it cannot be realised as a subquotient of a parabolic induction \cite[Corollary 1.2]{Herzig}.\\ 

\noindent In characteristic $\ell\neq p$, these are the well-studied supercuspidal representations, which can be realised as compact inductions when $k$ is algebraically closed and $p$ does not divide the order of the absolute Weyl group of $G$ \cite[Corollary 3]{Fintzen}. 

In characteristic $p$, supersingular representations are not well understood. They can still be related to compact inductions by \cite[Theorem 5.27]{supersingular module}, but they fail even to be finitely presented when $G\neq GL_2(\mathbb{Q}_p)$ or $SL_2(\mathbb{Q}_p)$ \cite{Schraen},\cite{Wu}.\\

\noindent On the other hand, we say that a finite length module over the pro-$p$ Iwahori-Hecke algebra $\mathcal{H}$ is \emph{supersingular} if it is killed by a power of a canonical ideal $\mathfrak{J}$ of the centre $Z(\mathcal{H})$ of $\mathcal{H}$ \cite[Proposition-Definition 5.10]{supersingular module}. In recovering a version in characteristic $p$ of the equivalence defined by the adjoint functors $\mathfrak{h}$ and $\mathfrak{t}$ in characteristic 0, a very promising approach has been to remove the supersingular objects from both categories.\\

\noindent For example, it was proved by Schneider and Ollivier in \cite[Theorem 0.5]{torsion} that if $G=SL_2(K)$, then the functor $\mathfrak{t}$ restricts to a fully faithful functor from the category Mod$(H_\zeta)$ of finite length $\mathcal{H}$-modules where the canonical generator $\zeta$ of $\mathfrak{J}$ acts invertibly (which of course excludes all supersingular modules). The image of Mod$(H_\zeta)$ under $\mathfrak{t}$ coincides with the category of smooth, finite length representations that arise as subquotients of parabolic inductions (i.e. non-supersingular representations), which is equivalent to Mod$(H_\zeta)$ via the $I$-invariance functor $\mathfrak{h}$. 

This result was generalised by Abe in \cite[Corollary 4.2]{Abe} to any connected, reductive $p$-adic Lie group $G$, allowing us to realise a similar equivalence between a category of Hecke modules and the category of smooth, finite length $G$-representations that arise as subquotients of \emph{maximal} parabolic inductions. But of course, in rank greater than 1, this excludes many examples of non-supersingular modules.\\

\noindent However, the result of Schneider and Ollivier in \cite{torsion} is actually much stronger. Specifically, they exhibit a category $\mathcal{F}$ of $\mathcal{H}$-modules, excluding all supersingular modules, and their main result \cite[Theorem 0.1, Corollary 2.7]{torsion} demonstrates that there is a fully faithful functor $\mathfrak{t}':\mathcal{F}\to Rep_k^{\infty}(G)$ with inverse given by $\mathfrak{h}$, and which restricts to the induction functor $\mathfrak{t}$ on Mod$(H_\zeta)$.\\

\noindent Indeed, the construction of $\mathcal{F}$ and $\mathfrak{t}'$ is very general \cite[section 1.4]{torsion}, and applies to \emph{any} reductive $p$-adic Lie group $G$. To briefly summarise, for each $m\in\mathbb{N}$, let $K_m$ be the $m$'th congruence subgroup, and $\mathbb{X}^{K_m}$ is a $(\mathcal{H},I)$-bimodule, so we can define the $\mathcal{H}$-module $(\mathbb{X}^{K_m})^*_I$ (the $I$-coinvariance of the $\mathcal{H}$-dual), and we obtain an exact sequence $$0\to Z_m\to (\mathbb{X}^{K_m})^*_I\to \mathcal{H}^*$$ where the latter map is given by restriction. 

We now define $\mathcal{F}$ as the category of all $\mathcal{H}$-modules $M$ with Hom$_{\mathcal{H}}(Z_m,M)=0$ for each $m\in\mathbb{N}$. We can think of $\mathcal{F}$ as the the torsionfree part of the torsion pair in Mod$(\mathcal{H})$ defined by $\{Z_m:m\in\mathbb{N}\}$. We can then define $\mathfrak{t}'$ as the functor that sends a module $M\in\mathcal{F}$ to the image of the map of smooth $G$-representations $$\mathbb{X}\otimes_\mathcal{H}M\to Hom_\mathcal{H}(\mathbb{X}^*,M),x\otimes m\mapsto (\lambda\mapsto\lambda(x)m)$$

\noindent The conjecture below was denoted by \textbf{(sur)} in \cite{torsion}, and it stands as the largest obstacle to understanding $\mathcal{F}$ in higher rank:

\begin{letterconj}\label{sur}
The canonical morphism $\mathbb{X}^*\to\mathcal{H}^*$ is a surjection for all $m\in\mathbb{N}$.
\end{letterconj}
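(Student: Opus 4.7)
The plan is to translate the surjectivity of $\mathbb{X}^* \to \mathcal{H}$ into the exactness of a chain complex on the Bruhat-Tits building $\mathcal{B}$, and then to verify that exactness combinatorially. First, I would identify $\mathcal{H} \cong \mathbb{X}^I$ and describe the canonical map concretely: since $I$ is the pointwise stabiliser of a fixed chamber, cosets in $G/I$ are in bijection with labelled chambers of $\mathcal{B}$, and a linear functional $\lambda \in \mathbb{X}^*$ encodes a $k$-valued function on such chambers. The map $\mathbb{X}^* \to \mathcal{H}$ is then an averaging over $I$-orbits, and \textbf{(sur)} asks that every finitely supported $I$-biinvariant function on $G$ be realised in this way.

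Next, I would attach to this situation a coefficient system $\mathcal{C}$ on $\mathcal{B}$, whose value at each facet encodes the restriction of functionals to the chambers containing that facet, and whose transition maps encode the relations forced by $I$-averaging. The oriented chain complex of $\mathcal{C}$ then fits into a diagram in which the cokernel of $\mathbb{X}^* \to \mathcal{H}$ appears as a low-degree (co)homology group. Crucially, because every element of $\mathcal{H}$ has compact support modulo $I$, exactness is only needed on a bounded local region of $\mathcal{B}$; this reduces \textbf{(sur)} to a purely finite combinatorial statement about the interaction of the coefficient system with a ball of chambers around a base chamber.

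For $G = SL_3(K)$, this local region is a neighbourhood in the two-dimensional $\widetilde{A}_2$ complex, and the natural approach is induction on the radius of the ball: I would use apartments and the retractions onto them to reduce computations to the standard apartment, then propagate vanishing outward along minimal galleries. The main obstacle will be the interplay between the simplicial geometry of $\widetilde{A}_2$ and the residue field $\mathbb{F}_q$-data that parameterises chambers sharing a wall. Even in rank 2, the orientations around interior vertices yield global relations that cannot be resolved one facet at a time, and developing the combinatorial machinery to control these global-to-local obstructions --- especially in the totally ramified case, where $q = p$ is smallest and the cohomological constraints are most rigid --- is where I expect the bulk of the argument to lie, and is presumably why the paper delivers only \emph{strong evidence} rather than a full proof in this setting.
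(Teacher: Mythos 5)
Your sketch captures the high-level strategy of the paper — reduce \textbf{(sur)} to exactness of the oriented chain complex attached to the coefficient system $\underline{\underline{\mathbb{X}}}$ on $\Delta$, restricted to bounded regions — but there are two points where the plan would not go through as stated.

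First, exactness on a single bounded ball is not enough. The paper shows that \textbf{(sur)} is equivalent to $\ast$-acyclicity of $\mathbb{X}/\mathcal{H}$, i.e.\ $Ext^i_{\mathcal{H}}(\mathbb{X}/\mathcal{H},\mathcal{H}) = 0$ for all $i > 0$, and then exhausts $\mathbb{X} = \varinjlim S_m$ where $S_m := \delta(C_0(\Delta_m,\mathbb{X}))$. The $\ast$-acyclicity of $S_m/\mathcal{H}$ and of $S_{m+1}/S_m$ must hold for \emph{every} $m$; these are then assembled by the $\varprojlim^{(1)}$ spectral-sequence argument of Proposition \ref{propn: direct limit}. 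Each individual $\ast$-acyclicity statement is what the local exactness hypotheses of Conjecture \ref{conj: exactness 1} feed into, via flatness of $\mathcal{H}$ over the finite parahoric subalgebras $\mathcal{H}_F$ (Lemmas \ref{lem: face induction}--\ref{lem: adjacent acyclicity}). So the right target is local exactness for every complete region $\Delta_m \subseteq \mathcal{X} \subseteq \Delta_{m+1}$, together with the face case \textbf{(B)}.

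Second, the combinatorics of the $\widetilde{A}_2$ step is not retractions onto apartments plus outward propagation along minimal galleries. That is exactly the rank-one tree argument (Lemma \ref{lem: rank 1}), and it breaks in rank two precisely because the building is not a tree: a chain supported near the boundary of $\Delta_{n+1}$ cannot be pushed inward one edge at a time without colliding with itself around a vertex, and there is no canonical ``unique path to the base chamber'' to guide the cancellation. The machinery the paper develops instead is the stratification of $\Delta_n$ by peaks, summits and crowns (Theorems \ref{thm: peak} and \ref{thm: d-partite}), a no-short-cycle lemma (Lemma \ref{lem: no-cycles}), and a shift-invariance framework (Definition \ref{defn: shift invariance}, Lemma \ref{lem: crucial}) that uses the $I$-action on border edges of $\Delta_{n+1}$ to organise the values of a chain into a single $I$-orbit before cancelling. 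Your proposal names none of this, so it leaves the core difficulty — how to contract a chain across the region without a tree structure to exploit — entirely unaddressed.

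Finally, as you correctly observe, the conjecture remains open: the paper proves only that Conjecture \ref{conj: exactness 1} implies \textbf{(sur)} (Theorem \ref{letterthm: implies sur}), and gives partial evidence for Conjecture \ref{conj: exactness 1} in the special case of chains supported on $\Delta_3$ with boundary in $\Delta_0$, for $G = SL_3(K)$ with $K/\mathbb{Q}_p$ totally ramified and $K \neq \mathbb{Q}_p$ (Theorem \ref{letterthm: first step}).
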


\noindent It follows from this conjecture that $0\to Z_m\to(\mathbb{X}^{K_m})^*_I\to \mathcal{H}^*\to 0$ is exact for all $m\in\mathbb{N}$, thus $M\cong Hom_{\mathcal{H}}(M,(\mathbb{X}^{K_m})^*_I)$ for all $M\in\mathcal{F}$. Using the argument in the proof of \cite[Theorem 1.9]{torsion}, Conjecture \ref{sur} is all that is required to prove that $\mathfrak{t}'$ is fully faithful, with inverse $\mathfrak{h}$, and hence that $\mathcal{F}$ embeds faithfully into $Rep_k^\infty(G)$.\\

\noindent Conjecture \ref{sur} holds when char$(k)\neq p$ \cite[Lemma 1.8]{torsion}, or if $G$ has rank 1 \cite[Corollary 2.7]{torsion}, but in general it remains open. Our ultimate aim is to prove Conjecture \ref{sur} in characteristic $p$ for any choice of semisimple, simply connected $p$-adic Lie group $G=\mathbb{G}(K)$, but in this paper, we will focus on the smallest case not currently known, when $G=SL_3(K)$. 

\subsection{Coefficient systems on the Bruhat-Tits building}

The proof of Conjecture \ref{sur} in rank 1 \cite[Corollary 2.7]{torsion} makes use of the Bruhat-Tits tree $T_q$, which can be simply defined as the tree where each vertex has degree $q+1$. However, its vertices can be realised as rank 2 lattices in $K^2$ modulo scaling, so it carries a natural action of $PGL_2(K)$, and hence of $GL_2(K)$ and $SL_2(K)$.

Arguably the most important ingredient of the proof is the \emph{coefficient system} of $\mathbb{X}$, and its associated oriented chain complex, which has the form $$0\to C_1(T_q,\mathbb{X})\to C_0(T_q,\mathbb{X})\to\mathbb{X}\to 0$$ where $C_0(T_q,\mathbb{X})$ (resp. $C_1(T_q,\mathbb{X})$) is a space of functions from the set of vertices (resp. oriented edges) of $T_q$ to $\mathbb{X}$ with finite support. These spaces have the structure of $(\mathcal{H},I)$-bimodules, and the sequence obtained is exact by \cite[Remark 3.2]{gorenstein}. It is straightforward to prove \cite[Lemma 2.2]{torsion} that the sequence remains exact when we restrict to the sequence defined on a finite region of the tree, which is a crucial detail in the argument.\\

\noindent More generally, for any reductive $p$-adic Lie group $G=\mathbb{G}(K)$ of rank $d$, there is a canonically defined Bruhat-Tits building $\Delta=\widetilde{\Delta}(G)$, which can be realised as a polysimplicial complex of dimension $d=\dim(G)$, which coincides with $T_q$ when $G$ has rank 1. This building also carries a transitive action of $G$, indeed the pro-$p$ Iwahori subgroup $I$ can be most easily defined as the Sylow $p$-subgroup of the stabiliser of a maximal simplex (or \emph{chamber}) in $\Delta$.

We can also define a coefficient system on $\Delta$ completely analogously to the rank 1 case (see \cite[Chapter \rom{2}]{coefficient} for details), where we extend the chain complex to include the higher space of functions $C_i(\Delta,\mathbb{X})$ defined on oriented $i$-simplices in $\Delta$, for each $i\leq d$, and the sequence remains exact.\\

\noindent Of course, in rank greater than 1, the Bruhat-Tits building is no longer a tree, and its structure becomes immeasurably more complex. Even in the simplest rank 2 case, where $G$ has type $\widetilde{A}_2$, there is very little material in the literature that deals with the building explicitly (see \cite{gallery} for an overview). Without the assumption that $\Delta$ is a tree, it becomes very difficult to control the local behaviour of the coefficient system, i.e. what happens when we restrict to functions defined on a fixed, bounded region in the building.\\

\noindent More generally, whenever $\mathcal{X}$ is a set of $j$-facets in $\Delta$, for some $j\leq d$, we define $C_i(\mathcal{X},\Delta)$ for each $i\leq d$ to be the space of functions in $C_i(\Delta,\mathbb{X})$ with support in $\mathcal{X}$. Restricting to these spaces, we deduce the following chain complex of $\mathcal{H}$-modules 
\begin{equation}\label{eqn: local coefficient system 1}
0\to C_d(\mathcal{X},\mathbb{X})\to\dots\to C_1(\mathcal{X},\mathbb{X})\to C_0(\mathcal{X},\mathbb{X})\to\mathbb{X}
\end{equation}

\noindent\textbf{Note:} 
\begin{enumerate}
\item If $\mathcal{X}$ is $I$-invariant, then $C_i(\mathcal{X},\mathbb{X})$ is a $(\mathcal{H},I)$-submodule of $C_i(\Delta,\mathbb{X})$ for each $i$. But in general, $C_i(\mathcal{X},\mathbb{X})$ need not carry an $I$-action.\\

\item Of course, if $i>j$ then $C_i(\mathcal{X},\mathbb{X})=0$, since $\mathcal{X}$ contains no $i$-simplices. However, we will usually assume that $\mathcal{X}$ consists of chambers (i.e. $d$-simplices).
\end{enumerate}

\noindent In general, it is not clear whether this restricted sequence is exact, but we do suspect that it is in several important cases. In section \ref{sec: buildings}, we will define the region $\Delta_n$ of $\Delta$, for each $n\geq 0$, consisting of all chambers of distance no more than $n$ from the hyperspecial chamber $C$, and we will spend much of this section exploring the geometric and combinatorial properties of this region in type $\widetilde{A}_2$.

In section \ref{sec: coefficient systems} we will define a \emph{complete region} $\mathcal{X}$ of $\Delta$ to be a set of chambers with $\Delta_n\subseteq\mathcal{X}\subset\Delta_{n+1}$ for some $n\in\mathbb{N}$.

\begin{letterconj}\label{conj: exactness 1}
Suppose $\mathcal{X}$ is a set of facets in $\Delta$ satisfying one of the two properties below:\\

\emph{\textbf{(A)}} $\mathcal{X}$ is an $I$-invariant complete region in $\Delta$.\\

\emph{\textbf{(B)}} $\mathcal{X}$ consists of a single face of the hyperspecial chamber $C$.\\

\noindent Then the restricted chain complex (\ref{eqn: local coefficient system 1}) is exact.
\end{letterconj}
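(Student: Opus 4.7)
My plan is to treat the two parts separately, with (B) as a warm-up and (A) as the main case.

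For part (B), the restricted complex in (\ref{eqn: local coefficient system 1}) reduces to the augmented oriented chain complex of a single simplex $F$, with coefficients given by evaluating the coefficient system of $\mathbb{X}$ at the faces of $F$. The differentials on a single simplex are, up to sign, restriction maps between the modules $\mathbb{X}^{I_\tau}$ attached to the faces $\tau$ of $F$, and every inclusion $\tau\subseteq F$ induces an inclusion $\mathbb{X}^{I_F}\hookrightarrow\mathbb{X}^{I_\tau}$. Consequently the classical cone-on-a-vertex contracting homotopy for the augmented simplicial chain complex of a simplex lifts verbatim to this setting, giving the required exactness (including injectivity of $C_0(\mathcal{X},\mathbb{X})\to\mathbb{X}$).

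For part (A), I would proceed by a nested induction. First, I would establish exactness for $\mathcal{X}=\Delta_n$ by induction on $n\geq 0$, the base case $n=0$ (where $\mathcal{X}=\{C\}$) being supplied by part (B) applied to $F=C$. The passage from $\Delta_n$ to a general complete region $\mathcal{X}$ with $\Delta_n\subseteq\mathcal{X}\subseteq\Delta_{n+1}$ would then be carried out by adding the remaining distance-$(n{+}1)$ chambers one at a time. In each step, I would use the combinatorial analysis of the $\widetilde{A}_2$ building developed in section \ref{sec: buildings} to choose an order in which, as each new chamber $D$ is adjoined, its intersection $\mathcal{Y}$ with the already-built region $\mathcal{X}_{\text{old}}$ is a contractible subcomplex of $\partial D$. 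Given such an ordering, one obtains a short exact sequence of chain complexes relating the restrictions to $\mathcal{X}_{\text{old}}$, $\{D\}$, $\mathcal{X}_{\text{old}}\cup\{D\}$ and $\mathcal{Y}$, and exactness of the first two (by induction and part (B) respectively), combined with exactness on $\mathcal{Y}$ (a routine extension of part (B) from a single face to a contractible union of faces), yields exactness of the union via the associated long exact sequence.

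The principal obstacle will be producing this chamber ordering, that is, verifying at each stage that one can find a next chamber $D\in\Delta_{n+1}\setminus\Delta_n$ whose intersection with the current region is both nonempty and contractible as a subcomplex of $\partial D$. In the rank-$1$ tree case this is automatic because each new chamber meets the previous region in a single vertex, but in the $\widetilde{A}_2$ setting the intersection could a priori be disconnected (for instance, two edges of the triangle $D$ that meet the old region only at vertices opposite a missing edge), which would break the Mayer--Vietoris reduction. Ruling out such configurations --- or, when they genuinely arise, handling them by a finer decomposition --- is precisely where the explicit geometric and combinatorial description of $\Delta_n$ developed earlier in the paper becomes indispensable, and I expect this to be the main technical difficulty of the argument, and moreover the reason why the hypothesis $\mathcal{X}\subseteq\Delta_{n+1}$ (rather than a general subset of $\Delta$) is essential.
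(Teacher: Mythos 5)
The cone-on-a-vertex homotopy does \emph{not} lift to the coefficient system, and this is precisely the reason why Conjecture~\ref{conj: exactness 1} is an open conjecture rather than a triviality. The issue is one of variance: if $\tau\subseteq F$ then $I_\tau\subseteq I_F$, hence $\mathbb{X}^{I_F}\subseteq\mathbb{X}^{I_\tau}$, so the modules attached to facets \emph{shrink} as the facets grow. The boundary maps $\varepsilon_i$ use these inclusions in the correct direction (from a higher-dimensional facet down to a face). A contracting homotopy $s_i:C_i\to C_{i+1}$ built by coning off a vertex $v_0$ would have to take the value $\alpha(\tau)\in\mathbb{X}^{I_\tau}$ and place it on the larger facet $\tau\cup\{v_0\}$, where it must lie in the strictly smaller subspace $\mathbb{X}^{I_{\tau\cup\{v_0\}}}$; there is no natural map in that direction, and in characteristic $p$ one cannot average over $I_{\tau\cup\{v_0\}}/I_\tau$ either, since that index is a power of $p$. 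So part~(B) is not a formal consequence of simplicial contractibility. Indeed, even in the paper the case where $\mathcal{X}$ is a single \emph{edge} of $C$ is only handled (in Proposition~\ref{propn: (A) implies (B)}) by \emph{assuming} the level-$0$ instance of~(A) and then invoking the genuinely group-theoretic fact (Proposition~\ref{propn: generating_subgroup1}) that $I$ is generated by subgroups of $I_{s_1}$ and $I_{s_2}$, so that $\mathbb{X}^{I_{s_1}}\cap\mathbb{X}^{I_{s_2}}\subseteq\mathbb{X}^{I_{s_0}}$. Nothing of that sort falls out of a formal homotopy.

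Once part~(B) is no longer available, the Mayer--Vietoris induction for part~(A) also has nothing to bootstrap from: the base case $\mathcal{X}=\Delta_0=\{C\}$ is exactly part~(B) for the full chamber and is the hardest instance, and the "routine extension to a contractible union of faces" inherits the same problem. Beyond that, even the MV bookkeeping would not close without a further nontrivial input. The known rank-$1$ proof (Lemma~\ref{lem: rank 1}) does \emph{not} use a homotopy; it shows that any $\beta$ with $\varepsilon_0(\beta)$ supported in $\mathcal{X}$ must itself be supported in $\mathcal{X}$, and that support-propagation argument relies crucially on the uniqueness of paths in a tree. In the $\widetilde{A}_2$ building, cycles exist (Lemma~\ref{lem: no-cycles} only rules out short ones) and the support argument fails, which is precisely why the paper resorts to the $G$-action, shift invariance, and the crown decomposition, and in the end only proves the partial statement of Theorem~\ref{letterthm: first step}. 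The correct message to take away is that the obstruction lives in the coefficient system itself, not in the topology of the region, and no argument that only sees the topology (simplicial contractibility, MV of constant coefficients) can succeed.
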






\noindent Conjecture \ref{conj: exactness 1} is easy to prove when $G$ has rank 1 and $\Delta=T_q$ is a tree (\cite[Lemma 2.2]{torsion}). But in higher ranks, without the tree structure, the proof fails and there are no cases when it is known to hold. 

Still, our first main result demonstrates that this conjecture is the only obstacle to a full proof of Conjecture \ref{sur}, and the rest of the proof of \cite[Corollary 2.7]{torsion} can be shown to generalise.

\begin{letterthm}\label{letterthm: implies sur}
Suppose $G=\mathbb{G}(K)$ for $\mathbb{G}$ split semisimple, simply connected. If Conjecture \ref{conj: exactness 1} holds for $\Delta=\Delta(G)$, then $\mathbb{X}^*\to\mathcal{H}^*$ is surjective.
\end{letterthm}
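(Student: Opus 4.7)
The plan is to adapt the proof of \cite[Corollary 2.7]{torsion} from the rank-1 tree setting to the general higher-rank building, using Conjecture \ref{conj: exactness 1} in place of \cite[Lemma 2.2]{torsion}, the corresponding local exactness statement on a tree. The remainder of the argument is formal: it relies only on exactness of a local coefficient system, the canonical pairing $\mathbb{X}\times\mathbb{X}^*\to k$, and the identification $\mathcal{H}\cong\mathbb{X}^I$.

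Concretely, I would first take $h\in\mathcal{H}$ and observe that, as an element of $\mathbb{X}^I$, it has finite support in $G/I$, so is supported on finitely many chambers. Choosing $n$ large enough that this support lies in $\Delta_n$, Conjecture \ref{conj: exactness 1}(A) applied to $\mathcal{X}=\Delta_n$ yields exactness of
$$0\to C_d(\Delta_n,\mathbb{X})\to\cdots\to C_0(\Delta_n,\mathbb{X})\to\mathbb{X}.$$
Dualising gives a complex
$$\mathbb{X}^*\to C_0(\Delta_n,\mathbb{X})^*\to\cdots\to C_d(\Delta_n,\mathbb{X})^*\to 0$$
whose cohomology is exactly the obstruction to lifting elements of $\mathcal{H}$ through $\mathbb{X}^*\to\mathcal{H}$.

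Building a preimage of $h$ then proceeds by induction on the dimension: start with $h$ viewed inside $C_0(\Delta_n,\mathbb{X})^*$ via $\mathcal{H}\cong\mathbb{X}^I$, and at each step use exactness of the dualised complex to lift across one level. After a finite induction of length at most $d$ we arrive at the desired $\lambda\in\mathbb{X}^*$ mapping to $h$. Conjecture \ref{conj: exactness 1}(B), the single-face case, enters in dealing with the interface between $\Delta_n$ and $\Delta\setminus\Delta_n$: it ensures that no obstruction to extension is contributed by faces of $\Delta_n$ shared with chambers outside the region.

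The main obstacle I expect is the passage from rank 1, where the coefficient complex has only the two terms $C_1$ and $C_0$ and a short diagram chase suffices, to higher rank, where the complex has length $d+1$. The rank-1 argument must be reformulated as an induction along the full complex, and at each codimension one must confirm that the appropriate exactness is supplied by case (A) or case (B) of Conjecture \ref{conj: exactness 1}; any additional local exactness condition arising in this translation would have to be isolated and appended to the conjecture. Verifying that cases (A) and (B) are in fact sufficient — i.e.\ that no further combinatorial input beyond complete regions and single faces is needed — is the delicate step.
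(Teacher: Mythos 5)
Your high-level plan (generalize the rank-1 argument of \cite[Corollary 2.7]{torsion}, replacing the tree lemma with Conjecture \ref{conj: exactness 1}) matches the paper's intent, but the actual argument you sketch has a genuine gap. You propose to dualize the exact local complex $0\to C_d(\Delta_n,\mathbb{X})\to\cdots\to C_0(\Delta_n,\mathbb{X})\to\mathbb{X}$ and read off the lifting obstruction from the cohomology of the dual. But dualizing an exact sequence by $\mathrm{Hom}_{\mathcal{H}}(-,\mathcal{H})$ preserves exactness only when the terms are $\ast$-acyclic, i.e.\ when $\mathrm{Ext}_{\mathcal{H}}^{i}(C_j,\mathcal{H})=0$ for $i>0$, and establishing this acyclicity is precisely the hard content, not a formality. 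It is supplied by the paper's Lemmas \ref{lem: face induction}--\ref{lem: adjacent acyclicity}, which realize $\mathbb{X}^{I_F}$ as $\mathbb{X}_F\otimes_{\mathcal{H}_F}\mathcal{H}$ with $\mathcal{H}$ free over $\mathcal{H}_F$. Your sketch never invokes any acyclicity input, so the "dualize and chase" step does not go through as stated. Moreover, your candidate preimage $\lambda$ would be $\mathcal{H}$-linear on $C_0(\Delta_n,\mathbb{X})$ for a fixed $n$, but an element of $\mathbb{X}^*$ must be $\mathcal{H}$-linear on all of $\mathbb{X}=\underset{m}{\bigcup}S_m$; passing to the limit requires control of $\underset{m}{\varprojlim}^{(1)}$, which is the content of Proposition \ref{propn: direct limit} and is absent from your argument.

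The paper's actual route is structured differently: it proves the stronger statement that $\mathbb{X}/\mathcal{H}$ is $\ast$-acyclic (so that $\mathrm{Ext}^1(\mathbb{X}/\mathcal{H},\mathcal{H})=0$ and the inclusion $\mathcal{H}\hookrightarrow\mathbb{X}$ splits), by writing $\mathbb{X}/\mathcal{H}=\underset{m}{\varinjlim}\,S_m/\mathcal{H}$ and showing $S_0/\mathcal{H}$ and each successive quotient $S_{m+1}/S_m$ is $\ast$-acyclic. Condition \textbf{(B)} of Conjecture \ref{conj: exactness 1} is used once, in Lemma \ref{lem: base case}, to identify $\mathbb{X}^{I_{v_i}}\cap\sum_{j'}\mathbb{X}^{I_{v_{j'}}}$ with a sum of $\mathbb{X}^{I_e}$ for edges $e$ of $C$ --- not "at the interface between $\Delta_n$ and its complement" as you suggest. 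Condition \textbf{(A)} drives the inductive step via Lemma \ref{lem: exactness application}, Proposition \ref{propn: acyclic exact}, and Corollary \ref{cor: inductive step}, which proceed by peeling off one vertex of $\Delta_{m+1}\setminus\Delta_m$ at a time rather than by an induction on the homological dimension $d$. To make your dualization idea rigorous you would, at minimum, need to incorporate the $\ast$-acyclicity machinery of \S\ref{subsec: acyclic modules} and a Mittag-Leffler argument to handle the limit over $m$; at that point the argument essentially becomes the paper's.
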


\noindent Let $v_0$ be the hyperspecial vertex in $\Delta$, and define the \emph{star} of $v_0$ (denoted $Star(v_0)$) to be the region consisting of all chambers containing $v_0$ in their closure. We say that a collection of chambers $\mathcal{X}\subseteq \Delta$ is \emph{star-bounded} if

\begin{itemize}

\item $C\in\mathcal{X}$,

\item $\mathcal{X}\subseteq Star(v_0)$, and

\item $\mathcal{X}$ is $I$-invariant.


\end{itemize}

\noindent We believe that proving exactness of (\ref{eqn: local coefficient system 1}) in the case when $\mathcal{X}$ is star-bounded should suffice to complete a proof of Conjecture \ref{conj: exactness 1}, and we explore this idea in the case where $G=SL_3(K)$, and $\Delta$ is the $\widetilde{A}_2$-Bruhat-Tits building.

\begin{letterthm}\label{letterthm: exact for star}
Let $G=SL_3(K)$, for $K/\mathbb{Q}_p$ a totally ramified extension, and suppose that \emph{char}$(k)=p$. Then if the restricted chain complex (\ref{eqn: local coefficient system 1}) is exact for any star-bounded region $\mathcal{X}$, then $\mathbb{X}^*\to\mathcal{H}^*$ is surjective.
\end{letterthm}

\noindent In fact, it suffices only to prove that star-bounded regions satisfy the weaker property of being Star$(v_0)$\emph{-collapsible} (Definition \ref{defn: collapsible}), which we can prove at least in the case where $\mathcal{X}=\{C\}$ (Theorem \ref{thm: final}).


\subsection{Structure of paper}

Proving Conjecture \ref{conj: exactness 1} in type $\widetilde{A}_2$ is very technical, and requires in depth analysis of the combinatorial structure of the Bruhat-Tits building. Section \ref{sec: buildings} will largely be dedicated to revising the relevant theory of buildings, and proving technical results concerning the local structure of the $\widetilde{A}_2$ building, and the canonical pro-$p$ subgroups associated to its facets.

In section \ref{sec: coefficient systems}, we will formally define coefficent systems and local oriented chain complexes, complete the proof of Theorem \ref{letterthm: implies sur}, and begin to explore some avenues by which Conjecture \ref{conj: exactness 1} could be proved in general.\\

\noindent In section \ref{sec: orbits}, we will use the results we obtained in section \ref{sec: buildings} regarding the action of $G$ on $\Delta_n$ to explore the action of $G$ on the space $C_1(\Delta_n,\mathbb{X})$, which will ultimately culminate in a full proof of Theorem \ref{letterthm: exact for star}, modulo some technical details which will be completed in section \ref{sec: small cases}. The central idea of the proof is to regard the surface of $\Delta_n$ as (almost) a disjoint union of regions isometric with $Star(v_0)$, and we shift chains in $C_1(\Delta_n,\mathbb{X})$ to chains defined on these regions.\\



\noindent In light of Theorem \ref{letterthm: exact for star}, the only obstacle to proving Conjecture \ref{sur} for $SL_3(K)$ is to prove that (\ref{eqn: local coefficient system 1}) is exact for appropriate $I$-invariant regions $\mathcal{X}$ contained in the star of $v_0$. In the remainder of section \ref{sec: small cases}, we will examine the star in detail, and explore avenues of proving this exactness, focusing on the case where $\mathcal{X}=\{C\}$.\\


\noindent\textbf{Acknowledgements:} The author is very grateful to Nicolas Dupr\'{e} for many helpful and constructive comments on this work, for assisting in resolving some serious issues, and for providing considerable simplifications to the Bruhat-Tits theory involved. The author would also like to thank the Heilbronn Institute for Mathematical Research for funding and supporting this research.

\section{The Bruhat-Tits building}\label{sec: buildings}

This section serves as a primer for the theory of buildings and Bruhat-Tits theory, but the results we prove here will be essential in our main argument. Very little exists in the literature exploring the structure of the Bruhat-Tits building explicitly in higher ranks. In type $\widetilde{A}_2$, the best resource currently available is \cite{gallery}, which is the only resource which provides an explicit realisation of the building. We will reprove this realisation combinatorially, and develop techniques for working with it practically.

\subsection{Recap on buildings}

Typically, a \emph{building} is realised as a simplicial complex with additional geometric structure defined by an associated Coxeter group, similar to the usual Coxeter complex  \cite{Ronan}, \cite{Brown}. So throughout, we will will fix an irreducible Coxeter system $(W,S)$, with $\vert S\vert=d<\infty$.\\ 

\noindent Formally, there are a number of equivalent definitions. In \cite[Chapter 3.1]{Ronan}, a building over $W$ is defined to be a chamber system $\Delta$ (as defined in \cite[Chapter 1.1]{Ronan}), where the adjacency relations $C\sim_s D$ are defined for \emph{chambers} $C,D\in\Delta$ using the generators $s\in S$, together with a function $\delta:\Delta\times\Delta\to W$ such that for any minimal gallery of chambers $C_0\sim_{s_1} C_1\sim_{s_2}\dots\sim_{s_r} C_r$ in $\Delta$,

\begin{itemize}
    \item $\delta(C_0,C_r)=s_1s_2\dots s_r$.

    \item $\delta(C_0,C_r)$ has length $r$ in $W$.
\end{itemize}

\noindent We define the \emph{distance between chambers} $C,D\in\Delta$ to be $d(C,D):=\ell(\delta(C,D))$, where $\ell$ is the length function in $W$. From the properties of $\delta$, we see that this is equal to the length of any minimal gallery from $C$ to $D$.\\

\noindent On the other hand, in \cite[Chapter \rom{4}.1]{Brown}, a building over $W$ it is explicitly defined as a simplicial complex $\Delta$ of uniform dimension $d$, which arises as a union of a collection of subcomplexes $\{\mathcal{A}_i:i\in I\}$ called \emph{apartments} such that 

\begin{itemize}
    \item Each $\mathcal{A}_i$ is isomorphic to the Coxeter complex of $W$. In particular, the maximal simplices in $\mathcal{A}_i$ have codimension 1 faces indexed by the elements of $S$.

    \item For any two maximal simplices $C,D$, there is an apartment containing $C$ and $D$.

    \item For all apartments $\mathcal{A}_i,\mathcal{A}_j$, there is an isometry $\iota:\mathcal{A}_i\to\mathcal{A}_j$ which fixes $\mathcal{A}_i\cap\mathcal{A}_j$ pointwise.
\end{itemize}

\noindent These definitions are equivalent because if $\Delta$ is defined as a simplicial complex, we can realise it as a chamber system over $S$ by defining the chambers to be maximal simplices (i.e. $d$-simplices), and if $C,D\in\mathcal{A}_i$ with $\tau:\mathcal{A}_i\cong W$, then $\delta(C,D):=\tau(C)^{-1}\tau(D)$ (which is independent of the choice of apartment $\mathcal{A}_i$). 

Conversely, if $\Delta$ is defined as a chamber system, then its geometric realisation will be a simplicial complex of dimension $d$, and the apartments can be defined as all isometric images of the Coxeter complex of $W$ in this realisation.\\

\noindent We will alternate between these definitions quite liberally in this paper, but we can always regard a building $\Delta$ as a simplicial complex of dimension $d$, and its $d$-simplices are called \emph{chambers}. We will also often reference the function $\delta:\Delta\times\Delta\to W$, defined on pairs of chambers, and to apartments in the building.

Of course, as a simplicial complex, a building is defined uniquely by its graph structure, i.e. its vertices and edges. Any complete subgraph $F$ on $n\leq d$ vertices forms a codimension $d-n$ face of a chamber in $\Delta$, and we call this a \emph{facet} in $\Delta$.\\

\noindent \textbf{Convention:} When referring to \emph{subsets} of a building $\Delta$, it is often unclear whether we are referring to sets of chambers, or sets of smaller facets, e.g. vertices, edges, etc. So for clarity, in this paper, when we refer to a \emph{subset} $\mathcal{X}$ of $\Delta$, we mean a set of vertices, but for any facet $F$ in $\Delta$, we write $F\in\mathcal{X}$ to mean that all the vertices of $F$ lie in $\mathcal{X}$. 

More generally, we say that $\mathcal{X}$ is a \emph{set of $i$-facets} if for all vertices $v\in\mathcal{X}$, there is an $i$-facet $F\in\mathcal{X}$ containing $v$.


\begin{definition}
An \emph{automorphism} of a building is defined as a bijective morphism of chamber systems $\sigma:\Delta\to\Delta$ such that for all chambers $C,D$ in $\Delta$ $$\delta(\sigma(C),\sigma(D))=\delta(C,D)$$ Let $Aut(\Delta)$ be the group of automorphisms of $\Delta$, we say that $\Delta$ is \emph{transitive} if $Aut(\Delta)$ acts transitively on the chambers of $\Delta$, and \emph{strongly transitive} if $Aut(\Delta)$ also acts transitively on the apartments of $\Delta$.
\end{definition}

\noindent The inspiration for the theory of buildings lies in study of algebraic groups and groups of $p$-adic type. Classically, if $\mathbb{G}$ is a reductive algebraic group with irreducible Weyl group $W$, and $G=\mathbb{G}(K)$ for $K$ any field, we can construct a transitive building $\Delta(G)$ over $W$ on which $G$ acts by automorphisms. This is called the \emph{spherical building} of $G$.

When $\mathbb{G}$ has type $A_n$, the spherical building of $G=\mathbb{G}(K)$ is the \emph{flag complex} in $n$-dimensional projective space over $K$.\\

\noindent On the other hand, if $\widetilde{W}$ is the affine Weyl group associated to $\mathbb{G}$, and $K$ is a $p$-adic field, we instead want to define a building $\widetilde{\Delta}(G)$ over $\widetilde{W}$, with an action of $G$ by automorphisms. This is known as the \emph{Bruhat-Tits building} (or \emph{semisimple building}) of $G$. There are many ways of defining $\Delta=\widetilde{\Delta}(G)$, most commonly using the root datum of $\mathbb{G}$, but there are various other constructions. \\

\noindent\textbf{Example:} When $\mathbb{G}$ has type $A_n$, we can define $\widetilde{\Delta}(G)$ as the complex whose vertices are full rank $\mathcal{O}$-lattices in $K^{n+1}$ modulo scaling, where two vertices $u=[\mathcal{L}_1]$ and $v=[\mathcal{L}_2]$ are joined by an edge if $\mathcal{L}_1\supseteq\alpha\mathcal{L}_2\supseteq\pi\mathcal{L}_1$ for some $\alpha\in K$.\\

\noindent Later in this section, and in the paper, we will use this description, but we will now record some properties of general buildings that we will cite throughout.

\begin{theorem}\label{thm: minimal gallery}
Suppose $C,D$ are chambers in a strongly transitive building $\Delta$, $d(C,D)=d$, and $C=C_0\sim C_1\sim\dots\sim C_d=D$ is a minimal gallery from $C$ to $D$. Then for any apartment $\mathcal{A}$ containing $C$ and $D$, $\mathcal{A}$ contains $C_0,C_1,\dots,C_d$.
\end{theorem}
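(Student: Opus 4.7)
The plan is to prove the statement by strong induction on $d = d(C, D)$. The base cases $d = 0$ (where $C = D \in \mathcal{A}$) and $d = 1$ (the gallery is the edge $C \sim D$, both chambers of which lie in $\mathcal{A}$) are immediate.

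For the inductive step, assume the result for all pairs of chambers at strictly smaller distance, and write the type sequence of the gallery $(C_0, \ldots, C_d)$ as $(s_1, \ldots, s_d)$, so that $\delta(C, D) = s_1 s_2 \cdots s_d$ is a reduced expression in $W$. The key reduction is to establish that $C_1 \in \mathcal{A}$: once this is done, the sub-gallery $C_1 \sim C_2 \sim \cdots \sim C_d$ is a minimal gallery of length $d - 1$ between two chambers of $\mathcal{A}$, and the inductive hypothesis immediately yields $C_2, \ldots, C_{d-1} \in \mathcal{A}$.

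To prove $C_1 \in \mathcal{A}$, I would exploit the Coxeter-complex structure of the apartment: in $\mathcal{A}$ the chamber $C$ has exactly one $s_1$-neighbor, call it $C_1'$, and because $s_1 s_2 \cdots s_d$ is reduced the generator $s_1$ is a left descent of $\delta(C, D)$, which forces $d(C_1', D) = d - 1$. The chamber $C_1$ of our gallery also satisfies $C_1 \sim_{s_1} C$ and $d(C_1, D) = d - 1$. I would then invoke the fundamental uniqueness-of-projection statement for buildings: among the chambers of the $s_1$-panel of $C$ there is a unique one closest to $D$, namely the projection of $D$ onto that panel. When $s_1$ is a left descent of $\delta(C, D)$ this projection is distinct from $C$ itself and is the unique $s_1$-neighbor of $C$ at distance $d - 1$ from $D$. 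Consequently $C_1 = C_1' \in \mathcal{A}$, completing the inductive step.

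The main obstacle is precisely this projection uniqueness. From the chamber-system viewpoint it is a standard consequence of the axioms for the Weyl-distance function $\delta$, describing how $\delta(\cdot, D)$ behaves under an $s$-move. From the simplicial/apartment viewpoint of Section 2.1, the cleanest route is via the retraction $\rho_{\mathcal{A}, D}\colon \Delta \to \mathcal{A}$ centered at $D$, which is type-preserving, fixes $\mathcal{A}$ pointwise, and preserves the distance to $D$; any two candidate $s_1$-neighbors of $C$ at distance $d - 1$ from $D$ must both retract to $C_1'$, and the fact that $\rho_{\mathcal{A}, D}$ restricts to an isomorphism on any apartment containing $D$ (combined with the apartment axiom guaranteeing such apartments exist through any specified chamber) can then be exploited to force the two candidates to coincide. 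This geometric input is essentially the whole content of the theorem; the induction itself is purely formal.
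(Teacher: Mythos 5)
The paper does not actually prove this theorem; it simply cites \cite[Proposition 2.3.6]{Bruhat-Tits}. Your argument is correct and is the standard proof that apartments are convex: strong induction on $d(C,D)$, reduced to showing $C_1\in\mathcal{A}$, which follows because both $C_1$ and the unique $s_1$-neighbour $C_1'$ of $C$ inside $\mathcal{A}$ achieve the minimum distance $d-1$ from $D$ over the $s_1$-panel of $C$, and the projection of $D$ onto that panel is unique. Your identification of this gate/projection uniqueness as the crux is exactly right; from the Weyl-distance axioms it is immediate (if $E$ is at minimal distance in the panel and $X\neq E$ is another chamber of the panel, then $\delta(X,E)=s_1$ and $\ell\bigl(s_1\,\delta(E,D)\bigr)=\ell\bigl(\delta(E,D)\bigr)+1$ by minimality, so the building axiom forces $\delta(X,D)=s_1\,\delta(E,D)$, which is strictly longer), and this is arguably cleaner than the retraction $\rho_{\mathcal{A},D}$ route you sketch, since establishing the needed properties of the retraction essentially repackages the same axiom. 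Two small remarks: when you apply the inductive hypothesis to the sub-gallery $C_1\sim\cdots\sim C_d$, you use that $\mathcal{A}$ is an apartment containing both $C_1$ and $D$, which holds once $C_1\in\mathcal{A}$ is established, so the induction closes as intended; and your argument never uses strong transitivity, which is consistent with the fact that convexity of apartments holds in arbitrary buildings.
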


\begin{proof}

This is given by \cite[Proposition 2.3.6]{Bruhat-Tits}\end{proof}

\noindent  Now, if $\Delta=\widetilde{\Delta}(G)$ is the Bruhat-Tits building of $G$, then it contains a canonical chamber $C$ known as the \emph{hyperspecial chamber}.

\noindent\textbf{Example:} If $\mathbb{G}$ has type $A_n$, the hyperspecial chamber $C$ consists of the vertices $[\mathcal{O}^n],[\mathcal{O}^{n-1}\oplus\pi\mathcal{O}],[\mathcal{O}^{n-2}\oplus\pi\mathcal{O}^2],\dots,[\mathcal{O}\oplus\pi\mathcal{O}^{n-1}]$, and we call $v=[\mathcal{O}^n]$ the \emph{hyperspecial vertex}.\\ 

\noindent For each facet $F$ in $\Delta$, denote by $d_F$ the integer $$d_F:=\min\{d(D,C):D\text{ a chamber in }\Delta\text{ containing }F\text{ as a face}\}$$

\begin{lemma}\label{lem: chamber for facet}
Let $\Delta=\widetilde{\Delta}(G)$ be the Bruhat-Tits building for $G$, and let $F$ be a facet in $\Delta$. Then there exists a unique chamber $C(F)$ of $\Delta$, containing $F$, with $d(C(F),C)=d_F$. Moreover, 

\begin{enumerate}
\item If $\mathcal{A}$ is an apartment in $\Delta$ containing $C$ and $F$, then $\mathcal{A}$ contains $C(F)$.

\item If $g\in G$ and $g\cdot C=C$ then $C(g\cdot F)=g\cdot C(F)$.
\end{enumerate}
\end{lemma}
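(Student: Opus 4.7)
The plan is to identify the required chamber $C(F)$ with the combinatorial projection (or ``gate'') of $C$ onto the residue $R_F := \{D \text{ chamber} : F \subseteq D\}$ of chambers of $\Delta$ containing $F$ as a face. The main tool is the standard \emph{gate property} for residues in a building: for any chamber $C$ and any residue $R$ of $\Delta$, there exists a unique chamber $D^\ast \in R$ minimising $d(C, -)|_R$, and moreover every minimal gallery from $C$ to any $D \in R$ can be chosen to pass through $D^\ast$. This is a staple of building theory; see, for instance, \cite{Ronan} or \cite{Brown}.

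Taking this for granted, I would set $C(F) := D^\ast$ with $R = R_F$. Existence and uniqueness of a chamber containing $F$ at distance $d_F$ from $C$ are then both immediate from the gate property together with the definition of $d_F$. For property (1), given any apartment $\mathcal{A}$ containing $C$ and $F$, I pick any chamber $D \in \mathcal{A}$ with $F \subseteq D$, which exists since $F$ is a face of some chamber in the Coxeter complex $\mathcal{A}$. The gate property supplies a minimal gallery from $C$ to $D$ passing through $C(F)$, and Theorem \ref{thm: minimal gallery} forces this gallery, and in particular the chamber $C(F)$, to lie in $\mathcal{A}$. For property (2), the hypothesis $g \cdot C = C$ makes $g$ an isometry preserving $d(\cdot, C)$, and clearly $g \cdot R_F = R_{g \cdot F}$; hence $g \cdot C(F)$ is a chamber of $R_{g \cdot F}$ at distance $d_F = d_{g \cdot F}$ from $C$, and uniqueness forces $g \cdot C(F) = C(g \cdot F)$.

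The substantive step is the gate property itself. If one prefers a self-contained argument rather than an appeal to the literature, the cleanest path is to work inside a fixed apartment $\mathcal{A}_0$ containing $C$ and $F$, where the chambers of $\mathcal{A}_0$ containing $F$ correspond to a coset of a parabolic subgroup $W_J \leq W$, and the existence of a unique minimiser reduces to the classical fact that each such coset has a unique element of minimal length. The transfer to other apartments is achieved via the third apartment axiom: for any apartment $\mathcal{A}'$ containing $C$ and $F$, the isometry $\iota : \mathcal{A}_0 \to \mathcal{A}'$ fixing $\mathcal{A}_0 \cap \mathcal{A}'$ pointwise fixes both $C$ and $F$, so it must send the minimiser in $\mathcal{A}_0$ to the corresponding minimiser in $\mathcal{A}'$. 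To upgrade this to uniqueness across distinct apartments, one combines the isometry with Theorem \ref{thm: minimal gallery} to propagate a minimal gallery into $\mathcal{A}_0$ and conclude that any putative second minimiser must coincide with $C(F)$; I expect this final cross-apartment uniqueness argument to be the most delicate point.
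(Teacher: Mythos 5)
Your argument is correct, and the appeal to the gate (combinatorial projection) property is exactly the right tool; the deductions of parts (1) and (2) from it together with Theorem \ref{thm: minimal gallery} are clean. The paper itself does not give a proof here at all — it simply cites \cite[Lemma 1.3]{simplification} — so you have essentially supplied the standard argument that that reference packages. The one thing worth noting: your closing paragraph sketching a self-contained proof of the gate property is only a sketch, but you correctly flag that cross-apartment uniqueness is the delicate step and that it can be sidestepped by citing the gate property directly from \cite{Ronan} or \cite{Brown}, so there is no gap in the argument as presented.
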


\begin{proof}

This follows from \cite[Lemma 1.3]{simplification}.\end{proof}

\noindent So from now on, let $\Delta=\widetilde{\Delta}(G)$ be the Bruhat-Tits building for $G$, and we deduce the following useful corollaries of Theorem \ref{thm: minimal gallery} and Lemma \ref{lem: chamber for facet}.

\begin{corollary}\label{cor: gallery fixing}
If $C,D$ are chambers in $\Delta$, then if $g\in G$ with $g\cdot C=C$ and $g\cdot D=D$, then for any minimal gallery $C=C_0,C_1,\dots,C_m=D$ from $C$ to $D$, $g\cdot C_i=C_i$ for all $i$.
\end{corollary}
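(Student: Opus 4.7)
The plan is to proceed by downward induction on $i$, establishing $g \cdot C_i = C_i$ for $i = m, m-1, \ldots, 0$. The base case $i = m$ is immediate from $g \cdot D = D$, and the case $i = 0$ is already covered by the hypothesis $g \cdot C = C$.

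For the inductive step, suppose $g \cdot C_i = C_i$ for some $i \geq 1$, and let $F := C_{i-1} \cap C_i$ be the shared codimension-1 face of type $s \in S$, so that $C_{i-1} \sim_s C_i$. Because $g$ is an automorphism (and so preserves $\delta$), one has $\delta(g \cdot C_{i-1}, C_i) = \delta(g \cdot C_{i-1}, g \cdot C_i) = \delta(C_{i-1}, C_i) = s$. Thus $g \cdot C_{i-1}$ is $s$-adjacent to $C_i$, and so $g \cdot F = g \cdot C_{i-1} \cap g \cdot C_i = g \cdot C_{i-1} \cap C_i$ equals the $s$-face of $C_i$, which is $F$ by the uniqueness of codimension-1 faces of a given type within a chamber. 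Hence $g \cdot F = F$.

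Now apply Lemma \ref{lem: chamber for facet}(2): since $g \cdot C = C$, we obtain $g \cdot C(F) = C(g \cdot F) = C(F)$. It remains to identify $C(F)$ with $C_{i-1}$. Any chamber $E$ containing $F$ is either $C_i$ itself or is $s$-adjacent to $C_i$, so $d(E, D) \leq 1 + (m-i)$; combined with $d(C,D) = m$ and the triangle inequality $d(C,D) \leq d(C,E) + d(E,D)$, this forces $d(C,E) \geq i - 1$. The chamber $C_{i-1}$ attains this lower bound, so $d_F = i - 1$ and the uniqueness assertion in Lemma \ref{lem: chamber for facet} yields $C(F) = C_{i-1}$. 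Therefore $g \cdot C_{i-1} = g \cdot C(F) = C(F) = C_{i-1}$, closing the induction.

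The only step that requires genuine thought is the verification that $g \cdot F = F$, which hinges on the fact that a building automorphism preserves the Weyl-distance function $\delta$ and hence the type of the shared face between two adjacent chambers. Everything else is a direct application of Lemma \ref{lem: chamber for facet} together with straightforward distance bookkeeping along the minimal gallery, so I expect no further serious obstacles.
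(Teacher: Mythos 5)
Your proof is correct, but it takes a genuinely different route from the paper's. The paper applies Theorem~\ref{thm: minimal gallery} to the translated gallery $C = g\cdot C_0 \sim \dots \sim g\cdot C_m = D$ to place both $C_i$ and $g\cdot C_i$ inside a single apartment $\mathcal{A}$ containing $C$ and $D$, and then runs a \emph{forward} induction: once $g\cdot C_i = C_i$, the chamber $g\cdot C_{i+1}$ is $s_{i+1}$-adjacent to $C_i$ and lies in the thin apartment $\mathcal{A}$, so it must equal $C_{i+1}$. You instead run a \emph{downward} induction and sidestep apartments entirely after the setup: you use preservation of the Weyl distance $\delta$ to show that $g$ fixes the shared codimension-1 face $F = C_{i-1}\cap C_i$, then invoke Lemma~\ref{lem: chamber for facet}(2) to conclude $g\cdot C(F) = C(F)$, and finally identify $C(F) = C_{i-1}$ by a triangle-inequality computation showing $d_F = i-1$. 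The paper's route is slightly more economical once the apartment containment is in hand, since the thinness of the apartment immediately forces uniqueness; your route trades that for a concrete distance argument and leans more heavily on the $C(F)$ machinery from Lemma~\ref{lem: chamber for facet}, which has the minor virtue of not requiring one to first establish that the translated gallery lives in a common apartment with the original. Both are valid and of comparable length. (One small remark: once you know $g\cdot F = F$ and $d_F = i-1$, you could also finish without part (2) of Lemma~\ref{lem: chamber for facet}: since $g$ fixes $C$, $d(g\cdot C_{i-1}, C) = d(C_{i-1}, C) = i-1$, so both $C_{i-1}$ and $g\cdot C_{i-1}$ realise the minimum $d_F$ among chambers containing $F$, and the uniqueness in part (1) already forces them to coincide.)
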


\begin{proof}

We know that $C=g\cdot C=g\cdot C_0\sim\dots\sim g\cdot C_m=C_m$ is a minimal gallery from $C$ to $D$, so fixing any apartment $\mathcal{A}$ containing $C$ and $D$, we know it must contain $g\cdot C_i$ for all $i$ by Theorem \ref{thm: minimal gallery}.

Since we know that $g\cdot C_0=C_0$, we will apply induction and assume that $g\cdot C_i=C_i$ for some $i<m$. Then since $C_i\sim_{s_{i+1}} C_{i+1}$, we must have that $C_i=g\cdot C_i\sim_{s_{i+1}} g\cdot C_{i+1}$, and hence $C_i$ is adjacent to $C_{i+1}$ and $g\cdot C_{i+1}$ via the same codimension 1 face. But since $C_i,C_{i+1},g\cdot C_{i+1}$ all lie in the same apartment $\mathcal{A}$, this implies that $g\cdot C_{i+1}=C_{i+1}$ as required.\end{proof}

\begin{corollary}\label{cor: adjacent to facet}
If $C$ is the hyperspecial chamber in $\Delta$, and $F$ is a codimension 1 facet in $\Delta$, then setting $d:=d_F$:

\begin{itemize}
\item $F$ is a face of precisely one chamber of distance $d$ from $C$.

\item All chambers with $F$ as a face have distance $d$ or $d+1$ from $C$.
\end{itemize}
\end{corollary}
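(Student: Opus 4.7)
The plan is to deduce both bullets directly from Lemma \ref{lem: chamber for facet} together with the triangle inequality for the gallery distance, exploiting the fact that any two distinct chambers sharing a codimension 1 face are adjacent in the chamber system.

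First, the existence and uniqueness statement in the opening bullet is essentially a restatement of Lemma \ref{lem: chamber for facet} applied to the codimension 1 facet $F$: that lemma already produces a distinguished chamber $C(F)$ containing $F$ with $d(C(F),C)=d_F=d$, and the uniqueness of this chamber is part of its conclusion. So the first bullet requires no further work beyond citing Lemma \ref{lem: chamber for facet}.

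For the second bullet, I would fix any chamber $D$ having $F$ as a face. By the very definition of $d_F=d$, one immediately has $d(D,C)\geq d$, giving the lower bound. For the upper bound, I would use that $F$ has codimension 1: any two distinct chambers of $\Delta$ containing $F$ are adjacent across $F$ in the chamber system (their $\delta$-value is the simple reflection associated to $F$). Hence either $D=C(F)$, which gives $d(D,C)=d$, or $d(D,C(F))=1$, in which case the triangle inequality yields
\[
d(D,C)\leq d(D,C(F))+d(C(F),C)=1+d=d+1.
\]
Combined with $d(D,C)\geq d$, this shows $d(D,C)\in\{d,d+1\}$, completing the second bullet.

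There is no substantive obstacle here: the entire argument reduces to invoking Lemma \ref{lem: chamber for facet} and using that a codimension 1 facet lies between pairwise $s$-adjacent chambers for the corresponding Coxeter generator $s$. The purpose of isolating this corollary in the exposition is to have a clean statement ready to control the local behaviour of chambers meeting the boundary facets of the bounded regions $\Delta_n$ that appear later in the paper.
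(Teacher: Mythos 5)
Your proof is correct, and your argument for the second bullet is genuinely different from the paper's. The paper works inside an apartment $\mathcal{A}$ containing $C$ and $D$ (which also contains $C(F)$ by Lemma \ref{lem: chamber for facet}(1)) and then uses the Coxeter-complex parity fact that two adjacent chambers in an apartment lie at distances from $C$ differing by exactly $1$; combined with minimality of $d_F$ this forces $d(D,C)=d_F+1$ whenever $D\neq C(F)$. You instead invoke the triangle inequality for the gallery metric, bracketing $d(D,C)$ between the lower bound $d_F$ (immediate from the definition of $d_F$) and the upper bound $d(D,C(F))+d(C(F),C)=1+d_F$, which produces the same conclusion without ever appealing to Theorem \ref{thm: minimal gallery} or the apartment axioms. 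Your route is therefore more elementary and self-contained; the one thing the paper's apartment argument yields directly that yours does not is the sharper fact that $D\neq C(F)$ already forces $d(D,C)=d_F+1$ (rather than merely $d(D,C)\in\{d_F,d_F+1\}$), but that refinement follows at once by combining your two bullets, so nothing is lost. Both proofs dispose of the first bullet identically, by citing the uniqueness statement in Lemma \ref{lem: chamber for facet}.
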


\begin{proof}

Using Lemma \ref{lem: chamber for facet}, we know that there exists a unique chamber $C(F)$ containing $F$ as a face such that $d(C(F),C)=d_F$, so we only need to prove that if $D$ contains $F$ and $D\neq C(F)$ then $d(D,C)=d_F+1$.

But $D$ is adjacent to $C(F)$ via $F$, so choose a minimal gallery $C=C_0\sim\dots\sim C_m=D$ from $C$ to $D$, where $C_{m-1}=C(F)$, and using Theorem \ref{thm: minimal gallery} we can choose an apartment $\mathcal{A}$ containing $C,C(F)$ and $D$. Since $d(C,C(F))=d_F$ and $D$ is adjacent to $C(F)$ in $\mathcal{A}$, we must have that $d(D,C)=d_F\pm 1$. So by minimality of $d_F$, we must have that $d(D,C)=d_F+1$.\qedhere






\end{proof}

\noindent In fact, in the case when $G$ has type $\widetilde{A}_n$ for some $n\in\mathbb{N}$, we have a stronger version of Corollary \ref{cor: adjacent to facet}.

\begin{proposition}\label{propn: number of adjacent chambers}
Suppose $G=\mathbb{G}(K)$ for some reductive algebraic group $\mathbb{G}$ of type $A_n$, and the residue field of $K$ has order $q$. Then for every codimension 1 facet $F$ in $\Delta$, and each chamber $C$ in $\Delta$, setting $d:=d_F(C)$:
\begin{itemize}
\item $F$ belongs to precisely $q+1$ chambers.

\item one of these chambers has distance $d$ from $C$, the remaining $q$ have distance $d+1$.
\end{itemize}
\end{proposition}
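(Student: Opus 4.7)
The proof naturally splits according to the two bullets in the statement, the first being essentially combinatorial and the second following by combining the first with Corollary \ref{cor: adjacent to facet}.

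For the first bullet, the plan is to compute directly using the lattice description of $\widetilde{\Delta}(G)$ in type $\widetilde{A}_n$ that was recalled just before Theorem \ref{thm: minimal gallery}. A codimension-$1$ facet $F$ has exactly $n$ vertices, which are lattice classes that we may normalise to representatives forming a chain
\[
\mathcal{L}_0\supset\mathcal{L}_1\supset\cdots\supset\mathcal{L}_{n-1}\supset\pi\mathcal{L}_0,
\]
with all successive quotients $1$-dimensional over $\mathbb{F}_q$ except for one index $j$, where the quotient $\mathcal{L}_j/\mathcal{L}_{j+1}$ (indices read cyclically, with $\mathcal{L}_n:=\pi\mathcal{L}_0$) is $2$-dimensional. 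A chamber extending $F$ is obtained by inserting an intermediate lattice $[\mathcal{M}]$ with $\mathcal{L}_j\supset\mathcal{M}\supset\mathcal{L}_{j+1}$ and each quotient $1$-dimensional. Such lattices $\mathcal{M}$ correspond bijectively to $1$-dimensional $\mathbb{F}_q$-subspaces of $\mathcal{L}_j/\mathcal{L}_{j+1}\cong\mathbb{F}_q^2$, so their number is $\vert\mathbb{P}^1(\mathbb{F}_q)\vert=q+1$. This yields the first bullet.

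For the second bullet, the observation is that Corollary \ref{cor: adjacent to facet}, although stated with respect to the hyperspecial chamber, holds with respect to any chamber $C$ in the $\widetilde{A}_n$ case: since all chambers of $\Delta$ are equivalent under the action of $G=SL_{n+1}(K)$ (or more generally $Aut(\Delta)$), we may transport the argument by choosing $g$ sending the hyperspecial chamber to $C$ and applying Lemma \ref{lem: chamber for facet}(2) and Corollary \ref{cor: adjacent to facet} to $g^{-1}\cdot F$. This produces a unique chamber $C(F)$ containing $F$ at distance $d_F(C)$ from $C$, and forces every other chamber through $F$ to be at distance exactly $d_F(C)+1$. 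Combining with the first bullet, there are precisely $q$ chambers at distance $d+1$.

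The only genuinely non-trivial step is the lattice bookkeeping in the first bullet: one must verify carefully that inserting $[\mathcal{M}]$ really does produce a chamber and that different lines in the $\mathbb{F}_q^2$-quotient produce distinct vertices of $\Delta$. The second bullet is then essentially a translation of the hyperspecial case, and I expect no real obstacle beyond confirming the transitivity used in that step.
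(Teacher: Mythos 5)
Your proof is correct and takes essentially the same route as the paper: the first bullet by normalising $F$ to a lattice chain with a unique $2$-dimensional successive quotient and identifying the chambers through $F$ with $\mathbb{P}^1(\mathbb{F}_q)$, and the second by invoking Corollary \ref{cor: adjacent to facet}. Your treatment is in fact slightly cleaner than the paper's in two respects: the paper's indexing of the chain appears to be off by one (it lists $n-1$ lattice classes for a codimension-$1$ facet, which should have $n$, and consequently misstates which quotients can have dimension $2$), whereas your count is consistent; and you explicitly supply the transitivity step needed to pass from the hyperspecial chamber in Corollary \ref{cor: adjacent to facet} to an arbitrary chamber $C$, a point the paper leaves implicit.
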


\begin{proof}

The second statement follows immediately from Corollary \ref{cor: adjacent to facet}, so we only need to prove the first statement.\\

\noindent Realising the vertices of $F$ as lattices in $K^{n+1}$ modulo scaling, we can write $F=\{[\mathcal{L}_1],\dots,[\mathcal{L}_{n-1}]\}$ with $$\mathcal{L}_1\supseteq\mathcal{L}_2\dots\supseteq\mathcal{L}_{n-1}\supseteq \pi\mathcal{L}_1$$ But $\mathcal{L}_1/\pi\mathcal{L}_1$ is a $\mathbb{F}_q$-vector space of dimension $n+1$, so each quotient $\mathcal{L}_i/\mathcal{L}_{i+1}$ has dimension 1 or 2, and only one can have dimension 1.

So if $D$ is a chamber of $\Delta$, and $F$ is a face of $D$, then $D=\{[\mathcal{L}_0],[\mathcal{L}_1],\dots,[\mathcal{L}_{n-1}]\}$, and we may assume that $\mathcal{L}_1\supseteq\mathcal{L}_0\supseteq\pi\mathcal{L}_1$. But we know that for each $i$, $\mathcal{L}_i\supseteq\beta_i\mathcal{L}_0\supseteq\pi\mathcal{L}_i$ for some $\beta_i\in K$, and since $\mathcal{L}_i\subseteq\mathcal{L}_0$ it follows that $\beta_i\in\mathcal{O}$.\\

\noindent If $\beta_i\in \pi\mathcal{O}$ then $\mathcal{L}_0\supseteq\beta_i^{-1}\pi\mathcal{L}_i\supseteq\mathcal{L}_i$, and if $\beta_i\in\mathcal{O}^{\times}$ then $\mathcal{L}_i\supseteq\mathcal{L}_0$. So let $j\geq 1$ be maximal such that $\mathcal{L}_j\supseteq\mathcal{L}_0$, and it follows that $\mathcal{L}_j\supseteq\mathcal{L}_0\supseteq\mathcal{L}_{j+1}$. This implies that $\mathcal{L}_j/\mathcal{L}_{j+1}$ has dimension 2 over $\mathbb{F}_q$, and $\mathcal{L}_0/\mathcal{L}_{j+1}$ has dimension 1. 

Since only a single quotient $\mathcal{L}_i/\mathcal{L}_{i+1}$ has dimension 2, $j$ does not depend on $D$, and since a 2 dimensional $\mathbb{F}_q$-vector space has only $q+1$ 1-dimensional subspaces, it follows that there are only $q+1$ chambers adjacent to $F$.\end{proof}

\subsection{Subgroups associated to facets}\label{subsec: facet subgroups}

Again, let $\mathbb{G}$ be a split semisimple, simply connected algebraic group, let $G=\mathbb{G}(K)$, and for each facet $F$ in the Bruhat-Tits building $\Delta=\widetilde{\Delta}(G)$, define the subgroup $J_F$ of $G$ by $$J_F:=\{g\in G:g\text{ fixes every vertex of }F\}$$ and note that $J_F=Stab_G(F)$ by \cite[Proposition 4.6.32]{Bruhat-Tits2}, and $J_F$ is a compact open subgroup of $G$.\\






\noindent It is proved in \cite{Tits} that for each facet $F$, there exists a connected $\mathcal{O}$-group scheme $\mathcal{G}_F$ with generic fiber $\mathbb{G}$ such that $\mathcal{G}_F(\mathcal{O})=J_F$, and the reduction $\overline{\mathcal{G}_F}$ of $\mathcal{G}_F$ modulo $\pi$ is a connected algebraic group over $\mathbb{F}_q$ with unipotent radical $N_F$. As in \cite{gorenstein} and \cite{torsion}, we define the subgroup $I_F$ of $J_F$ as $$I_F:=\{g\in \mathcal{G}_F(\mathcal{O}):\overline{g}\in N_F(\mathcal{O}/\pi\mathcal{O})\}$$

\noindent\textbf{Note:} If we assume $\mathbb{G}$ is a general split reductive algebraic group, we can still define the groups $J_F,I_F,\mathcal{G}_F$, but $J_F$, $Stab_G(F)$ and $\mathcal{G}_F^o(\mathcal{O})$ do not always coincide in general, which affects many of our subsequent results. So in this paper we will always assume semisimplicity.\\

\noindent It is clear that $I_F$ is a normal subgroup of $J_F$. Moreover, if $D$ is a chamber in $\Delta$, $v\in D$ is a vertex, and $$D=F_d\supseteq F_{d-1}\supseteq\dots\supseteq F_1\supseteq F_0=v$$ where each facet $F_i$ has dimension $i$, then $$I_v=I_{F_0}\subseteq\dots\subseteq I_{F_d}=I_D\subseteq J_D= J_{F_d}\subseteq\dots\subseteq J_{F_0}=J_v$$ If $F=C$ is the hyperspecial chamber in $\Delta$, then $J_C$ is called the \emph{Iwahori subgroup} of $G$, and we call the subgroup $I:=I_C$ the \emph{pro-$p$ Iwahori subgroup}. Note that $I_C$ is a Sylow-$p$ subgroup of $J_C$.

On the other hand, if $F=v$ is a vertex, then $J_v$ is a maximal compact open subgroup of $G$ \cite[Corollaire 3.3.3 and \S 4.4.9]{Bruhat-Tits}, and assuming $v$ is the hyperspecial vertex, we may realise $I_v$ as $\ker(\mathbb{G}(\mathcal{O})\to\mathbb{G}(\mathcal{O}/\pi\mathcal{O}))$.\\

\noindent\textbf{Note:} For each $m\in\mathbb{N}$, we similarly define the subgroup $K_m:=\ker(\mathbb{G}(\mathcal{O})\to\mathbb{G}(\mathcal{O}/\pi^m\mathcal{O}))$, a compact open subgroup of $G$ with $K_1=I_v$.\\

\noindent We define the \emph{standard apartment} in $\Delta$ to be a certain canonical apartment $\mathcal{A}_0$ in $\Delta$ that contains the hyperspecial chamber in $\Delta$. To define it explicitly, consider the BN-pair $(J,N_G(T))$, $J$ is the Iwahori and $T:=\mathbb{T}(K)$ for any torus $\mathbb{T}$ in $\mathbb{G}$. Therefore we can realise the cosets of $G/J$ as chambers in a transitive building using \cite[Theorem \rom{5}.3]{Brown}, where the defining function $\delta$ is given by $$\delta(gJ,hJ)=w\text{ where }JwJ=Jg^{-1}hJ$$ In fact this building coincides with $\widetilde{\Delta}(G)$, and the standard apartment can now be realised as the set of chambers $\{gJ:g\in N_G(T)\}$. The trivial coset $J$ is known as the \emph{hyperspecial chamber}.\\

\noindent\textbf{Example:} If $\mathbb{G}$ has type $A_n$, then $J_v=\mathbb{G}(\mathcal{O})$, $J=J_C$ is the group of matrices in $\mathbb{G}(\mathcal{O})$ that are invertible, upper-triangular modulo $\pi$, and the standard apartment $\mathcal{A}_0$ can be realised as the set vertices the form $[\langle \alpha_1 e_1,\dots,\alpha_{n+1}e_{n+1}\rangle_\mathcal{O}]$, where $\{e_1,\dots,e_{n+1}\}$ is the standard basis for $K^{n+1}$, and $\alpha_1,\dots,\alpha_{n+1}\in K$.

Furthermore, $I_v$ is the first congruence kernel $K_1:=\ker(\mathbb{G}(\mathcal{O})\to\mathbb{G}(\mathbb{F}_p))$, and $I=I_C$ is the group of matrices in $\mathbb{G}(\mathcal{O})$ that are unipotent, upper-triangular modulo $\pi$. The subgroup $K_m$ arises as the stabiliser of all vertices of distance no more than $m$ from $v$.

\begin{lemma}\label{lem: face transitivity}
Any facet $F$ in $\Delta$ is conjugate under the pro-$p$ Iwahori subgroup $I$ to a unique facet in $\mathcal{A}_0$.
\end{lemma}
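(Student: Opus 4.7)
The plan is to prove existence and uniqueness separately, relying on strong transitivity of $\Delta$ combined with the factorisation $J = T_0 \cdot I$, where $T_0$ denotes the maximal bounded subgroup of the split torus $T = \mathbb{T}(K)$.

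For existence, I would start by choosing an apartment $\mathcal{A}'$ containing both the hyperspecial chamber $C$ and $F$; this is guaranteed by the building axioms, since $C$ and $F$ are each contained in some chamber. Strong transitivity of the Bruhat-Tits building implies that $J = J_C$, as the stabiliser of $C$, acts transitively on the set of apartments containing $C$, so there exists $g \in J$ with $g\cdot \mathcal{A}' = \mathcal{A}_0$, and in particular $g \cdot F \in \mathcal{A}_0$. To upgrade $g$ to an element of $I$, I would invoke the decomposition $J = T_0 \cdot I$, which follows from the surjectivity of the reduction $T_0 \twoheadrightarrow T(\mathbb{F}_q)$ together with the canonical identification $J/I \cong T(\mathbb{F}_q)$ (the maximal torus of the reductive quotient $\overline{\mathcal{G}_C}/N_C$). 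Writing $g = ti$ with $t \in T_0$ and $i \in I$, and using that $T_0$ acts on $\mathcal{A}_0$ by the translations given by the cocharacter lattice (so its compact part fixes $\mathcal{A}_0$ pointwise), every vertex of $g\cdot F \in \mathcal{A}_0$ is fixed by $t^{-1}$. Hence $i \cdot F = t^{-1}\cdot (g \cdot F) = g \cdot F \in \mathcal{A}_0$, as required.

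For uniqueness, suppose $F_1, F_2 \in \mathcal{A}_0$ and $j = (i')^{-1}i \in I$ satisfies $j \cdot F_1 = F_2$; I would argue $F_1 = F_2$ as follows. The apartments $\mathcal{A}_0$ and $j \cdot \mathcal{A}_0$ both contain $C$ (since $j \in I \subseteq J_C$) and the facet $F_2$. Applying strong transitivity once more yields $h \in J$ with $h\cdot (j\cdot \mathcal{A}_0) = \mathcal{A}_0$ and $h$ fixing $\mathcal{A}_0 \cap j\cdot\mathcal{A}_0$ pointwise, so in particular $h \cdot F_2 = F_2$. The product $hj$ then stabilises $\mathcal{A}_0$ setwise and fixes $C$, so it lies in $N_G(T) \cap J_C$. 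Since $N_G(T)/T_0 \cong \widetilde{W}$ acts simply transitively on the chambers of $\mathcal{A}_0$, the stabiliser of $C$ in this quotient is trivial, forcing $hj \in T_0$; but $T_0$ fixes $\mathcal{A}_0$ pointwise, so $hj \cdot F_1 = F_1$, and combined with $h \cdot F_2 = F_2$ this gives $F_1 = F_2$.

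The most substantial ingredient is the factorisation $J = T_0 \cdot I$ together with the identification $J_C \cap N_G(T) = T_0$; both are standard, but rely on the structure theory of the smooth integral models $\mathcal{G}_C$ and their reductive quotients, and so will be cited from \cite{Tits} and \cite{Bruhat-Tits2}. Once these are in place, both existence and uniqueness become transparent applications of strong transitivity.
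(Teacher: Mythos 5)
The paper does not actually give a proof here: it cites \cite[Remark 4.17(2)]{gorenstein}, so there is no ``paper approach'' to compare against directly. Your self-contained argument is correct and fills that gap.

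Your two main ingredients --- strong transitivity of the $G$-action on $\Delta$ (so that $J_C$ acts transitively on apartments containing $C$) and the Iwahori factorisation $J = T_0 \cdot I$ coming from $J/I \cong \mathbb{T}(\mathbb{F}_q)$ and the surjectivity of $T_0 \twoheadrightarrow \mathbb{T}(\mathbb{F}_q)$ --- are exactly the right ones, and your existence argument is clean: write the element of $J$ realising the transitivity as $ti$ with $t\in T_0$ and $i\in I$, and use that $T_0$ fixes $\mathcal{A}_0$ pointwise to discard $t$. One stylistic slip: you describe $T_0$ as acting by translations ``so its compact part fixes $\mathcal{A}_0$ pointwise''; $T_0$ \emph{is} the compact part, and it is $T(K)$ that acts by translations via the valuation homomorphism with kernel $T_0$. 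The uniqueness argument is also sound, but it leans on two unstated facts: (i) that the isomorphism $j\mathcal{A}_0 \to \mathcal{A}_0$ realised by your $h\in J$ must coincide with the building-axiom isomorphism fixing the intersection pointwise (true, because an isomorphism of apartments fixing a chamber is unique), and (ii) that $\mathrm{Stab}_G(\mathcal{A}_0) = N_G(T)$. Both are standard, but (ii) can be avoided entirely: since $G$ acts by $\delta$-preserving, hence type-preserving, automorphisms, $hj$ restricts to a type-preserving automorphism of the Coxeter complex $\mathcal{A}_0$ fixing the chamber $C$, and any such automorphism is the identity on $\mathcal{A}_0$; this gives $hj\cdot F_1 = F_1$ directly, without passing through $N_G(T)$ or $T_0$. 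With that small streamlining, the proof is a good elementary replacement for the citation, and it applies verbatim to any split semisimple simply connected $\mathbb{G}$, as the lemma requires.
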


\begin{proof}

See \cite[Remark 4.17(2)]{gorenstein}.\end{proof}


\begin{proposition}\label{propn: adjacent chambers facet subgroup}
For any facet $F$ in $\Delta$, 
\begin{enumerate}
    \item $I_F$ is the unique, maximal pro-$p$ normal subgroup of $J_F$.

    \item $I_F$ is equal to the set of all $g\in G$ such that 
    \begin{itemize}
    \item $g$ stabilises all chambers containing $F$.

    \item $g^{p^n}\rightarrow 1$ as $n\rightarrow\infty$.
    \end{itemize}

In particular, $S\subseteq I_F$ for any pro-$p$ subgroup $S$ of $G$ fixing all chambers containing $F$.
\end{enumerate}
\end{proposition}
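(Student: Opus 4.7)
My plan is to deduce both claims from the structure theory of the smooth $\mathcal{O}$-group scheme $\mathcal{G}_F$ together with classical facts about the $\mathbb{F}_q$-points of connected reductive groups, using the reduction map as the main bridge.

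For part (1), I would first verify that $I_F$ is pro-$p$ and normal in $J_F$. Smoothness of $\mathcal{G}_F$ yields a surjective reduction map $\rho : J_F \twoheadrightarrow \overline{\mathcal{G}_F}(\mathbb{F}_q)$ whose kernel $K_F^1$ is pro-$p$. Since $I_F = \rho^{-1}(N_F(\mathbb{F}_q))$ and $N_F(\mathbb{F}_q)$ is a finite $p$-group (as $N_F$ is connected unipotent), $I_F$ is itself pro-$p$ as an extension of a pro-$p$ group by a $p$-group; normality follows from $N_F$ being the unipotent radical of $\overline{\mathcal{G}_F}$. For maximality, let $P \trianglelefteq J_F$ be pro-$p$, and consider the image of $\rho(P)$ in the reductive quotient $H := \overline{\mathcal{G}_F}/N_F$, which is a normal $p$-subgroup of $H(\mathbb{F}_q)$. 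A standard consequence of the Borel--Tits theorem is that such a subgroup is contained in $Z(H)(\mathbb{F}_q)$, and since $Z(H)$ is a diagonalisable group scheme, in characteristic $p$ its $\mathbb{F}_q$-points are automatically of order prime to $p$. Hence the image is trivial, $\rho(P) \subseteq N_F(\mathbb{F}_q)$, and $P \subseteq I_F$. Uniqueness of a maximal such subgroup is automatic, since the product of two normal pro-$p$ subgroups of a profinite group remains normal and pro-$p$.

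For part (2), the inclusion of $I_F$ into the stated set is immediate: $I_F$ is pro-$p$, and $I_F \subseteq I_D \subseteq J_D = \mathrm{Stab}_G(D)$ for any chamber $D \supseteq F$, as recorded in the text. For the reverse inclusion, and equivalently for the ``in particular'' clause, let $S$ be a pro-$p$ subgroup of $G$ fixing every chamber containing $F$; then $S \subseteq T := \bigcap_{D \supseteq F} J_D$, which is a subgroup normal in $J_F$ since $J_F$ permutes the set of chambers containing $F$. By Tits' construction of $\mathcal{G}_F$, the action of $J_F/I_F \hookrightarrow H(\mathbb{F}_q)$ on the set of chambers of $\Delta$ containing $F$ coincides with the action of $H(\mathbb{F}_q)$ on the spherical building of $H$, whose kernel is $Z(H)(\mathbb{F}_q)$. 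Since $\rho(T)$ therefore lies modulo $N_F(\mathbb{F}_q)$ in this prime-to-$p$ centre, the image of the pro-$p$ subgroup $S$ in $J_F/I_F$ is a $p$-subgroup of a prime-to-$p$ group, hence trivial, giving $S \subseteq I_F$.

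The main obstacle will be pinning down the Bruhat--Tits inputs, specifically the identification of the star of $F$ with the spherical building of the reductive quotient of $\overline{\mathcal{G}_F}$, and the clean statement that normal $p$-subgroups of the $\mathbb{F}_q$-points of a connected reductive group are central. Both are standard for split semisimple simply connected $\mathbb{G}$, and I would expect to source them from \cite{Bruhat-Tits2} and \cite{gorenstein}; once these are granted, the structural arguments above assemble quickly and uniformly across the two parts of the proposition.
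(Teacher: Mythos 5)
Your proof is correct and part~(1) is essentially the paper's argument with the key step spelled out: the paper simply asserts that $J_F/I_F\cong(\overline{\mathcal{G}_F}/N_F)(\mathbb{F}_q)$ has no nontrivial normal $p$-subgroups because the quotient is reductive, whereas you justify this via the Borel--Tits description of normal subgroups and the prime-to-$p$ order of $Z(H)(\mathbb{F}_q)$. Part~(2) is where you genuinely diverge from the paper. The paper argues chamber by chamber: for each chamber $C\supseteq F$ it observes that $\overline{\mathcal{G}_C}/N_C$ is a split \emph{torus}, so $J_C/I_C$ has order prime to $p$ and $g\in I_C$; it then intersects the $I_C$ to get a pro-$p$ normal subgroup of $J_F$ and invokes part~(1). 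Your argument instead passes directly through the Bruhat--Tits identification of the star of $F$ with the spherical building of $H=\overline{\mathcal{G}_F}/N_F$, identifies the kernel $T/I_F$ of the resulting $J_F/I_F$-action with $Z(H)(\mathbb{F}_q)$, and concludes by a prime-to-$p$ order count. This is logically independent of part~(1), a bit more uniform, and exhibits what $T/I_F$ actually is, at the cost of invoking the deeper link-of-a-facet = spherical-building theorem that the paper avoids by specialising to chambers. One small presentational gap: you treat the reverse inclusion in~(2) and the ``in particular'' clause as interchangeable; this is fine, but it would be cleaner to state explicitly that any $g\in J_F$ with $g^{p^n}\to 1$ generates a pro-$p$ closed subgroup, so the single-element statement reduces to the subgroup statement you actually prove.
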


\begin{proof} $ $

\begin{enumerate}
    \item We know that $I_F$ is a pro-$p$ normal subgroup of $J_F$, and $$J_F/I_F\cong(\overline{\mathcal{G}_F}/N_F)(\mathbb{F}_q)$$ So since $\overline{\mathcal{G}_F}/N_F$ is a reductive algebraic group over $\mathbb{F}_q$, it follows that $J_F/I_F$ contains no non-trivial normal $p$-subgroups.

    \item Since $I_F$ is a pro-$p$ subgroup of $G$, it is clear that $g^{p^n}\rightarrow 1$ for all $g\in I_F$. On the other hand, for any chamber $C$ of $\Delta$ containing $F$ as a face, we know that $I_F\subseteq I_C\subseteq J_C$, so clearly every element of $I_F$ stabilises $C$.

\noindent Conversely, assume $g$ fixes every chamber adjacent to $F$, and that $g^{p^n}\rightarrow 1$ as $n\rightarrow\infty$. Then for any chamber $C$ containing $F$, it is clear that $g\in J_C$. But since $I_C$ is an open subgroup of $J_C$, it follows that $g$ maps to a $p$-torsion element of $J_C/I_C\cong (\overline{\mathcal{G}_C}/N_C)(\mathbb{F}_q)$. But $\overline{\mathcal{G}_C}/N_C$ is a split torus, so $J_F/I_F$ can contain no non-trivial $p$-torsion elements, and thus $g\in I_C$.

So if $\{C_1,\dots,C_r\}$ is the set of all chambers containing $F$ as a face, then $g\in N:= I_{C_1}\cap\cdots\cap I_{C_r}$. But since $J_F$ permutes $\{C_1,\dots,C_r\}$, it follows that $N$ is a normal subgroup of $J_F$. and clearly it is a pro-$p$ subgroup, so it follows from part 1 that $N\subseteq I_F$, and hence $g\in I_F$ as required.\qedhere
\end{enumerate}

\end{proof}

\subsection{Cycles and Summits in the building}\label{subsec: summits}

The geometric and combinatorial structure of the Bruhat-Tits building of $G$ is well understood when $G$ has rank $1$, in which case the building is a tree. In higher ranks, this is very false. Indeed, the building is constructed from higher dimensional simplices, all of which contain cycles.

In this section, we will move towards understanding the local structure of the $\widetilde{A}_2$-building, and prove several technical results which will be required in the proof of our main results.\\

\noindent Let $\Delta=\widetilde{\Delta}(G)$ be the Bruhat-Tits building of $G$, and let $C$ be the hyperspecial chamber. A \emph{cycle} in $\Delta$ is defined to be a gallery $$D=D_0\sim D_1\sim\dots\sim D_m=D$$ where $D_i\neq D$ for all $0<i<m$. We call $m$ the \emph{length} of the cycle. For example, there are no cycles of any length in the rank 1 Bruhat-Tits tree.

\begin{lemma}\label{lem: no-cycles}
The $\widetilde{A}_2$-building $\Delta$ contains no cycles of length less than 6.
\end{lemma}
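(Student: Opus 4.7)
The plan is to argue by contradiction: assume a cycle $D_0 \sim_{s_1} \cdots \sim_{s_m} D_m = D_0$ exists with $m$ the minimum length and $m < 6$, and reduce each small case to the girth of either an apartment or the link of a vertex. The Coxeter complex of $\widetilde{A}_2$ is the Euclidean triangular tiling, whose dual graph has girth $6$ (the shortest cycles wrap around a single vertex and require six chambers). The link of any vertex is the spherical building of type $A_2$ over $\mathbb{F}_q$ -- the incidence graph of $\mathbb{P}^2(\mathbb{F}_q)$, a generalised triangle of girth $6$. Both facts will produce the desired contradictions once the cycle is shown to lie in an appropriate substructure.

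The case $m=2$ is immediate: two distinct chambers share at most one codim-$1$ face, so two genuine adjacencies between them cannot occur. For $m=3$, the three pairwise-adjacent chambers $D_0,D_1,D_2$ must share a common vertex $v$: the edges $D_0 \cap D_1$ and $D_1 \cap D_2$ are distinct edges of the triangle $D_1$ and hence meet at some vertex $v \in D_1$, which then also lies on $D_0 \cap D_2$ because two triangles sharing more than a single vertex share an edge. Thus the whole cycle lies in the link of $v$, contradicting its girth. For $m=4$, minimality of the cycle rules out any chord, so $d(D_0,D_2)=2$ and both halves $D_0,D_1,D_2$ and $D_0,D_3,D_2$ are minimal galleries between $D_0$ and $D_2$; by Theorem~\ref{thm: minimal gallery} they lie in any common apartment $\mathcal{A}$ through $\{D_0,D_2\}$, so the full cycle sits in $\mathcal{A}$ -- contradicting the girth of the triangular tiling.

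The main obstacle is the case $m=5$: the odd length prevents a symmetric midpoint split, and the cycle cannot be confined to a single apartment -- a short check shows that if an apartment $\mathcal{A}$ contains $D_0,D_2$, every chamber adjacent to $D_2$ in $\mathcal{A}$ sits at distance $1$ or $3$ from $D_0$, whereas $d(D_0,D_3)=2$ by minimality. Here I would instead work directly with the Weyl distance, tracking $\delta(D_0,D_i)$ step by step via the building axiom that $\delta(D_0,D_{i+1}) \in \{\delta(D_0,D_i), \delta(D_0,D_i)s_{i+1}\}$, with the multiplication option forced whenever it strictly increases length. A case analysis on the three generators of $\widetilde{W}(\widetilde{A}_2)$, subject to the cycle's non-degeneracy conditions $s_i \neq s_{i+1}$ and $s_5 \neq s_1$, rules out $\delta(D_0,D_5)=e$ in every scenario; this reflects the fact that the shortest nontrivial relation in $\widetilde{W}(\widetilde{A}_2)$ with no two consecutive letters equal is the braid relation $(s_is_j)^3=e$, of length exactly $6$. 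The bookkeeping for this Weyl-distance analysis is routine but tedious, and represents the principal technical hurdle.
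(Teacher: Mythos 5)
Your $m\le 4$ cases match the paper's in all essentials; for $m=3$ the paper uses the slightly lighter observation that four pairwise-joined vertices would give a $3$-simplex (buildings are flag complexes), whereas you invoke the girth of the link of the shared vertex, but both work. For $m=5$ your route is genuinely different. The paper argues geometrically: it first shows all five chambers share a vertex $v$, then analyses chambers $D',E$ near the ``antipodal'' edge $e = D_2\cap D_3$ and eventually produces a forbidden $3$-simplex or a shorter cycle. You instead propose to track $\delta(D_0,D_i)$ via the $W$-metric axiom, but you leave this incomplete, flagging the case analysis as tedious. In fact it collapses to a short length count and subsumes $m=3,4$ as well. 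Granting $s_i\neq s_{i+1}$ (which minimality of the cycle forces, since a repeated type would let you cut across the panel and shorten the cycle; both you and the paper use this tacitly), the axiom forces $\delta(D_0,D_k)=s_1\cdots s_k$ of length exactly $k$ for $k\le 3$, because in $\widetilde{A}_2$ any word of length $\le 3$ with no two consecutive letters equal is reduced (all $m(s,t)=3$). Hence $\delta(D_0,D_4)\in\{s_1s_2s_3,\ s_1s_2s_3s_4\}$, whose elements have length $3$, or length $2$ or $4$; in every case $\ell(\delta(D_0,D_4))\ge 2$. But a cycle of length $5$ would force $\delta(D_0,D_4)=s_5$, of length $1$, giving the contradiction with no further cases. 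The same count disposes of $m=3,4$ outright, since there $\ell(\delta(D_0,D_{m-1}))=m-1\ge 2$. So your approach, once filled in, is correct and actually shorter and more uniform than the paper's; the only things to supply are the reducedness of $s_1s_2s_3$ and the observation that minimality enforces non-degenerate types, both immediate.
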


\begin{proof}
Clearly there can be no cycles in $\Delta$ of length 1 or 2, and a cycle of length 3 would constitute a 3-simplex, which cannot exist in the $\widetilde{A}_2$-building. Thus all cycles have length at least 4.\\

\noindent If $D=D_0\sim D_1\sim D_2\sim D_3\sim D_4=D$ is a cycle of length 4, then $d(D,D_2)=2$, because if $d(D,D_2)=0$ or 1, this would give a cycle of length 2 or 3. So choose an apartment $\mathcal{A}$ containing $D$ and $D_2$, and by Theorem \ref{thm: minimal gallery}, $\mathcal{A}$ must contain $D_1,D_2,D_3$, so this is a cycle of length 4 in the apartment, i.e. in the $\widetilde{A}_2$-Coxeter complex, which is impossible.\\

\noindent Therefore, suppose $D=D_0\sim D_1\sim D_2\sim D_3\sim D_4\sim D_5=D$ is a cycle of length 5, and since it is a cycle of minimal possible length, we must have that $d(D,D_1)=d(D,D_4)=1$ and $d(D,D_2)=d(D,D_3)=2$. In particular, $D_2$ and $D_3$ are not adjacent to $D$.\\ 

\noindent Note that $D_1$ and $D_2$ share a vertex $v_1$ in common with $D$, and similarly $D_3,D_4$ and $D$ have a common vertex $v_2$. If $v_1\neq v_2$, then these vertices are joined by an edge (in $D$), and this cannot be an edge of $D_2$ or $D_3$, since they are not adjacent to $D$ by assumption. Thus the vertices of $D_2$ and $D_3$ form a 3-simplex, which again is impossible. 

Therefore, $v_1=v_2=:v$ is a common edge shared by all chambers in the cycle, as illustrated below:\\

\begin{center}

\tikzset{every picture/.style={line width=0.75pt}} 

\begin{tikzpicture}[x=0.75pt,y=0.75pt,yscale=-1,xscale=1]

\draw  [color={rgb, 255:red, 0; green, 0; blue, 0 }  ,draw opacity=1 ] (290.95,310.98) -- (182.01,181.64) -- (316.71,190.37) -- cycle ;
\draw  [color={rgb, 255:red, 0; green, 0; blue, 0 }  ,draw opacity=1 ] (455,149.18) -- (415.29,272.87) -- (317.15,189.85) -- cycle ;
\draw  [color={rgb, 255:red, 0; green, 0; blue, 0 }  ,draw opacity=1 ] (258.64,78) -- (316.73,190.38) -- (181.03,181.53) -- cycle ;
\draw  [color={rgb, 255:red, 0; green, 0; blue, 0 }  ,draw opacity=1 ] (415.28,272.88) -- (290.76,310.94) -- (316.74,190.36) -- cycle ;
\draw    (258.63,78) -- (455,149.17) ;

\draw (325.94,239.7) node [anchor=north west][inner sep=0.75pt]   [align=left] {$\displaystyle D$};
\draw (255.65,222.7) node [anchor=north west][inner sep=0.75pt]   [align=left] {$\displaystyle D_{1}$};
\draw (245.34,133.48) node [anchor=north west][inner sep=0.75pt]   [align=left] {$\displaystyle D_{2}$};
\draw (323.41,136.31) node [anchor=north west][inner sep=0.75pt]   [align=left] {$\displaystyle D_{3}$};
\draw (389.7,192.96) node [anchor=north west][inner sep=0.75pt]   [align=left] {$\displaystyle D_{4}$};
\draw (316,198) node [anchor=north west][inner sep=0.75pt]   [align=left] {$\displaystyle v$};
\draw (345,268) node [anchor=north west][inner sep=0.75pt]   [align=left] {$\displaystyle e_{0}$};
\draw (291,119) node [anchor=north west][inner sep=0.75pt]   [align=left] {$\displaystyle e$};
\draw (304,246) node [anchor=north west][inner sep=0.75pt]   [align=left] {$\displaystyle e_{1}$};
\draw (355,228) node [anchor=north west][inner sep=0.75pt]   [align=left] {$\displaystyle e_{4}$};

\end{tikzpicture}

\end{center}

\noindent Let $e$ be the edge joining $D_2$ and $D_3$, and let $D'$ be a chamber adjacent to $e$ with $D'\neq D_2$ or $D_3$. Then $d(D,D')\leq 3$, and if $d(D,D')=3$ then $D\sim D_1\sim D_2\sim D'$ and $D\sim D_4\sim D_3\sim D'$ are minimal galleries, so fix any apartment $\mathcal{A}$ containing $D$ and $D'$, and it follows from Theorem \ref{thm: minimal gallery} that $\mathcal{A}$ contains $D,D_1,\dots,D_4$, so it contains a 5-cycle, which is impossible in the $\widetilde{A}_2$-Coxeter complex.\\

\noindent On the other hand, if $d(D,D')=0$ or 1, then this gives a 3 or 4-cycle, so we may assume that $d(D,D')=2$, so there exists a chamber $E$ such that $D\sim E\sim D'$. Let $e_1$ (resp. $e_4$) be the edge of $D$ adjacent to $D_1$ (resp. $D_4$). If $E$ is adjacent to $D$ via $e_1$ or $e_4$, then this gives a 4-cycle $E\sim D'\sim D_2\sim D_1\sim E$ or $E\sim D'\sim D_3\sim D_4\sim E$, which is impossible. So $E$ must be adjacent to $D$ via its third edge $e_0$.

\begin{center}

\tikzset{every picture/.style={line width=0.75pt}} 

\begin{tikzpicture}[x=0.75pt,y=0.75pt,yscale=-1,xscale=1]

\draw  [color={rgb, 255:red, 0; green, 0; blue, 0 }  ,draw opacity=1 ] (177.96,271.27) -- (90.82,169.67) -- (198.56,176.53) -- cycle ;
\draw  [color={rgb, 255:red, 0; green, 0; blue, 0 }  ,draw opacity=1 ] (309.2,144.17) -- (277.43,241.34) -- (198.92,176.13) -- cycle ;
\draw  [color={rgb, 255:red, 0; green, 0; blue, 0 }  ,draw opacity=1 ] (152.11,88.26) -- (198.59,176.54) -- (90.04,169.59) -- cycle ;
\draw  [color={rgb, 255:red, 0; green, 0; blue, 0 }  ,draw opacity=1 ] (277.43,241.34) -- (177.81,271.25) -- (198.59,176.52) -- cycle ;
\draw    (152.11,88.27) -- (309.2,144.17) ;
\draw   (249.57,331.27) -- (178.04,270.62) -- (278.3,241.09) -- cycle ;
\draw   (241.75,60.99) -- (198.76,176.5) -- (152.63,88.07) -- cycle ;
\draw  [dash pattern={on 0.84pt off 2.51pt}]  (241.75,60.99) -- (249.58,331.27) ;
\draw  [dash pattern={on 0.84pt off 2.51pt}]  (152.63,88.07) -- (177.96,271.27) ;
\draw  [color={rgb, 255:red, 0; green, 0; blue, 0 }  ,draw opacity=1 ] (446.76,266.56) -- (359.61,164.96) -- (467.36,171.81) -- cycle ;
\draw  [color={rgb, 255:red, 0; green, 0; blue, 0 }  ,draw opacity=1 ] (578,139.46) -- (546.23,236.62) -- (467.72,171.42) -- cycle ;
\draw  [color={rgb, 255:red, 0; green, 0; blue, 0 }  ,draw opacity=1 ] (420.91,83.55) -- (467.38,171.82) -- (358.83,164.88) -- cycle ;
\draw  [color={rgb, 255:red, 0; green, 0; blue, 0 }  ,draw opacity=1 ] (546.23,236.63) -- (446.61,266.53) -- (467.38,171.81) -- cycle ;
\draw    (420.91,83.55) -- (578,139.46) ;
\draw   (518.37,326.56) -- (446.84,265.9) -- (547.09,236.38) -- cycle ;
\draw   (510.55,56.28) -- (467.55,171.79) -- (421.42,83.36) -- cycle ;
\draw  [dash pattern={on 0.84pt off 2.51pt}]  (510.54,56.28) -- (446.84,265.9) ;
\draw  [dash pattern={on 0.84pt off 2.51pt}]  (421.43,83.36) -- (518.37,326.56) ;

\draw (204.45,213.35) node [anchor=north west][inner sep=0.75pt]   [align=left] {$\displaystyle D$};
\draw (147.53,200) node [anchor=north west][inner sep=0.75pt]   [align=left] {$\displaystyle D_{1}$};
\draw (139.28,129.91) node [anchor=north west][inner sep=0.75pt]   [align=left] {$\displaystyle D_{2}$};
\draw (201.73,132.14) node [anchor=north west][inner sep=0.75pt]   [align=left] {$\displaystyle D_{3}$};
\draw (254.76,176.64) node [anchor=north west][inner sep=0.75pt]   [align=left] {$\displaystyle D_{4}$};
\draw (197.01,180.6) node [anchor=north west][inner sep=0.75pt]   [align=left] {$\displaystyle v$};
\draw (227.95,270.69) node [anchor=north west][inner sep=0.75pt]   [align=left] {$\displaystyle E$};
\draw (219.6,235.58) node [anchor=north west][inner sep=0.75pt]   [align=left] {$\displaystyle e_{0}$};
\draw (177.01,118.54) node [anchor=north west][inner sep=0.75pt]   [align=left] {$\displaystyle e$};
\draw (186.81,218.3) node [anchor=north west][inner sep=0.75pt]   [align=left] {$\displaystyle e_{1}$};
\draw (227.6,204.16) node [anchor=north west][inner sep=0.75pt]   [align=left] {$\displaystyle e_{4}$};
\draw (205.6,76.12) node [anchor=north west][inner sep=0.75pt]   [align=left] {$\displaystyle D'$};
\draw (244.1,329.84) node [anchor=north west][inner sep=0.75pt]   [align=left] {$\displaystyle u$};
\draw (473.25,208.64) node [anchor=north west][inner sep=0.75pt]   [align=left] {$\displaystyle D$};
\draw (416.32,195.29) node [anchor=north west][inner sep=0.75pt]   [align=left] {$\displaystyle D_{1}$};
\draw (408.07,125.2) node [anchor=north west][inner sep=0.75pt]   [align=left] {$\displaystyle D_{2}$};
\draw (470.53,127.43) node [anchor=north west][inner sep=0.75pt]   [align=left] {$\displaystyle D_{3}$};
\draw (523.56,171.92) node [anchor=north west][inner sep=0.75pt]   [align=left] {$\displaystyle D_{4}$};
\draw (465.8,175.88) node [anchor=north west][inner sep=0.75pt]   [align=left] {$\displaystyle v$};
\draw (496.75,265.98) node [anchor=north west][inner sep=0.75pt]   [align=left] {$\displaystyle E$};
\draw (488.4,230.87) node [anchor=north west][inner sep=0.75pt]   [align=left] {$\displaystyle e_{0}$};
\draw (445.8,113.83) node [anchor=north west][inner sep=0.75pt]   [align=left] {$\displaystyle e$};
\draw (455.6,213.59) node [anchor=north west][inner sep=0.75pt]   [align=left] {$\displaystyle e_{1}$};
\draw (496.4,199.45) node [anchor=north west][inner sep=0.75pt]   [align=left] {$\displaystyle e_{4}$};
\draw (474.4,71.41) node [anchor=north west][inner sep=0.75pt]   [align=left] {$\displaystyle D'$};
\draw (512.9,325.13) node [anchor=north west][inner sep=0.75pt]   [align=left] {$\displaystyle u$};

\end{tikzpicture}

\end{center}

\noindent In particular, $E$ does not contain $v$, so $E$ is adjacent to $D'$ via the edge of not containing $v$, and thus $D'$ contains an edge connecting $v$ to the edge of $E$ outside $e_0$ (as shown in the diagram, where the dotted line indicates that we identify the vertices). This results in a 3-simplex consisting of the vertices of $D$ and $E$, or else a 3-cycle $D_1\sim D_2\sim D'\sim D_1$, a contradiction in both cases.\end{proof}

\noindent\textbf{Remark:} There are cycles of higher length in the $\widetilde{A}_2$-Bruhat-Tits building.  Indeed, every apartment is composed of hexagonal arrangements of chambers, which are 6-cycles, and these are the only examples of 6-cycles in the building. However, as we will see later, there are examples of cycles of higher length that are not contained in apartments.\\

\noindent Now, for each $n\in\mathbb{N}$, define the following set of vertices in $\Delta$:  $$\Delta_n:=\left\{v\in V(\Delta):v\in D\text{ for some chamber }D\text{ of }\Delta\text{ with }d(C,D)\leq n\right\}$$

\noindent\textbf{Note:} 1. We define this region as a set of vertices, rather than chambers, since we can find chambers $D$ such that $d(D,C)>n$ but all vertices of $D$ lie in $\Delta_n$.\\

\noindent 2. For convenience, we let $\Delta_{-1}:=\varnothing$, so we may always refer to $\Delta_{n-1}$ for any $n\in\mathbb{N}$.\\

\noindent If $\Delta$ is the $\widetilde{A}_1$-building, i.e. the infinite tree where every vertex has degree $q+1$, then we can realise the regions $\Delta_n$ explicitly for any $n$, as illustrated below when $q=2$:

\begin{center}

\tikzset{every picture/.style={line width=0.75pt}} 

\begin{tikzpicture}[x=0.75pt,y=0.75pt,yscale=-1,xscale=1]

\draw    (254.55,190.6) -- (315.64,191.2) ;
\draw  [fill={rgb, 255:red, 0; green, 0; blue, 0 }  ,fill opacity=1 ] (254.55,190.6) .. controls (254.55,189.76) and (253.94,189.08) .. (253.19,189.08) .. controls (252.44,189.08) and (251.83,189.76) .. (251.83,190.6) .. controls (251.83,191.43) and (252.44,192.11) .. (253.19,192.11) .. controls (253.94,192.11) and (254.55,191.43) .. (254.55,190.6) -- cycle ;
\draw  [fill={rgb, 255:red, 0; green, 0; blue, 0 }  ,fill opacity=1 ] (318.36,191.2) .. controls (318.36,190.36) and (317.75,189.69) .. (317,189.69) .. controls (316.25,189.69) and (315.64,190.36) .. (315.64,191.2) .. controls (315.64,192.04) and (316.25,192.72) .. (317,192.72) .. controls (317.75,192.72) and (318.36,192.04) .. (318.36,191.2) -- cycle ;
\draw    (211.1,152.78) -- (253.19,190.6) ;
\draw    (253.19,190.6) -- (214.72,228.61) ;
\draw    (358.18,154.7) -- (318.36,191.2) ;
\draw    (318.36,191.2) -- (360.45,229.01) ;
\draw  [fill={rgb, 255:red, 0; green, 0; blue, 0 }  ,fill opacity=1 ] (361.81,229.01) .. controls (361.81,228.18) and (361.2,227.5) .. (360.45,227.5) .. controls (359.7,227.5) and (359.09,228.18) .. (359.09,229.01) .. controls (359.09,229.85) and (359.7,230.53) .. (360.45,230.53) .. controls (361.2,230.53) and (361.81,229.85) .. (361.81,229.01) -- cycle ;
\draw  [fill={rgb, 255:red, 0; green, 0; blue, 0 }  ,fill opacity=1 ] (359.54,154.7) .. controls (359.54,153.87) and (358.93,153.19) .. (358.18,153.19) .. controls (357.43,153.19) and (356.83,153.87) .. (356.83,154.7) .. controls (356.83,155.54) and (357.43,156.22) .. (358.18,156.22) .. controls (358.93,156.22) and (359.54,155.54) .. (359.54,154.7) -- cycle ;
\draw  [fill={rgb, 255:red, 0; green, 0; blue, 0 }  ,fill opacity=1 ] (216.08,228.61) .. controls (216.08,227.77) and (215.47,227.09) .. (214.72,227.09) .. controls (213.97,227.09) and (213.36,227.77) .. (213.36,228.61) .. controls (213.36,229.45) and (213.97,230.13) .. (214.72,230.13) .. controls (215.47,230.13) and (216.08,229.45) .. (216.08,228.61) -- cycle ;
\draw  [fill={rgb, 255:red, 0; green, 0; blue, 0 }  ,fill opacity=1 ] (212.46,152.78) .. controls (212.46,151.95) and (211.85,151.27) .. (211.1,151.27) .. controls (210.35,151.27) and (209.74,151.95) .. (209.74,152.78) .. controls (209.74,153.62) and (210.35,154.3) .. (211.1,154.3) .. controls (211.85,154.3) and (212.46,153.62) .. (212.46,152.78) -- cycle ;
\draw    (210.16,106.87) -- (211.1,152.78) ;
\draw    (211.1,152.78) -- (166.88,132.55) ;
\draw    (214.72,228.61) -- (161.68,242.09) ;
\draw    (214.72,228.61) -- (205.96,272.76) ;
\draw    (363.39,107.74) -- (358.18,154.7) ;
\draw    (402.69,134.04) -- (358.18,154.7) ;
\draw    (418.49,242.89) -- (360.45,229.01) ;
\draw    (360.45,229.01) -- (378.5,273.19) ;
\draw  [fill={rgb, 255:red, 0; green, 0; blue, 0 }  ,fill opacity=1 ] (167.04,132.41) .. controls (167.04,131.58) and (166.43,130.9) .. (165.68,130.9) .. controls (164.93,130.9) and (164.33,131.58) .. (164.33,132.41) .. controls (164.33,133.25) and (164.93,133.93) .. (165.68,133.93) .. controls (166.43,133.93) and (167.04,133.25) .. (167.04,132.41) -- cycle ;
\draw  [fill={rgb, 255:red, 0; green, 0; blue, 0 }  ,fill opacity=1 ] (211.52,106.87) .. controls (211.52,106.03) and (210.91,105.35) .. (210.16,105.35) .. controls (209.41,105.35) and (208.8,106.03) .. (208.8,106.87) .. controls (208.8,107.71) and (209.41,108.39) .. (210.16,108.39) .. controls (210.91,108.39) and (211.52,107.71) .. (211.52,106.87) -- cycle ;
\draw  [fill={rgb, 255:red, 0; green, 0; blue, 0 }  ,fill opacity=1 ] (364.74,107.74) .. controls (364.74,106.9) and (364.14,106.22) .. (363.39,106.22) .. controls (362.64,106.22) and (362.03,106.9) .. (362.03,107.74) .. controls (362.03,108.57) and (362.64,109.25) .. (363.39,109.25) .. controls (364.14,109.25) and (364.74,108.57) .. (364.74,107.74) -- cycle ;
\draw  [fill={rgb, 255:red, 0; green, 0; blue, 0 }  ,fill opacity=1 ] (404.71,133.75) .. controls (404.71,132.91) and (404.1,132.23) .. (403.35,132.23) .. controls (402.6,132.23) and (401.99,132.91) .. (401.99,133.75) .. controls (401.99,134.58) and (402.6,135.26) .. (403.35,135.26) .. controls (404.1,135.26) and (404.71,134.58) .. (404.71,133.75) -- cycle ;
\draw  [fill={rgb, 255:red, 0; green, 0; blue, 0 }  ,fill opacity=1 ] (419.56,242.18) .. controls (419.56,241.34) and (418.95,240.66) .. (418.2,240.66) .. controls (417.45,240.66) and (416.84,241.34) .. (416.84,242.18) .. controls (416.84,243.01) and (417.45,243.69) .. (418.2,243.69) .. controls (418.95,243.69) and (419.56,243.01) .. (419.56,242.18) -- cycle ;
\draw  [fill={rgb, 255:red, 0; green, 0; blue, 0 }  ,fill opacity=1 ] (379.86,273.19) .. controls (379.86,272.36) and (379.25,271.68) .. (378.5,271.68) .. controls (377.75,271.68) and (377.14,272.36) .. (377.14,273.19) .. controls (377.14,274.03) and (377.75,274.71) .. (378.5,274.71) .. controls (379.25,274.71) and (379.86,274.03) .. (379.86,273.19) -- cycle ;
\draw  [fill={rgb, 255:red, 0; green, 0; blue, 0 }  ,fill opacity=1 ] (161.68,242.09) .. controls (161.68,241.25) and (161.08,240.58) .. (160.33,240.58) .. controls (159.58,240.58) and (158.97,241.25) .. (158.97,242.09) .. controls (158.97,242.93) and (159.58,243.61) .. (160.33,243.61) .. controls (161.08,243.61) and (161.68,242.93) .. (161.68,242.09) -- cycle ;
\draw  [fill={rgb, 255:red, 0; green, 0; blue, 0 }  ,fill opacity=1 ] (207.32,274.28) .. controls (207.32,273.44) and (206.71,272.76) .. (205.96,272.76) .. controls (205.21,272.76) and (204.61,273.44) .. (204.61,274.28) .. controls (204.61,275.12) and (205.21,275.79) .. (205.96,275.79) .. controls (206.71,275.79) and (207.32,275.12) .. (207.32,274.28) -- cycle ;
\draw    (223.49,71.86) -- (209.86,108.2) ;
\draw  [fill={rgb, 255:red, 0; green, 0; blue, 0 }  ,fill opacity=1 ] (224.62,72.27) .. controls (224.89,71.61) and (224.59,70.89) .. (223.97,70.67) .. controls (223.34,70.44) and (222.63,70.79) .. (222.36,71.45) .. controls (222.1,72.11) and (222.39,72.83) .. (223.02,73.05) .. controls (223.64,73.27) and (224.36,72.92) .. (224.62,72.27) -- cycle ;
\draw    (187.58,75.9) -- (210.92,106.85) ;
\draw  [fill={rgb, 255:red, 0; green, 0; blue, 0 }  ,fill opacity=1 ] (188.53,75.15) .. controls (188.11,74.58) and (187.35,74.44) .. (186.83,74.86) .. controls (186.31,75.27) and (186.22,76.07) .. (186.64,76.64) .. controls (187.05,77.22) and (187.81,77.35) .. (188.33,76.94) .. controls (188.85,76.52) and (188.94,75.72) .. (188.53,75.15) -- cycle ;
\draw    (150.43,96.73) -- (165.68,132.41) ;
\draw  [fill={rgb, 255:red, 0; green, 0; blue, 0 }  ,fill opacity=1 ] (151.52,96.23) .. controls (151.26,95.58) and (150.56,95.27) .. (149.95,95.54) .. controls (149.35,95.81) and (149.07,96.57) .. (149.34,97.23) .. controls (149.6,97.88) and (150.3,98.19) .. (150.91,97.92) .. controls (151.51,97.65) and (151.79,96.89) .. (151.52,96.23) -- cycle ;
\draw    (128.99,140.03) -- (166.88,132.55) ;
\draw  [fill={rgb, 255:red, 0; green, 0; blue, 0 }  ,fill opacity=1 ] (128.73,138.85) .. controls (128.04,138.97) and (127.59,139.6) .. (127.73,140.25) .. controls (127.87,140.9) and (128.54,141.32) .. (129.24,141.2) .. controls (129.93,141.08) and (130.38,140.46) .. (130.24,139.81) .. controls (130.1,139.16) and (129.43,138.73) .. (128.73,138.85) -- cycle ;
\draw    (121.78,239.77) -- (160.33,242.09) ;
\draw  [fill={rgb, 255:red, 0; green, 0; blue, 0 }  ,fill opacity=1 ] (121.83,238.57) .. controls (121.12,238.51) and (120.53,239) .. (120.5,239.67) .. controls (120.47,240.33) and (121.02,240.92) .. (121.72,240.98) .. controls (122.43,241.03) and (123.02,240.54) .. (123.05,239.88) .. controls (123.08,239.21) and (122.53,238.62) .. (121.83,238.57) -- cycle ;
\draw    (138.22,279.53) -- (160.08,241.44) ;
\draw  [fill={rgb, 255:red, 0; green, 0; blue, 0 }  ,fill opacity=1 ] (139.58,279.53) .. controls (139.58,278.69) and (138.97,278.01) .. (138.22,278.01) .. controls (137.47,278.01) and (136.86,278.69) .. (136.86,279.53) .. controls (136.86,280.37) and (137.47,281.05) .. (138.22,281.05) .. controls (138.97,281.05) and (139.58,280.37) .. (139.58,279.53) -- cycle ;
\draw    (176.33,306.14) -- (206.32,274.13) ;
\draw  [fill={rgb, 255:red, 0; green, 0; blue, 0 }  ,fill opacity=1 ] (177.65,306.45) .. controls (177.84,305.64) and (177.41,304.83) .. (176.68,304.66) .. controls (175.95,304.49) and (175.2,305.01) .. (175.01,305.82) .. controls (174.82,306.64) and (175.25,307.44) .. (175.98,307.61) .. controls (176.71,307.79) and (177.46,307.27) .. (177.65,306.45) -- cycle ;
\draw    (231.23,307.21) -- (206.45,274.98) ;
\draw  [fill={rgb, 255:red, 0; green, 0; blue, 0 }  ,fill opacity=1 ] (231.55,305.89) .. controls (230.89,305.65) and (230.21,306.05) .. (230.03,306.78) .. controls (229.86,307.51) and (230.25,308.29) .. (230.91,308.53) .. controls (231.56,308.77) and (232.24,308.38) .. (232.42,307.65) .. controls (232.59,306.92) and (232.2,306.13) .. (231.55,305.89) -- cycle ;
\draw    (442.98,274.4) -- (418.2,242.18) ;
\draw  [fill={rgb, 255:red, 0; green, 0; blue, 0 }  ,fill opacity=1 ] (443.3,273.08) .. controls (442.64,272.84) and (441.96,273.24) .. (441.79,273.97) .. controls (441.61,274.7) and (442,275.48) .. (442.66,275.72) .. controls (443.32,275.97) and (443.99,275.57) .. (444.17,274.84) .. controls (444.35,274.11) and (443.96,273.32) .. (443.3,273.08) -- cycle ;
\draw    (366.47,312.08) -- (378.03,273.03) ;
\draw  [fill={rgb, 255:red, 0; green, 0; blue, 0 }  ,fill opacity=1 ] (367.72,311.57) .. controls (367.54,310.89) and (366.82,310.57) .. (366.13,310.85) .. controls (365.44,311.13) and (365.03,311.91) .. (365.22,312.59) .. controls (365.41,313.26) and (366.12,313.59) .. (366.82,313.3) .. controls (367.51,313.02) and (367.91,312.24) .. (367.72,311.57) -- cycle ;
\draw    (403.72,306.1) -- (378.94,273.87) ;
\draw  [fill={rgb, 255:red, 0; green, 0; blue, 0 }  ,fill opacity=1 ] (404.04,304.78) .. controls (403.39,304.54) and (402.71,304.93) .. (402.53,305.66) .. controls (402.36,306.39) and (402.75,307.18) .. (403.41,307.42) .. controls (404.06,307.66) and (404.74,307.27) .. (404.92,306.54) .. controls (405.09,305.81) and (404.7,305.02) .. (404.04,304.78) -- cycle ;
\draw    (457.61,232.33) -- (418.49,242.89) ;
\draw  [fill={rgb, 255:red, 0; green, 0; blue, 0 }  ,fill opacity=1 ] (456.52,231.53) .. controls (456.05,232.05) and (456.15,232.83) .. (456.75,233.27) .. controls (457.35,233.72) and (458.23,233.66) .. (458.7,233.14) .. controls (459.18,232.62) and (459.07,231.84) .. (458.47,231.4) .. controls (457.87,230.95) and (457,231.01) .. (456.52,231.53) -- cycle ;
\draw    (443.78,136.29) -- (403.35,133.75) ;
\draw  [fill={rgb, 255:red, 0; green, 0; blue, 0 }  ,fill opacity=1 ] (443,135.17) .. controls (442.39,135.51) and (442.24,136.28) .. (442.67,136.9) .. controls (443.1,137.51) and (443.94,137.74) .. (444.56,137.4) .. controls (445.17,137.06) and (445.32,136.29) .. (444.89,135.67) .. controls (444.46,135.06) and (443.62,134.84) .. (443,135.17) -- cycle ;
\draw    (424.55,99.74) -- (402.69,134.04) ;
\draw  [fill={rgb, 255:red, 0; green, 0; blue, 0 }  ,fill opacity=1 ] (423.21,99.88) .. controls (423.2,100.59) and (423.8,101.09) .. (424.54,101.02) .. controls (425.29,100.94) and (425.89,100.3) .. (425.9,99.6) .. controls (425.9,98.9) and (425.3,98.39) .. (424.56,98.47) .. controls (423.82,98.54) and (423.21,99.18) .. (423.21,99.88) -- cycle ;
\draw    (391.46,77.53) -- (363.62,107.12) ;
\draw  [fill={rgb, 255:red, 0; green, 0; blue, 0 }  ,fill opacity=1 ] (390.12,77.42) .. controls (389.98,78.11) and (390.47,78.72) .. (391.22,78.78) .. controls (391.96,78.84) and (392.67,78.33) .. (392.81,77.64) .. controls (392.95,76.95) and (392.45,76.34) .. (391.71,76.28) .. controls (390.96,76.21) and (390.25,76.72) .. (390.12,77.42) -- cycle ;
\draw    (341.25,73.51) -- (363.04,107.86) ;
\draw  [fill={rgb, 255:red, 0; green, 0; blue, 0 }  ,fill opacity=1 ] (340.81,74.8) .. controls (341.44,75.1) and (342.15,74.77) .. (342.39,74.06) .. controls (342.64,73.35) and (342.32,72.53) .. (341.68,72.23) .. controls (341.05,71.93) and (340.34,72.26) .. (340.1,72.97) .. controls (339.86,73.68) and (340.18,74.5) .. (340.81,74.8) -- cycle ;
\draw  [dash pattern={on 0.84pt off 2.51pt}] (253.25,190.54) .. controls (252.82,180) and (266.73,171.46) .. (284.31,171.46) .. controls (301.89,171.46) and (316.48,180) .. (316.91,190.54) .. controls (317.34,201.08) and (303.43,209.62) .. (285.85,209.62) .. controls (268.27,209.62) and (253.68,201.08) .. (253.25,190.54) -- cycle ;
\draw  [dash pattern={on 0.84pt off 2.51pt}] (183.26,190.9) .. controls (182.05,160.96) and (226.73,136.68) .. (283.06,136.68) .. controls (339.39,136.68) and (386.03,160.96) .. (387.25,190.9) .. controls (388.46,220.84) and (343.78,245.12) .. (287.45,245.12) .. controls (231.12,245.12) and (184.48,220.84) .. (183.26,190.9) -- cycle ;
\draw  [dash pattern={on 0.84pt off 2.51pt}] (137.87,192.66) .. controls (135.65,137.84) and (200.15,93.4) .. (281.93,93.4) .. controls (363.72,93.4) and (431.82,137.84) .. (434.04,192.66) .. controls (436.26,247.48) and (371.76,291.92) .. (289.98,291.92) .. controls (208.19,291.92) and (140.09,247.48) .. (137.87,192.66) -- cycle ;

\draw (277.65,174.71) node [anchor=north west][inner sep=0.75pt]   [align=left] {$\displaystyle C$};
\draw (126.49,320.14) node [anchor=north west][inner sep=0.75pt]   [align=left] {\textbf{Figure 1: }The region $\displaystyle \Delta _{3}$ of the Bruhat-Tits tree };

\end{tikzpicture}

\end{center} 

\noindent In this paper, however, we are particularly interested in the case where $\Delta$ is the $\widetilde{A}_2$-building. Techniques were developed for visualising $\Delta$ in \cite{gallery}, and using this visualisation we can construct an image of $\Delta_n$ in this building, as illustrated below in Figure 2, an image available in \cite{link} (again when $q=2$).\\

\begin{figure}[h]
\centering
{\includegraphics[scale=.8]{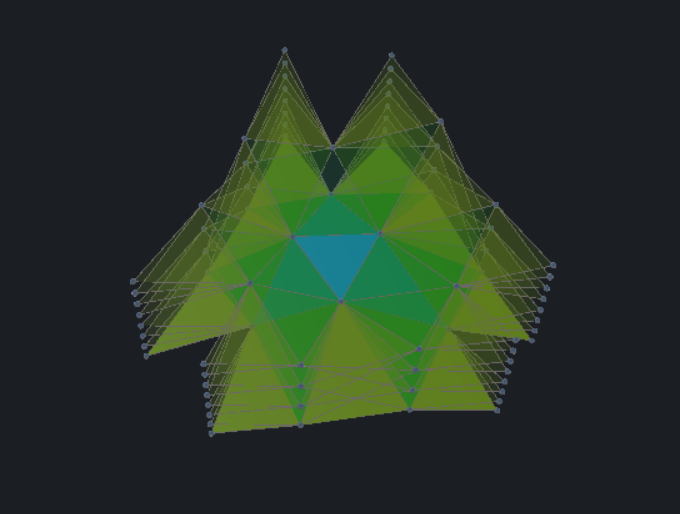}}\\
\textbf{Figure 2:} The region $\Delta_3$ of the $\widetilde{A}_2$-Bruhat-Tits building, where $C$ is the blue chamber.
\end{figure}

\noindent The apartments in $\Delta$ containing $C$ are very visible in this image, since they are all isomorphic to the $\widetilde{A}_2$ Coxeter complex, which is a tiling of the Euclidean plane by 2-simplices. The chambers of this complex are in bijection with elements of the affine Coxeter group $\widetilde{W}$; which in type $\widetilde{A}_2$ we can realise as $$\widetilde{W}:=\langle s_0,s_1,s_2|s_0^2=s_1^2=s_2^2=(s_0s_1)^3=(s_0s_2)^3=(s_1s_2)^3\rangle$$

\begin{center}

\tikzset{every picture/.style={line width=0.75pt}} 

\begin{tikzpicture}[x=0.75pt,y=0.75pt,yscale=-1,xscale=1]

\draw  [color={rgb, 255:red, 0; green, 0; blue, 0 }  ,draw opacity=1 ] (281.9,211.87) -- (346.13,310.27) -- (219.71,311.57) -- cycle ;
\draw  [color={rgb, 255:red, 0; green, 0; blue, 0 }  ,draw opacity=1 ] (150.68,212.89) -- (281.8,211.49) -- (219.34,311.02) -- cycle ;
\draw  [color={rgb, 255:red, 0; green, 0; blue, 0 }  ,draw opacity=1 ] (409.09,210.18) -- (346.03,310.35) -- (281.79,211.47) -- cycle ;
\draw  [color={rgb, 255:red, 0; green, 0; blue, 0 }  ,draw opacity=1 ] (215.22,113.16) -- (281.8,211.49) -- (150.67,212.84) -- cycle ;
\draw  [color={rgb, 255:red, 0; green, 0; blue, 0 }  ,draw opacity=1 ] (344.42,111.83) -- (409.09,210.17) -- (281.8,211.48) -- cycle ;
\draw    (215.22,113.16) -- (344.42,111.83) ;
\draw  [color={rgb, 255:red, 0; green, 0; blue, 0 }  ,draw opacity=1 ] (471.72,110.52) -- (536.38,208.86) -- (409.09,210.17) -- cycle ;
\draw    (342.51,111.85) -- (471.72,110.52) ;
\draw  [color={rgb, 255:red, 0; green, 0; blue, 0 }  ,draw opacity=1 ] (409.09,210.17) -- (473.32,308.58) -- (346.9,309.88) -- cycle ;
\draw  [color={rgb, 255:red, 0; green, 0; blue, 0 }  ,draw opacity=1 ] (536.39,208.86) -- (473.32,309.03) -- (409.08,210.16) -- cycle ;
\draw  [color={rgb, 255:red, 0; green, 0; blue, 0 }  ,draw opacity=1 ] (219.71,311.57) -- (346.64,309.46) -- (285.68,406.6) -- cycle ;
\draw    (285.69,406.6) -- (412.88,404.91) ;
\draw    (346.03,310.35) -- (412.88,404.91) ;
\draw    (412.88,404.91) -- (473.32,309.03) ;
\draw    (194.06,82.53) -- (271.23,195.74) ;
\draw    (131.12,185.18) -- (175.77,248.73) ;
\draw    (355.56,323.88) -- (426.6,425.15) ;
\draw    (266.11,377.8) -- (300.64,429.45) ;
\draw    (469.35,302.71) -- (518.12,377.21) ;
\draw    (120.43,312.3) -- (251.7,311.81) ;
\draw    (210.41,407.38) -- (296.67,406.35) ;
\draw    (388.4,405.54) -- (505.39,404.34) ;
\draw    (422.41,309.19) -- (533.39,307.05) ;
\draw    (118.41,213.31) -- (250.67,211.81) ;
\draw    (443.76,209.89) -- (556.37,207.8) ;
\draw    (182.1,377.67) -- (231.48,291.01) ;
\draw    (124.84,255.25) -- (160.25,197.88) ;
\draw    (334.53,127.08) -- (365.99,74.76) ;
\draw    (524.47,191.39) -- (551.62,231.85) ;
\draw    (203.57,131.43) -- (232.11,87.14) ;
\draw    (350.48,121.92) -- (324.1,85.19) ;
\draw    (268.67,432.78) -- (291.32,397.94) ;
\draw    (399.57,422.43) -- (412.88,404.91) ;
\draw    (536.38,208.86) -- (549.16,186.88) ;
\draw    (465.33,121.51) -- (478.1,99.53) ;
\draw  [dash pattern={on 0.84pt off 2.51pt}]  (386.27,439.95) -- (399.57,422.43) ;
\draw  [dash pattern={on 0.84pt off 2.51pt}]  (254.9,454.92) -- (268.67,432.78) ;
\draw  [dash pattern={on 0.84pt off 2.51pt}]  (151.64,429.98) -- (182.1,377.67) ;
\draw  [dash pattern={on 0.84pt off 2.51pt}]  (97.29,299.54) -- (124.84,255.25) ;
\draw  [dash pattern={on 0.84pt off 2.51pt}]  (174.41,407.75) -- (210.41,407.38) ;
\draw  [dash pattern={on 0.84pt off 2.51pt}]  (505.39,404.34) -- (530.39,404.08) ;
\draw  [dash pattern={on 0.84pt off 2.51pt}]  (518.12,377.21) -- (528.28,393.1) ;
\draw  [dash pattern={on 0.84pt off 2.51pt}]  (313.93,69.29) -- (316.96,74.03) -- (324.1,85.19) ;
\draw  [dash pattern={on 0.84pt off 2.51pt}]  (426.6,425.15) -- (436.77,441.05) ;
\draw  [dash pattern={on 0.84pt off 2.51pt}]  (300.64,429.45) -- (310.81,445.35) ;
\draw  [dash pattern={on 0.84pt off 2.51pt}]  (533.39,307.05) -- (558.39,306.79) ;
\draw  [dash pattern={on 0.84pt off 2.51pt}]  (95.43,312.56) -- (120.43,312.3) ;
\draw  [dash pattern={on 0.84pt off 2.51pt}]  (549.16,186.88) -- (559.96,167.76) ;
\draw  [dash pattern={on 0.84pt off 2.51pt}]  (478.1,99.53) -- (488.91,80.41) ;
\draw  [dash pattern={on 0.84pt off 2.51pt}]  (365.99,74.76) -- (376.8,55.64) ;
\draw  [dash pattern={on 0.84pt off 2.51pt}]  (232.11,87.14) -- (242.92,68.02) ;
\draw  [dash pattern={on 0.84pt off 2.51pt}]  (93.41,213.57) -- (118.41,213.31) ;
\draw  [dash pattern={on 0.84pt off 2.51pt}]  (556.37,207.8) -- (581.37,207.55) ;
\draw  [dash pattern={on 0.84pt off 2.51pt}]  (551.62,231.85) -- (561.78,247.75) ;
\draw  [dash pattern={on 0.84pt off 2.51pt}]  (120.95,169.29) -- (123.98,174.02) -- (131.12,185.18) ;
\draw  [dash pattern={on 0.84pt off 2.51pt}]  (183.9,66.63) -- (186.93,71.36) -- (194.06,82.53) ;

\draw (154.49,463.14) node [anchor=north west][inner sep=0.75pt]   [align=left] {\textbf{Figure 3: }An apartment of the $\displaystyle \tilde{A}_{2}$ building containing $\displaystyle C$};
\draw (338.67,237.45) node [anchor=north west][inner sep=0.75pt]  [rotate=-359.41] [align=left] {$\displaystyle C$};
\draw (273.67,264.45) node [anchor=north west][inner sep=0.75pt]  [rotate=-359.41] [align=left] {$\displaystyle s_{0}$};
\draw (338.67,162.45) node [anchor=north west][inner sep=0.75pt]  [rotate=-359.41] [align=left] {$\displaystyle s_{1}$};
\draw (403.67,262.45) node [anchor=north west][inner sep=0.75pt]  [rotate=-359.41] [align=left] {$\displaystyle s_{2}$};
\draw (393.67,138.45) node [anchor=north west][inner sep=0.75pt]  [rotate=-359.41] [align=left] {$\displaystyle s_{2} s_{1}$};
\draw (270.67,137.45) node [anchor=north west][inner sep=0.75pt]  [rotate=-359.41] [align=left] {$\displaystyle s_{0} s_{1}$};
\draw (205.67,240.45) node [anchor=north west][inner sep=0.75pt]  [rotate=-359.41] [align=left] {$\displaystyle s_{1} s_{0}$};
\draw (456.67,232.45) node [anchor=north west][inner sep=0.75pt]  [rotate=-359.41] [align=left] {$\displaystyle s_{1} s_{2}$};
\draw (267.67,334.45) node [anchor=north west][inner sep=0.75pt]  [rotate=-359.41] [align=left] {$\displaystyle s_{2} s_{0}$};
\draw (452.67,168.45) node [anchor=north west][inner sep=0.75pt]  [rotate=-359.41] [align=left] {$\displaystyle s_{2} s_{1} s_{2}$};
\draw (397.67,332.45) node [anchor=north west][inner sep=0.75pt]  [rotate=-359.41] [align=left] {$\displaystyle s_{0} s_{2}$};
\draw (324.67,360.45) node [anchor=north west][inner sep=0.75pt]  [rotate=-359.41] [align=left] {$\displaystyle s_{2} s_{0} s_{2}$};
\draw (194.67,172.45) node [anchor=north west][inner sep=0.75pt]  [rotate=-359.41] [align=left] {$\displaystyle s_{1} s_{0} s_{1}$};

\end{tikzpicture}

\end{center}

\noindent More generally, for each element $w\in\widetilde{W}$, as in \cite{gallery}, we define the \emph{$w$-sphere} in $\Delta$ as the set of all chambers $D$ in $\Delta$ such that $\delta(D,C)=w$, i.e. in any apartment containing $C$ and $D$ where $C$ corresponds with the identity, $D$ corresponds with $w$. Denote the $w$-sphere by $\mathcal{C}_w$.\\

\noindent Observation of Figure 2 shows that the vertices in the $\widetilde{A}_2$ building that lie on the boundary of $\Delta_{n}$ (i.e. outside $\Delta_{n-1}$) are connected to $\Delta_{n-1}$ via a single chamber, and no other edge joins them to this region. In other words, they can be regarded as an isolated peak of the jagged surface. This prompts the following definition, which we state in full generality:

\begin{definition}\label{defn: summit and peak}
For each $n>0$ and each vertex $v\in \Delta_{n}\backslash \Delta_{n-1}$, we say $v$ is a \emph{peak} of $\Delta_{n}$ if 
\begin{itemize}
\item there is a unique chamber $D_v$ in $\Delta$ containing $v$ with $d(D_v,C)=n$,

\item $F_v:=D_v\backslash\{v\}$ is contained in $\Delta_{n-1}$, and the vertices of $F_v$ are the only vertices in $\Delta_{n-1}$ that are joined by an edge to $v$.

\end{itemize}

\noindent We call $D_v$ the \emph{summit} of $\Delta_{n}$ at $v$, and we call the codimension 1 facet $F_v$ the \emph{base} of the summit.
\end{definition}

\noindent\textbf{Remark:} If $n=0$ then $\Delta_0=C$, and we say that every vertex of $C$ is a peak of $\Delta_0$, with summit $C$.\\

\noindent\textbf{Example:} 1. If $\Delta$ is the $\widetilde{A}_1$ tree, then clearly every vertex in $v\in\Delta_{n}\backslash\Delta_{n-1}$ is a peak of $\Delta_{n}$, and the summit at $v$ is the unique edge adjacent to $v$ that belongs to a path beginning at $v$ and ending at $C$.\\

\noindent 2. If $\Delta$ has rank 2 and $v\in\Delta_{n}\backslash\Delta_{n-1}$ is a peak of $\Delta_{n}$, then the base $F_v$ of the $D_v$ is an edge, and we call it $b_v$. \\

\noindent Figure 2 and intuition suggest that every vertex in $\Delta_{n}\backslash\Delta_{n-1}$ is a peak of $\Delta_{n}$ in the $\widetilde{A}_2$ building. This is true, in fact the following theorem gives us something even stronger.

\begin{theorem}\label{thm: peak}
Suppose $\Delta$ is the $\widetilde{A}_2$ building and $n>0$. Then: 
\begin{enumerate}
\item If $v\in\Delta_{n}\backslash\Delta_{n-1}$, then $v$ is a peak of $\Delta_{n}$.

\item If $u,v\in\Delta_{n}\backslash\Delta_{n-1}$ are joined by an edge $e$, then there is a unique chamber $E$ with $d(E,C)=n+1$, adjacent to $e$, and to the summits $D_u$ and $D_v$.
\end{enumerate}
\end{theorem}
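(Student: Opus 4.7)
My plan is to split the theorem into its two assertions and reduce each to a combinatorial fact about peaks in a single apartment, transferring back to $\Delta$ via the apartment-inclusion property in Lemma \ref{lem: chamber for facet}. For part 1, the summit $D_v$ comes from Lemma \ref{lem: chamber for facet} applied to $\{v\}$, since $v\in\Delta_n\setminus\Delta_{n-1}$ forces the minimum distance of chambers containing $v$ to equal $n$. For $F_v\subset\Delta_{n-1}$, I would take a minimal gallery from $D_v$ to $C$: the penultimate chamber is at distance $n-1$ and shares a codimension 1 face with $D_v$, which cannot contain $v$ (else $v\in\Delta_{n-1}$), so it must be $F_v$. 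The hard step of part 1 is showing $v$ has no other $\Delta_{n-1}$-neighbour, which I would handle by contradiction: assume $w\in\Delta_{n-1}$ is adjacent to $v$ with $w\notin F_v$, set $e=\{v,w\}$, and pick an apartment $\mathcal{A}$ containing $C$ and any chamber of $\Delta$ that contains $e$. Then $v,w\in\mathcal{A}$, and Lemma \ref{lem: chamber for facet} applied to the vertices $\{v\}$ and $\{w\}$ forces $D_v,D_w\in\mathcal{A}$. I would then invoke the purely Coxeter-complex fact that in the $\widetilde{A}_2$ Coxeter complex a peak of $\Delta_n$ has no neighbour in $\Delta_{n-1}$ outside its base, giving that every chamber of $\mathcal{A}$ containing $w$ has distance at least $n$ from $C$; this contradicts $D_w\in\mathcal{A}$ being a chamber at distance at most $n-1$ that contains $w$.

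For part 2, part 1 supplies summits $D_u,D_v$ with bases $F_u,F_v\subset\Delta_{n-1}$, and since $u,v\notin\Delta_{n-1}$ we have $u\notin F_v$ and $v\notin F_u$, so neither $D_u$ nor $D_v$ contains $e=\{u,v\}$; in particular every chamber containing $e$ has distance at least $n+1$ from $C$. Let $C(e)$ be the minimum-distance chamber containing $e$ (from Corollary \ref{cor: adjacent to facet}), and pick an apartment $\mathcal{A}$ containing $C$ and $C(e)$. Since $u,v\in C(e)\subset\mathcal{A}$, Lemma \ref{lem: chamber for facet} puts $D_u,D_v\in\mathcal{A}$, and hence $u,v,F_u,F_v,e$ all sit in $\mathcal{A}$. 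The plan is then to use the Coxeter-complex fact that two adjacent peaks of $\Delta_n\cap\mathcal{A}$ are separated by a unique ``inward'' chamber at distance $n+1$; by the 3-colouring of vertices by type this chamber is of the form $\{u,v,x\}$, where $x$ is the unique common vertex in $F_u\cap F_v$ of the type different from both $u$ and $v$, and it is adjacent to $D_u$ via $\{u,x\}$ and to $D_v$ via $\{v,x\}$. This identifies $C(e)=\{u,v,x\}$ at distance $n+1$, and uniqueness of $E:=C(e)$ follows from Corollary \ref{cor: adjacent to facet}, since only one chamber of $\Delta$ containing $e$ has distance $d_e=n+1$ from $C$.

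The hard part will be the two Coxeter-complex inputs invoked above: that in the $\widetilde{A}_2$ Coxeter complex a peak of $\Delta_n$ has only its base as $\Delta_{n-1}$-neighbour, and that adjacent peaks of $\Delta_n$ are separated by an inward chamber at distance $n+1$. Both are visually clear from the hexagonal picture of $\Delta_n\cap\mathcal{A}$ suggested by Figure 3, but a clean combinatorial proof will require working through the distance pattern of the ring of chambers around a boundary vertex and separating the ``flat'' and ``corner'' cases on the hexagonal boundary. Once those Coxeter-complex facts are in hand, the transfer back to the full building through the apartment-inclusion step is uniform and essentially automatic.
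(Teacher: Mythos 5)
Your proof is correct and follows essentially the same approach as the paper: both rest on Lemma \ref{lem: chamber for facet} to pin down the summit $C(v)$ and to pull the relevant summits into a common apartment, and both reduce the remaining verification to a combinatorial fact about peaks in the $\widetilde{A}_2$ Coxeter complex read off from the hexagonal diagram. Your variations are purely cosmetic — in part 1 you take a fresh apartment containing $C$ and the offending edge rather than conjugating the edge into the standard apartment via the Iwahori action (Lemma \ref{lem: face transitivity}), and in part 2 you get uniqueness directly from Corollary \ref{cor: adjacent to facet} once $d_e=n+1$ is established instead of the paper's three-simplex contradiction — but these do not change the structure of the argument.
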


\begin{proof} $ $

\begin{enumerate}

\item For every vertex $v\in\Delta_{n}\backslash\Delta_{n-1}$, we know by Lemma \ref{lem: chamber for facet} that there exists a unique chamber $C(v)$ of $\Delta$ containing $v$ of minimal distance from $C$. It follows that $d(C(v),C)=n$. We will prove that $C(v)$ is the summit of $v$.

\noindent Let $I$ be the pro-$p$ Iwahori subgroup of $G$, and let $\mathcal{A}$ be the standard apartment. Without loss of generality, we will assume that $v\in\mathcal{A}$, and hence $C(v)\in\mathcal{A}$ by Lemma \ref{lem: chamber for facet}(1). Suppose $e$ is an edge of $\Delta$ joining $v$ to an edge in $\Delta_m$, then using Lemma \ref{lem: face transitivity}, there is a unique edge $e'$ in $\mathcal{A}$ such that $e'$ is conjugate to $e$ by an element $g\in I$.

Setting $v':=g\cdot v$, we know that $C(v')=C(g\cdot v)=g\cdot C(v)$ by Lemma \ref{lem: chamber for facet}(2), and thus $d(C(v'),C)=d(g\cdot C(v),g\cdot C)=d(C(v),C)=n$. Moreover, if $v'\in D$ with $d(D,C)\leq n-1$ then $v\in g^{-1}D$ and $d(g^{-1}D,C)\leq n-1$, so $v\in\Delta_{n-1}$. This contradiction implies that $v'\in\Delta_{n}\backslash\Delta_{n-1}$.

\noindent But both vertices of $e'$ lie in $\mathcal{A}$, so $v'$ lies in $\mathcal{A}$, and hence $C(v')$ lies in $\mathcal{A}$ by Lemma \ref{lem: chamber for facet}(1). It remains to show that $e'$ is an edge of $C(v')$, and it will follow that $e:=g^{-1}e'$ is an edge of $C(v)=g^{-1}C(v')$ as required.

\noindent But $\mathcal{A}$ is isomorphic to the $\widetilde{A}_2$ Coxeter complex, so all vertices adjacent to $v'$ in $\mathcal{A}$ form the hexagonal arrangement below (where the number in brackets indicates the distance from $C$).

\begin{center}

\tikzset{every picture/.style={line width=0.75pt}} 

\begin{tikzpicture}[x=0.75pt,y=0.75pt,yscale=-1,xscale=1]

\draw  [color={rgb, 255:red, 0; green, 0; blue, 0 }  ,draw opacity=1 ] (317.86,211.66) -- (387.51,318.52) -- (248.21,318.52) -- cycle ;
\draw  [color={rgb, 255:red, 0; green, 0; blue, 0 }  ,draw opacity=1 ] (173.27,211.31) -- (317.74,211.25) -- (247.82,317.91) -- cycle ;
\draw  [color={rgb, 255:red, 0; green, 0; blue, 0 }  ,draw opacity=1 ] (458,211.25) -- (387.39,318.6) -- (317.75,211.24) -- cycle ;
\draw  [color={rgb, 255:red, 0; green, 0; blue, 0 }  ,draw opacity=1 ] (245.51,104.45) -- (317.74,211.25) -- (173.27,211.25) -- cycle ;
\draw  [color={rgb, 255:red, 0; green, 0; blue, 0 }  ,draw opacity=1 ] (387.87,104.45) -- (458,211.24) -- (317.74,211.24) -- cycle ;
\draw    (245.51,104.45) -- (387.87,104.45) ;

\draw (299,260) node [anchor=north west][inner sep=0.75pt]   [align=left] {$\displaystyle C( v')$};
\draw (309,221) node [anchor=north west][inner sep=0.75pt]   [align=left] {$\displaystyle v'$};
\draw (362,187) node [anchor=north west][inner sep=0.75pt]   [align=left] {$\displaystyle ( n+2)$};
\draw (362,227) node [anchor=north west][inner sep=0.75pt]   [align=left] {$\displaystyle ( n+1)$};
\draw (304,295) node [anchor=north west][inner sep=0.75pt]   [align=left] {$\displaystyle (n)$};
\draw (216,224) node [anchor=north west][inner sep=0.75pt]   [align=left] {$\displaystyle ( n+1)$};
\draw (215,185) node [anchor=north west][inner sep=0.75pt]   [align=left] {$\displaystyle ( n+2)$};
\draw (292,115) node [anchor=north west][inner sep=0.75pt]   [align=left] {$\displaystyle (n+3)$};

\end{tikzpicture}

\end{center}

\noindent But, we know that the second vertex $u'$ of $e'$ lies in $\Delta_{n-1}$, so $d(C(u'),C)\leq n-1$ and $C(u')\in\mathcal{A}$ by Lemma \ref{lem: chamber for facet}(1). So if we assume that $e'$ is not an edge of $C(v')$, then it follows that 
\begin{itemize}
\item $u'$ is a vertex in $\mathcal{A}$,

\item $u'$ is adjacent to $v'$,

\item $u'$ is contained in a chamber of $\mathcal{A}$ of distance no more than $n-1$ from $C$
\end{itemize} 

\noindent But the extended diagram below shows that such a vertex $u'$ cannot exist in $\mathcal{A}$, and it follows that $e'=g\cdot e$ is an edge of $C(v')=g\cdot C(v)$, and hence $e$ is and edge of $C(v)$ as required.\\

\tikzset{every picture/.style={line width=0.75pt}} 

\begin{tikzpicture}[x=0.75pt,y=0.75pt,yscale=-1,xscale=1]

\draw  [color={rgb, 255:red, 0; green, 0; blue, 0 }  ,draw opacity=1 ] (328.88,279.65) -- (393.25,377.52) -- (264.52,377.52) -- cycle ;
\draw  [color={rgb, 255:red, 0; green, 0; blue, 0 }  ,draw opacity=1 ] (195.27,279.32) -- (328.78,279.27) -- (264.16,376.97) -- cycle ;
\draw  [color={rgb, 255:red, 0; green, 0; blue, 0 }  ,draw opacity=1 ] (458.39,279.28) -- (393.14,377.6) -- (328.78,279.26) -- cycle ;
\draw  [color={rgb, 255:red, 0; green, 0; blue, 0 }  ,draw opacity=1 ] (262.02,181.45) -- (328.78,279.27) -- (195.27,279.27) -- cycle ;
\draw  [color={rgb, 255:red, 0; green, 0; blue, 0 }  ,draw opacity=1 ] (393.58,181.45) -- (458.39,279.27) -- (328.78,279.27) -- cycle ;
\draw    (262.02,181.45) -- (393.58,181.45) ;
\draw  [color={rgb, 255:red, 0; green, 0; blue, 0 }  ,draw opacity=1 ] (523.19,181.45) -- (588,279.27) -- (458.39,279.27) -- cycle ;
\draw    (391.64,181.45) -- (523.19,181.45) ;
\draw  [color={rgb, 255:red, 0; green, 0; blue, 0 }  ,draw opacity=1 ] (458.39,279.27) -- (522.75,377.14) -- (394.03,377.14) -- cycle ;
\draw  [color={rgb, 255:red, 0; green, 0; blue, 0 }  ,draw opacity=1 ] (588,279.27) -- (522.75,377.59) -- (458.39,279.25) -- cycle ;
\draw  [color={rgb, 255:red, 0; green, 0; blue, 0 }  ,draw opacity=1 ] (196,278.45) -- (264.16,376.97) -- (129.22,377.28) -- cycle ;
\draw  [color={rgb, 255:red, 0; green, 0; blue, 0 }  ,draw opacity=1 ] (195.27,279.32) -- (130.02,377.65) -- (65.67,279.3) -- cycle ;
\draw  [color={rgb, 255:red, 0; green, 0; blue, 0 }  ,draw opacity=1 ] (130.47,181.48) -- (195.27,279.32) -- (65.67,279.32) -- cycle ;
\draw    (130.47,181.48) -- (262.02,181.45) ;
\draw  [color={rgb, 255:red, 0; green, 0; blue, 0 }  ,draw opacity=1 ] (196.25,83.63) -- (262.02,181.48) -- (130.47,181.48) -- cycle ;
\draw  [color={rgb, 255:red, 0; green, 0; blue, 0 }  ,draw opacity=1 ] (327.8,83.63) -- (393.58,181.45) -- (262.02,181.45) -- cycle ;
\draw  [color={rgb, 255:red, 0; green, 0; blue, 0 }  ,draw opacity=1 ] (458.39,83.63) -- (523.19,181.45) -- (393.58,181.45) -- cycle ;
\draw    (196.25,83.63) -- (458.39,83.63) ;

\draw (309.09,323.08) node [anchor=north west][inner sep=0.75pt]   [align=left] {$\displaystyle C( v')$};
\draw (320.09,289.28) node [anchor=north west][inner sep=0.75pt]   [align=left] {$\displaystyle v'$};
\draw (367.66,256.3) node [anchor=north west][inner sep=0.75pt]   [align=left] {$\displaystyle (n+2)$};
\draw (368.59,284.7) node [anchor=north west][inner sep=0.75pt]   [align=left] {$\displaystyle (n+1)$};
\draw (319.28,352.48) node [anchor=north west][inner sep=0.75pt]   [align=left] {$\displaystyle (n)$};
\draw (451.43,349.73) node [anchor=north west][inner sep=0.75pt]   [align=left] {$\displaystyle (n)$};
\draw (239.21,283.78) node [anchor=north west][inner sep=0.75pt]   [align=left] {$\displaystyle (n+1)$};
\draw (240.14,258.14) node [anchor=north west][inner sep=0.75pt]   [align=left] {$\displaystyle (n+2)$};
\draw (302.98,190.36) node [anchor=north west][inner sep=0.75pt]   [align=left] {$\displaystyle (n+3)$};
\draw (428.65,191.27) node [anchor=north west][inner sep=0.75pt]   [align=left] {$\displaystyle (n+3)$};
\draw (496.11,256.3) node [anchor=north west][inner sep=0.75pt]   [align=left] {$\displaystyle (n+2)$};
\draw (497.04,284.7) node [anchor=north west][inner sep=0.75pt]   [align=left] {$\displaystyle (n+1)$};
\draw (186.28,354.48) node [anchor=north west][inner sep=0.75pt]   [align=left] {$\displaystyle (n)$};
\draw (101.21,283.78) node [anchor=north west][inner sep=0.75pt]   [align=left] {$\displaystyle (n+1)$};
\draw (101.14,257.14) node [anchor=north west][inner sep=0.75pt]   [align=left] {$\displaystyle (n+2)$};
\draw (165.98,189.36) node [anchor=north west][inner sep=0.75pt]   [align=left] {$\displaystyle (n+3)$};
\draw (164.98,157.36) node [anchor=north west][inner sep=0.75pt]   [align=left] {$\displaystyle (n+4)$};
\draw (300.98,159.36) node [anchor=north west][inner sep=0.75pt]   [align=left] {$\displaystyle (n+4)$};
\draw (428.98,159.36) node [anchor=north west][inner sep=0.75pt]   [align=left] {$\displaystyle (n+4)$};
\draw (361.98,92.36) node [anchor=north west][inner sep=0.75pt]   [align=left] {$\displaystyle (n+5)$};
\draw (232.98,90.36) node [anchor=north west][inner sep=0.75pt]   [align=left] {$\displaystyle (n+5)$};

\end{tikzpicture}

\item Suppose $e$ is an edge joining two peaks $u,v\in\Delta_{n}\backslash\Delta_{n-1}$. Then using Lemma \ref{lem: chamber for facet}, we know that there is a unique chamber $C(e)$ of $\Delta$ containing $e$ of minimal distance from $C$, so since $u,v\in C(e)$, we must have that $d(C(e),C)>n-1$.


Let $\mathcal{A}$ be an apartment containing $e$ and $C$. Then since $u,v\in e$, it follows from Lemma \ref{lem: chamber for facet}(1) that $\mathcal{A}$ contains $C(e),C(v)$ and $C(u)$, and they must form the arrangement in $\mathcal{A}$ below.

\begin{center}

\tikzset{every picture/.style={line width=0.75pt}} 

\begin{tikzpicture}[x=0.75pt,y=0.75pt,yscale=-1,xscale=1]

\draw  [color={rgb, 255:red, 0; green, 0; blue, 0 }  ,draw opacity=1 ] (182.47,299.97) -- (252.55,192.27) -- (322.72,299.29) -- cycle ;
\draw  [color={rgb, 255:red, 0; green, 0; blue, 0 }  ,draw opacity=1 ] (322.46,298.97) -- (252.29,192.46) -- (391.59,191.78) -- cycle ;
\draw  [color={rgb, 255:red, 0; green, 0; blue, 0 }  ,draw opacity=1 ] (321.21,299.66) -- (391.3,191.97) -- (461.46,298.99) -- cycle ;
\draw  [color={rgb, 255:red, 0; green, 0; blue, 0 }  ,draw opacity=1 ] (252.18,192.28) -- (321.73,85.23) -- (391.59,191.78) -- cycle ;

\draw (315.21,233.15) node [anchor=north west][inner sep=0.75pt]  [rotate=-0.31] [align=left] {$\displaystyle E$};
\draw (235.31,243.81) node [anchor=north west][inner sep=0.75pt]  [rotate=-0.37] [align=left] {$\displaystyle C( v)$};
\draw (243.35,276.98) node [anchor=north west][inner sep=0.75pt]  [font=\normalsize,rotate=-359.67] [align=left] {$\displaystyle (n)$};
\draw (291.96,197.36) node [anchor=north west][inner sep=0.75pt]  [font=\normalsize,rotate=-359.51] [align=left] {$\displaystyle (n+1)$};
\draw (387.36,276.57) node [anchor=north west][inner sep=0.75pt]  [font=\normalsize,rotate=-359.98] [align=left] {$\displaystyle (n)$};
\draw (215.33,273.37) node [anchor=north west][inner sep=0.75pt]  [rotate=-179.72] [align=left] {$ $};
\draw (374.31,239.81) node [anchor=north west][inner sep=0.75pt]  [rotate=-0.37] [align=left] {$\displaystyle C( u)$};
\draw (243.31,169.81) node [anchor=north west][inner sep=0.75pt]  [rotate=-0.37] [align=left] {$\displaystyle v$};
\draw (387.31,167.81) node [anchor=north west][inner sep=0.75pt]  [rotate=-0.37] [align=left] {$\displaystyle u$};
\draw (291.96,169.36) node [anchor=north west][inner sep=0.75pt]  [font=\normalsize,rotate=-359.51] [align=left] {$\displaystyle (n+2)$};

\end{tikzpicture}

\end{center}

Minimality implies that $C(e)$ is the chamber denoted by $E$ in this diagram, so $E:=C(e)$ is adjacent to the summits $C(v)$ and $C(e)$.

Finally, suppose that $E'$ is another chamber adjacent to $e,C(v)$ and $C(u)$. Then $E'$ consists of $u,v$ and a third vertex $w$ that lies at the base of $C(u)$ and $C(v)$. So if $E'\neq E$ then $C(u)$ and $C(v)$ must share two distinct vertices at their bases, and hence their bases must agree.

But since the peaks $u$ and $v$ are joined by an edge, this implies that the base $b_u=b_v$ and the vertices $u,v$ form a 3-simplex, which is impossible in the $\widetilde{A}_2$ building. This proves that $E'=E$ as required.\qedhere
\end{enumerate}
\end{proof}

\noindent\textbf{Note:} We expect Theorem \ref{thm: peak}(1) to hold in full generality, i.e. for any strongly transitive building $\Delta$, every $v\in\Delta_{n}\backslash\Delta_{n-1}$ is a peak of $\Delta_{n}$, but we will not prove this here.

\begin{corollary}
$\Delta_n$ can be realised as the set of all chambers of distance no more than $n$ from $C$.
\end{corollary}

\begin{proof}

It is clear that if $d(D,C)\leq n$ then all the vertices of $D$ lie in $\Delta_n$ by the definition of $\Delta_n$. Conversely, if $v\in\Delta_0$ then $v$ is a vertex of $C$, so we can proceed by induction on $n$.

If $n>0$ and $v\in\Delta_n$, then we may assume by induction that $v\notin\Delta_{n-1}$, and hence $v$ is a peak of $\Delta_n$ by Theorem \ref{thm: peak}(1). Thus $v\in D_v$ and $d(D_v,C)\leq n$ as required.\end{proof}

\noindent The great advantage of Theorem \ref{thm: peak} is that it demonstrates that when passing from a vertex on the border of $\Delta_{n}$ to an adjacent vertex in $\Delta_{n-1}$, we stay in a fixed apartment. Next, we will show how we can use this to recover a description for the regions $\Delta_n$ in the $\widetilde{A}_2$ Bruhat-Tits building, as illustrated in Figure 2.

\subsection{Crowns of $\Delta_{n}$}\label{subsec: Delta decomposition}

\noindent Until the end of the section, we will assume that $\Delta$ is the $\widetilde{A}_2$ building. We now want to prove a decomposition for the regions $\Delta_n$ associated to the hyperspecial chamber $C$ in terms of the vertices of $C$. In particular, we want to recover a description of the surface of $\Delta_n$.\\

\noindent\textbf{Notation:} 
\begin{itemize}
\item Let $v_0,v_1,v_2$ be the three vertices of the hyperspecial chamber $C$, where $v_0$ is the hyperspecial vertex.

\item For any vertex $v\in V(\Delta)$, $n\geq 0$, let $B(v,n)$ be the set of vertices in $\Delta$ of graph theoretic distance no more than $n$ from $v$, i.e. the ball of radius $n$ centered at $v$.

 \item For each $n\geq 0$, let $P(n):=\Delta_n\backslash\Delta_{n-1}$ (where $\Delta_{-1}:=\varnothing$), which is the set of all peaks of $\Delta_n$ by Theorem \ref{thm: peak}. 

\item Let $S(n):=\{D_v:v\in P(n)\}$ be the associated set of summits.
\end{itemize}

\noindent For now, we will fix a single apartment $\mathcal{A}$ in $\Delta$ containing $C$, and analogously to $\Delta_n$, we define $$\mathcal{A}_n:=\left\{v\in V(\mathcal{A}):v\in D\text{ for some chamber }D\text{ of }\mathcal{A}\text{ with }d(C,D)\leq n\right\}.$$

\begin{proposition}\label{propn: peaks in apartments}
$\mathcal{A}_n=\Delta_n\cap\mathcal{A}$, and $\mathcal{A}_n\backslash\mathcal{A}_{n-1}=P(n)\cap\mathcal{A}$. Moreover, for all $v\in P(n)\cap\mathcal{A}$, the summit of $\Delta_n$ at $v$ lies in $\mathcal{A}$.
\end{proposition}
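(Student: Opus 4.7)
The plan is to reduce everything to Lemma \ref{lem: chamber for facet}, which pins down, for any facet $F$, a unique minimal-distance chamber $C(F)\supseteq F$ that lies in every apartment containing $C$ and $F$. All three claims will follow almost immediately once we apply this to the $0$-facet $\{v\}$.

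First I would prove the equality $\mathcal{A}_n = \Delta_n\cap\mathcal{A}$. The inclusion $\mathcal{A}_n\subseteq\Delta_n\cap\mathcal{A}$ is by definition, since a chamber of $\mathcal{A}$ is also a chamber of $\Delta$. For the reverse inclusion, take $v\in\Delta_n\cap\mathcal{A}$, so there is some chamber $D$ of $\Delta$ containing $v$ with $d(D,C)\le n$. Apply Lemma \ref{lem: chamber for facet} to $F=\{v\}$: we obtain $C(v)$ with $d(C(v),C)=d_v\le d(D,C)\le n$, and since $\mathcal{A}$ contains both $C$ and $v$, part (1) of that lemma forces $C(v)\in\mathcal{A}$. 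Thus $v$ lies in the chamber $C(v)$ of $\mathcal{A}$ of distance at most $n$ from $C$, so $v\in\mathcal{A}_n$.

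The second equality is then formal:
\[
\mathcal{A}_n\setminus\mathcal{A}_{n-1} = (\Delta_n\cap\mathcal{A})\setminus(\Delta_{n-1}\cap\mathcal{A}) = (\Delta_n\setminus\Delta_{n-1})\cap\mathcal{A} = P(n)\cap\mathcal{A},
\]
using the just-proved identity at levels $n$ and $n-1$ (the case $n=0$ uses the convention $\mathcal{A}_{-1}=\Delta_{-1}=\varnothing$).

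For the final claim, let $v\in P(n)\cap\mathcal{A}$. From the proof of Theorem \ref{thm: peak}(1) the summit $D_v$ of $\Delta_n$ at $v$ is exactly the chamber $C(v)$ produced by Lemma \ref{lem: chamber for facet}. Since $C$ and $v$ both lie in $\mathcal{A}$, part (1) of the same lemma gives $C(v)\in\mathcal{A}$, i.e.\ $D_v\in\mathcal{A}$. There is no real obstacle here; the only point that could be mistaken for a subtlety is the identification $D_v=C(v)$, but that was already recorded in Theorem \ref{thm: peak}, so the proposition is essentially a one-line application of Lemma \ref{lem: chamber for facet} combined with set-theoretic bookkeeping.
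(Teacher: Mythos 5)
Your proposal is correct and takes the same route as the paper: both rest entirely on Lemma \ref{lem: chamber for facet} (giving $C(v)\in\mathcal{A}$ for any $v\in\mathcal{A}$) together with the identification $D_v=C(v)$ from the proof of Theorem \ref{thm: peak}(1). Your write-up is somewhat more complete than the paper's, which only spells out the inclusion $P(n)\cap\mathcal{A}\subseteq\mathcal{A}_n\backslash\mathcal{A}_{n-1}$ and leaves the first equality and the reverse inclusion implicit, whereas you prove $\mathcal{A}_n=\Delta_n\cap\mathcal{A}$ outright and then obtain the second equality by set-theoretic bookkeeping.
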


\begin{proof}

Using Lemma \ref{lem: chamber for facet}, we know that there exists a unique chamber $C(v)$ in $\Delta$, containing $v$, of minimal distance from $C$, and that $C(v)\in\mathcal{A}$. The proof of Theorem \ref{thm: peak}(1) shows that $C(v)$ is the summit of $\Delta_n$ at $v$, and Lemma \ref{lem: chamber for facet}(1) shows that $C(v)\in\mathcal{A}$.

This implies that $v\in\mathcal{A}_n$, and since $v\notin\Delta_{n-1}$, it is clear that $v\notin\mathcal{A}_{n-1}$ as required.\end{proof}

\noindent In light of this result, structural statements regarding $\Delta_n$ can be reduced to statements involving a single apartment, which is isometric with the $\widetilde{A}_2$ Coxeter complex. The following fact regarding the complex is intuitively obvious from observation of Figure 3, and follows from induction on $n$.

\begin{lemma}\label{lem: distance}
Let $M:=\lceil\frac{n}{2}\rceil$. Then given $v\in\mathcal{A}_n\backslash\mathcal{A}_{n-1}$:
\begin{itemize}
\item If $n$ is even, there exists $i\in\{0,1,2\}$ such that $v$ has graph theoretic distance $M$ from $v_i$, and $v$ has distance $m+1$ from $v_{i-1}$ and $v_{i+1}$ (subscripts modulo 3).

\item If $n$ is odd, there exists $i\in\{0,1,2\}$ such that $v$ has graph theoretic distance $m$ from $v_{i-1}$ and $v_{i+1}$, and distance $M+1$ from $v_i$.
\end{itemize}
\end{lemma}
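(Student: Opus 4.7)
The plan is to reduce the lemma to a purely combinatorial statement about graph distances in the $\widetilde{A}_2$ Coxeter complex. By Proposition~\ref{propn: peaks in apartments}, for $v \in \mathcal{A}_n \setminus \mathcal{A}_{n-1}$ the associated summit and base already lie inside $\mathcal{A}$, so the three distances $d(v,v_0), d(v,v_1), d(v,v_2)$ are intrinsic to the apartment. I then induct on $n$, tracking all three distances simultaneously.

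The base cases $n \in \{0, 1\}$ are immediate. For $n = 0$ the peaks are $v_0, v_1, v_2$ themselves, giving triples $(0, 1, 1)$ up to permutation, matching the even case with $m = 0$. For $n = 1$, the three peaks are the vertices opposite $v_0, v_1, v_2$ in the three chambers of $\mathcal{A}$ adjacent to $C$; the peak $u_i$ opposite $v_i$ (in the chamber across the edge $\{v_{i-1}, v_{i+1}\}$) is joined to $v_{i \pm 1}$ by edges of that chamber, and no chamber of $\mathcal{A}$ contains both $u_i$ and $v_i$, so $d(u_i, v_i) = 2$; this matches the odd case with $m = 1$ and exceptional index $i$.

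For the inductive step with $n \geq 2$, take $v \in \mathcal{A}_n \setminus \mathcal{A}_{n-1}$. Theorem~\ref{thm: peak}(1) together with Proposition~\ref{propn: peaks in apartments} provide a summit $D_v \subseteq \mathcal{A}$ containing $v$, with base $b_v = \{a, b\} \subseteq \mathcal{A}_{n-1}$, and the peak property ensures that $a, b$ are the only neighbours of $v$ in $\mathcal{A}_{n-1}$. Since $\mathcal{A}$ is a tessellation of $\mathbb{R}^2$ by equilateral triangles and the regions $\mathcal{A}_n$ are path-convex therein, a shortest edge-path in $\mathcal{A}$ from $v$ to $v_j \in \mathcal{A}_0$ stays in $\mathcal{A}_n$ and must begin with an edge to $a$ or $b$; this yields the crucial identity
\[
d(v, v_j) \;=\; 1 + \min\!\bigl(d(a, v_j),\, d(b, v_j)\bigr) \qquad (j = 0, 1, 2).
\]
Moreover $a, b$ cannot both lie in $\mathcal{A}_{n-2}$, for otherwise any chamber $D' \subseteq \mathcal{A}$ containing $\{a, b\}$ with $d(D', C) \leq n - 2$ would satisfy $D' \sim D_v$ and hence $d(D_v, C) \leq n - 1$, contradicting $d(D_v, C) = n$. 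So either (i) both $a, b \in \mathcal{A}_{n-1} \setminus \mathcal{A}_{n-2}$, or (ii) exactly one (say $a$) lies in $\mathcal{A}_{n-2} \setminus \mathcal{A}_{n-3}$ and the other in $\mathcal{A}_{n-1} \setminus \mathcal{A}_{n-2}$.

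In either case, feeding the induction hypothesis for $a$ and $b$ into the displayed formula produces the triple $(d(v, v_0), d(v, v_1), d(v, v_2))$ as the coordinate-wise minimum of the two known triples, shifted by $+1$. The main obstacle is the bookkeeping: one must verify in each of cases (i) and (ii), and at each parity of $n$, that the exceptional indices arising from the hypothesis for $a$ and for $b$ combine compatibly to yield one of the two predicted patterns $(m, m+1, m+1)$ or $(m+1, m, m)$ for $v$. This reflects the geometric fact that the three vertices of the summit triangle $D_v$ all point in the same ``outward'' direction away from $C$; invoking the three-fold rotational symmetry of $C$ in $\mathcal{A}$ reduces each case to one representative summit configuration, and the remaining verification is a short explicit computation on the triples.
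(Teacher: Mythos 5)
The paper offers no proof of this lemma; it falls under the blanket remark earlier in the subsection that ``we will not give details all of proofs that concern combinatorics within the complex, since they are largely intuitively obvious by observation of Figure 3.'' Your proposal is therefore already more explicit than what appears in the paper, and the overall strategy — reduce to the apartment via Proposition~\ref{propn: peaks in apartments} and induct on $n$ using a distance recursion driven by the base $\{a,b\}$ of the summit $D_v$ — is a sensible way to supply the missing argument.

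However, the case analysis you set up for the base is wrong, and the deferred ``bookkeeping'' cannot be carried out over the configurations you list. Your claim that ``$a,b$ cannot both lie in $\mathcal{A}_{n-2}$'' is false, and the argument you give for it is flawed: you reason from ``any chamber $D'$ containing $\{a,b\}$ with $d(D',C)\le n-2$,'' but you never establish that such a $D'$ exists, and in fact none does. In the apartment there are exactly two chambers containing the edge $\{a,b\}$, and when $a,b$ are adjacent peaks of $\Delta_{n-2}$ one of them is $D_v$ (distance $n$) and the other is the chamber at distance $n-1$ supplied by Theorem~\ref{thm: peak}(2); the implication you need is vacuously true and yields no contradiction. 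Indeed, the excluded configuration is the generic one: as Theorem~\ref{thm: d-partite}(3) records — and as is visible directly in Figure~4 — for every ``interior'' crown summit $D\in S_j^{(n)}$ with $1<j<m+1$, the base joins two peaks of $\Delta_{n-2}$, so both endpoints of the base lie in $\mathcal{A}_{n-2}\setminus\mathcal{A}_{n-3}\subseteq\mathcal{A}_{n-2}$.

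Conversely, your case (i), with both $a,b\in\mathcal{A}_{n-1}\setminus\mathcal{A}_{n-2}$, never occurs for a summit base. If $a,b$ are adjacent peaks of $\Delta_{n-1}$ then, again by Theorem~\ref{thm: peak}(2), the unique chamber containing $\{a,b\}$ at distance $n$ from $C$ is the one adjacent to both $D_a$ and $D_b$; its third vertex is a common base vertex of $D_a$ and $D_b$ and hence lies in $\mathcal{A}_{n-2}$, so that chamber is not a summit of $\Delta_n$, and every other chamber containing $\{a,b\}$ has distance $n+1$. The correct dichotomy is: both base vertices are peaks of $\Delta_{n-2}$ (interior summits), or one is a peak of $\Delta_{n-1}$ and the other lies in $\Delta_{n-2}$ (the two corner summits of the crown). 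A secondary gap is that the recursion $d(v,v_j)=1+\min(d(a,v_j),d(b,v_j))$ is asserted by appeal to ``path-convexity'' without justification — you would need to rule out geodesics that first pass through a sibling peak of $\Delta_n$ — but the broken case split is the concrete obstruction to completing the induction as you describe it.
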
\qedhere

\noindent Motivated by this result, we define for each $n\in\mathbb{N}$, $i=0,1,2$ the following subset $X_{i,n}$ of vertices in $\Delta$, where $M:=\lceil\frac{n}{2}\rceil$ (taking subscripts modulo $3$):

\begin{equation}\label{eqn: X set}
X_{i,n}:=\begin{cases} B\left(v_i,M\right), & n\text{ even}\\
                                   B(v_{i-1},M)\cap B(v_{i+1},M) & n\text{ odd}\end{cases}
\end{equation}

\noindent Using Lemma \ref{lem: distance} and Proposition \ref{propn: peaks in apartments} we deduce that $\Delta_n=X_{0,n}\cup X_{1,n}\cup X_{2,n}$, and $\Delta_{n-1}\subseteq X_{i,n}$ for each $i$. In particular, if $n>0$, then for every vertex $v$ in $X_{i,n}$, there exists a chamber $D$ in $\Delta$ entirely contained in $X_{i,n}$, so we may regard $X_{i,n}$ as a set of chambers. Note that $X_{0,n},X_{1,n},X_{2,n}$ are all isometric.\\

\noindent\textbf{Examples:} 1. If $n=0$ then $X_{0,n}=\{v_0\}$.\\

\noindent 2. If $n=1$ then $X_{0,n}$ is the set of all vertices adjacent to $v_1,v_2$. As a set of chambers, this consists of $C$ and all chambers adjacent to $C$ via the edge $\{v_1,v_2\}$.\\

\noindent 3. If $n=2$ then $X_{0,2}=B(v_0,2)$ is the set of vertices adjacent to $v_0$. As a set of chambers, $X_{0,2}$ consists of all chambers in $\Delta$ containing $v_0$. We call this region the \emph{star} of $v_0$, and we usually denote it by Star$(v_0)$.

\begin{lemma}\label{lem: partition1}

Suppose $v\in P(n)$ and $v\in X_{i,n}$, and suppose $v$ is joined to a vertex $u$ of $P(n)$ with $u\neq v$.
\begin{itemize}
\item $v\notin X_{j,n}$ for all $j\neq i$.

\item $u\in X_{i,n}$.
\end{itemize}
\end{lemma}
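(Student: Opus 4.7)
The strategy is to reduce both bullets to combinatorial facts inside a single apartment, via Proposition \ref{propn: peaks in apartments} and Theorem \ref{thm: peak}, and then confirm these by inspection of the $\widetilde{A}_2$ Coxeter complex.

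First I would fix an apartment $\mathcal{A}$ containing $C$ and $v$, which exists by Proposition \ref{propn: peaks in apartments}, so that $v\in\mathcal{A}_n\setminus\mathcal{A}_{n-1}$. Lemma \ref{lem: distance} then assigns $v$ a unique index $j\in\{0,1,2\}$ recording its distance profile from the hyperspecial vertices $v_0,v_1,v_2$ in the $1$-skeleton of $\mathcal{A}$. To identify this $j$ with the $i$ for which $v\in X_{i,n}$ (and so to obtain $v\notin X_{k,n}$ for $k\neq i$), I would argue that apartment distances agree with building distances for vertices in a common apartment: any shortest path in the $1$-skeleton of $\Delta$ from $v$ to some $v_k$ can be transported into $\mathcal{A}$ by a strongly transitive automorphism fixing $C$ and $v$. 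This yields the first bullet.

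For the second bullet, I would invoke Theorem \ref{thm: peak}(2) to produce the unique chamber $E$ with $d(E,C)=n+1$ containing the edge $e=\{u,v\}$ and adjacent to both summits $D_u,D_v$. Choosing an apartment $\mathcal{A}$ containing $C$ and $E$ places both $u$ and $v$ in $\mathcal{A}_n\setminus\mathcal{A}_{n-1}$, and by Proposition \ref{propn: peaks in apartments} both summits $D_u,D_v$ also lie in $\mathcal{A}$. Applying the first bullet to both $u$ and $v$ in this common apartment reduces the claim $u\in X_{i,n}$ to the combinatorial statement that two peaks of $\mathcal{A}_n\setminus\mathcal{A}_{n-1}$ joined by an edge must share the same index from Lemma \ref{lem: distance}.

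The main obstacle is this final combinatorial step in the $\widetilde{A}_2$ Coxeter complex. By inspection of Figure 3, the boundary layer $\mathcal{A}_n\setminus\mathcal{A}_{n-1}$ decomposes into three symmetric arcs, one per index $i\in\{0,1,2\}$, meeting only at isolated corner vertices where the index would otherwise switch. Two peaks of different indices are separated by such a corner and are never joined by an edge of $\mathcal{A}$; it is precisely the hypothesis that $u$ and $v$ are joined by an \emph{edge}, rather than merely sharing a vertex, that excludes these corner meeting points and forces the indices to agree. I would verify this by a short case analysis on the parity of $n$, examining the three local configurations around each arc boundary directly in the triangular tiling.
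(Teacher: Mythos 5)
Your overall approach matches the paper's proof very closely: both use Theorem \ref{thm: peak}(2) to produce the chamber $E$ adjacent to the two summits, pass to an apartment $\mathcal{A}$ containing $E$ and $C$, place both summits in $\mathcal{A}$ via Theorem \ref{thm: minimal gallery}, invoke Proposition \ref{propn: peaks in apartments} and Lemma \ref{lem: distance}, and finish by a combinatorial inspection of the $\widetilde{A}_2$ Coxeter complex. The paper does both bullets in one apartment containing $E$ and $C$ and is considerably terser; your version spells out the Coxeter-complex combinatorics (the three arcs, adjacent peaks sharing an index) that the paper compresses into ``it is clear.''

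One step of your first bullet does not quite work as sketched. You want to identify the apartment-distance profile from Lemma \ref{lem: distance} with membership in $X_{i,n}$, which is defined via building-distance in $\Delta$, and you propose to transport a building-geodesic from $v$ to $v_k$ into $\mathcal{A}$ by a strongly transitive automorphism fixing $C$ and $v$. There is no reason such an automorphism should move an arbitrary geodesic into $\mathcal{A}$: strong transitivity gives transitivity on pairs (apartment, chamber), not control over an arbitrary path with the same endpoints. The standard fix is the canonical retraction $\rho:\Delta\to\mathcal{A}$ (centred at a chamber of $\mathcal{A}$), which is a simplicial map fixing $\mathcal{A}$ pointwise; applied to any path in the $1$-skeleton from $v$ to $v_k$ it yields a path in $\mathcal{A}$ of the same length, so $d_\mathcal{A}(v,v_k)\leq d_\Delta(v,v_k)$, and the reverse inequality is trivial since $\mathcal{A}\subseteq\Delta$. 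The paper itself elides this point, so filling it in is genuinely useful, but the retraction argument (not the automorphism argument) is what carries it.
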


\begin{proof}

Let $D_v,D_u$ be the summits of $v$ and $u$ respectively. By Theorem \ref{thm: peak}(2), there exists a chamber $E$ with $d(E,C)=n+1$ adjacent to $D_u$ and $D_v$, containing $v$ and $u$. So let $\mathcal{A}$ be an apartment containing $E$ and $C$, and $\mathcal{A}$ will contain $D$ and $D'$ by Theorem \ref{thm: minimal gallery}. Moreover, we know that $v\in\mathcal{A}_n\backslash\mathcal{A}_{n-1}$ by Proposition \ref{propn: peaks in apartments}.

Using Lemma \ref{lem: distance}, we know that $v\notin X_{j,n}$ for $j\neq i$, and realising $\mathcal{A}$ as the $\widetilde{A}_2$ Coxeter complex, it is clear that $u\in X_{i,n}$.\qedhere






\end{proof}

\noindent We now define the \emph{crown} of $X_{i,n}$ to be $$\text{Crown}(X_{i,n})=S(n)\cap X_{i,n}$$ i.e. Crown$(X_{i,n})$ is the set of summits in $\Delta_n$ that lie in $X_{i,n}$, and it follows from Lemma \ref{lem: partition1} that for $n>0$, we can partition $S(n)$ as $$S(n)=\text{Crown}(X_{0,n})\sqcup\text{Crown}(X_{1,n})\sqcup\text{Crown}(X_{2,n})$$ and thus $\Delta_n$ can be decomposed as
 \begin{equation}\label{eqn: crown decomposition}
\Delta_n=\text{Crown}(X_{0,n})\sqcup\text{Crown}(X_{1,n})\sqcup\text{Crown}(X_{2,n})\sqcup\Delta_{n-1}
\end{equation}

\noindent To explore the structure of the crown in more detail, we need to consider its intersection with the standard apartment $\mathcal{A}$, i.e. the set of all summits of $\mathcal{A}_n$ that lie in $X_{i,n}$. This intersection forms a single line of summits, each sharing a vertex at the base with its neighbour on either side, reminiscent of a flattened paper crown (hence the name). Unlike a paper crown, however, each peak is joined to the peaks of its neighbours on both sides, and the illustration below shows.

\begin{center}

\tikzset{every picture/.style={line width=0.75pt}} 

\begin{tikzpicture}[x=0.75pt,y=0.75pt,yscale=-1,xscale=1]

\draw  [color={rgb, 255:red, 0; green, 0; blue, 0 }  ,draw opacity=1 ] (86.11,64.32) -- (132.17,130.3) -- (40.05,130.3) -- cycle ;
\draw  [color={rgb, 255:red, 0; green, 0; blue, 0 }  ,draw opacity=1 ] (178.81,64.32) -- (224.87,130.29) -- (132.75,130.29) -- cycle ;
\draw  [color={rgb, 255:red, 0; green, 0; blue, 0 }  ,draw opacity=1 ] (270.94,64.32) -- (317,130.29) -- (224.87,130.29) -- cycle ;
\draw  [dash pattern={on 0.84pt off 2.51pt}]  (86.11,64.32) -- (270.94,64.32) ;
\draw  [dash pattern={on 4.5pt off 4.5pt}]  (317,130.29) -- (429,128.6) ;
\draw  [color={rgb, 255:red, 0; green, 0; blue, 0 }  ,draw opacity=1 ] (475.06,62.63) -- (521.13,128.6) -- (429,128.6) -- cycle ;
\draw  [color={rgb, 255:red, 0; green, 0; blue, 0 }  ,draw opacity=1 ] (567.19,62.63) -- (613.25,128.6) -- (521.13,128.6) -- cycle ;
\draw  [dash pattern={on 0.84pt off 2.51pt}]  (475.06,62.63) -- (567.19,62.63) ;
\draw  [dash pattern={on 4.5pt off 4.5pt}]  (270.94,64.32) -- (475.06,62.63) ;
\draw  [color={rgb, 255:red, 0; green, 0; blue, 0 }  ,draw opacity=1 ] (319.11,285.32) -- (365.17,351.3) -- (273.05,351.3) -- cycle ;
\draw  [dash pattern={on 0.84pt off 2.51pt}]  (86.69,196.27) -- (235,321.6) ;
\draw  [dash pattern={on 0.84pt off 2.51pt}]  (567.19,194.57) -- (405,320.6) ;
\draw    (235,321.6) -- (273.05,351.3) ;
\draw    (365.17,351.3) -- (405,320.6) ;
\draw  [color={rgb, 255:red, 0; green, 0; blue, 0 }  ,draw opacity=1 ] (132.75,130.29) -- (178.81,196.27) -- (86.69,196.27) -- cycle ;
\draw    (40.05,130.3) -- (86.69,196.27) ;
\draw  [color={rgb, 255:red, 0; green, 0; blue, 0 }  ,draw opacity=1 ] (521.13,128.6) -- (567.19,194.57) -- (475.06,194.57) -- cycle ;
\draw    (567.19,194.57) -- (613.25,128.6) ;
\draw  [color={rgb, 255:red, 0; green, 0; blue, 0 }  ,draw opacity=1 ] (224.87,130.29) -- (270.94,196.27) -- (178.81,196.27) -- cycle ;
\draw  [dash pattern={on 4.5pt off 4.5pt}]  (270.94,196.27) -- (475.06,194.57) ;

\draw (75,99) node [anchor=north west][inner sep=0.75pt]  [font=\small] [align=left] {$\displaystyle D_{1,0}^{( n)}$};
\draw (167,99) node [anchor=north west][inner sep=0.75pt]  [font=\small] [align=left] {$\displaystyle D_{2,0}^{( n)}$};
\draw (258,99) node [anchor=north west][inner sep=0.75pt]  [font=\small] [align=left] {$\displaystyle D_{3,0}^{( n)}$};
\draw (458,97) node [anchor=north west][inner sep=0.75pt]  [font=\small] [align=left] {$\displaystyle D_{m-1,0}^{( n)}$};
\draw (552,96) node [anchor=north west][inner sep=0.75pt]  [font=\small] [align=left] {$\displaystyle D_{m+1,0}^{( n)}$};
\draw (69,146) node [anchor=north west][inner sep=0.75pt]  [font=\small,color={rgb, 255:red, 245; green, 152; blue, 35 }  ,opacity=1 ] [align=left] {$\displaystyle D_{1,1}^{( n-1)}$};
\draw (551,145) node [anchor=north west][inner sep=0.75pt]  [font=\small,color={rgb, 255:red, 74; green, 144; blue, 226 }  ,opacity=1 ] [align=left] {$\displaystyle D_{1,2}^{( n-1)}$};
\draw (315,318) node [anchor=north west][inner sep=0.75pt]  [font=\small] [align=left] {$\displaystyle C$};
\draw (312,265) node [anchor=north west][inner sep=0.75pt]  [font=\small] [align=left] {$\displaystyle v_{0}$};
\draw (367.17,354.3) node [anchor=north west][inner sep=0.75pt]  [font=\small,color={rgb, 255:red, 245; green, 152; blue, 35 }  ,opacity=1 ] [align=left] {$\displaystyle v_{1}$};
\draw (259,354) node [anchor=north west][inner sep=0.75pt]  [font=\small,color={rgb, 255:red, 74; green, 144; blue, 226 }  ,opacity=1 ] [align=left] {$\displaystyle v_{2}$};
\draw (89,388) node [anchor=north west][inner sep=0.75pt]   [align=left] {\textbf{Figure 4:} The intersection of $\displaystyle X_{0,n}$ with a single apartment when $\displaystyle n$ is even,\\ \ \ \ \ \ \ \ \ \ \ \ \ \ \ the $\displaystyle m+1$ chambers at the top comprising the crown};
\draw (114,162) node [anchor=north west][inner sep=0.75pt]  [font=\small] [align=left] {$\displaystyle D_{1,0}^{( n-2)}$};
\draw (208,164) node [anchor=north west][inner sep=0.75pt]  [font=\small] [align=left] {$\displaystyle D_{2,0}^{( n-2)}$};
\draw (503,165) node [anchor=north west][inner sep=0.75pt]  [font=\small] [align=left] {$\displaystyle D_{m,0}^{( n-2)}$};

\end{tikzpicture}

\end{center}

\noindent The figure also shows that the crown of $X_{i,n}$ in the apartment lies atop of the crown of $X_{i,n-2}$, and an easy induction shows that there are exactly $m+1$ chambers, where $m:=\lfloor\frac{n}{2}\rfloor$. We label these chambers $D_{1,i}^{(n)},D_{2,i}^{(n)},\dots,D_{m+1,i}^{(n)}$.\\

\noindent\textbf{Note:} There is a choice for how we label these chambers. Our convention will be that the peak of $D_{j,i}^{(n)}$ is joined to the peaks of $D_{j-1,i}^{(n)}$ and $D_{j+1,i}^{(n)}$. 

Moreover, we know that the bases of $D_{1,i}^{(n)}$ and $D_{m+1,i}$ form edges of summits in $\mathcal{A}_{n-1}$, so fixing $i=0$, we will assume that $D_{1,0}^{(n)}$ is based at a summit in Crown$(X_{1,n-1})$ and $D_{m+1,0}^{(n)}$ is based at a summit in Crown$(X_{2,n-1})$.\\

\noindent For each $j=1,\dots,m+1$, define $w_{j,i}^{(n)}:=\delta(D_{j,i}^{(n)},C)\in\widetilde{W}$, and let $$S_{j,i}^{(n)}:=\mathcal{C}_{w_{j,i}^{(n)}}=\{D\in\Delta:\delta(D,C)=w_{j,i}^{(n)}\}$$ be its sphere in $\Delta$. Since every element of $\widetilde{W}$ corresponds uniquely to a chamber in $\mathcal{A}$, it follows that $S_{j,i}^{(n)}\cap\mathcal{A}=\{D_{j,i}^{(n)}\}$, and more generally, $S_{j,i}^{(n)}$ has intersection of size 1 with any apartment.

Let $P_{j,i}^{(n)}$ be the set of peaks of the summits in $S_{j,i}^{(n)}$, a set of vertices in bijection with $S_{j,i}^{(n)}$.\\

\noindent By symmetry, we can assume from now on that $j=0$, and we will define $S_j^{(n)}:=S_{0,j}^{(n)}$ and $P_j^{(n)}=P_{j,i}^{(n)}$. The following results complete our description of the crown of $\Delta_n$.

\begin{lemma}\label{lem: Weyl invariance}
Let $D$ be a summit of $\Delta_n$, with base $b$, and let $e$ be an edge of $D$ not equal to $b$. Then for any chambers $E_1,E_2$ adjacent to $D$ via $e$, $\delta(E_1,C)=\delta(E_2,C)$. 
\end{lemma}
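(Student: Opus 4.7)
The strategy is to show that every chamber adjacent to $D$ through $e$ has distance exactly $n+1$ from $C$, and is $s$-adjacent to $D$ for one and the same generator $s$ determined by $e$; the Weyl-valued distance $\delta(\,\cdot\,,C)$ is then forced to take a single common value on all such chambers.

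First I would pin down $d_e$. Since $e$ is an edge of $D$ distinct from the base $b = F_v$, it must contain the peak vertex $v$. By Definition \ref{defn: summit and peak}, $D$ is the unique chamber of distance $n$ from $C$ containing $v$, and no chamber of distance at most $n-1$ from $C$ contains $v$. Thus any chamber containing $e$ contains $v$, so has distance at least $n$ from $C$, with equality only for $D$. In particular $d_e = n$ and $D$ is the unique chamber of distance $d_e$ containing $e$. Applying Corollary \ref{cor: adjacent to facet} to the codimension-1 facet $e$ then gives $d(E,C) = n+1$ for every chamber $E \neq D$ that shares the face $e$ with $D$; in particular this applies to $E_1$ and $E_2$.

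Next I would use the chamber-system structure: each codimension-1 face of $D$ carries a well-defined type $s \in S$, and two chambers are $s$-adjacent precisely when they share the face of that type. Consequently every chamber adjacent to $D$ via $e$ is $s$-adjacent to $D$ for the single generator $s = s(e)$ determined by $e$, so both $D \sim_s E_1$ and $D \sim_s E_2$ hold with the same $s$. Setting $w := \delta(C,D)$ (of length $n$), the defining properties of $\delta$ imply that whenever $D \sim_s E$ with $d(C,E) = d(C,D) + 1$ one has $\delta(C,E) = ws$. Applying this to both $E_1$ and $E_2$ gives $\delta(C,E_1) = ws = \delta(C,E_2)$, and inverting yields $\delta(E_1,C) = \delta(E_2,C)$ as required.

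The only step that requires genuine input from the $\widetilde{A}_2$ setting is the identification $d_e = n$, which rests on the peak property of $v$; the rest is a formal consequence of the chamber-system axioms, so I do not anticipate any serious technical obstacle.
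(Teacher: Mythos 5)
Your proof is correct, and it takes a genuinely different route from the paper. After both arguments establish $d(E_1,C)=d(E_2,C)=n+1$ (via the peak property together with Corollary \ref{cor: adjacent to facet} in your case, via Theorem \ref{thm: peak} in the paper's), the paper proceeds geometrically: it chooses apartments $\mathcal{A}_i$ containing $C$ and $E_i$, notes each contains $D$, and invokes the apartment axiom to get an isometry $\iota\colon\mathcal{A}_1\to\mathcal{A}_2$ fixing $\mathcal{A}_1\cap\mathcal{A}_2$ pointwise; thinness of $\mathcal{A}_2$ forces $\iota(E_1)=E_2$ and the conclusion follows. You instead work purely with the $W$-distance function and the chamber-system axioms: the shared codimension-1 facet $e$ has a single type $s$, so each $E_i$ is $s$-adjacent to $D$, and appending $E_i$ to a minimal gallery from $C$ to $D$ gives a minimal gallery of length $n+1$, whence $\delta(C,E_i)=\delta(C,D)\,s$ for both $i$. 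Your version is slightly more economical, needing only the definition of $\delta$ via minimal galleries and the typing of facets rather than the apartment-isometry axiom; the paper's is perhaps more geometric in flavour. Both are valid, and both make the same essential use of the peak property to pin down the distances.
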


\begin{proof}

Using Theorem \ref{thm: peak}, we know that $d(E_1,C)=d(E_2,C)=n+1$. So for $i=1,2$, fix an apartment $\mathcal{A}_i$ containing $E_i$ and $C$, and it follows that $\mathcal{A}_i$ will contain $D$, and all minimal galleries from $D$ to $C$.

Moreover, we know from the definition of a building that there exists an isometry $\iota:\mathcal{A}_1\to\mathcal{A}_2$ which is identical on $\mathcal{A}_1\cap\mathcal{A}_2$. So since $\iota(E_1)\in\mathcal{A}_2$ is adjacent to $D$ via $e$, and so is $E_2$, it follows that $\iota(E_1)=E_2$. Therefore $\delta(E_1,C)=\delta(\iota(E_1),\iota(C))=\delta(E_2,C)$.\end{proof}

\begin{theorem}\label{thm: d-partite}

For each $n\geq 1$, let $m:=\lfloor\frac{n}{2}\rfloor$, then \emph{Crown}$(X_{0,n})=S_1^{(n)}\sqcup\dots\sqcup S_{m+1}^{(n)}$. Moreover, fixing $j=1,\dots,m+1$:
\begin{enumerate}

    \item For all $v\in P_j^{(n)}$, all adjacent vertices to $v$ in $P(n)$ lie in either $P_{j-1}^{(n)}$ or $P_{j+1}^{(n)}$.

    \item If $v\in P_j^{(n)}$, no two distinct neighbours of $v$ in $P(n)$ have summits with the same base.
    
    \item If $1<j< m+1$, then for all $D\in S_j^{(n)}$, the base of $D$ joins a vertex in $P_{j-1}^{(n-2)}$ to a vertex in $P_{j}^{(n-2)}$.

    \item If $j=1$ (resp. $m+1$) then for all $D\in S_j^{(n)}$, the base of $D$ forms an edge of a summit in $S_{1,1}^{(n-1)}$ (resp. $S_{2,1}^{(n-1)}$).

    \item For each $v\in P_j^{(n)}$, $v$ is joined to $q$ vertices in $P_{j+1}^{(n)}$ (if $j<n$) and $q$ vertices in $P_{j-1}^{(n)}$ (if $j>1$). 

    \item $\left\vert S_{j}^{(n)}\right\vert = \left\vert P_{j}^{(n)} \right\vert = q^{n}$.
\end{enumerate}
\end{theorem}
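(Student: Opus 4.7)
My plan is to prove all six claims together by a simultaneous induction on $n$, translating the apartment picture of Figure 4 to the full building via Lemma \ref{lem: Weyl invariance} and Theorem \ref{thm: peak}. The base cases $n=0,1$ fall out of the definitions and Proposition \ref{propn: number of adjacent chambers}.

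For the decomposition $\mathrm{Crown}(X_{0,n})=S_1^{(n)}\sqcup\cdots\sqcup S_{m+1}^{(n)}$, I would take any $D\in\mathrm{Crown}(X_{0,n})$ with peak $v$, use Proposition \ref{propn: peaks in apartments} to place $D$ in some apartment $\mathcal{A}$ through $C$, and observe that in $\mathcal{A}$ the chamber $D$ must coincide with one of the $m+1$ crown chambers $D_{j,0}^{(n)}$; this pins down $\delta(D,C)=w_j^{(n)}$ uniquely, and distinctness of $w_1^{(n)},\ldots,w_{m+1}^{(n)}$ in $\widetilde{W}$ gives disjointness. For part (1), taking adjacent peaks $u,v$ and applying Theorem \ref{thm: peak}(2) produces a chamber $E$ of distance $n+1$ adjacent to both summits; embedding $E$ and $C$ in a common apartment carries $D_u, D_v$ along by Theorem \ref{thm: minimal gallery}, and the apartment picture forces $D_u$ to neighbour $D_v$ in the crown row, giving $u\in P_{j\pm 1}^{(n)}$. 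For parts (3), (4), I would fix any $D\in S_j^{(n)}$, work in an apartment through $D$ and $C$, and read off from Figure 4 that the base $b$ joins the peak of $D_{j-1,0}^{(n-2)}$ to that of $D_{j,0}^{(n-2)}$ (or the corresponding boundary crowns when $j=1$ or $m+1$); Lemma \ref{lem: Weyl invariance} ensures that the ``type'' of the two endpoints of $b$ depends only on $w_j^{(n)}$, transferring the conclusion from one apartment representative to every chamber of $S_j^{(n)}$.

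For part (5), I fix a non-base edge $e$ of $D_v$ at $v$. Since the peak $v$ cannot lie in any chamber of distance $<n$, neither can $e$, so $d_e=n$ and Proposition \ref{propn: number of adjacent chambers} yields exactly $q$ chambers of distance $n+1$ through $e$; by Lemma \ref{lem: Weyl invariance} these all share the Weyl distance of the apartment chamber adjacent to $D_v$ along $e$, and each third vertex is a peak belonging to $P_{j\pm 1}^{(n)}$ after an $I$-transitivity argument reducing to the apartment representative. Part (6) is then an easy induction: combining (5) with (2) gives $|P_{j+1}^{(n)}|=q|P_j^{(n)}|$ without collisions, and a parallel count on the $X_{1,n-1}$-side (using the inductive hypothesis for $n-1$ and part (4) at $j=1$) gives the initialisation $|P_1^{(n)}|=q^n$.

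The main difficulty I expect is part (2). A configuration with distinct $u_1,u_2$ adjacent to $v$ and $b_{u_1}=b_{u_2}=b$ does not produce a forbidden $3$-simplex directly; instead, the two chambers $E_1, E_2$ of distance $n+1$ supplied by Theorem \ref{thm: peak}(2) must both contain the edge $\{v,w\}$ for the unique vertex $w\in b_v\cap b$, so by Lemma \ref{lem: Weyl invariance} they lie in a common sphere. I would then combine this with Corollary \ref{cor: adjacent to facet} applied to the edge $b$ itself, and with the uniqueness clause in Lemma \ref{lem: chamber for facet}, to argue that the assignment $u\mapsto D_u$ is injective on the set of neighbours of $v$ lying in a single $P_{j\pm 1}^{(n)}$, which forces $u_1=u_2$ and completes the induction.
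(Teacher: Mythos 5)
Your overall plan matches the paper's proof closely: reduce to an apartment representative via Proposition \ref{propn: peaks in apartments} and Lemma \ref{lem: Weyl invariance}, read off the combinatorics of the crown row in the Coxeter complex, and use Theorem \ref{thm: peak} together with Proposition \ref{propn: number of adjacent chambers} to transfer counts to the whole building. That part is fine.

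There is a real gap in part (2). You correctly identify that the two chambers $E_1,E_2$ from Theorem \ref{thm: peak}(2) both contain the edge $\{v,w\}$ with $w\in b_v\cap b$, but from there the cited tools do not close the argument: Corollary \ref{cor: adjacent to facet} only tells you that $\{v,w\}$ belongs to $q$ chambers of distance $n+1$, so it allows $E_1\neq E_2$; Lemma \ref{lem: chamber for facet} gives uniqueness of the \emph{minimal}-distance chamber, which here is $D_v$, not a constraint on $E_1,E_2$; and ``injectivity of $u\mapsto D_u$'' is vacuously true since $u$ is the peak of $D_u$ --- what you actually need is injectivity of $u\mapsto b_u$. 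The configuration you have built is exactly a $4$-cycle $E_1\sim D_1\sim D_2\sim E_2\sim E_1$ (adjacencies via $\{u_1,w\}$, the common base $b$, $\{u_2,w\}$, and $\{v,w\}$, with all four chambers distinct because the $E_i$ have distance $n+1$ and the $D_i$ have distance $n$); Lemma \ref{lem: no-cycles} then gives the contradiction at once. This is the missing idea, and it is also what the paper uses.

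Your part (6) argument also does not go through as written. The bipartite graph between $P_j^{(n)}$ and $P_{j+1}^{(n)}$ is $q$-regular in both directions by part (5), so $|P_{j+1}^{(n)}|=|P_j^{(n)}|$, not $q\,|P_j^{(n)}|$ --- the latter would contradict the conclusion $|P_j^{(n)}|=q^n$ for all $j$. Part (2) controls the bases of the summits at distinct neighbours of a \emph{fixed} $v$; it does not give disjointness of the neighbourhoods of distinct $v$'s, so it does not supply the ``no collisions'' you invoke. The correct recursion, and the one the paper uses, is vertical rather than lateral: by part (3) there are $q^{n-1}$ edges joining $P_{j-1}^{(n-2)}$ to $P_j^{(n-2)}$ (induction on $n$), each such edge is the base of exactly $q$ summits of $\Delta_n$ by Proposition \ref{propn: number of adjacent chambers}, distinct edges yield disjoint sets of summits since a summit has a unique base, and these summits exhaust $S_j^{(n)}$, giving $q^{n-1}\cdot q=q^n$.
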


\begin{proof}

For every summit $D\in$ Crown$(X_{0,n})$, $\delta(D,C)=w_{0,j}^{(n)}$ for some $j$, so $D\in S_{j}^{(n)}$. Moreover, if $D\in S_i^{(n)}$ for some $i\neq j$, then $\delta(D,C)=w_{0,j}^{(n)}=w_{0,i}^{(n)}$, which is impossible. Thus Crown$(X_{0,n})$ is the disjoint union of $S_{1}^{(n)},\dots,S_{m+1}^{(n)}$.

\begin{enumerate}

\item Fix $D\in S_{0,j}^{(n)}$ with peak $v$, and we know that $\delta(D,C)=w_{0,j}^{(n)}$, so after fixing an apartment $\mathcal{A}$ containing $D$ and $C$, we may assume that $D=D_{0,j}^{(n)}$. Thus the only vertices in $\mathcal{A}_n\backslash\mathcal{A}_{n-1}$ that are joined to $v$ are the peaks of $D_{0,j-1}^{(n)}$ and $D_{0,j+1}^{(n)}$ (cf. Figure 4).

\item If $v$ was joined to two peaks $u_1,u_2\in\Delta_n\backslash\Delta_{n-1}$ whose summits have the same base, then we may assume that $u_1,u_2\in P_{j+1}^{(n)}$. If $D_1,D_2$ are the summits of $u_1$ and $u_2$, then $D_1$ and $D_2$ are adjacent, and by Theorem \ref{thm: peak}, there exist chambers $E_1,E_2$ adjacent to $D$, containing $v$ and $u_1,u_2$ respectively, and they must be adjacent via the same edge $e$ of $D$. 

In particular, $E_1$ and $E_2$ are adjacent, so $E_1\sim D_1\sim D_2\sim E_2\sim E_1$ is a cycle of length 4 in $\Delta$, contradicting Lemma \ref{lem: no-cycles}.

\item If $1<j<m+1$ then the base of $D_{0,j}^{(n)}$ joins the peak of $D_{0,j-1}^{(n-2)}$ to the peak of $D_{0,j}^{(n-2)}$ (cf. Figure 4). But $\delta(D,C)=w_{0,j}^{(n)}$, so fixing any apartment $\mathcal{A}$ containing $D$ and $C$, we may assume without loss of generality that $D=D_{0,j}^{(n)}$, so $D\in S_j^{(n)}$.

\item If $j=1$ (resp. $n$) then the base of $D_{0,j}^{(n)}$ forms an edge of $D_{1,1}^{(n-1)}$ (resp. $D_{2,1}^{(n-1)}$), so by the same argument as in part 3, this base is the edge of a summit in $S_{1,1}^{(n-1)}$ (resp. $S_{2,1}^{(n-1)})$.

\item Note that for every vertex $u$ in $P_{j}^{(n)}$ with summit $D_u$, if $u$ is adjacent to $v\in P_{j+1}^{(n)}$, then there must exist a chamber $E_v$ with $d(E_v,C)=n+1$ adjacent to $D$ and $D_u$ and containing $u$ and $v$ by Theorem \ref{thm: peak}. Moreover, if $E_v$ is adjacent to $D$ via the edge $e$, then for any other chamber $E\neq D$ adjacent to $e$, $\delta(E,C)=\delta(E_u,C)$ by Lemma \ref{lem: Weyl invariance}.\\

But there are $q$ chambers $E\neq D$ adjacent to $e$ by Proposition \ref{propn: number of adjacent chambers}, and fixing an apartment containing $E$, we see that $E$ is adjacent to a chamber $D'$ with $\delta(D',C)=w_{0,j+1}^{(n)}$. So $D'\in S_{j+1}^{(n)}$ and the peak of $D'$ is joined to $v$ as required. Thus $v$ is joined to precisely $q$ vertices in $P_{j+1}^{(n)}$ if $j<n$, and the same argument shows that it is joined to $q$ vertices in $P_{j-1}^{(n)}$ if $j>1$.

\item Finally, to prove that $\left\vert S_{j}^{(n)}\right\vert = q^{n}$, we will use induction on $n$. If $n=0$, then $S_{j}^{(n)}=\{C\}$ has size $1=q^0$, and if $n=1$, $S_j^{(n)}$ is the set of all chambers adjacent to the edge $\{v_1,v_2\}$, not equal to $C$, and there are $q$ of these by Proposition \ref{propn: number of adjacent chambers}, so it has size $q=q^1$.\\

For $n\geq 2$, since $\vert S_{j-1}^{(n-2)}\vert=q^{n-2}$, and for each $v\in P_{j-1}^{(n-2)}$ there are $q$ adjacent vertices in $P_{j}^{(n-2)}$, there are $q^{n-1}$ edges joining vertices in $P_{j-1}^{(n-2)}$ to vertices in $P_j^{(n-2)}$, and each of these form the base of $q$ peaks of $\Delta_{n}$ by Proposition \ref{propn: number of adjacent chambers}, which implies that $\vert P_j^{(n)}\vert =q^n$.\qedhere

\end{enumerate}

\end{proof}

\noindent\textbf{Examples:} 1. If $n=1$, then Crown$(X_{0,n})$ is the set of all chambers adjacent to $C$ via the edge $\{v_1,v_2\}$, which is equal to $S_1^{(n)}$.\\

\noindent 2. If $n=2$,  Crown$(X_{0,n})=S_1^{(n)}\sqcup S_{2}^{(n)}$ consists of all chambers of distance 2 from $C$ containing $v_0$. Theorem \ref{thm: d-partite} says that vertices in $P_1^{(n)}\sqcup P_2^{(n)}$ form a bipartite graph.\\

\noindent Now, if $n\geq 2$, let $v\in P(n-2)$, and let $D$ be the summit at $v$, and let $X_D:=\text{Star(v)}\cong X_{0,2}$.

\begin{corollary}\label{cor: local}
If $D_1,D_2\in S_j^{(n-2)}$ for some $i,j$, and $D_1\neq D_2$, then $X_{D_1}\cap X_{D_2}\subseteq\Delta_{n-1}$
\end{corollary}

\begin{proof}

The intersection $X_D\cap\text{ Crown}(X_{0,n})$ comprises the summits in Crown$(X_{0,n})$ that contain $v$ at their base,

Suppose there exists a vertex $v\in X_{D_1}\cap X_{D_2}$ with $v\notin\Delta_n$. Then since $X_D\subseteq\Delta_{n+1}$, it follows from Theorem \ref{thm: peak}(1) that $v$ is a peak of $\Delta_{n+1}$.

But if $v_1,v_2$ are the peaks of $D_1$ and $D_2$ respectively, then $v$ is joined to $v_1$ and $v_2$, so applying Theorem \ref{thm: peak}(2) we see that $v_1$ and $v_2$ belong to the base of the summit of $v$. But by Theorem \ref{thm: d-partite}(3,4), the base of the summit at $v$ cannot join two distinct vertices in $P_{j}^{(n-1)}$, so this implies $v_1=v_2$, and hence $D_1=D_2$.\end{proof}

\noindent This result implies that we can decompose $S_j^{(n)}\sqcup S_{j+1}^{(n)}$ as a disjoint union of $X_D\cap\text{ Crown}(X_{0,n})\cong\text{ Crown}(X_{0,2})$, as $D$ ranges over the summits in $S_j^{(n-2)}$. Thus we can reduce study of the crown of $X_{0,n}$ to the crown of the small region $X_{0,2}$, which will be crucial in our argument.

\subsection{The action of $I$ on $\Delta_n$}

\noindent Now, recall that $G=SL_3(K)$ acts on $\Delta$ by automorphisms, and recall from section \ref{subsec: facet subgroups} how we define the subgroups $I_F\subseteq \text{Stab}_G(F)$ for each facet $F$ in $\Delta$, and let $I=I_C$ be the pro-$p$ Iwahori subgroup.

\begin{lemma}\label{lem: preservation}
For each $n\in\mathbb{N}$, the action of $I$ on $\Delta$ preserves 

\begin{itemize}
\item $\Delta_n$.

\item $X_{0,n},X_{1,n},X_{2,n}$.

\item \emph{Crown}$(X_{i,n})$ and \emph{Crown}$^e(X_{i,n})$ for $i=0,1,2$.

\item $S_{j,i}^{(n)}$ for each $j=1,\dots,\lfloor\frac{n}{2}\rfloor+1$.

\end{itemize}
\end{lemma}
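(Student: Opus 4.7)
The plan is to observe that every claim reduces to the single fact that $I\subseteq J_C$ acts on $\Delta$ by building automorphisms and fixes every vertex of the hyperspecial chamber $C$. Since $I\subseteq J_C$ and $J_F$ is defined as the subgroup of $G$ fixing every vertex of $F$, each $g\in I$ satisfies $g\cdot v_i=v_i$ for $i=0,1,2$; and because $g$ acts by automorphisms of the chamber system, $\delta(g\cdot D,g\cdot E)=\delta(D,E)$ for all chambers $D,E$. In particular $g$ preserves the distance function $d$ on chambers and the graph distance on vertices.

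First, for $\Delta_n$: if $v\in\Delta_n$ pick a chamber $D\ni v$ with $d(D,C)\leq n$; then $g\cdot v\in g\cdot D$ and $d(g\cdot D,C)=d(g\cdot D,g\cdot C)=d(D,C)\leq n$, so $g\cdot v\in\Delta_n$. Next, for $X_{i,n}$, the definition (\ref{eqn: X set}) is purely in terms of graph distance from the vertices $v_0,v_1,v_2$ of $C$; since $g$ fixes each $v_i$ and preserves the graph, it preserves each ball about $v_i$, hence each $X_{i,n}$.

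For $\mathrm{Crown}(X_{i,n})=S(n)\cap X_{i,n}$, observe that $g$ preserves $\Delta_n$ and $\Delta_{n-1}$, hence the set $P(n)=\Delta_n\setminus\Delta_{n-1}$ of peaks. Since by Theorem \ref{thm: peak}(1) the summit $D_v$ attached to a peak $v$ is the unique chamber containing $v$ at distance $n$ from $C$, and this property is preserved by $g$ (which fixes $C$ and preserves $d$), we get $g\cdot D_v=D_{g\cdot v}$; thus $g$ permutes $S(n)$. Combined with preservation of $X_{i,n}$, this yields preservation of $\mathrm{Crown}(X_{i,n})$. The extended crown $\mathrm{Crown}^e(X_{i,n})$ is built (Definition \ref{defn: extended crown}) out of the summits of $\Delta_{n-2}$ lying in $X_{i,n-2}$ by taking, for each such summit $D$, the set $X_D$ of vertices adjacent to the peak of $D$; since $g$ preserves summits, peaks and adjacency, and preserves $X_{i,n-2}$, it preserves $\mathrm{Crown}^e(X_{i,n})$.

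Finally, for $S_{j,i}^{(n)}=\{D\in\Delta:\delta(D,C)=w_{j,i}^{(n)}\}$, using $g\cdot C=C$ and the automorphism property,
$$\delta(g\cdot D,C)=\delta(g\cdot D,g\cdot C)=\delta(D,C),$$
so $S_{j,i}^{(n)}$ is $I$-stable. There is no substantive obstacle here beyond carefully invoking the correct characterisation in each case; the only mild subtlety is that for the (extended) crowns one must note that summits and peaks are defined intrinsically from $C$ and the metric, so are automatically preserved by any automorphism fixing $C$.
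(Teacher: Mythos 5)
Your proof is correct and follows essentially the same route as the paper: reduce everything to the facts that $I$ fixes every vertex of $C$ (hence each $v_i$) and acts by Weyl-distance-preserving automorphisms, then run through each item directly. The only cosmetic difference is that you make the intermediate preservation of $P(n)$, $S(n)$, and the assignment $v\mapsto D_v$ fully explicit, whereas the paper's text is slightly more compressed; the substance is the same.
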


\begin{proof}

First, if $v\in\Delta_n$ then there exists a chamber $D$ with $v\in D$ and $d(D,C)\leq n$. So given $g\in I$, $d(g\cdot D,C)=d(g\cdot D,g\cdot C)=d(D,C)=n$, so $g\cdot v\in\Delta_n$.

Moreover, if $v\notin\Delta_{n-1}$ then $g\cdot v\notin\Delta_{n-1}$, otherwise $v=g^{-1}\cdot(g\cdot v)\in\Delta_{n-1}$. So $I$ preserves all peaks of $\Delta_n$.\\

\noindent Also note that for all $g\in I$, $g\cdot v_i=v_i$ for $i=0,1,2$. So if $d(v,v_i)=m$ for some $i$, then $d(g\cdot v,v_i)=d(g\cdot v,g\cdot v_i)=d(v,v_i)=m$. So clearly if $v\in X_{i,n}$ then $g\cdot v\in X_{i,n}$.

Since $\text{ Crown}(X_{i,n})=X_{i,n}\cap S(n)$ by definition, it is clear that $I$ preserves $\text{ Crown}(X_{i,n})$. Moreover, for all $D\in\text{ Crown}(X_{i,n-2})$, $g\in I$, it is clear that $g\cdot X_D=X_{g\cdot D}$, and $g\cdot D\in\text{ Crown}(X_{i,n-2})$, so it follows that $I$ preserves Crown$^e(X_{i,n})$.\\

\noindent By definition, $S_{j,i}^{(n)}$ is the $w$-sphere of the chamber $D_{j,i}$ in the standard apartment, so $D\in S_{j,i}^{(n)}$ if and only if $\delta(D,C)=\delta(D_{j,i}^{(n)},C)$. But for any $g\in I$, since $g\cdot C=C$ it is clear that $\delta(g\cdot D,C)=\delta(g\cdot D,g\cdot C)=\delta(D,C)=\delta(D_{j,i}^{(n)},C)$, so $g\cdot D\in S_{j,i}^{(n)}$ as required.\end{proof}

\noindent In the following definition, $\Delta$ can be any Bruhat-Tits building.

\begin{definition}\label{defn: complete region}
We say that $\mathcal{X}$ is a \emph{complete region} of $\Delta$, if 
\begin{itemize}
\item there exists $n\in\mathbb{N}$ with $\Delta_n\subseteq\mathcal{X}\subset\Delta_{n+1}$, and

\item for all vertices $v$ in $\mathcal{X}$, there exists a chamber $D\in\mathcal{X}$ containing $v$.
\end{itemize}
\end{definition}

\noindent\textbf{Note:} We can realise a complete region as a set of chambers in $\Delta$. Indeed, we could define a complete region as a set of chambers $\mathcal{X}$ with $\Delta_n\subseteq\mathcal{X}\subseteq\Delta_{n+1}$.\\

\noindent\textbf{Examples:} 1. Of course, $\Delta_n$ is itself a complete region, since for every $v\in\Delta_n$, by definition, there exists a chamber $D$ with $v\in D$ and $d(D,C)\leq n$. So all vertices of $D$ lie in $\Delta_n$.\\

\noindent 2. If $G$ has rank 1, then $\Delta$ is the Bruhat-Tits tree of degree $q$, and $\Delta_n$ is the set of all vertices of distance no more than $m$ from either $v_0=[\mathcal{O}\oplus\mathcal{O}]$ or $v_1=[\pi\mathcal{O}\oplus\mathcal{O}]$ (see Figure 1). So any complete region would comprise precisely these vertices, and any collection of vertices in $\Delta_{n+1}\backslash\Delta_{n}$, each of which are joined by a unique edge to $\Delta_n$.\\

\noindent 3. If $G$ has type $\widetilde{A}_2$, then using Theorem \ref{thm: peak} we see that any set of vertices $\mathcal{X}$ containing $\Delta_n$ and a collection of vertices in $\Delta_{n+1}\backslash\Delta_n$ is a complete region, similar to the rank 1 case. This should hold in higher ranks, but we do not prove this here.

\begin{lemma}\label{lem: I-invariant complete region}
Let $\mathcal{X}$ be a complete region of $\Delta$ with $\Delta_{n-1}\subseteq\mathcal{X}\subseteq\Delta_{n}$. Then $\mathcal{X}$ is $I$-invariant if and only if $\mathcal{X}=\Delta_{n-1}\sqcup \underset{(i,j)\in \Gamma}\bigsqcup S_{j,i}^{(n)}$ for some unique subset $\Gamma\subseteq\{1,\dots,\lfloor\frac{n}{2}\rfloor+1\}\times \{0,1,2\}$.
\end{lemma}

\begin{proof}

One direction follows immediately from Lemma \ref{lem: preservation}. Conversely, if $D$ is a chamber in $\mathcal{X}$ with $D\notin\Delta_{n-1}$, then $D\in\Delta_{n}$, so $D\in S_{j,i}^{(n)}$ for some $i=0,1,2$, $j\leq m+1$. But since $S_{j,i}^{(n+1)}$ and $\mathcal{X}$ are $I$-invariant by Lemma \ref{lem: preservation}, it follows immediately that $S_{j,i}^{(n)}\subseteq\mathcal{X}$ as required.\end{proof}

\begin{proposition}\label{propn: distance fixing}
Let $D,E$ be two summits of $\Delta_n$, whose peaks are joined by an edge, and let $F$ be a chamber, adjacent to $E$ at the base, with $d(F,C)=n-1$. Then there exists $g\in I$ such that $g$ fixes $D$, but $g$ does not fix $F$.
\end{proposition}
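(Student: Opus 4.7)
My plan is to locate, in a common apartment, a wall $W$ that strictly separates $F$ from both $C$ and $D$, and then take $g$ from the associated affine root subgroup, which will lie in $I$ and fix $D$ while moving $F$.

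By Theorem~\ref{thm: peak}(2), let $B$ be the unique chamber at distance $n+1$ from $C$ containing the edge $e=\{v_D,v_E\}$ joining the two peaks, and write $B=\{v_D,v_E,v_B\}$. Since peaks lie in $\Delta_n\setminus\Delta_{n-1}$ while the other vertices of summits lie in $\Delta_{n-1}$, I can write $D=\{v_D,v_B,v_D'\}$ and $E=\{v_E,v_B,v_E'\}$ with $v_D',v_E'\in\Delta_{n-1}$, and $F=\{v_B,v_E',v_F\}$ with $v_F\in\Delta_{n-1}$. If $v_D'=v_E'$ then $b_D=b_E$, forcing $D\sim E$, and then $D\sim B\sim E\sim D$ would be a 3-cycle, contradicting Lemma~\ref{lem: no-cycles}; hence $v_D'\neq v_E'$. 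Next I choose an apartment $\mathcal{A}$ containing $C$ and $B$. By Theorem~\ref{thm: minimal gallery}, $\mathcal{A}$ contains every chamber on every minimal gallery from $C$ to $B$, so $D$, $E$, and also $F$ (via the minimal gallery $C\to\cdots\to F\to E\to B$) all lie in $\mathcal{A}$.

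I next show $D$ and $F$ share no edge, and so are not adjacent. A common edge would produce a common vertex besides $v_B$. But $v_D\notin\Delta_{n-1}$ while $v_E',v_F\in\Delta_{n-1}$ rules out $v_D\in\{v_E',v_F\}$; $v_D'\neq v_E'$ from above; and in $\mathcal{A}$ the link of $v_B$ in the $\widetilde{A}_2$ Coxeter complex is a hexagon whose six neighbouring vertices are pairwise distinct, so $v_F\neq v_D'$. Hence $D\cap F=\{v_B\}$ and $D,F$ are not adjacent. Together with $d(C,F)=n-1$ and $d(C,D)=n$, this means that if $F$ were on some minimal gallery from $C$ to $D$ then $d(F,D)=1$, a contradiction; so $F$ lies on no minimal gallery from $C$ to $D$.

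Applying the standard Coxeter-complex characterisation of minimal galleries to this fact, there is a wall $W$ of $\mathcal{A}$ with $C$ and $D$ strictly in one open half-apartment and $F$ strictly in the other. Let $\alpha$ be the affine root with $W=\{\alpha=0\}$ and $\alpha(C)>0$. Bruhat-Tits theory for $G=SL_3(K)$ then supplies an affine root subgroup $U_\alpha\subseteq G$ contained in the pro-$p$ Iwahori $I$ (since $\alpha(C)>0$), which fixes the closed half-apartment $\{\alpha\geq 0\}$ pointwise (so every $g\in U_\alpha$ fixes $D$) and acts without chamber fixed points on the complementary open half, which contains $F$. Choosing any $g\in U_\alpha$ with $g\cdot F\neq F$ completes the construction.

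The most delicate step is the last one: ensuring some $g\in U_\alpha$ actually moves $F$. When $F$ has an edge on $W$, $U_\alpha$ acts simply transitively on the $q$ chambers sharing that edge with $W$ on the negative side, so any non-trivial $g\in U_\alpha$ works; in the general case this reduces to the standard fact, read off from the root-datum description of $I$, that the orbit of any chamber strictly on the negative side of $W$ under $U_\alpha$ is non-trivial.
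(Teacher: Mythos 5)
Your proof is correct in substance and takes a genuinely different route from the paper. The paper's own argument chooses an apartment $\mathcal{A}$ through $C$ and $H$ (your $B$), declares it the standard apartment WLOG, realises all the relevant vertices as lattice classes $[\langle\pi^a e_1,\pi^b e_2,\pi^c e_3\rangle]$, splits on the parity of $n$, and then writes down an explicit unipotent matrix $g\in I$ that manifestly fixes the vertices of $D$ and moves a vertex of $F$. Your version replaces this computation with a conceptual argument: you show $F$ is not on any minimal gallery from $C$ to $D$ (via the careful local analysis around $v_B$, which is done cleanly — the hexagonal-link step correctly rules out $v_F=v_D'$), invoke the convexity characterisation to produce a separating wall $W=\{\alpha=0\}$, and take $g$ in the positive affine root subgroup $U_\alpha$. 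This avoids the parity case split and works verbatim in any rank and type, which the paper's coordinate argument does not. The trade-off is that you are importing Bruhat--Tits facts the paper deliberately avoids developing (positivity of $\alpha$ implies $U_\alpha\subseteq I$; $U_\alpha$ fixes the closed half $\{\alpha\geq 0\}$ pointwise; $U_\alpha$ has an element moving every chamber of $\mathcal{A}$ with $\alpha<0$). The last of these is the real content, and you correctly flag it as the delicate step; it is true and "standard," but the paper's hands-on matrix essentially serves as its verification, so your citation of it as a black box is where the two proofs are doing the same work in different language.

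One small imprecision worth fixing: you assert that "$U_\alpha$ acts simply transitively on the $q$ chambers" sharing a panel on $W$ and that "any non-trivial $g\in U_\alpha$ works." In the usual convention $U_\alpha$ is the filtration piece $U_{a,m}$, a pro-$p$ group, not a group of order $q$; the elements of $U_{\alpha+1}\subsetneq U_\alpha$ do fix all chambers adjacent to $W$ on the negative side. What is true is that $U_\alpha/U_{\alpha+1}\cong\mathbb{F}_q$ acts simply transitively, so you need $g\in U_\alpha\setminus U_{\alpha+1}$, not merely $g\neq 1$. With that adjustment the existence of the required $g$ is sound, and for the "general case" (where $F$ is not adjacent to $W$) one can either appeal to the explicit description of $U_\alpha$ acting on lattices, as the paper does, or argue that a $g\in U_\alpha\setminus U_{\alpha+1}$ fixing $F$ would — by Corollary~\ref{cor: gallery fixing} applied to $F$ and any fixed chamber on the positive side — have to fix the chamber of the gallery straddling $W$, a contradiction.
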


\begin{proof}

By Theorem \ref{thm: peak}(2), we know that there exists a chamber $H$, with $d(H,C)=n+1$, adjacent to $D$ and $E$. Fix an apartment $\mathcal{A}$ containing $C$ and $H$, and it follows from Theorem \ref{thm: minimal gallery} that $\mathcal{A}$ contains $D,E$ and $F$.

Without loss of generality, we may assume that $\mathcal{A}$ is the standard apartment.\\

\noindent Realising vertices in $\Delta$ as equivalence classes of full rank lattices in $K^3$, the vertices of $D,E,F$ all have the form $[\langle \alpha e_1,\beta e_2,\gamma e_3\rangle]$, for $\alpha,\beta,\gamma\in K$, where $e_1,e_2,e_3$ is the standard basis for $K^3$. By definition of the hyperspecial chamber, $C$ has vertices $v_0=[\langle e_1,e_2,e_3\rangle]$, $v_1=[\langle e_1,e_2,\pi e_3\rangle]$ and $v_2=[\langle e_1,\pi e_2,\pi e_3\rangle]$.\\

\noindent Setting $u,v$ as the peaks of $D$ and $E$ respectively, if $M:=\lceil\frac{n}{2}\rceil$, then using Lemma \ref{lem: distance} and Lemma \ref{lem: partition1} we may assume without loss of generality that $u$ and $v$ both have distance $M$ from $v_0$, distance $M+1$ from $v_1$, and the same distance from $v_2$, which is either $M$ or $M+1$.\\

\noindent If $u,v$ have distance $M+1$ from $v_2$ (i.e. $n$ is even), then an easy induction shows that $u$ and $v$ have the form $u=[\langle\pi^n e_1,\pi^i e_2, e_3\rangle]$ for some $i\leq n$, and $v=[\langle \pi^n e_1,\pi^{i\pm 1}e_2,e_3\rangle]$. Moreover, the vectors at the base of $D$ have the form $w_1=[\langle \pi^{n-1}e_1,\pi^{i}e_2,e_3\rangle]$ and $w_2:=[\langle \pi^{n-1}e_1,\pi^{i-1}e_2,e_3\rangle]$. 

Also, the base of $E$ contains a vertex $w$ that it does not share with $D$. Since $E$ is adjacent to $F$ at the base, it follows that $w$ is also a vertex of $F$. So it remains to find an element $g\in I$ such that $g$ fixes $u,w_1$ and $w_2$, but $g$ does not fix $w$.\\ 

\noindent If $v=[\langle \pi^n e_1,\pi^{i-1}e_2,e_3\rangle]$ then $w=[\langle \pi^{n-1}e_1,\pi^{i-2}e_2,e_3\rangle]$, and we take $g:=\left(\begin{array}{ccc} 1 & \pi^{n-i} & 0\\ 0 & 1 & 0\\ 0 & 0 & 1\end{array}\right)$. On the other hand, if  $v=[\langle \pi^n e_1,\pi^{i+1}e_2,e_3\rangle]$ then $w=[\langle \pi^{n-1}e_1,\pi^{i+1}e_2,e_3\rangle]$, and we take $g:=\left(\begin{array}{ccc} 1 & 0 & 0\\ 0 & 1 & \pi^i\\ 0 & 0 & 1\end{array}\right)$.\\

\noindent Since $I$ is the group of matrices in $SL_3(\mathcal{O})$ that are unipotent upper triangular modulo $\pi$, we see that $g\in I$ in both cases. We can immediately calculate that $g$ fixes $u,w_1$ and $w_2$, and $g$ does not fix $w$ as required. A similar argument applies when $n$ is odd.\end{proof}

\subsection{The border of $\Delta_n$}\label{subsec: border}

In the proof of our main theorems, rather than using the action of $G$ on vertices or chambers in $\Delta$, it will usually be more fruitful to consider the action of $G$ on edges, and we will be particularly interested in the edges that can be said to lie on the boundary of the region. The following definition makes this precise:

\begin{definition}\label{defn: border}
We say that an edge $e$ in $\Delta$ \emph{lies on the border} of $\Delta_{n}$ if 
\begin{enumerate}
\item both vertices of $e$ lie in $\Delta_{n}$,

\item  at least one lies outside of $\Delta_{n-1}$, and 

\item $e$ is the base of a summit of $\Delta_r$ for some $r>n$.
\end{enumerate}
\end{definition}

\begin{lemma}\label{lem: border characterisation}
If $e$ lies on the border of $\Delta_{n}$, then there is a unique chamber $D$ in $\Delta_{n}$ containing $e$.
\end{lemma}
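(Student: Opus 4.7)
The plan is to identify the minimum-distance chamber containing $e$ produced by Lemma \ref{lem: chamber for facet} as the unique chamber in $\Delta_n$ with $e$ as a face. Let $u$ be a vertex of $e$ lying outside $\Delta_{n-1}$ (so $u\in P(n)$, by condition~(2)) and let $w$ be the other vertex of $e$; by condition~(3) fix a peak $v^*\in P(r)$ with $r>n$ whose summit is $D^*:=D_{v^*}=\{u,w,v^*\}$. Write $C(e)$ for the unique chamber of minimum distance $d_e$ from $C$ containing $e$.

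I would first show $C(e)\in\Delta_n$ by cases on $w$. When $w\in\Delta_{n-1}$, the peak property of $u$ forces $w\in F_u$, so $D_u$ contains $e$; since any chamber containing $u$ has distance $\geq n$ from $C$, we get $d_e=n$ and $C(e)=D_u$, whose three vertices all lie in $\Delta_n$. When $w\in P(n)$, Theorem \ref{thm: peak}(2) yields a unique chamber $E^*=\{u,w,y\}$ of distance $n+1$ containing $e$ and adjacent to both summits $D_u,D_w$, with $y\in\Delta_{n-1}$ their common base vertex; as no distance-$n$ chamber contains $e$ (since $w\notin F_u$), we have $d_e=n+1$ and $C(e)=E^*\in\Delta_n$.

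For uniqueness, suppose $E=\{u,w,z\}\neq C(e)$ is a chamber in $\Delta_n$ containing $e$, so $z\in\Delta_n$ and $d(E,C)=d_e+1$ by Corollary \ref{cor: adjacent to facet}. The subcase $z\in\Delta_{n-1}$ is straightforward: when $w\in\Delta_{n-1}$ it forces $z$ to be the other vertex of $F_u$, giving $E=D_u=C(e)$ and a distance contradiction; when $w\in P(n)$ it forces $z$ to be the unique common base vertex of $D_u,D_w$ from Theorem \ref{thm: peak}(2), giving $E=C(e)$ again. The remaining subcase $z\in P(n)$ is the main obstacle and must be handled differently in the two cases.

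When $w\in P(n)$, the three pairwise-adjacent peaks $u,w,z$ form $E$, and I would apply Theorem \ref{thm: peak}(2) to the pairs $\{u,w\}$ and $\{u,z\}$ to obtain chambers $E^*_{uw}=\{u,w,y_{uw}\}$ and $E^*_{uz}=\{u,z,y_{uz}\}$ with $y_{uw},y_{uz}\in F_u$. If $y_{uw}=y_{uz}$ the chambers $E,E^*_{uw},E^*_{uz}$ form a $3$-cycle via the distinct edges $\{u,w\},\{u,y_{uw}\},\{u,z\}$, while if $y_{uw}\neq y_{uz}$ the chambers $E,E^*_{uw},D_u,E^*_{uz}$ form a $4$-cycle via four distinct edges all containing $u$; either contradicts Lemma \ref{lem: no-cycles}. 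The harder case $w\in\Delta_{n-1}$ with $z\in P(n)$ cannot be closed this way, since the relevant short cycles all pass repeatedly through the edge $e$; instead I would use condition~(3) together with Lemma \ref{lem: Weyl invariance} applied to the summit $D_u$ along the edge $e\neq F_u$ to get $\delta(E,C)=\delta(D^*,C)$. By the Bruhat decomposition for the BN-pair $(J,N_G(T))$ from Section~\ref{subsec: facet subgroups}, chambers with equal Weyl distance from $C$ form a single Iwahori orbit, so some $g\in J$ satisfies $g\cdot D^*=E$. Since $g$ fixes $C$ and preserves chamber distance from $C$, it permutes the sets $P(k)$ for each $k$; thus $g\cdot v^*\in P(n+1)$ must be one of the vertices of $E$, yet all three lie in $\Delta_n$ and $P(n+1)\cap\Delta_n=\emptyset$, a contradiction. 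Hence no such $E$ exists, and $C(e)$ is the unique chamber in $\Delta_n$ containing $e$.
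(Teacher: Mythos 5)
Your proof is correct, and its overall skeleton matches the paper's: set up $C(e)$ via Lemma~\ref{lem: chamber for facet}, split by whether the second vertex $w$ lies in $\Delta_{n-1}$ or $P(n)$, identify $C(e)$ (as $D_u$ or as the chamber of Theorem~\ref{thm: peak}(2)), and then argue uniqueness. Where you diverge is in the uniqueness step. The paper deduces uniqueness directly from Corollary~\ref{cor: adjacent to facet}, treating "distance $d_e+1$ from $C$" as equivalent to "not in $\Delta_n$"; but the paper itself warns in the Note after the definition of $\Delta_n$ that a chamber can have all vertices in $\Delta_n$ while having distance $>n$ from $C$, which is exactly the case $z\in P(n)$ you single out. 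You close that case with two tools the paper does not deploy for this lemma: the no-short-cycle Lemma~\ref{lem: no-cycles} (when $w\in P(n)$) and a Weyl-sphere argument combining Lemma~\ref{lem: Weyl invariance} with the $BN$-pair fact that $\delta(\,\cdot\,,C)$-spheres are single Iwahori orbits (when $w\in\Delta_{n-1}$). That Bruhat fact is not proved in the paper, though it follows from the description of $\delta$ on $G/J$ in Section~\ref{subsec: facet subgroups}, so your appeal to it is legitimate. Two small points worth filling in: (i) before asserting $g\cdot v^*\in P(n+1)$ you should note that $r=n+1$, which holds because $D^*$ is adjacent to $D_u$ via $e$ and $D_u$ is the unique distance-$n$ chamber containing $e$ (Corollary~\ref{cor: adjacent to facet}), so $d(D^*,C)=n+1$; and (ii) the observation that $g\in J$ permutes each $P(k)$ needs the $J$-version of Lemma~\ref{lem: preservation}, but that lemma's proof only uses $g\cdot C=C$ and so carries over verbatim. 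With those remarks, your argument is complete and in fact somewhat more careful than the paper's.
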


\begin{proof}

Setting $u,v\in\Delta_n$ as the two edges of $e$, we know using Lemma \ref{lem: chamber for facet} that there exist unique chambers $C(u),C(v),C(e)$ containing $u,v,e$ respectively, and of minimal distance to $C$ among all such chambers.

Without loss of generality, we may assume that $u\notin\Delta_{n-1}$, and it follows that $d(C(u),C)=n$. By Theorem \ref{thm: peak}(1), $u$ is a peak of $\Delta_n$, and clearly $C(u)$ is the summit at $u$.\\

\noindent If $v\in\Delta_{n-1}$, then by Definition \ref{defn: summit and peak} we know that $v\in C(u)$, and hence $e$ is an an edge of $C(u)$. So taking $D:=C(u)$, we know that $D\in\Delta_n$. But we also know that $e$ is the base of a summit $E$ of $\Delta_r$ for some $r>n$. Since $E$ is adjacent to $D$, it follows that $d(E,C)=n+1$, and hence all chambers adjacent to $D$ via $e$ have distance $n+1$ from $C$ by Corollary \ref{cor: adjacent to facet}. In particular, $D$ is the unique chamber adjacent to $e$ which lies in $\Delta_n$.\\

\noindent So we may assume that $v\notin\Delta_{n-1}$, and thus $v$ is a peak of $\Delta_n$ with summit $C(v)$. Using Theorem \ref{thm: peak}(2), there exists a unique chamber $D\in\Delta_n$ with $d(D,C)=n+1$, adjacent to $e,C(u),C(v)$. Clearly $d(C(e),C)\leq d(D,C)=n+1$, so either $C(e)=D$ or $d(C(e),C)\leq n$ by minimality. 

But since $u,v\in C(e)$, we know that $n=d(C(u),C)=d(C(v),C)\leq d(C(e),C)$, so if $d(C(e),C)\leq n$ then this forces equality, so $C(u)=C(v)=C(e)$ by minimality, and hence $u=v$, a contradiction.\\

\noindent Therefore $C(e)=D$, and all chambers adjacent to $D$ via $e$ are summits of $\Delta_{n+2}$, and hence lie outside $\Delta_n$ as required.\end{proof}

\noindent Moreover, if $D$ is a chamber in $\Delta$ with $d(D,C)=m$, we say that an edge $e$ of $D$ is an \emph{exterior edge} if for all chambers $E$ adjacent to $D$ via $e$, $d(E,C)=m+1$. Note that this does not necessarily imply that $e$ lies on the border of $\Delta_m$, since it is possible that no such chamber $E$ would be a peak of $\Delta_{m+1}$.

\begin{lemma}\label{lem: unique edge}

Let $\mathcal{X}$ be a complete region, let $D$ be a chamber in $\Delta$, and suppose that every exterior edge of $D$ lies in $\mathcal{X}$. Then $D\in\mathcal{X}$.

\end{lemma}

\begin{proof}

Let $m:=d(D,C)$ for convenience, and fix an exterior edge $e=\{v,w\}$ of $D$, then $e\in\mathcal{X}$ by assumption. Since $\mathcal{X}$ is complete, we know that $\Delta_n\subseteq\mathcal{X}\subseteq\Delta_{n+1}$ for some $n\in\mathbb{N}$, thus both vertices of $e$ lie in $\Delta_{n+1}$.\\

\noindent Let us first suppose that $u\in\Delta_n$, $w\in\Delta_{n+1}\backslash\Delta_{n}$. Then by Theorem \ref{thm: peak}, $w$ is a peak of $\Delta_{n+1}$, with summit $D'$ of distance $n+1$ from $C$, containing $u$, and all other chambers adjacent to $e$ have distance $n+2$. Moreover, since $\mathcal{X}$ is complete, $D'\in\mathcal{X}$. 

But since $e$ is exterior, we are assuming all chambers adjacent to $e$ have distance $m$ or $m+1$ from $C$, and $D$ is the unique such chamber with $d(D,C)=m$. Thus we conclude that $n=m+1$ and $D'=D\in\mathcal{X}$ as required. So we can assume from now on that either $v,w\in\Delta_n$, or $v,w\in\Delta_{n+1}\backslash\Delta_n$.\\

\noindent Suppose there exists another exterior edge $e'\neq e$ of $D$. Then similarly $e'\in\mathcal{X}$, and either both vertices of $e'$ lie in $\Delta_n$, or both lie in $\Delta_{n+1}\backslash\Delta_n$. But the vertices of $e$ and $e'$ comprise all vertices of $D$, and they must share a common vertex. 

Therefore, either all vertices of $D$ lie in $\Delta_{n+1}\backslash\Delta_n$, contradicting Theorem \ref{thm: d-partite}, or they all lie in $\Delta_n\subseteq\mathcal{X}$, so we conclude that $D\in\mathcal{X}$ as required.\\ 

\noindent So we can assume from now on that $e$ is the unique exterior edge of $D$. Since all summits of a region $\Delta_r$ have at least two exterior edges (the edges joined to the peak) by Theorem \ref{thm: peak}(2), it follows that $D$ is not a summit in the building.\\

\noindent Now, suppose that $v,w\in\Delta_n$, and assume for contradiction that $D\notin\mathcal{X}$. Then the third vertex of $D$ must lie outside of $\Delta_n$, making $D$ a summit of $\Delta_{r}$ for some $r\geq m+1$ by Theorem \ref{thm: peak}(1) -- contradiction.\\

\noindent On the other hand, if $u,w\in\Delta_{n+1}\backslash\Delta_n$, then they are both peaks of $\Delta_{n+1}$ by Theorem \ref{thm: peak}(1). Therefore, by Theorem \ref{thm: peak}(2), the bases of the summits $D_u$ and $D_w$ share a common vertex $v\in\Delta_n$, and $E=\{u,v,w\}$ is the unique chamber, containing $e$, with $d(E,C)=n+2$. And since $u,w\in\mathcal{X}$ and $v\in\Delta_n\subseteq\mathcal{X}$, it follows that $E\in\mathcal{X}$.

But $e$ is the base of a summit of $\Delta_{n+3}$ by Theorem \ref{thm: d-partite}, so all chambers adjacent to $e$ that are not equal to $E$ are peaks of $\Delta_{n+3}$, and none of these can be $D$. Hence $D=E\in\mathcal{X}$.\end{proof}

\section{Coefficient systems}\label{sec: coefficient systems}

We now return to the general setting. Throughout this section, let $G=\mathbb{G}(K)$, for $\mathbb{G}$ a split semisimple, simply connected algebraic group. Let $d\in\mathbb{N}$ be the rank of $G$, let $I$ be the canonical pro-$p$ Iwahori subgroup of $G$, and let $\mathbb{X}:=k[G/I]$ be the standard module. As in the previous section, $\Delta=\widetilde{\Delta}(G)$ will denote the Bruhat-Tits building of $G$, which has rank $d$, and we let $C$ be the hyperspecial chamber in $\Delta$.

\subsection{$\ast$-acyclic $\mathcal{H}$-modules}\label{subsec: acyclic modules}

\noindent Recall from section \ref{subsec: facet subgroups} how we define the subgroups $I_F\subseteq J_F$ for each facet $F$ in $\Delta$, and how we can realise $I_F$ as the set of all elements of the group $\mathcal{G}_F^\circ(\mathcal{O})$ that lie in the unipotent radical modulo $\pi$. With this description in mind, we define the following data as in \cite[Section 3.3.1]{gorenstein} and \cite[section 1.3]{torsion}:

\begin{definition}
For each face $F$ of $C$, define $\mathbb{X}_F:=k[\mathcal{G}_F^{\circ}(\mathcal{O})/I]=\text{\emph{ind}}_I^{\mathcal{G}_F^{\circ}(\mathcal{O})}(1)$, and $\mathcal{H}_F:=\text{End}_{k[\mathcal{G}_F^{\circ}(\mathcal{O})]}(\mathbb{X}_F)^{\text{\emph{op}}}$
\end{definition}

\noindent This is completely analogous to the definition of the standard module $\mathbb{X}$ and the pro-$p$ Iwahori-Hecke algebra $\mathcal{H}$. Indeed, $\mathcal{H}_F$ is a finite dimensional subalgebra of $\mathcal{H}$, and $\mathcal{H}$ is free as a left and right $\mathcal{H}_F$-module \cite[Proposition 1.3]{torsion}. 

\begin{lemma}\label{lem: face induction}
If $F$ is a face of $C$, then $\mathbb{X}_F\otimes_{\mathcal{H}_{F}}\mathcal{H}\cong\mathbb{X}^{I_F}$ via $x\otimes h\mapsto h(x)$.
\end{lemma}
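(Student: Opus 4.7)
The plan is to first verify well-definedness of the natural map $\Phi:\mathbb{X}_F\otimes_{\mathcal{H}_F}\mathcal{H}\to\mathbb{X}^{I_F}$, $x\otimes h\mapsto h(x)$, then to establish bijectivity by decomposing both sides according to $J_F$-orbits on $G/I$ and matching them orbit-by-orbit, using the freeness of $\mathcal{H}$ over $\mathcal{H}_F$.

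\emph{Well-definedness.} First I would check that $\mathbb{X}_F=k[J_F/I]$ sits inside $\mathbb{X}^{I_F}$: for $j\in J_F$ and $g\in I_F$, normality of $I_F$ in $J_F=\mathcal{G}_F^\circ(\mathcal{O})$ together with $I_F\subseteq I$ gives $g\cdot jI=j(j^{-1}gj)I=jI$, so each basis vector of $\mathbb{X}_F$ is $I_F$-fixed. Since $\mathcal{H}$ acts on $\mathbb{X}$ by $G$-equivariant endomorphisms, this action commutes with $I_F$ and preserves $\mathbb{X}^{I_F}$. The assignment is $\mathcal{H}_F$-balanced: using the opposite-algebra convention $x\cdot h=h(x)$, both $\Phi((x\cdot h')\otimes h)$ and $\Phi(x\otimes h'h)$ evaluate to $h(h'(x))$. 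So $\Phi$ is a well-defined morphism of right $\mathcal{H}$-modules.

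\emph{Reduction to $J_F$-orbits.} By \cite[Proposition 1.3]{torsion}, $\mathcal{H}$ is free as a left $\mathcal{H}_F$-module. I would choose a system $\mathcal{W}\subseteq G$ of $(J_F,I)$-double coset representatives so that the Hecke elements $\tau_w\in\mathbb{X}^I\cong\mathcal{H}$ (the indicator functions of $IwI/I$) form such a basis. This yields a decomposition
\[\mathbb{X}_F\otimes_{\mathcal{H}_F}\mathcal{H}\;=\;\bigoplus_{w\in\mathcal{W}}\mathbb{X}_F\otimes\tau_w,\]
while the $J_F$-orbit partition $G/I=\bigsqcup_{w\in\mathcal{W}}J_FwI/I$ gives a matching decomposition of $\mathbb{X}^{I_F}$. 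Since $\Phi([jI]\otimes\tau_w)=j\tau_w$ is supported inside $J_FwI/I$, the map respects both decompositions, and bijectivity reduces to showing that for each fixed $w$ the restriction
\[\Phi_w:\mathbb{X}_F\otimes\tau_w\longrightarrow k[J_FwI/I]^{I_F},\qquad [jI]\otimes\tau_w\mapsto j\tau_w,\]
is an isomorphism.

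\emph{Per-orbit comparison and the main obstacle.} The target has $k$-dimension $|I_F\backslash J_F/(J_F\cap wIw^{-1})|$, which by normality of $I_F$ simplifies to $|J_F/I_F(J_F\cap wIw^{-1})|$, while the source has dimension $|J_F/I|$. Matching these boils down to the group-theoretic identity $I=I_F\cdot(J_F\cap wIw^{-1})$ inside $J_F$. With a compatible choice of $\mathcal{W}$ (so that $w$ represents a suitably ``positive'' element of the extended affine Weyl group relative to $F$), this identity should follow from the pro-$p$ Iwahori factorisation of $I$ into root subgroups and the behaviour of these factors under conjugation by $w$; an analogous stabiliser computation yields injectivity of $\Phi_w$, after which $\Phi_w$ identifies $\mathbb{X}_F\otimes\tau_w$ with the space of $I_F$-orbit sums in $k[J_FwI/I]$. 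I expect the verification of this identity, together with the choice of representatives needed to make it hold uniformly in $w$, to be the main technical difficulty: it is a refinement of the Bruhat-type decomposition underlying the freeness of $\mathcal{H}$ over $\mathcal{H}_F$, and should be accessible from the description of $I_F$ as the pro-$p$ radical of $J_F$ developed in \cite{gorenstein}.
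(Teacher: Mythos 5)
The paper proves this lemma purely by citation to \cite[Proposition 1.3]{torsion}, so there is no in-paper argument to compare your sketch against; the relevant comparison is whether your proposal actually constitutes a proof. It does not yet: you have correctly set up the natural map, checked $\mathcal{H}_F$-balancedness, and identified that freeness of $\mathcal{H}$ over $\mathcal{H}_F$ together with a $(J_F,I)$-double-coset decomposition reduces the problem to a per-orbit comparison, but the step you flag as ``the main technical difficulty'' is precisely the content of the lemma, and you leave it as an expectation rather than a verification.

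Concretely, two things remain open in your sketch. First, the identity $I = I_F\cdot(J_F\cap wIw^{-1})$ is not obviously correct as stated: it presupposes $J_F\cap wIw^{-1}\subseteq I$, which requires $w$ to be a minimal-length representative of $W_F(1)\backslash\widetilde{W}(1)$, and even then the equality needs a root-group factorisation argument, not just a Bruhat decomposition; you would need to actually write this down, since it is not a formal consequence of freeness. Second, and more importantly, matching dimensions orbit-by-orbit does not by itself show $\Phi_w$ is an isomorphism. The image $\Phi_w([jI]\otimes\tau_w)$ is the characteristic function of $jIwI/I\subseteq J_FwI/I$. These sets are indeed $I_F$-stable (by normality of $I_F$ in $J_F$), but they are generally \emph{unions} of $I_F$-orbits, not single orbits, and for distinct $j$ they can overlap. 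So even with matching dimensions you still need to show that the characteristic functions of the $jIwI/I$ span $k[J_FwI/I]^{I_F}$ (surjectivity) and that the only relations among them are those already imposed on the source by $\mathcal{H}_F$ (injectivity). This is exactly where the pro-$p$ structure of $I_F$ and the length-additivity relations $T_uT_d = T_{ud}$ for $u\in W_F(1)$, $d$ a minimal-length representative, enter the Ollivier--Schneider argument; without invoking them, your ``per-orbit comparison'' does not close.

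In short: the framing is sensible and matches the standard route, but what you present is an outline with the central computation explicitly deferred, so it is not yet a proof of the lemma.
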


\begin{proof}

This is \cite[Proposition 1.3]{torsion}.\end{proof}




\noindent Now, let $M$ be a $\mathcal{H}$-module, and recall from \cite[section 1.3.1]{torsion} that $M$ is \emph{$\ast$-acyclic} if $Ext_{\mathcal{H}}^i(M,\mathcal{H})=0$ for all $i\geq 1$. Acyclicity is a very desirable homological property, and the following lemma ensures that by focusing on modules induced from $\mathcal{H}_F$, it is one we can often deduce.

\begin{lemma}\label{lem: acyclic induction}
If $F$ is a face of $C$ and $N$ is a finitely generated $\mathcal{H}_F$-module, then $N\otimes_{\mathcal{H}_F}\mathcal{H}$ is a $\ast$-acyclic $\mathcal{H}$-module.
\end{lemma}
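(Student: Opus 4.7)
The plan is to reduce the computation of $\text{Ext}_{\mathcal{H}}^i(N\otimes_{\mathcal{H}_F}\mathcal{H},\mathcal{H})$ to an Ext computation over the finite-dimensional algebra $\mathcal{H}_F$, where vanishing will follow from self-injectivity. Concretely, since $N$ is finitely generated and $\mathcal{H}_F$ is finite dimensional over $k$ (in particular Noetherian), I would pick a resolution $P_{\bullet}\to N$ by finitely generated projective $\mathcal{H}_F$-modules. Because $\mathcal{H}$ is free, hence flat, as a right $\mathcal{H}_F$-module by \cite[Proposition 1.3]{torsion}, tensoring produces a projective resolution $P_{\bullet}\otimes_{\mathcal{H}_F}\mathcal{H}\to N\otimes_{\mathcal{H}_F}\mathcal{H}$ in $\text{Mod}(\mathcal{H})$ (noting that $\mathcal{H}_F\otimes_{\mathcal{H}_F}\mathcal{H}=\mathcal{H}$ is projective, and finite generation is preserved). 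The standard extension-of-scalars adjunction
$$\text{Hom}_{\mathcal{H}}(-\otimes_{\mathcal{H}_F}\mathcal{H},\mathcal{H})\cong\text{Hom}_{\mathcal{H}_F}(-,\mathcal{H}),$$
applied levelwise, then yields a natural isomorphism $\text{Ext}_{\mathcal{H}}^i(N\otimes_{\mathcal{H}_F}\mathcal{H},\mathcal{H})\cong\text{Ext}_{\mathcal{H}_F}^i(N,\mathcal{H})$, where on the right $\mathcal{H}$ is viewed as a left $\mathcal{H}_F$-module by restriction.

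Next, I would exploit the fact that $\mathcal{H}$ is \emph{also} free as a left $\mathcal{H}_F$-module, so that $\mathcal{H}\cong\bigoplus_{j\in J}\mathcal{H}_F$ for some index set $J$. Because each $P_i$ in the chosen resolution is finitely generated, $\text{Hom}_{\mathcal{H}_F}(P_i,-)$ commutes with arbitrary direct sums, and taking cohomology of a direct sum of complexes is a direct sum of cohomologies, so
$$\text{Ext}_{\mathcal{H}_F}^i(N,\mathcal{H})\cong\bigoplus_{j\in J}\text{Ext}_{\mathcal{H}_F}^i(N,\mathcal{H}_F).$$
It therefore suffices to show that the regular left $\mathcal{H}_F$-module is injective, i.e.\ that $\text{Ext}_{\mathcal{H}_F}^i(N,\mathcal{H}_F)=0$ for all $i\geq 1$ and every finitely generated $\mathcal{H}_F$-module $N$.

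The main, and essentially only nontrivial, input is thus the self-injectivity (Frobenius property) of the finite-dimensional algebra $\mathcal{H}_F$. I expect this to be extracted from \cite{gorenstein}, where the Gorenstein property of the restricted Iwahori--Hecke algebra is analysed: the statement that $\mathcal{H}_F$ is $0$-Gorenstein is equivalent to its regular module being injective, and immediately gives the desired vanishing. The hard part of the proof is therefore not the homological manipulations, which are purely formal manoeuvres with flat base change and the extension-of-scalars adjunction, but the appeal to the self-injectivity of $\mathcal{H}_F$.
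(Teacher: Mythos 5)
Your argument is correct in substance, but it takes a genuinely different form from the paper's, which simply invokes \cite[Corollary 1.5]{torsion} without reproducing any proof. You have, in effect, reconstructed the content of that cited corollary from scratch: flat base change along $\mathcal{H}_F \to \mathcal{H}$, the extension-of-scalars adjunction giving $\mathrm{Ext}^i_{\mathcal{H}}(N\otimes_{\mathcal{H}_F}\mathcal{H},\mathcal{H}) \cong \mathrm{Ext}^i_{\mathcal{H}_F}(N,\mathcal{H})$, a free decomposition of $\mathcal{H}$ over $\mathcal{H}_F$ to pass to $\bigoplus \mathrm{Ext}^i_{\mathcal{H}_F}(N,\mathcal{H}_F)$ (using finite generation of $N$ so that $\mathrm{Hom}$ out of the finitely generated projectives in the resolution commutes with direct sums), and finally the self-injectivity of $\mathcal{H}_F$. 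The key non-formal input, that the finite Hecke algebra $\mathcal{H}_F$ is Frobenius (hence self-injective), is indeed established in \cite{gorenstein}, so that appeal is sound. The trade-off is clear: the paper's one-line citation is economical but opaque, while your argument makes the homological mechanism and the single geometric/algebraic input visible; this is valuable since the rest of Section \ref{subsec: acyclic modules} leans on exactly this module-theoretic picture.

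One point to tidy: you have the sidedness of the freeness statements swapped. With $N$ a right $\mathcal{H}_F$-module, exactness of $P_\bullet \otimes_{\mathcal{H}_F}\mathcal{H}$ requires $\mathcal{H}$ flat as a \emph{left} $\mathcal{H}_F$-module, whereas the decomposition $\mathrm{Ext}^i_{\mathcal{H}_F}(N,\mathcal{H}) \cong \bigoplus_j \mathrm{Ext}^i_{\mathcal{H}_F}(N,\mathcal{H}_F)$ uses $\mathcal{H}$ free as a \emph{right} $\mathcal{H}_F$-module, where $\mathcal{H}$ carries the restricted right $\mathcal{H}_F$-action. Since \cite[Proposition 1.3]{torsion} gives freeness on both sides, nothing breaks, but the attributions should be flipped for the argument to read correctly.
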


\begin{proof}
This is \cite[Corollary 1.5]{torsion}.\end{proof}

\noindent Using Lemma \ref{lem: face induction}, it follows that $\mathbb{X}^{I_F}$ is $\ast$-acyclic. Moreover, since $\mathcal{H}$ is free over $\mathcal{H}_F$, the functor $-\otimes_{\mathcal{H}_F}\mathcal{H}$ is exact, so for any submodule $N$ of $\mathbb{X}_F$, if we let $M$ be the $\mathcal{H}$-submodule of $\mathbb{X}^{I_F}$ generated by $N$, then $$\mathbb{X}^{I_F}/M\cong (\mathbb{X}_F/N)\otimes_{\mathcal{H}_F}\mathcal{H}$$ is $\ast$-acyclic.

\begin{lemma}\label{lem: adjacent acyclicity}
Let $v$ be a vertex in $\Delta$, and $S$ is any set of edges in $\Delta$ adjacent to $v$. If we set $$N:=\underset{e\in S}{\sum}{\mathbb{X}^{I_e}}$$ then $\mathbb{X}^{I_v}/N$ is $\ast$-acyclic.
\end{lemma}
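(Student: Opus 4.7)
The plan is to realise $\mathbb{X}^{I_v}/N$ as a module induced from $\mathcal{H}_v$, and then invoke Lemma~\ref{lem: acyclic induction}. First I reduce to the case $v \in C$: since the left $G$-action on $\mathbb{X}$ commutes with the right $\mathcal{H}$-action and $G$ acts transitively on chambers, translating $v$ (and $S$) by a suitable $g \in G$ preserves the $\mathcal{H}$-module structure of $\mathbb{X}^{I_v}/N$, so we may assume $v$ is a vertex of $C$.

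The core observation is that by Lemma~\ref{lem: face induction}, $\mathbb{X}^{I_v} \cong \mathbb{X}_v \otimes_{\mathcal{H}_v} \mathcal{H}$, and since $\mathcal{H}$ is free over $\mathcal{H}_v$, tensoring with $\mathcal{H}$ is exact. Hence for any $\mathcal{H}_v$-submodule $N' \subseteq \mathbb{X}_v$ with $\mathcal{H} \cdot N' = N$ we obtain
\begin{equation*}
\mathbb{X}^{I_v}/N \;\cong\; (\mathbb{X}_v/N') \otimes_{\mathcal{H}_v} \mathcal{H},
\end{equation*}
which is $\ast$-acyclic by Lemma~\ref{lem: acyclic induction} (noting that $\mathbb{X}_v$ is finite-dimensional over $k$, so $\mathbb{X}_v/N'$ is finitely generated over $\mathcal{H}_v$). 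The entire proof thus reduces to producing such an $N'$.

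To build it, for each $e \in S$ I would choose $h_e \in J_v$ translating $e$ onto an edge $e'_e := h_e \cdot e$ of $C$; such an $h_e$ exists because $J_v$ acts transitively on the chambers containing $v$, and since $h_e$ fixes $v$, the edge $e'_e$ still contains $v$. Then $I_e = h_e^{-1} I_{e'_e} h_e$, and because the $G$- and $\mathcal{H}$-actions on $\mathbb{X}$ commute,
\begin{equation*}
\mathbb{X}^{I_e} \;=\; h_e^{-1} \cdot \mathbb{X}^{I_{e'_e}} \;=\; \mathcal{H} \cdot \bigl(h_e^{-1} \cdot \mathbb{X}_{e'_e}\bigr),
\end{equation*}
where the second equality uses Lemma~\ref{lem: face induction}. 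Since $h_e^{-1} \in J_v$ and $\mathbb{X}_{e'_e} \subseteq \mathbb{X}_v$, the subspace $h_e^{-1} \cdot \mathbb{X}_{e'_e}$ lies inside $\mathbb{X}_v$. Taking $N'$ to be the $\mathcal{H}_v$-submodule of $\mathbb{X}_v$ generated by $\bigcup_{e \in S} h_e^{-1} \cdot \mathbb{X}_{e'_e}$ then gives $\mathcal{H} \cdot N' = \sum_{e \in S} \mathbb{X}^{I_e} = N$, as required.

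The main obstacle, beyond the bookkeeping of the two commuting actions on $\mathbb{X}$, is the $J_v$-transitivity on chambers containing $v$ that is used to reduce to edges of $C$. This is standard in Bruhat--Tits theory (the reductive quotient $J_v/I_v$ acts transitively on the chambers of the spherical link at $v$), and I would invoke it via a reference rather than reprove it; if $S$ consists of edges of $C$ through $v$ from the outset, one can even take each $h_e = 1$ and skip this step entirely.
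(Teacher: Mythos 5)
Your proof is correct, and the overall architecture (reduce to $v\in C$, produce an $\mathcal{H}_v$-submodule $N'\subseteq\mathbb{X}_v$ inducing up to $N$, then invoke exactness of $-\otimes_{\mathcal{H}_v}\mathcal{H}$ together with Lemma~\ref{lem: acyclic induction}) is the same as the paper's. The difference is in how you build $N'$. The paper exploits the fact that every $I_e$ for $e$ through $v$ is already contained in $J_v$, so the isomorphism $\mathbb{X}^{I_v}\cong\mathbb{X}_v\otimes_{\mathcal{H}_v}\mathcal{H}$ is $I_e$-equivariant; then, because $\mathcal{H}$ is free over $\mathcal{H}_v$, taking $I_e$-invariants commutes with the tensor product, giving $\mathbb{X}^{I_e}\cong\mathbb{X}_v^{I_e}\otimes_{\mathcal{H}_v}\mathcal{H}$ directly, and $V$ is the $\mathcal{H}_v$-submodule generated by the $\mathbb{X}_v^{I_e}$. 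You instead pick, for each $e$, an element $h_e\in J_v$ translating $e$ onto an edge $e'_e$ of $C$, apply Lemma~\ref{lem: face induction} to the facet $e'_e$ of $C$, and pull $\mathbb{X}_{e'_e}$ back by $h_e^{-1}$. Your route sidesteps having to observe that invariants commute with the free tensor product, but at the cost of a choice of representatives and an appeal to transitivity of $J_v$ on chambers through $v$ (which, as you say, is standard; it underlies Lemma~\ref{lem: face transitivity}). Both are valid; the paper's version is choice-free and marginally shorter, while yours makes the role of Lemma~\ref{lem: face induction} more uniformly visible since every $\mathbb{X}^{I_e}$ is realised via a genuine face of $C$.
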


\begin{proof}
Firstly, $v=g\cdot v_0$ for some $g\in G$, where $v_0$ is the hyperspecial vertex. If we let $S_0:=g^{-1}S$ and $N_0:=\underset{e\in S_0}{\sum}{\mathbb{X}^{I_e}}$, then there is an isomorphism of $\mathcal{H}$-modules $\mathbb{X}^{I_v}/N\cong\mathbb{X}^{I_{v_0}}/N_0$ via $y+N\mapsto g\cdot y+N_0$, so we may assume that $v=v_0$, and hence $\mathbb{X}^{I_v}\cong\mathbb{X}_v\otimes_{\mathcal{H}_v}\mathcal{H}$ by Lemma \ref{lem: face induction}.\\

\noindent For each $e\in S$, $v$ is a face of $e$, so $\mathbb{X}^{I_e}=(\mathbb{X}^{I_v})^{I_e}\cong \mathbb{X}_v^{I_e}\otimes_{\mathcal{H}_v}\mathcal{H}$. So if we let $V$ be the $\mathcal{H}_v$-submodule of $\mathbb{X}_v$ generated by $\{\mathbb{X}_v^{I_e}:e\in S\}$, then $N=\underset{e\in S}{\sum}{\mathbb{X}^{I_e}}$ is spanned by $V$, i.e. $V\otimes_{\mathcal{H}_v}\mathcal{H}=N$.\\

\noindent But since $\mathcal{H}$ is free over $\mathcal{H}_v$, the functor $-\otimes_{\mathcal{H}_v}\mathcal{H}$ is exact. So applying it to the exact sequence of $\mathcal{H}_v$-modules $$0\to V\to\mathbb{X}_v\to\mathbb{X}_v/V\to 0$$ we obtain an exact sequence $$0\to N\to \mathbb{X}^{I_v}\to(\mathbb{X}_v/V)\otimes_{\mathcal{H}_v}\mathcal{H}\to 0$$ In other words $\mathbb{X}^{I_v}/N\cong(\mathbb{X}_v/V)\otimes_{\mathcal{H}_v}\mathcal{H}$ as $\mathcal{H}$-modules, and since $\mathbb{X}_v/V$ is finitely generated as an $\mathcal{H}_v$-module, it follows from Lemma \ref{lem: acyclic induction} that $\mathbb{X}^{I_v}/N$ is $\ast$-acyclic.\end{proof}

\noindent Now, for any $\mathcal{H}$-module $M$, we define the \emph{dual} of $M$ to be the $\mathcal{H}$-module $M^*:=\text{Hom}_{\mathcal{H}}(M,\mathcal{H})$. The following result adapts the proof of \cite[Corollary 2.6]{torsion}.

\begin{proposition}\label{propn: direct limit}
If $(M_n)_{n\in \mathbb{N}}$ is a direct system of $\ast$-acyclic $\mathcal{H}$-modules, such that $M_{n+1}/M_n$ is $\ast$-acyclic for each $n\in\mathbb{N}$, then $M:=\underset{n\in I}{\varinjlim}\text{ }M_n$ is $\ast$-acyclic.
\end{proposition}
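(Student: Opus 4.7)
The natural approach is to use the standard telescope presentation of a sequential direct limit as the cokernel of a map between direct sums, which transfers the problem of computing $\mathrm{Ext}^i(M,\mathcal{H})$ to the easier problem of computing $\mathrm{Ext}^i\bigl(\bigoplus_n M_n,\mathcal{H}\bigr)$, where direct sums become products and thus acyclicity is preserved.

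First, let $\phi_n : M_n \to M_{n+1}$ denote the transition maps and consider the short exact sequence of $\mathcal{H}$-modules
\begin{equation*}
0 \longrightarrow \bigoplus_{n \in \mathbb{N}} M_n \xrightarrow{\;d\;} \bigoplus_{n \in \mathbb{N}} M_n \longrightarrow M \longrightarrow 0,
\end{equation*}
where $d$ is the usual telescope differential sending $(x_n)_n$ to $(x_n - \phi_{n-1}(x_{n-1}))_n$. Since $\mathrm{Ext}^i$ converts direct sums in the first variable into products, and each $M_n$ is $*$-acyclic, we have $\mathrm{Ext}^i\bigl(\bigoplus_n M_n,\mathcal{H}\bigr) = \prod_n \mathrm{Ext}^i(M_n,\mathcal{H}) = 0$ for every $i \geq 1$. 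Feeding the telescope sequence into the long exact $\mathrm{Ext}$-sequence immediately forces $\mathrm{Ext}^i(M,\mathcal{H}) = 0$ for all $i \geq 2$.

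The only remaining piece is $\mathrm{Ext}^1(M,\mathcal{H})$. The relevant portion of the long exact sequence reads
\begin{equation*}
\prod_n M_n^{*} \xrightarrow{\;d^{*}\;} \prod_n M_n^{*} \longrightarrow \mathrm{Ext}^1(M,\mathcal{H}) \longrightarrow \mathrm{Ext}^1\bigl(\textstyle\bigoplus_n M_n,\mathcal{H}\bigr) = 0,
\end{equation*}
so $\mathrm{Ext}^1(M,\mathcal{H}) = \mathrm{coker}(d^{*})$, which is by construction $\varprojlim\nolimits^1 M_n^{*}$ with respect to the dual transitions $\phi_n^{*}: M_{n+1}^{*} \to M_n^{*}$. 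To eliminate this, I would apply $\mathrm{Hom}_{\mathcal{H}}(-,\mathcal{H})$ to the short exact sequence $0 \to M_n \xrightarrow{\phi_n} M_{n+1} \to M_{n+1}/M_n \to 0$, obtaining
\begin{equation*}
M_{n+1}^{*} \longrightarrow M_n^{*} \longrightarrow \mathrm{Ext}^1(M_{n+1}/M_n,\mathcal{H}) = 0,
\end{equation*}
where the final vanishing is exactly the second hypothesis. Hence every $\phi_n^{*}$ is surjective, the inverse system $(M_n^{*})$ trivially satisfies Mittag--Leffler, and $\varprojlim\nolimits^1 M_n^{*} = 0$.

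I do not anticipate any serious obstacle here: the telescope presentation is classical for $\mathbb{N}$-indexed direct systems, and the Mittag--Leffler argument is automatic once one observes that the acyclicity hypothesis on $M_{n+1}/M_n$ is precisely what is needed to make the dual transitions surjective. The only subtlety is confirming that the map identified as $\varprojlim\nolimits^1$ is really induced by the $\phi_n^{*}$, which follows from unwinding definitions of the telescope differential. One may implicitly be assuming that $M_n \hookrightarrow M_{n+1}$ is injective (as the notation $M_{n+1}/M_n$ suggests); if not, one replaces this quotient by $\mathrm{coker}(\phi_n)$ and the same long-exact-sequence argument goes through verbatim.
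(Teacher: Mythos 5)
Your proof is correct and follows essentially the same strategy as the paper's: both reduce the vanishing of $\mathrm{Ext}^i_{\mathcal{H}}(M,\mathcal{H})$ to the vanishing of $\varprojlim^{(1)} M_n^*$, and both obtain this from Mittag--Leffler by using the acyclicity of $M_{n+1}/M_n$ to show that the dual transition maps $M_{n+1}^*\to M_n^*$ are surjective. The paper invokes the $\varprojlim^{(i)}\mathrm{Ext}^j$ spectral sequence where you unfold the telescope short exact sequence explicitly, but for $\mathbb{N}$-indexed systems these amount to the same computation.
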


\begin{proof}

Firstly, $\ast$-acyclicity of $M_{n+1}/M_n$ implies that $Ext_{\mathcal{H}}^1(M_{n+1}/M_n,\mathcal{H})=0$, and it follows that the sequence $0\to(M_{n+1}/M_n)^*\to M_{n+1}^*\to M_n^*\to 0$ is exact, and hence $M_{n+1}^*\to M_n^*$ is surjective.\\

\noindent Now, consider the spectral sequence defined by $$E_2^{i,j}:=\underset{n}{\varprojlim}^{(i)}Ext_{\mathcal{H}}^j(M_n,\mathcal{H})\implies Ext_{\mathcal{H}}^{i+j}(M,\mathcal{H})$$ Since each $M_n$ is $\ast$-acyclic, only the first column of the $E_2$ page can be non-zero, from which we deduce an isomorphism $$\underset{n}{\varprojlim}^{(i)}\text{ Hom}_{\mathcal{H}}(M_n,\mathcal{H})\cong Ext_{\mathcal{H}}^{i}(M,\mathcal{H})$$ for each $i\geq 0$. For $i>1$, $\underset{n}{\varprojlim}^{(i)}\text{ Hom}_{\mathcal{H}}(M_n,\mathcal{H})=0$ by \cite[Definition 3.5.1]{Weibel}, and since the transition maps $M_{n+1}^*\to M_n^*$ are surjective, it follows from \cite[Lemma 3.5.3]{Weibel} that $\underset{n}{\varprojlim}^{(1)}\text{ Hom}_{\mathcal{H}}(M_n,\mathcal{H})=\underset{n}{\varprojlim}^{(1)}M_n^*=0$. Therefore, $Ext_{\mathcal{H}}^{i}(M,\mathcal{H})=0$ for all $i>0$ as required.\end{proof}

\subsection{Coefficient systems of $\mathcal{H}$-modules}

For each $i=0,\dots,d$, let $\mathcal{F}_i$ be the set of facets of dimension $i$ in $\Delta$. For each $i\leq d$ and each $F\in\mathcal{F}_i$, we can define an \emph{orientation} on $F$ (see \cite[Chapter \rom{2}.1]{coefficient} for the precise definition). In fact, for $i\geq 1$, there are two possible orientations on $F$, and we will denote these by $(F,c)$ and $(F,-c)=:\sigma(F,c)$.  \\

\noindent To give a rough illustration, if $i=1$ and $F=e$ is an edge, then the two orientations can be regarded as the two ways to make $e$ a directed edge. We only need to specify which vertex of $e$ is the origin, and which is the target. 

If $i=2$ and $F$ is a 2-simplex, then the two orientations of $F$ correspond to the two possible ways of orienting the three edges of $F$ to give then the same direction:

\begin{center}

\tikzset{every picture/.style={line width=0.75pt}} 

\begin{tikzpicture}[x=0.75pt,y=0.75pt,yscale=-1,xscale=1]

\draw   (188,125.6) -- (285,265.6) -- (91,265.6) -- cycle ;
\draw    (91,265.6) -- (146.87,184.25) ;
\draw [shift={(148,182.6)}, rotate = 124.48] [color={rgb, 255:red, 0; green, 0; blue, 0 }  ][line width=0.75]    (10.93,-3.29) .. controls (6.95,-1.4) and (3.31,-0.3) .. (0,0) .. controls (3.31,0.3) and (6.95,1.4) .. (10.93,3.29)   ;
\draw    (188,125.6) -- (236.86,195.96) ;
\draw [shift={(238,197.6)}, rotate = 235.22] [color={rgb, 255:red, 0; green, 0; blue, 0 }  ][line width=0.75]    (10.93,-3.29) .. controls (6.95,-1.4) and (3.31,-0.3) .. (0,0) .. controls (3.31,0.3) and (6.95,1.4) .. (10.93,3.29)   ;
\draw    (285,265.6) -- (180,265.6) ;
\draw [shift={(178,265.6)}, rotate = 360] [color={rgb, 255:red, 0; green, 0; blue, 0 }  ][line width=0.75]    (10.93,-3.29) .. controls (6.95,-1.4) and (3.31,-0.3) .. (0,0) .. controls (3.31,0.3) and (6.95,1.4) .. (10.93,3.29)   ;
\draw   (477,129.6) -- (574,269.6) -- (380,269.6) -- cycle ;
\draw    (477,129.6) -- (428.13,200.95) ;
\draw [shift={(427,202.6)}, rotate = 304.41] [color={rgb, 255:red, 0; green, 0; blue, 0 }  ][line width=0.75]    (10.93,-3.29) .. controls (6.95,-1.4) and (3.31,-0.3) .. (0,0) .. controls (3.31,0.3) and (6.95,1.4) .. (10.93,3.29)   ;
\draw    (574,269.6) -- (521.15,194.24) ;
\draw [shift={(520,192.6)}, rotate = 54.96] [color={rgb, 255:red, 0; green, 0; blue, 0 }  ][line width=0.75]    (10.93,-3.29) .. controls (6.95,-1.4) and (3.31,-0.3) .. (0,0) .. controls (3.31,0.3) and (6.95,1.4) .. (10.93,3.29)   ;
\draw    (380,269.6) -- (482,269.6) ;
\draw [shift={(484,269.6)}, rotate = 180] [color={rgb, 255:red, 0; green, 0; blue, 0 }  ][line width=0.75]    (10.93,-3.29) .. controls (6.95,-1.4) and (3.31,-0.3) .. (0,0) .. controls (3.31,0.3) and (6.95,1.4) .. (10.93,3.29)   ;

\draw (170,202) node [anchor=north west][inner sep=0.75pt]   [align=left] {$\displaystyle ( F,c)$};
\draw (459,206) node [anchor=north west][inner sep=0.75pt]   [align=left] {$\displaystyle ( F,c)$};
\draw (195,296) node [anchor=north west][inner sep=0.75pt]   [align=left] {\textbf{Figure 5:} The two orientations on a 2-simplex};

\end{tikzpicture}

\end{center}

\noindent More generally, the two orientations of $F\in\mathcal{F}_i$ correspond to the two ways that all faces of $F$ have compatible orientation. This is stated more precisely in \cite{coefficient}, but since we are largely concerned with the rank 2 building in this paper, we will not explore this in more depth now. \\

\noindent\textbf{Notation:} Suppose $F\in\mathcal{F}_i$, $c$ is an orientation of $F$, $F'\in\mathcal{F}_{i-1}$ is a face of $F$, and $F''\in\mathcal{F}_{i+1}$ contains $F$ as a face.

\begin{enumerate}
\item Denote by $c\downarrow_{F'}$ the orientation of $F'$ induced by $c$.

\item There is a unique orientation on $F''$ which restricts to $c$ on $F$. We denote this orientation by $c\uparrow^{F''}$.
\end{enumerate}

\noindent Moreover, for each $i=0,\dots,d$, let $$\mathcal{F}_i^{o}:=\{(F,c):F\in\mathcal{F}_i,c\text{ an orientation of }F\}$$ be the set of all oriented $i$-facets in $\Delta$.\\

\noindent\textbf{Note:} If $\overset{\to}{F}=(F,c)$ is an oriented facet, we sometimes write the subgroup $I_F$ as $I_{\overset{\to}{F}}$, though the orientation does not change the subgroup.\\

\noindent Now, let $\mathbf{V}$ be a $(G,\mathcal{H})$-bimodule, and continuing to follow \cite[Chapter \rom{2}.1]{coefficient}, we define the \emph{coefficient system} of $\mathbf{V}$ to be the collection of $\mathcal{H}$-submodules $$\underline{\underline{\mathbf{V}}}:=\{\mathbf{V}^{I_F}:F\in\mathcal{F}_i\text{ for some }i\leq d\}$$ 

\begin{definition}\label{defn: i-chain}
For each $i=0,\dots,d$, we say that a function $\alpha:\mathcal{F}_i^{\text{o}}\to\mathbf{V}$ is an \emph{oriented cellular $i$-chain on $\underline{\underline{\mathbf{V}}}$} if

\begin{itemize}
\item \emph{supp}$(\alpha)$ is a finite subset of $\mathcal{F}_i^{\text{o}}$.

\item $\alpha(F)\in\mathbf{V}^{I_F}$ for all $F\in\mathcal{F}_i^{\text{o}}$.

\item $\alpha(\sigma(F))=-\alpha(F)$ for all $F\in\mathcal{F}_i^o$.
\end{itemize}

\noindent Let $C_i(\Delta,\mathbf{V})$ denote the set of all oriented cellular $i$-chain on $\underline{\underline{\mathbf{V}}}$

\end{definition}

\noindent Note that each $C_i(\Delta,\mathbf{V})$ is a $(G,\mathcal{H})$-bimodule, where the $\mathcal{H}$-action is given by $$(x\cdot \alpha)(F,c)=x\alpha(F,c)$$ while the $G$-action is given by $$(g\cdot \alpha)(F,c)=g\alpha(g^{-1}F,g^{-1}c)$$ where $g^{-1}c$ is the orientation given on the edges by: if $\overset{\rightarrow}{e}$ is an oriented edge, then $o(g^{-1}\cdot\overset{\rightarrow}{e})=g^{-1}o(\overset{\rightarrow}{e})$ and $t(g^{-1}\cdot\overset{\rightarrow}{e})=g^{-1}\cdot t(\overset{\rightarrow}{e})$, where $o,t$ denote the \emph{origin} and \emph{target} of $\overset{\rightarrow}{e}$. This $G$-module structure will be crucial in the proof of Theorem \ref{letterthm: exact for star}.\\

\noindent For each $i=0,\dots,d-1$ there exists a map $\varepsilon_i:C_{i+1}(\Delta,\mathbf{V})\to C_i(\Delta,\mathbf{V})$, where $$\varepsilon_i(\alpha)(F,c):=\underset{\underset{F\text{a face of }F'}{F'\in\mathcal{F}_i}}{\sum}{\alpha(F',c\uparrow^{F'})}$$

\noindent There is also a map $\delta:C_0(\Delta,\mathbf{V})\to\mathbf{V},\alpha\mapsto\underset{v\in\mathcal{F}_0}{\sum}{\alpha(v)}$, and it is easily checked that $\delta\circ\varepsilon_0=0$ and $\varepsilon_i\circ\varepsilon_{i+1}=0$ for all $0\leq i\leq d$.

\begin{definition}\label{defn: coefficient system}
The sequence $$0\longrightarrow C_d(\Delta,\mathbf{V})\underset{\varepsilon_{d-1}}{\longrightarrow} C_{d-1}(\Delta,\mathbf{V})\underset{\varepsilon_{d-2}}{\longrightarrow}\dots\underset{\varepsilon_{0}}{\longrightarrow} C_0(\Delta,\mathbf{V})\underset{\delta}{\longrightarrow} \mathbf{V}\longrightarrow 0$$ is called the associated \emph{oriented chain complex} of the coefficient system $\underline{\underline{\mathbf{V}}}$.
\end{definition}

\noindent \textbf{Note:} If $\mathbf{V}=\mathbb{X}$, then the associated complex is exact (\cite[Remark 3.1(1)]{gorenstein}).  In general, this need not be true, but this will be the case that we focus on.

\subsection{Local coefficient systems}

Fix a set of vertices $\mathcal{X}$ in $\Delta$, and recall that for any facet $F$ of $\Delta$, we say that $F$ \emph{lies in $\mathcal{X}$} (or $F\in\mathcal{X}$) if all vertices of $F$ lie in $\mathcal{X}$. We will assume that $\mathcal{X}$ can be realised as a set of $j$-facets for some $j\leq d$, i.e. for every vertex $v\in\mathcal{X}$, there exists $F\in\mathcal{F}_j$ lying in $\mathcal{X}$ with $v\in F$.\\  

\noindent\textbf{Note:} It is possible for facets of dimension greater than $j$ to lie in $\mathcal{X}$, i.e. if all $j$-faces of a larger facet lie in $\mathcal{X}$. In most cases, we will take $j=d$ anyway, so this discrepancy will not pose a problem.\\

\noindent For each $i=0,\dots,d$, set $$\mathcal{F}_i(\mathcal{X}):=\{F\in\mathcal{F}_i:F\text{ lies in }\mathcal{X}\}$$ and define the \emph{local coefficient system} with respect to $\mathcal{X}$ to be the subcollection of $\underline{\underline{\mathbf{V}}}$ defined by $$\underline{\underline{\mathbf{V}}}(\mathcal{X}):=\{V^{I_F}:F\text{ lies in }\mathcal{X}\}=\{V^{I_F}:F\in\mathcal{F}_i(\mathcal{X})\text{ for some }0\leq i\leq j\}$$ We can define the set of oriented $i$-chains on the local coefficient system $\underline{\underline{\mathbf{V}}}(\mathcal{X})$ by  $$C_i(\mathcal{X},\mathbf{V}):=\{\alpha\in C_i(\Delta,\mathbf{V}):\alpha(F,c)=0\text{ if }F\notin\mathcal{F}_i(\mathcal{X})\}$$

\noindent This is a $\mathcal{H}$-submodule of $C_i(\Delta,\mathbf{V})$, and if $H$ is a subgroup of $G$ that permutes the $i$-facets in $\mathcal{X}$, then the $G$-action on $C_i(\Delta,\mathbf{V})$ restricts to an $H$-action on $C_i(\mathcal{X},\mathbf{V})$.\\ 

\noindent It is clear from the definition of the maps $\varepsilon_i$ that $\varepsilon_i\left(C_{i+1}(\mathcal{X},\mathbf{V})\right)\subseteq C_i(\mathcal{X},\mathbf{V})$. Thus we can define the \emph{oriented chain complex} of the local coefficient system $\underline{\underline{\mathbf{V}}}(\mathcal{X})$ to be the sequence 
\begin{equation}\label{eqn: local coefficient system}
0\longrightarrow C_d(\mathcal{X},\mathbf{V})\underset{\varepsilon_{d-1}}{\longrightarrow} C_{d-1}(\mathcal{X},\mathbf{V})\underset{\varepsilon_{d-2}}{\longrightarrow}\dots\underset{\varepsilon_0}{\longrightarrow} C_0(\mathcal{X},\mathbf{V})\underset{\delta}{\longrightarrow} S(\mathcal{X})\longrightarrow 0
\end{equation}

\noindent where $S(\mathcal{X}):=\delta(C_0(\mathcal{X},\mathbf{V}))$. For convenience, we will refer to this sequence as the \emph{local oriented chain complex} with respect to $\mathcal{X}$.\\

\noindent We define $C_i^{(n)}(\Delta,\mathbf{V}):=C_i(\Delta_n,\mathbf{V})$, and $S_n:=S(\Delta_n)$. Then the resulting local oriented chain complex

\begin{equation}\label{eqn: m-local coefficient system}
0\longrightarrow C_d^{(n)}(\Delta,\mathbf{V})\underset{\varepsilon_{d-1}}{\longrightarrow} C_{d-1}^{(n)}(\Delta,\mathbf{V})\underset{\varepsilon_{d-2}}{\longrightarrow}\dots\underset{\varepsilon_0}{\longrightarrow} C_0^{(n)}(\Delta,\mathbf{V})\underset{\delta}{\longrightarrow} S_n\longrightarrow 0
\end{equation}

\noindent is called the \emph{oriented chain complex to degree $m$}.\\




\noindent Recall the statement of Conjecture \ref{conj: exactness 1}, that the local oriented chain complex (\ref{eqn: local coefficient system}) is exact whenever \textbf{(A)} $\mathcal{X}$ is an $I$-invariant complete region of $\Delta$, or \textbf{(B)} $\mathcal{X}$ consists of a single facet of $C$.\\

\noindent The argument in \cite[Lemma 2.2]{torsion} proves that Conjecture \ref{conj: exactness 1} holds when $G$ has rank 1, but in general it remains unsolved. In this section, we will explore its consequences.

\subsection{Acyclicity in degree 0}

Ultimately, we want to prove that the canonical morphism $\mathbb{X}^*\to\mathcal{H}$ is a surjection, as stated in Conjecture \ref{sur} (or the hypothesis \textbf{(sur)} in \cite{torsion}). Equivalently, we want to show that $\mathcal{H}$ is a direct summand of $\mathbb{X}$ as an $\mathcal{H}$-module. 

Following the argument in \cite{torsion}, we will approach a stronger statement, that $Ext_{\mathcal{H}}^i(\mathbb{X}/\mathcal{H},\mathcal{H})=0$ for all $i>0$, i.e. the $\mathcal{H}$-module $\mathbb{X}/\mathcal{H}$ is \emph{$\ast$-acyclic} in the sense of section \ref{subsec: acyclic modules}.\\

\noindent First note that for any $x\in\mathbb{X}$, $x=\delta(\alpha)$ for some $\alpha\in C_0(\Delta,\mathbb{X})$ by exactness of the global oriented chain complex. Since supp$(\alpha)\subseteq V(\Delta)$ is finite, there must exist $n\in\mathbb{N}$ such that supp$(\alpha)\subseteq\Delta_n$, and hence $x\in S_n$, i.e. $\mathbb{X}=\underset{n\in\mathbb{N}}{\bigcup}{S_n}$. 

Therefore, writing $\mathbb{X}/\mathcal{H}=\underset{n}{\varinjlim}\text{ }S_n/\mathcal{H}$, we see using Proposition \ref{propn: direct limit} that to prove $\mathbb{X}/\mathcal{H}$ is $\ast$-acyclic, it remains to show that $S_n/\mathcal{H}$ and $S_{n+1}/S_n$ are $\ast$-acyclic for all $n\in\mathbb{N}$.\\

\noindent Let us first suppose that $n=0$. Then $S_0=\delta(C_0(\Delta_0,\mathbb{X}))$, and since chains in $C_0(\Delta_0,\mathbb{X})$ have support only on the hyperspecial chamber $C=\{v_0,\dots,v_{d-1}\}$, it follows that $$S_0=\mathbb{X}^{K_1^{(0)}}+\dots+\mathbb{X}^{K_1^{(d-1)}}=\mathbb{X}^{I_{v_0}}+\dots+\mathbb{X}^{I_{v_{d-1}}}$$

\begin{lemma}\label{lem: base case}
If we assume Conjecture \ref{conj: exactness 1}, then $S_0/\mathcal{H}$ is a $\ast$-acyclic $\mathcal{H}$-module.
\end{lemma}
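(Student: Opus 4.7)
The plan is to leverage exactness of the local oriented chain complex on the hyperspecial chamber. Applying Conjecture \ref{conj: exactness 1}(A) with $\mathcal{X} = \Delta_0 = C$ produces an exact sequence
$$0 \to C_d(C,\mathbb{X}) \to C_{d-1}(C,\mathbb{X}) \to \dots \to C_0(C,\mathbb{X}) \to S_0 \to 0,$$
whose top term is $C_d(C,\mathbb{X}) = \mathbb{X}^{I_C} = \mathcal{H}$. Thus $S_0$ is resolved by the $\mathcal{H}$-modules $C_i(C,\mathbb{X}) = \bigoplus_F \mathbb{X}^{I_F}$ (with $F$ ranging over oriented $i$-faces of $C$), each of which is $\ast$-acyclic by Lemma \ref{lem: face induction} combined with Lemma \ref{lem: acyclic induction}.

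Next I would construct an auxiliary subcomplex $\tilde{C}_\bullet \hookrightarrow C_\bullet(C,\mathbb{X})$ using the inclusions $\mathcal{H} = \mathbb{X}^{I_C} \subseteq \mathbb{X}^{I_F}$, which are valid because $I_F \subseteq I_C$ for every face $F$ of $C$ and which are compatible with the differentials $\varepsilon_i$. This $\tilde{C}_\bullet$ is canonically isomorphic to the reduced simplicial chain complex of the $d$-simplex $C$ with constant coefficients $\mathcal{H}$, augmented by $\mathcal{H}$; being the chain complex of a contractible space with coefficients in a flat module, it is exact. The short exact sequence of augmented complexes $0 \to \tilde{C}_\bullet^{+} \to C_\bullet(C,\mathbb{X})^{+} \to \bar{C}_\bullet^{+} \to 0$, combined with the long exact sequence in homology and the acyclicity of the first two, then produces an exact resolution
$$0 \to \bar{C}_{d-1} \to \bar{C}_{d-2} \to \dots \to \bar{C}_0 \to S_0/\mathcal{H} \to 0$$
whose terms are $\bar{C}_i = \bigoplus_F \mathbb{X}^{I_F}/\mathcal{H}$.

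To verify that each summand $\mathbb{X}^{I_F}/\mathcal{H}$ is $\ast$-acyclic, I would combine the isomorphism $\mathbb{X}^{I_F} \cong \mathbb{X}_F \otimes_{\mathcal{H}_F} \mathcal{H}$ from Lemma \ref{lem: face induction} with the canonical inclusion $\mathcal{H}_F \cong \mathbb{X}_F^I \hookrightarrow \mathbb{X}_F$. Since $\mathcal{H}$ is free as a left $\mathcal{H}_F$-module, tensoring the short exact sequence $0 \to \mathcal{H}_F \to \mathbb{X}_F \to \mathbb{X}_F/\mathcal{H}_F \to 0$ over $\mathcal{H}_F$ yields
$$\mathbb{X}^{I_F}/\mathcal{H} \cong (\mathbb{X}_F/\mathcal{H}_F) \otimes_{\mathcal{H}_F} \mathcal{H},$$
which is $\ast$-acyclic by Lemma \ref{lem: acyclic induction} as $\mathbb{X}_F/\mathcal{H}_F$ is a finitely generated $\mathcal{H}_F$-module. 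Consequently every $\bar{C}_i$ is $\ast$-acyclic.

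The hard part is the final step: deducing $\ast$-acyclicity of $S_0/\mathcal{H}$ itself from this resolution, since an $F$-acyclic resolution of bounded length only computes $\textnormal{Ext}^i(S_0/\mathcal{H},\mathcal{H})$ as the cohomology of the dualised complex, and naive dimension shifting through the short exact sequences $0 \to Z_i \to \bar{C}_i \to Z_{i-1} \to 0$ arising from the resolution only delivers vanishing of $\textnormal{Ext}^i(S_0/\mathcal{H},\mathcal{H})$ for $i \geq d$. To close the low-degree gap, I would seek an additional structural feature specific to our resolution — for instance, a compatible lift of the entire construction to an exact complex of finitely generated modules over a single subalgebra $\mathcal{H}_F$, presenting $S_0/\mathcal{H}$ in the induced form $N \otimes_{\mathcal{H}_F} \mathcal{H}$ to which Lemma \ref{lem: acyclic induction} applies directly. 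Reconciling the differentials across the various $\mathcal{H}_F$-structures that appear naturally in the $\bar{C}_i$ is expected to be the principal technical obstacle.
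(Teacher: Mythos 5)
Your proposal takes a genuinely different route from the paper's, but it contains a gap that you yourself identify, and the closing move you sketch does not work as stated.

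Your route uses hypothesis \textbf{(A)} of Conjecture \ref{conj: exactness 1} (with $\mathcal{X}=\Delta_0$) to get the exact resolution $0\to C_d(C,\mathbb{X})\to\dots\to C_0(C,\mathbb{X})\to S_0\to 0$, splits off the constant subcomplex $\tilde C_\bullet$ (exact because the $d$-simplex is contractible and $\mathcal{H}$ is flat over itself), and lands on a finite resolution of $S_0/\mathcal{H}$ of length $d-1$ by $\ast$-acyclic modules $\bar C_i$. Up to that point the construction is sound; in particular your observation that $\mathbb{X}^{I_F}/\mathcal{H}\cong(\mathbb{X}_F/\mathcal{H}_F)\otimes_{\mathcal{H}_F}\mathcal{H}$ is $\ast$-acyclic matches the remark the paper makes after Lemma \ref{lem: acyclic induction}. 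But, as you correctly note, a bounded $\ast$-acyclic resolution $0\to\bar C_{d-1}\to\dots\to\bar C_0\to S_0/\mathcal{H}\to 0$ only computes $\operatorname{Ext}^i_{\mathcal{H}}(S_0/\mathcal{H},\mathcal{H})$ as the cohomology of the dualised complex $\bar C_0^*\to\dots\to\bar C_{d-1}^*$: dimension shifting kills $\operatorname{Ext}^i$ only for $i\geq d$, and the vanishing in degrees $1\leq i\leq d-1$ is equivalent to exactness of that dual complex, which you have not proved. The speculative fix (a uniform presentation $S_0/\mathcal{H}\cong N\otimes_{\mathcal{H}_F}\mathcal{H}$ over a single subalgebra $\mathcal{H}_F$) would require the entire filtration to live over one face $F$, but the differentials mix $\mathcal{H}_F$-structures for different faces, so this is not a repair, just a restatement of the obstacle.

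The paper avoids the low-degree problem entirely by choosing a different filtration: it inducts on the number of vertices $v_i$ of $C$ that contribute to the partial sum $\sum_{j\in J}\mathbb{X}^{I_{v_j}}$, and at each step produces a \emph{short} exact sequence
\[
0\to\Bigl(\underset{j'\in J'}{\textstyle\sum}\mathbb{X}^{I_{v_{j'}}}\Bigr)/\mathcal{H}\to\Bigl(\underset{j\in J}{\textstyle\sum}\mathbb{X}^{I_{v_j}}\Bigr)/\mathcal{H}\to\mathbb{X}^{I_{v_i}}/M\to 0
\]
whose outer terms are a priori $\ast$-acyclic (one by induction, the other by Lemma \ref{lem: adjacent acyclicity}), so the long exact sequence in $\operatorname{Ext}$ gives $\ast$-acyclicity of the middle term in one move. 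The nontrivial input is the identification $M=\mathbb{X}^{I_{v_i}}\cap\sum_{j'\in J'}\mathbb{X}^{I_{v_{j'}}}=\sum_{e\in S}\mathbb{X}^{I_e}$, where $S$ consists of edges of $C$ from $J'$ to $v_i$. That identification is precisely where Conjecture \ref{conj: exactness 1} enters, but via hypothesis \textbf{(B)} (exactness for $\mathcal{X}$ a single face of $C$), not hypothesis \textbf{(A)} as in your argument. The paper explicitly flags that this is the unique point in the proof of Theorem \ref{letterthm: implies sur} where hypothesis \textbf{(B)} is used. If you want to salvage your approach, the missing content is exactly exactness of the dualised complex $\bar C_\bullet^*$, which after unwinding turns out to be the same intersection-versus-sum problem that the paper settles with \textbf{(B)}; you cannot dodge it by homological bookkeeping alone.
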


\begin{proof}

For any $i=0,\dots,d-1$, we know that $\mathbb{X}^{I_{v_i}}/\mathcal{H}\cong (\mathbb{X}_{v_i}/\mathcal{H}_{v_i})\otimes_{\mathcal{H}_{v_i}}\mathcal{H}$ is $\ast$-acyclic by Lemma \ref{lem: acyclic induction}. So fixing $n\in\mathbb{N}$ with $0<n<d$, assume for induction that for all subsets $J'\subseteq\{0,\dots,d-1\}$ of size $n-1$, the $\mathcal{H}$-module $\left(\underset{j'\in J'}{\sum}{\mathbb{X}^{I_{v_{j'}}}}\right)/\mathcal{H}$ is $\ast$-acyclic.\\

\noindent Fix a subset $J\subseteq\{0,\dots,d-1\}$ of size $n$, choose any $i\in J$, and let $J':=J\backslash\{i\}$.  Setting $M:=\mathbb{X}^{I_{v_i}}\cap\underset{j'\in J'}{\sum}{\mathbb{X}^{I_{v_{j'}}}}$, consider the short exact sequence of $\mathcal{H}$-modules $$0\to \left(\underset{j'\in J'}{\sum}{\mathbb{X}^{I_{v_{j'}}}}\right)/\mathcal{H}\to \left(\underset{j\in J}{\sum}{\mathbb{X}^{I_{v_{j}}}}\right)/\mathcal{H}\to\mathbb{X}^{I_{v_i}}/M\to 0$$

\noindent Since we know by induction that $\left(\underset{j'\in J'}{\sum}{\mathbb{X}^{I_{v_{j'}}}}\right)/\mathcal{H}$ is $\ast$-acyclic, it remains to prove that $\mathbb{X}^{I_{v_i}}/M$ also is, and it will follow from a long exact sequence argument that $\left(\underset{j\in J}{\sum}{\mathbb{X}^{I_{v_{j}}}}\right)/\mathcal{H}$ is $\ast$-acyclic as required.

In fact, we will prove that $M$ is equal to $N:=\underset{e\in S}{\sum}{\mathbb{X}^{I_e}}$, where $S$ is the set of all oriented edges of $C$ with target $v_i$, and whose origin lien in $J'$. It will follow from Lemma \ref{lem: adjacent acyclicity} that $\mathbb{X}^{I_{v_i}}/M$ is $\ast$-acyclic as required.\\

\noindent For any edge $e\in S$, since $e=\{v_i,v_k\}$ for some $k\in J'$, we know that $\mathbb{X}^{I_{e}}\subseteq\mathbb{X}^{I_{v_i}}\cap\mathbb{X}^{I_{v_k}}\subseteq M$, so clearly $N\subseteq M$. On the other hand, if $x\in M=\mathbb{X}^{I_{v_i}}\cap\underset{j'\in J'}{\sum}{\mathbb{X}^{I_{v_{j'}}}}$, then we can write $x=\underset{j'\in J'}{\sum}x_{j'}$ for some $x_{j'}\in\mathbb{X}^{I_{v_{j'}}}$.\\

\noindent Define $\alpha\in C_0(\Delta,\mathbb{X})$ by $$\alpha(v):=\begin{cases} -x_{j'} & v=v_{j'}\text{ for some }j'\in J\\ x & v=v_i \\ 0 & \text{otherwise}\end{cases}$$ and we see immediately that $\delta(\alpha)=\underset{v\in V(\Delta)}{\sum}{\alpha(v)}=x-\underset{j'\in J'}{\sum}{x_{v_{j'}}}=0$, so by exactness of the oriented chain complex, we know that $\alpha=\varepsilon_0(\beta)$ for some $\beta\in C_1(\Delta,\mathbb{X})$.\\

\noindent Moreover, the set $F:=\{v_j:j\in J\}$ of vertices forms an $n$-face of $C$, and clearly $\alpha$ is non-zero only on the vertices of $F$. So if $\mathcal{X}:=F$, then $\alpha\in C_0(\mathcal{X},\mathbb{X})$. But clearly $\mathcal{X}$ satisfies \textbf{(B)}, so applying Conjecture \ref{conj: exactness 1} it follows that we can choose $\beta$ to lie in $C_1(\mathcal{X},\mathbb{X})$, i.e. we may assume that $\beta$ is non-zero only on edges of $F$. 

In particular, the only oriented edges with target $v_i$ which are not killed by $\beta$ lie in $S$, so $x=\alpha(v_i)=\varepsilon_0(\beta)(v_i)=\underset{e\in S}{\sum}{\beta(e)}\in N$ as required.\end{proof}

\noindent\textbf{Note:} This lemma is the only part of the proof of Theorem \ref{letterthm: implies sur} that uses hypothesis \textbf{(B)} from Conjecture \ref{conj: exactness 1}.\\

\noindent We will now proceed by induction on $n$, to prove that $S_n/\mathcal{H}$ is $\ast$-acyclic for all $n\in\mathbb{N}$. Suppose that $S_{n}/\mathcal{H}$ is $\ast$-acyclic for some $n\geq 0$. Utilising hypothesis \textbf{(A)} of Conjecture \ref{conj: exactness 1}, we will show that $S_{n+1}/S_n$ is also $\ast$-acyclic, and since we have a short exact sequence $$0\to S_n/\mathcal{H}\to S_{n+1}/\mathcal{H}\to S_{n+1}/S_n\to 0$$ it will follow that $S_{n+1}/\mathcal{H}$ is also $\ast$-acyclic, and applying induction and Proposition \ref{propn: direct limit} it will follow that $\mathbb{X}/\mathcal{H}=\underset{n}{\varprojlim}\text{ }S_n/\mathcal{H}$ is $\ast$-acyclic as we require.

\subsection{Proof of Theorem \ref{letterthm: implies sur}}

We will now use Conjecture \ref{conj: exactness 1} to argue inductively that $S_{n+1}/S_n$ is a $\ast$-acyclic module. Consider first the oriented chain complexes to degree $n+1$ and $n$ with coefficients in $\mathbb{X}$, as given in (\ref{eqn: m-local coefficient system}). Since we are assuming these sequences are exact by Conjecture \ref{conj: exactness 1}, we may quotient them to get an exact sequence $$0\longrightarrow \frac{C_d^{(n+1)}(\Delta,\mathbb{X})}{C_d^{(n)}(\Delta,\mathbb{X})}\underset{\varepsilon_{d-1}}{\longrightarrow} \frac{C_{d-1}^{(n+1)}(\Delta,\mathbb{X})}{C_{d-1}^{(n)}(\Delta,\mathbb{X})}\underset{\varepsilon_{d-2}}{\longrightarrow}\dots\underset{\varepsilon_0}{\longrightarrow}\frac{C_{0}^{(n+1)}(\Delta,\mathbb{X})}{C_{0}^{(n)}(\Delta,\mathbb{X})}\underset{\delta}{\longrightarrow} \frac{S_{n+1}}{S_n} \longrightarrow 0$$

\noindent But $$C_{0}^{(n+1)}(\Delta,\mathbb{X})/C_{0}^{(n)}(\Delta,\mathbb{X})=\left\{\alpha+C_{0}^{(n)}(\Delta,\mathbb{X}):\alpha(v)=0\text{ if }v\notin\Delta_{n+1}\backslash\Delta_n\right\}\cong C_0(\Delta_{n+1}\backslash\Delta_n,\mathbb{X})$$ via $\alpha+C_{0}^{(n)}(\Delta,\mathbb{X})\mapsto \alpha|_{\Delta_{n+1}/\Delta_n}$.\\


\noindent Fix an $I$-invariant set of vertices $\mathcal{B}\subseteq\Delta_{m+1}\backslash\Delta_m$, and clearly $C_0(\mathcal{B},\mathbb{X})$ is an $\mathcal{H}$-submodule of $C_0(\Delta_{m+1}\backslash\Delta_m,\mathbb{X})$. Moreover, we can associate a larger region $\mathcal{X}_{\mathcal{B}}$ of $\Delta$ to $\mathcal{B}$, defined as the union of the vertices of
\begin{itemize}
\item $\Delta_n$ and 

\item all chambers $D\in\Delta$, containing a vertex in $\mathcal{B}$, with $d(D,C)=n+1$.
\end{itemize}

\noindent Clearly $\Delta_m\subseteq\mathcal{X}_\mathcal{B}\subseteq\Delta_{n+1}$, so $C_0^{(n)}(\Delta,\mathbb{X})\subseteq C_0(\mathcal{X}_{\mathcal{B}},\mathbb{X})\subseteq C_0^{(n+1)}(\Delta,\mathbb{X})$.

\begin{lemma}\label{lem: X_B difference}
$\mathcal{X}_\mathcal{B}$ is an $I$-invariant complete region of $\Delta$ with $\mathcal{X}_\mathcal{B}\backslash\Delta_n=\mathcal{B}$. Moreover $$C_0(\mathcal{X}_{\mathcal{B}},\mathbb{X})/C_0^{(n)}(\Delta,\mathbb{X})\cong C_0(\mathcal{B},\mathbb{X})$$ as $\mathcal{H}$-modules.
\end{lemma}

\begin{proof}

Clearly $\Delta_{n}$ is $I$-invariant, and for any chamber $D$ with $d(D,C)=n+1$ containing $v\in\mathcal{B}$, it follows from Lemma \ref{lem: chamber for facet} that $D=C(v)$ is the unique chamber of $\Delta$ containing $v$ of distance $n+1$ from $C$, and for any $g\in I$, $g\cdot D=g\cdot C(v)=C(g\cdot v)$. So since $g\cdot v\in\mathcal{B}$ it follows that $g\cdot D\in\mathcal{X}_\mathcal{B}$, and hence $\mathcal{X}_\mathcal{B}$ is $I$-invariant.\\

\noindent Furthermore, $\mathcal{B}\subseteq\mathcal{X}_\mathcal{B}\backslash\Delta_n$, and if $u$ is a vertex of $\mathcal{X}_\mathcal{B}$ that lies outside of $\Delta_n$, then by definition it must belong to a chamber $D$ containing some $v\in\mathcal{B}$ with $d(C,D)=n+1$.\\ 

\noindent Choose a minimal gallery $D=D_{n+1}\sim D_n\sim\dots\sim D_1\sim D_0=C$ and take $D'=D_n$. Then $D'$ is adjacent to $D$ and $d(D',C)=n$, so since $\mathcal{B}\cap\Delta_n=\varnothing$, we know that $v\notin D'$. 

So let $F$ be the codimension 1 facet of $\Delta$ adjacent to $D$ and $D'$. Then $F$ must contain every vector of $D$ besides $v$. So if $u\neq v$ then $u\in F\subseteq D'$, and hence $u\in\Delta_n$ -- contradiction.\\

\noindent Therefore, $u=v\in\mathcal{B}$, so $\mathcal{B}=\mathcal{X}_\mathcal{B}\backslash\Delta_n$ as required. Thus $\mathcal{X}_\mathcal{B}=\mathcal{B}\sqcup\Delta_n$, and it follows that the $\mathcal{H}$-module map $C_0(\mathcal{X}_{\mathcal{B}},\mathbb{X})\to C_0(\mathcal{B},\mathbb{X}),\alpha\mapsto\alpha|_{\mathcal{B}}$ is surjective with kernel $\{\alpha\in C_0(\mathcal{X}_{\mathcal{B}},\mathbb{X}):\alpha(v)=0\text{ for all }v\notin\Delta_n\}=C_0^{(n)}(\Delta,\mathbb{X})$. \end{proof}

\begin{lemma}\label{lem: exactness application}
If we assume Conjecture \ref{conj: exactness 1}, then $\varepsilon_0(C_1^{(n+1)}(\Delta,\mathbb{X}))\cap C_0(\mathcal{X}_\mathcal{B},\mathbb{X})=\varepsilon_0(C_1(\mathcal{X}_\mathcal{B},\mathbb{X}))$.
\end{lemma}

\begin{proof}

Clearly $\varepsilon_0(C_1(\mathcal{X}_\mathcal{B},\mathbb{X}))\subseteq\varepsilon_0(C_1^{(n+1)}(\Delta,\mathbb{X}))\cap C_0(\mathcal{X}_\mathcal{B},\mathbb{X})$, and given $\alpha\in \varepsilon_0(C_1^{(n+1)}(\Delta,\mathbb{X}))\cap C_0(\mathcal{X}_\mathcal{B},\mathbb{X})$, it is clear that $\delta(\alpha)=0$, and hence $\alpha\in\ker(\delta:C_0(\mathcal{X}_\mathcal{B},\mathbb{X})\to S(\mathcal{X}_\mathcal{B}))$.

But $\mathcal{X}_\mathcal{B}$ is $I$-invariant by Lemma \ref{lem: X_B difference}, so applying Conjecture \ref{conj: exactness 1}, this kernel is equal to $\varepsilon_0(C_1(\mathcal{X}_\mathcal{B},\mathbb{X}))$, so $\alpha\in\varepsilon_0(C_1(\mathcal{X}_\mathcal{B},\mathbb{X}))$ and equality must hold.\end{proof}

\noindent In light of this lemma, let $\pi$ be the quotient map $\pi:C_0^{(n+1)}(\Delta,\mathbb{X})\to C_0(\Delta_{n+1}/\Delta_n,\mathbb{X})$, and define $$E(\mathcal{B}):=\pi\left(\varepsilon_0(C_1(\mathcal{X}_\mathcal{B},\mathbb{X}))\right)=\pi\left(\varepsilon_0(C_1^{(n+1)}(\Delta,\mathbb{X}))\cap C_0(\mathcal{X}_\mathcal{B},\mathbb{X})\right)$$ Using Lemma \ref{lem: X_B difference}, it is clear that $E(\mathcal{B})$ is a submodule of $C_0(\mathcal{B},\mathbb{X})$, so we define $$A(\mathcal{B}):=C_0(\mathcal{B},\mathbb{X})/E(\mathcal{B})$$ and we see that $$A(\Delta_{n+1}\backslash\Delta_n)=C_0(\Delta_{n+1}\backslash\Delta_n,\mathbb{X})/\varepsilon_0(C_1^{(n+1}(\Delta,\mathbb{X}))\cong S_{n+1}/S_n$$ so we will prove that $A(\mathcal{B})$ is $\ast$-acyclic for all non-empty $I$-invariant subsets $\mathcal{B}\subseteq\Delta_{n+1}\backslash\Delta_n$.\\

\noindent\textbf{Note:} In the result below, we will assume that if $v\in\Delta_{n+1}\backslash\Delta_n$ and $g\in I$, then $v$ and $g\cdot v$ are not joined by an edge, i.e. no two vertices in an $I$-orbit in $\Delta_{n+1}\backslash\Delta_n$ are joined by an edge. This is true in rank 1, or if $G=SL_3(K)$ by Theorem \ref{thm: d-partite}, and it should hold true in general, but since we will not prove it here.

\begin{proposition}\label{propn: acyclic exact}
Let $\mathcal{B}\subseteq\Delta_{n+1}\backslash\Delta_{n}$ be a non-empty, $I$-invariant subset. If we assume Conjecture \ref{conj: exactness 1} then for any $I$-orbit $o\subseteq\mathcal{B}$, there exists a $\ast$-acyclic $\mathcal{H}$-module $L_o$ and a short exact sequence $$0\to A(\mathcal{B}\backslash o)\to A(\mathcal{B})\to L_o\to 0$$
\end{proposition}

\begin{proof}

For each $v\in o$, let $T_v$ be the set of all oriented edges $e$ of $\Delta$ with target $v$ and whose origin lies in $\mathcal{X}_{\mathcal{B}}$. Since no two vertices in $o$ are joined by an edge, it follows that if $u,v\in o$ and $u\neq v$, then $T_u$ and $T_v$ are disjoint as a set of edges, even ignoring orientation.\\


\noindent Let $N_v:=\underset{e\in T_v}{\sum}{\mathbb{X}^{I_e}}$, and $L_v:=\mathbb{X}^{I_v}/N_v$. Then $L_v$ is $\ast$-acyclic by Lemma \ref{lem: adjacent acyclicity}, so let $L_o:=\underset{v\in o}{\bigoplus}L_v$, and $L_o$ is also $\ast$-acyclic.\\

\noindent There is a surjection $\phi$ from $C_0(\mathcal{X}_\mathcal{B},\mathbb{X})$ to $L_o$ sending $\alpha$ to $\underset{v\in o}{\sum}(\alpha(v)+N_v)$. The kernel of $\phi$ contains $C_0(\Delta_n,\mathbb{X})$, so $\phi$ induces a map $\overline{\phi}:C_0(\mathcal{B},\mathbb{X})\to L_o$.

Moreover, $\ker(\phi)$ also contains $\varepsilon_0(C_1(\mathcal{X}_\mathcal{B},\mathbb{X}))$, because if $\beta\in C_1(\mathcal{X}_\mathcal{B},\mathbb{X})$ then $\beta(e)\in\mathbb{X}^{I_e}$ for all $e\in T$, so $\varepsilon_0(\beta)(v)\in N_v$ for all $v\in o$. Therefore, $E(\mathcal{B})\subseteq\ker(\overline{\phi})$, inducing a surjection $A(\mathcal{B})\to L_o$.\\

\noindent Using Lemma \ref{lem: X_B difference}, we know that $\mathcal{X}_\mathcal{B}=\mathcal{B}\sqcup\Delta_{n}$, and hence the vertices of $o$ are the only vertices of $\mathcal{X}_\mathcal{B}$ that lie outside of $\mathcal{X}_{\mathcal{B}\backslash o}$. It follows that there is a natural embedding $\iota$ from $C_0(\mathcal{X}_{\mathcal{B}\backslash o},\mathbb{X})$ to $C_0(\mathcal{X}_\mathcal{B},\mathbb{X})$, where we extend a chain $\alpha\in C_0(\mathcal{X}_{\mathcal{B}\backslash o},\mathbb{X})$ to $\mathcal{X}_{\mathcal{B}}$, sending each $v\in o$ to 0. So the image of $\iota$ lies in the kernel of $\phi$.\\

\noindent Moreover, if $\phi(\alpha)=0$ then $\alpha(v)\in N_v$ for each $v\in o$, so $\alpha(v)=\underset{e\in T_v}{\sum}b_e$ for some $b_e\in \mathbb{X}^{I_{e}}$. So we define $\beta\in C_1(\mathcal{X}_{\mathcal{B}},\mathbb{X})$ by

$$\beta(e):=\begin{cases}
b_e & e\in T\\ -b_e&\sigma(e)\in T \\0 &\text{otherwise}\end{cases}$$

\noindent\textbf{Note:} This makes sense since $T_v,T_u$ are disjoint when regarded as sets of unoriented edges for $u\neq v$, so $e\mapsto b_e$ is well defined.\\

\noindent Then $\varepsilon_0(\beta)(v)=\alpha(v)$ for all $v\in o$, so $\alpha-\varepsilon_0(\beta)\in C_0(\mathcal{X}_\mathcal{B},\mathbb{X})$ lies in the image of $C_0(\mathcal{X}_{\mathcal{B}\backslash o},\mathbb{X})$ under $\iota$. Moreover, since $\varepsilon_0(\beta)\in \varepsilon_0(C_1(\mathcal{X}_{\mathcal{B}},\mathbb{X}))$ maps to $E(\mathcal{B})$ under $\pi$, this implies that $\pi(\alpha)+E(\mathcal{B})=\pi(\alpha-\varepsilon_0(\beta))+E(\mathcal{B})$ lies in the image of the composition $$C_0(\mathcal{X}_{\mathcal{B}\backslash o},\mathbb{X})\overset{\iota}{\to} C_0(\mathcal{X}_\mathcal{B},\mathbb{X})\overset{\pi}{\to} C_0(\mathcal{B},\mathbb{X})\twoheadrightarrow A(\mathcal{B})$$ giving us an exact sequence $$C_0(\mathcal{X}_{\mathcal{B}\backslash o},\mathbb{X})\to A(\mathcal{B})\to L_o\to 0$$

\noindent Finally, $C_1(\mathcal{X}_{\mathcal{B}\backslash o},\mathbb{X})$ is contained in $C_1(\mathcal{X}_{\mathcal{B}},\mathbb{X})$, and it is clear that $\varepsilon_0(C_1(\mathcal{X}_{\mathcal{B}\backslash o},\mathbb{X}))$ maps to 0 under the first map in this sequence.

Moreover, if $\alpha\in C_0(\mathcal{X}_{\mathcal{B}\backslash o},\mathbb{X})$ maps to zero, then $\alpha+E(\mathcal{B})=0$ so $\alpha\in C_0(\mathcal{X}_{\mathcal{B}\backslash o},\mathbb{X})\cap \varepsilon_0(C_1(\mathcal{X}_\mathcal{B},\mathbb{X}))$, which is contained in $\varepsilon_0(C_1(\mathcal{X}_{\mathcal{B}\backslash o},\mathbb{X}))$ by Lemma \ref{lem: exactness application}. This gives an exact sequence $0\to A(\mathcal{B}\backslash o)\to A(\mathcal{B})\to L_o\to 0$ as required.\end{proof}

\begin{corollary}\label{cor: inductive step}
If we assume Conjecture \ref{conj: exactness 1}, then for any $I$-invariant subset $\mathcal{B}\subseteq\Delta_{n+1}\backslash\Delta_{n}$, $A(\mathcal{B})$ is $\ast$-acyclic. In particular, $A(\Delta_{n+1}\backslash\Delta_{n})\cong S_{n+1}/S_{n}$ is $\ast$-acyclic.
\end{corollary}

\begin{proof}

Let $M$ be the number of $I$-orbits in $\mathcal{B}$. If $M=0$ then $\mathcal{B}=\varnothing$ and $A(\mathcal{B})=0$. So we will assume that $M>0$ and apply induction on $M$.\\

\noindent By Proposition \ref{propn: acyclic exact}, for any $I$-orbit $o\subseteq\mathcal{B}$ there is a short exact sequence $$0\to A(\mathcal{B}\backslash o)\to A(\mathcal{B})\to L_o\to 0$$ for some $\ast$-acyclic module $L_o$. But $A(\mathcal{B}\backslash o)$ is $\ast$-acyclic by induction, so it follows from a long exact sequence argument that $A(\mathcal{B})$ must be $\ast$-acyclic as required.\end{proof}

\noindent We can now complete the proof of our main general result.\\

\noindent\emph{Proof of Theorem \ref{letterthm: implies sur}.} We know using Lemma \ref{lem: base case} that $S_0/\mathcal{H}$ is $\ast$-acyclic, and by Corollary \ref{cor: inductive step}, we know that $S_{n+1}/S_n$ is $\ast$-acyclic for all $m\geq 0$. If we suppose, for induction that $S_{n}/\mathcal{H}$ is $\ast$-acyclic for some $n\geq 0$, then considering the Ext-sequence associated with the short exact sequence of $\mathcal{H}$-modules $$0\to S_n/\mathcal{H}\to S_{n+1}/\mathcal{H}\to S_{n+1}/S_n\to 0$$ we see that $S_{n+1}/\mathcal{H}$ is also $\ast$-acyclic.\\

\noindent Therefore, $S_n/\mathcal{H}$ is $\ast$-acyclic for all $n\in\mathbb{N}$. Setting $M_n:=S_n/\mathcal{H}$, since $\underset{n}{\bigcup}S_n=\mathbb{X}$, we see that $\underset{n}{\varinjlim}{\text{ }M_n}=\mathbb{X}/\mathcal{H}$. So since $M_n$ and $M_{n+1}/M_n\cong S_{n+1}/S_n$ are $\ast$-acyclic, it follows from Proposition \ref{propn: direct limit} that $\mathbb{X}/\mathcal{H}$ is also $\ast$-acyclic. In other words, $Ext^i(\mathbb{X}/\mathcal{H},\mathcal{H})=0$ for all $i>0$.

In particular, $Ext^1(\mathbb{X}/\mathcal{H},\mathcal{H})=0$ and thus the extension $0\to\mathcal{H}\to\mathbb{X}\to\mathbb{X}/\mathcal{H}\to 0$ is trivial, i.e. $\mathcal{H}$ is a direct summand of $\mathbb{X}$, and thus the canonical morphism $\mathbb{X}^*\to\mathcal{H}$ is surjective, which is precisely Conjecture \ref{sur}.\qed

\subsection{Shift invariance}\label{subsec: shift invariance}

In light of Theorem \ref{letterthm: implies sur}, the remainder of the paper will be dedicated to exploring how to prove Conjecture \ref{conj: exactness 1}. We will focus on proving exactness of the local oriented chain complex \ref{eqn: local coefficient system} for a complete, $I$-invariant region $\mathcal{X}$ of $\Delta$.\\

\noindent Moreover, the biggest challenge should lie in proving exactness of the sequence $C_1(\mathcal{X},\mathbb{X})\to C_0(\mathcal{X},\mathbb{X})\to\mathbb{X}$ (which is actually the only thing we needed in the proof of Theorem \ref{letterthm: implies sur}). So for the remainder of this section, we will make the following assumption on $\mathcal{X}$:

\begin{equation}
\tag{\textbf{E}}
    \text{\parbox{.78\textwidth}{$0\to C_d(\mathcal{X},\mathbb{X})\to \dots\to C_1(\mathcal{X},\mathbb{X})\to C_0(\Delta,\mathbb{X})$ is exact.}}
    \label{E}
\end{equation}









\begin{definition}
We say that a chain $\beta'\in C_1(\Delta,\mathbb{X})$ is a \emph{shift} of $\beta$ if there exists $\gamma\in C_2(\Delta,\mathbb{X})$ such that $\beta'=\beta+\varepsilon_1(\gamma)$.

Moreover, if $\mathcal{Y}$ is a set of chambers in $\Delta$, then we say $\beta'$ is a \emph{$\mathcal{Y}$-shift} of $\beta$ if the chain $\gamma$ is zero on all chambers outside $\mathcal{Y}$.
\end{definition}

\noindent\textbf{Note:} It follows from exactness of the oriented chain complex that $\beta'$ is a shift of $\beta$ if and only if $\varepsilon_0(\beta')=\varepsilon_0(\beta)$. Moreover, it follows from (\ref{E}) that if $\beta,\beta'\in C_1(\mathcal{X},\mathbb{X})$, then $\varepsilon_0(\beta)=\varepsilon_0(\beta')$ if and only if $\beta'$ is an $\mathcal{X}$-shift of $\beta$.\\

\noindent We will now explore some properties of the action of $G$ on chains. Fix an $I$-invariant complete region $\mathcal{X}$ and note that the $I$-action on $C_i(\Delta,\mathbb{X})$ preserves $C_i(\mathcal{X},\mathbb{X})$ for each $i$. 


\begin{definition}\label{defn: shift invariance}
If $\beta\in C_1(\Delta,\mathbb{X})$ and $\varepsilon_0(\beta)\in C_0(\mathcal{X},\mathbb{X})$, then for any subgroup $S$ of $I$, we say that $\beta$ is \emph{$(S,\mathcal{X})$-shift invariant} if for all $g\in S$, $$(g-1)\cdot\beta\in C_1(\Delta_0,\mathbb{X})+\varepsilon_1(C_2(\Delta,\mathbb{X}))$$
\end{definition}

\noindent Alternatively stated, a chain $\beta\in C_1(\Delta,\mathbb{X})$ is $(S,\mathcal{X})$-shift invariant if for all $g\in S$, $(g-1)\cdot \beta$ has a shift which lies in $C_1(\mathcal{X},\mathbb{X})$, and note that this is trivially true whenever $\beta\in C_1(\mathcal{X},\mathbb{X})$.\\ 

\noindent For convenience, let $M_\mathcal{X}:=C_1(\mathcal{X},\mathbb{X})+\varepsilon_1(C_2(\Delta,\mathbb{X}))$, and note that $M_\mathcal{X}$ is $I$-invariant.

\begin{definition}\label{defn: collapsible}
If $\mathcal{L}$ is an $I$-invariant region of $\Delta$ and $\mathcal{X}\subseteq\mathcal{L}$, we say that $\mathcal{X}$ is \emph{$\mathcal{L}$-collapsible} if $\beta\in M_\mathcal{X}$ for all $\beta\in C_1(\mathcal{L},\mathbb{X})$ such that $\beta$ is $(I,\mathcal{X})$-shift invariant and $\varepsilon_0(\beta)\in C_0(\mathcal{X},\mathbb{X})$.

We say that $\mathcal{X}$ is \emph{collapsible} if it it $\mathcal{L}$-collapsible for all $I$-invariant complete regions $\mathcal{X}\subseteq\mathcal{L}$.
\end{definition}

\begin{lemma}\label{lem: one way}
If the local oriented chain complex $$0\to C_d(\mathcal{X},\mathbb{X})\to\dots\to C_0(\mathcal{X},\mathbb{X})\to\mathbb{X}$$ is exact, then $\mathcal{X}$ is a collapsible region of $\Delta$.
\end{lemma}

\begin{proof}

Suppose $\mathcal{X}\subseteq\mathcal{L}$, $\beta\in C_1(\mathcal{L},\mathbb{X})$ is $(I,\mathcal{X})$-shift invariant, and $\varepsilon_0(\beta)\in C_0(\mathcal{X},\mathbb{X})$.

It follows from exactness of the local oriented chain complex that $\varepsilon_0(\beta)=\varepsilon_0(\beta')$ for some $\beta'\in C_1(\mathcal{X},\mathbb{X})$, and hence $\beta-\beta'\in\varepsilon_1(C_2(\Delta,\mathbb{X}))$, and $\beta\in M_\mathcal{X}$. Therefore, $\mathcal{X}$ is $\mathcal{L}$-collapsible, and since this holds for all $\mathcal{L}$, it follows that $\mathcal{X}$ is collapsible.\end{proof}

\noindent The converse to Lemma \ref{lem: one way} is not immediately obvious, because the definition of collapsible regions only refers to those chains $\beta$ that are shift invariant. However, the results below show that it does indeed hold.

\begin{proposition}\label{propn: loose end}
Suppose $k$ has characteristic $p$, $\mathcal{X}\subseteq\mathcal{L}$ are complete, $I$-invariant regions, and $\mathcal{X}$ is $\mathcal{L}$-collapsible. Then for all $\beta\in C_1(\mathcal{L},\mathbb{X})$ such that $\varepsilon_0(\beta)\in C_0(\mathcal{X},\mathbb{X})$, $\beta\in M_\mathcal{X}$.
\end{proposition}

\begin{proof}

\noindent Firstly, let $V$ be the set of all vertices in $\Delta$ that are adjacent to a vertex in $\mathcal{X}$, and let $$K:=\{g\in I:g\cdot v=v\text{ for all }v\in V\}$$ For any $\beta\in C_1(\mathcal{L},\mathbb{X})$, we know that $I_v\supseteq K$ for all $v\in\mathcal{X}$ by Proposition \ref{propn: adjacent chambers facet subgroup}, so since $g\cdot\varepsilon_0(\beta)(v)=\varepsilon_0(\beta)(v)$ for all $g\in I_v$, it follows that $(g\cdot\varepsilon_0(\beta))(v)=g\varepsilon_0(\beta)(g^{-1}\cdot v)=g\varepsilon_0(\beta)=\varepsilon_0(\beta)$ for all $v\in\mathcal{X}$.

In other words, $\varepsilon_0((g-1)\cdot\beta)(v)=0$ for all $g\in K$, $v\in\mathcal{X}$. So if we assume that $\varepsilon_0(\beta)\in C_0(\mathcal{X},\mathbb{X})$, then since $\mathcal{X}$ is $I$-invariant, and $K\subseteq I$, we know that $\varepsilon_0((g-1)\beta)=(g-1)\varepsilon_0(\beta)$ is zero outside of $\mathcal{X}$, and hence $\varepsilon_0((g-1)\beta)=0$. By exactness of the oriented chain complex, this means that $(g-1)\cdot\beta\in \varepsilon_1(C_2(\Delta,\mathbb{X}))\subseteq M_\mathcal{X}$. In particular, $\beta$ is $(K,\mathcal{X})$-shift invariant.\\

\noindent Now, $V$ is $I$-invariant, so $K$ is an open normal subgroup of $I$. In particular, $I/K$ is a finite $p$-group, and hence there exists a chain of subgroups $I=S_0\supseteq S_1\supseteq\dots\supseteq S_r=K$ such that each $S_i$ is normal in $I$ and $S_i/S_{i+1}$ has order $p$. Fix $i\geq 0$ and let us suppose that if $\beta\in C_1(\mathcal{L},\mathbb{X})$ is $(S_i,\mathcal{X})$-shift invariant and $\varepsilon_0(\beta)\in C_0(\mathcal{X},\mathbb{X})$, then $\beta\in M_\mathcal{X}$, which we know to hold if $i=0$ by the assumption that $\mathcal{X}$ is $\mathcal{L}$-collapsible.\\

\noindent Since $\beta$ is always $(S_i,\mathcal{X})$-shift invariant if $i=r$, we will assume that $i<r$, and prove that the same property holds if $\beta$ is $(S_{i+1},\mathcal{X})$-shift invariant. The result will follow from induction on $i$.\\

\noindent Since $S_i/S_{i+1}$ has order $p$, it is generated by a single element $g\in S_i$ such that $g^p\in S_{i+1}$. Using $(S_{i+1},\mathcal{X})$-shift invariance of $\beta$, we know that $(g^p-1)\cdot\beta\in M_\mathcal{X}$, i.e. $g^p\cdot\beta\equiv\beta$ (mod $M_\mathcal{X}$).

So, let $\beta_0:=\beta+g\cdot\beta+g^2\cdot\beta+\dots+g^{p-1}\cdot\beta\in C_1(\mathcal{L},\mathbb{X})$, then $g\cdot\beta_0\equiv\beta_0$ (mod $M_\mathcal{X}$). Also, note that since $k$ has characteristic $p$, we may write $\beta_0=(g-1)^{p-1}\cdot\beta$, so we will also define $\beta_j:=(g-1)^{p-1-j}\cdot\beta$ for all $j=0,\dots,p$. Note that $\varepsilon_0(\beta_j)\in C_0(\mathcal{X},\mathbb{X})$ for all $j$.\\

\noindent Also note that for any $h\in S_{i+1}$, $k<p$, $g^{-k}hg^k\in S_{i+1}$, so $g^{-k}hg^k\beta\equiv\beta$ (mod $M_\mathcal{X}$). Therefore
\begin{align*}
h\cdot\beta_j&=h(g-1)^{p-1-j}\cdot\beta=\underset{1\leq k\leq p-1-j}{\sum}\binom{p-1-j}{k}(-1)^{p-1-j-k}hg^k\cdot\beta\\&=\underset{1\leq k\leq p-1-j}{\sum}\binom{p-1-j}{k}(-1)^{p-1-j-k}g^kg^{-k}hg^k\cdot\beta\\&\equiv\underset{1\leq k\leq p-1-j}{\sum}\binom{p-1-j}{k}(-1)^{p-1-j-k}g^k\cdot\beta\equiv (g-1)^{p-1-j}\cdot\beta\equiv\beta_j\text{ (mod }M_\mathcal{X})
\end{align*}

\noindent Therefore, $\beta_j$ is $(S_{i+1},\mathcal{X})$-shift invariant for all $j$. But $g\cdot\beta_0\equiv\beta_0$ (mod $M_\mathcal{X}$), $M_\mathcal{X}$ is $I$-invariant and $S_i$ is generated by $S_{i+1}$ and $g$, so it follows that $\beta_0$ is $(S_i,\mathcal{X})$-shift invariant.\\

\noindent Let us suppose, for a second induction, that $\beta_j$ is $(S_i,\mathcal{X})$-shift invariant for some $j\geq 0$. Then by assumption, $\beta_j\in M_\mathcal{X}$. But $\beta_j=(g-1)\cdot\beta_{j+1}$, so $(g-1)\cdot\beta_{j+1}\in M_\mathcal{X}$, and it follows that $\beta_{j+1}$ is $(S_i,\mathcal{X})$-shift invariant. Applying the second induction we see that $\beta_{p-1}=\beta$ is $(S_i,\mathcal{X})$-shift invariant as required.\end{proof}

\begin{corollary}\label{cor: extended regions}
Suppose $k$ has characteristic $p$, $\mathcal{X}$ is a complete, $I$-invariant, collapsible region of $\Delta$, and (\ref{E}) is satisfied. Then the local oriented chain complex $$0\to C_d(\mathcal{X},\mathbb{X})\to\dots\to C_1(\mathcal{X},\mathbb{X})\to C_0(\mathcal{X},\mathbb{X})\to\mathbb{X}$$ is exact.
\end{corollary}

\begin{proof}

Using (\ref{E}), we know that $0\to C_d(\mathcal{X},\mathbb{X})\to\dots\to C_1(\mathcal{X},\mathbb{X})\to C_0(\Delta,\mathbb{X})$ is exact, so it remains to prove that $C_1(\mathcal{X},\mathbb{X})\to C_0(\mathcal{X},\mathbb{X})\to\mathbb{X}$ is exact, i.e. to show that for all $\beta\in C_1(\Delta,\mathbb{X})$ such that $\varepsilon_0(\beta)\in C_0(\mathcal{X},\mathbb{X})$, we can choose $\beta'\in C_1(\mathcal{X},\mathbb{X})$ such that $\varepsilon_0(\beta')=\varepsilon_0(\beta)$, i.e. $\beta\in C_1(\mathcal{X},\mathbb{X})+\varepsilon_1(C_2(\Delta,\mathbb{X}))=M_\mathcal{X}$.

But since $\beta$ has finite support, we know that $\beta\in C_1(\Delta_n,\mathbb{X})$ for some $n\in\mathbb{N}$ with $\mathcal{X}\subseteq\Delta_n$. But since $\mathcal{X}$ is collapsible, we know that it is $\Delta_n$-collapsible, so applying Proposition \ref{propn: loose end}, we see that $\beta\in M_\mathcal{X}$ as required.\end{proof}

\noindent So to prove Conjecture \ref{conj: exactness 1}, it remains to show that all complete, $I$ invariant regions of $\Delta$ are collapsible. In the following section, we will explore how to prove this when $G=SL_3(K)$, completing the proof of Theorem \ref{letterthm: exact for star}

\section{Orbits in coefficient systems}\label{sec: orbits}

From now on, we will assume that $G=SL_3(K)$, and hence $\Delta=\widetilde{\Delta}(G)$ is the $\widetilde{A}_2$-Bruhat-Tits building. Again, let $C=\{v_0,v_1.v_2\}$ be the hyperspecial chamber in $\Delta$, where $v_0=[\mathcal{O}^3]$ is the hyperspecial vertex. We will also assume from now on that $k$ has characteristic $p$.

\subsection{Approaching Conjecture \ref{conj: exactness 1}}

Recall from the statement of Conjecture \ref{conj: exactness 1} in rank 2 that we need to prove that the local oriented chain complex 

\begin{equation}\label{eqn: rank 2 coefficient system}
0\to C_2(\mathcal{X},\mathbb{X})\underset{\varepsilon_1}{\to} C_1(\mathcal{X},\mathbb{X})\underset{\varepsilon_0}{\to} C_0(\mathcal{X},\mathbb{X})\underset{\delta}{\to}\mathbb{X}
\end{equation} 

\noindent is exact whenever \textbf{(A)} $\mathcal{X}$ is an $I$-invariant complete region of $\Delta$, or \textbf{(B)} $\mathcal{X}$ consists of the vertices of a single face of $C$. Note that if $\mathcal{X}=C=\Delta_0$, then $\mathcal{X}$ satisfies \textbf{(A)} and \textbf{(B)}.

\begin{lemma}\label{lem: (A) implies (B)}
If $\mathcal{X}$ consists of a single vertex or edge of $C$, then the local oriented chain complex for $\mathcal{X}$ is exact.
\end{lemma}

\begin{proof}

If $\mathcal{X}=\{v\}$ for $v$ a vertex of $C$, then $C_2(\mathcal{X},\mathbb{X})=C_1(\mathcal{X},\mathbb{X})=0$ and $C_0(\mathcal{X},\mathbb{X})$ consists of all functions from $\mathcal{X}=\{v\}$ to $\mathbb{X}^{I_v}$, so clearly $\delta$ has kernel $0=\varepsilon_0(C_1(\mathcal{X},\mathbb{X}))$ on $C_0(\mathcal{X},\mathbb{X})$.\\

\noindent If $\mathcal{X}=\{e\}$, for $e$ an oriented edge of $C$, and we will assume without loss of generality that $o(e)=v_1, t(e)=v_2$, then $C_2(\mathcal{X},\mathbb{X})=0$, and $C_1(\mathcal{X},\mathbb{X})$ consists of all functions from $\{e\}$ to $\mathbb{X}^{I_{e}}$. So clearly $\varepsilon_0$ is injective when restricted to $C_1(\mathcal{X},\mathbb{X})$. Thus we only need to prove that $\varepsilon_0(C_1(\mathcal{X},\mathbb{X}))$ coincides with the kernel of $\delta$ on $C_0(\mathcal{X},\mathbb{X})$.

But if $\alpha\in C_0(\mathcal{X},\mathbb{X})$ and $\delta(\alpha)=0$ then $\alpha(v_1)+\alpha(v_2)=0$. So $\alpha(v_1)=-\alpha(v_2)\in\mathbb{X}^{I_{v_1}}\cap\mathbb{X}^{I_{v_2}}=\mathbb{X}^{\langle I_{v_1},I_{v_2}\rangle}$.\\ 

\noindent We will see in Proposition \ref{propn: generating_subgroup2} that $\langle I_{v_1},I_{v_2}\rangle=I_e$, so $\alpha(v_1)\in\mathbb{X}^{I_e}$. So define $\beta\in C_1(\mathcal{X},\mathbb{X})$ by $\beta(e):=\alpha(v_1)$ and $\beta(\sigma(e)):=-\alpha(v_1)=\alpha(v_2)$, and we see that $\alpha=\varepsilon_0(\beta)$ as required.\end{proof}





\noindent This lemma completes the proof when $\mathcal{X}$ satisfies \textbf{(B)}, so we can assume from now on that $\mathcal{X}$ is a complete, $I$-invariant region of $\Delta$, and hence $\Delta_n\subseteq\mathcal{X}\subseteq\Delta_{n+1}$ for some $n\in\mathbb{N}$.

\begin{proposition}\label{propn: exactness1}
For any complete region $\mathcal{X}$ of $\Delta$, the chain complex $$0\to C_2(\mathcal{X},\mathbb{X})\to C_1(\mathcal{X},\mathbb{X})\to C_0(\Delta,\mathbb{X})$$ is exact.
\end{proposition}

\begin{proof}
Since $\varepsilon_1$ is injective, we only need to prove that $C_2(\mathcal{X},\mathbb{X})\to C_1(\mathcal{X},\mathbb{X})\to C_0(\Delta,\mathbb{X})$ is exact.\\

\noindent Since im$(\varepsilon_1)=\ker(\varepsilon_0)$ globally, we know that there exists a unique $\gamma\in C_2(\Delta,\mathbb{X})$ such that $\beta=\varepsilon_1(\gamma)$. So it remains only to prove that $\gamma\in C_2(\mathcal{X},\mathbb{X})$, i.e. for any oriented chamber $(D,c)$ of $\Delta$, if $\gamma(D,c)\neq 0$ then $D\in\mathcal{X}$.\\

\noindent Suppose for contradiction that there exists a chamber $D\notin\mathcal{X}$ such that $\gamma(D)\neq 0$. Suppose further that $n:=d(D,C)$ is maximal among all chambers $D\in\Delta\backslash\mathcal{X}$ such that $\gamma(D)\neq 0$.\\ 

\noindent By Corollary \ref{cor: adjacent to facet} there exists an edge $e$ of $D$ such that for every chamber $E$ of $\Delta$ containing $e$ as an edge, $d(E,C)=n+1$ if $E\neq D$, and hence $\gamma(E)=0$. In other words, $e$ is an exterior edge of $D$ in the sense of section \ref{subsec: border}.

But for any such edge $e$, and any orientation $c$ of $e$, $$\beta(e,c)=\varepsilon_1(\gamma)(e,c)=\underset{\underset{e\text{ an edge of }E}{E\in\mathcal{F}_2}}{\sum}{\gamma(E,c\uparrow^E)}=\gamma(D,c\uparrow^D)$$ so since $\gamma(D,c\uparrow^D)\neq 0$, it follows that $e\in\mathcal{X}$. Applying Lemma \ref{lem: unique edge} gives us that $D\in\mathcal{X}$ -- contradiction.\end{proof}

\noindent\textbf{Note:} With some small tweaks we expect that this proof can be generalised to show that $C_d(\mathcal{X},\Delta)\to C_{d-1}(\mathcal{X},\Delta)\to C_{d-2}(\mathcal{X},\Delta)$ is exact in full generality. Proving Theorem \ref{thm: peak} in types $\widetilde{B}_2$ and $\widetilde{G}_2$ would be enough to carry the proof over to all groups of rank 2.\\

\noindent This result means that Assumption (\ref{E}) from section \ref{subsec: shift invariance} is satisfied for $G=SL_3(K)$. So in light of Corollary \ref{cor: extended regions}, it remains only to prove that $\mathcal{X}$ is a collapsible region of $\Delta$, and Conjecture \ref{conj: exactness 1} for $SL_3(K)$ will follow.

\subsection{The key lemma}\label{subsec: key lemma}

\noindent From now on, fix a chain $\beta\in C_1(\Delta,\mathbb{X})$, and an $I$-invariant complete region $\mathcal{X}$ of $\Delta$. Choose $n\in\mathbb{N}$ such that $\Delta_n\subseteq\mathcal{X}\subseteq\Delta_{n+1}$, and we will make the following assumptions on $\beta$:

 \begin{itemize}

\item $\epsilon_0(\beta)$ is zero outside $\mathcal{X}$ (i.e. $\varepsilon_0(\beta)\in C_0(\mathcal{X},\mathbb{X})$).

\item $\beta$ is zero outside $\Delta_{n+1}$ (i.e. $\beta\in C_1(\Delta_{n+1},\mathbb{X}))$,

\item $\beta$ is $(I,\mathcal{X})$-shift invariant.
\end{itemize}

\noindent With these assumptions, we want to prove that $\beta\in M_\mathcal{X}=C_1(\mathcal{X},\mathbb{X})+\varepsilon_1(C_2(\Delta,\mathbb{X}))$, and it will follow that $\mathcal{X}$ is a $\Delta_{n+1}$-collapsible region of $\Delta$.\\

\noindent Recall from Definition \ref{defn: border} how we define the \emph{border} of $\Delta_{n+1}$, and recall from Lemma \ref{lem: border characterisation} that for each edge $e$ on the border, there is a unique chamber $D(e)\in\Delta_{n+1}$ containing $e$.\\

\noindent The following technical result will be crucial in our arguments to follow.

\begin{lemma}\label{lem: crucial}
Let $Y=\{e_1,\dots,e_r\}$ be a set of oriented edges on the border of $\Delta_{n+1}$, where $e_i\notin\mathcal{X}$ for all $i$, and let $A$ be a subgroup of $G$ such that 

\begin{enumerate}
\item If $g\cdot e_i=e_i$ for some $g\in A, 1\leq i\leq r$, then $g\in I_{e_i}$.

\item $A$ acts transitively on $Y$.

\item $\text{\emph{Stab}}_A(e_i)=\text{\emph{Stab}}_A(e_j)$ for $1\leq i,j\leq r$, so $N:=\text{\emph{Stab}}_A(e_1)$ is a normal subgroup of $A$.

\item $A/N=(A\cap I)/N$.

\end{enumerate}

\noindent Then there exists $\gamma\in C_2(\Delta_{n+1},\mathbb{X})$, where $\gamma(D)=0$ if $D\neq D(e_i)$ for some $1\leq i\leq r$, such that if $\beta':=\beta+\varepsilon_1(\gamma)$ then $\beta'(h\cdot e_i)=h\beta'(e_i)$ for all $h\in A$, $i=1,\dots,r$. 

In particular, $\{\beta'(e_1),\dots,\beta'(e_r)\}\subseteq\mathbb{X}$ forms a single $A$-orbit under the left action of $G$.
\end{lemma}

\begin{proof}

For convenience, set $D_i:=D(e_i)$ for each $i$. Since $N$ fixes every edge in $Y$, we know that $A/N=A\cap I/N$ acts transitively on $Y$. So for each $i=1,\dots,r$, choose $h_i\in A\cap I$ such that $e_i=h_i\cdot e_1$, and we can of course take $h_1=1$.

Then for any $h\in A$, $h\cdot e_1=e_j$ for some $j$. So $h\cdot e_1=h_j\cdot e_1$, and thus $h^{-1}h_j\in N$, i.e. $hN=h_jN$. Therefore, $A/N=\{h_1N,\dots,h_rN\}$.\\

\noindent Let $\beta_1=\beta$, and let $s_k^{(1)}:=\beta(e_k)$ for each $k$, and clearly $h_1s_1^{(1)}=s_1^{(1)}$. So suppose for induction that $\beta_1,\dots,\beta_{i-1}\in C_1(\Delta_{n+1},\mathbb{X})$ are defined for some $i\leq r$, and for each $j<i$:
\begin{itemize}
\item $\beta_j=\beta+\varepsilon_1(\gamma_j)$ for some $\gamma_j\in C_2(\Delta,\mathbb{X})$,

\item $\gamma_j$ is zero on all chambers outside $\{D_1,\dots,D_{j}\}$, and 

\item if $s_k^{(j)}:=\beta_j(e_k)$ then $s_k^{(j)}=h_ks_1^{(j)}$ for all $k\leq j<i$.
\end{itemize}

\noindent Now, using shift invariance we are assuming that for all $g\in S$, $g\cdot\beta-\beta\in C_1(\mathcal{X},\mathbb{X})+\varepsilon_1(C_2(\Delta,\mathbb{X}))$. Since it is clear that $g\cdot\varepsilon_1(\gamma_j)-\varepsilon_1(\gamma_j)=\varepsilon_1(g\cdot\gamma_j-\gamma_j)\in\varepsilon_1(C_2(\Delta,\mathbb{X}))$, we similarly have that $g\cdot\beta_j-\beta_j\in C_1(\mathcal{X},\mathbb{X})+\varepsilon_1(C_2(\Delta,\mathbb{X}))$.\\

\noindent In other words, taking $j=i-1$, for each $h\in I\cap A$ we can write $h\cdot \beta_{i-1}=\beta_{i-1}+z_h+\varepsilon_1(\gamma_h)$, where $z_h\in C_1(\mathcal{X},\mathbb{X})$, and $\gamma_h\in C_2(\Delta,\mathbb{X})$.\\ 

\noindent Moreover, since $\beta_{i-1}\in C_1(\Delta_{n+1},\mathbb{X})$, it follows that $h\cdot\beta_{i-1}\in C_1(\Delta_{n+1},\mathbb{X})$. So since $\beta_{i-1},h\cdot\beta_{i-1},z_h\in C_1(\Delta_{n+1},\mathbb{X})$ and $\varepsilon_0(h\cdot\beta_{i-1}-\beta_{i-1}-z_h)=0$, it follows from Proposition \ref{propn: exactness1} that $\gamma_h\in C_2(\Delta_{n+1},\mathbb{X})$. \\





\noindent Therefore, since $\{e_1,\dots,e_r\}$ lie on the border of $\Delta_{n+1}$, $\gamma_h$ is zero on all chambers adjacent to $e_j$, not equal to $D_j$, and thus $\varepsilon_1(\gamma_h)(e_j)=\gamma_h(D_j,c_j)$ for all $j$ (where $c_j$ is the orientation of $D_j$ that agrees with that of $e_j$). \\

\noindent Moreover, we are assuming that $e_j\notin\mathcal{X}$ for all $j$, so $z_h(e_j)=0$. Therefore, we see that $$\beta_{i-1}(e_{i-1})+\gamma_h(D_{i-1},c_{i-1})=(h\cdot \beta_{i-1})(e_{i-1})=h\beta_{i-1}(h^{-1}e_{i-1})$$

\noindent So let $h:=h_{i-1}h_i^{-1}\in A\cap S$, so that $h^{-1}e_{i-1}=e_i$, and 
\begin{align*}
\beta_{i-1}(e_i)&=\beta_{i-1}(h^{-1}e_{i-1})=h^{-1}h\beta_{i-1}(h^{-1}e_{i-1})\\&=h^{-1}\beta_{i-1}(e_{i-1})+h^{-1}\gamma_h(D_{i-1},c_{i-1})\\&=h^{-1}\beta_{i-1}(e_{i-1})+(h^{-1}\cdot \gamma_h)(h^{-1}D_{i-1},h^{-1}c_{i-1})
\end{align*}

\noindent But $h^{-1}D_{i-1}=D_i$ and $h^{-1}c_{i-1}=c_i$, so define $\gamma_i'\in C_2(\Delta,\mathbb{X})$ by $$\gamma_i'(D,c):=\begin{cases}
-(h^{-1}\cdot \gamma_h)(D_i,c_i) & (D,c)=(D_i,c_i)\\
(h^{-1}\cdot \gamma_h)(D_i,c_i) & (D,c)=(D_i,-c_i)\\
0 & \text{otherwise}
\end{cases}$$

\noindent Let $\beta_i:=\beta_{i-1}+\varepsilon_1(\gamma_i')$, and $\gamma_i:=\gamma_{i-1}+\gamma_i'$. Then $\beta_i=\beta_1+\varepsilon_1(\gamma_i)$ and $\gamma_i$ is zero on all chambers outside $\{D_1,\dots,D_i\}$.\\

\noindent Set $s_k^{(i)}:=\beta_i(e_k)=\beta_{i-1}(e_k)+\varepsilon_1(\gamma_i')(e_k)$ for each $k\leq i$. For $k<i$, this is equal to $$\beta_{i-1}(e_k)=
s_k^{(i-1)}=h_ks_1^{(i-1)}=h_ks_1^{(i)}$$ and $$s_i^{(i)}=\beta_{i-1}(e_i)+\gamma_i'(D_i,c_i)=\beta_{i-1}(e_i)-(h^{-1}\cdot \gamma_h)(D_i,c_i)=h^{-1}\beta_{i-1}(e_{i-1})=h^{-1}s_{i-1}^{(i-1)}$$

\noindent Thus $s_{i}^{(i)}=h_ih_{i-1}^{-1}s_{i-1}^{(i-1)}=h_ih_{i-1}^{-1}h_{i-1}s_1^{(i-1)}=h_is_1^{(i-1)}=h_is_1^{(i)}$.\\

\noindent So by induction, we may choose $\beta':=\beta_r$, $\gamma:=\gamma_r$, and thus $\beta'(e_j)=h_j\beta'(e_1)$ for all $j=1,\dots,r$.\\

\noindent Finally, if $h\in N$ then $h\cdot e_i=e_i$ for all $i$, so $h\in I_{e_i}$ for all $i$ by assumption. So since $\beta'(e_i)\in\mathbb{X}^{I_{e_i}}$, it follows that $h\cdot \beta'(e_i)=\beta'(e_i)$. So since $A/N=\{h_1N,\dots,h_nN\}$, the result follows.\end{proof}


\subsection{Shifting chains on summits}\label{subsec: I-invariance}

From now on, we will assume that the residue field of $K$ has order $p$. Again, we suppose $\beta\in C_1(\Delta,\mathbb{X})$ satisfies all the assumptions of section \ref{subsec: key lemma}. To outline how we will apply Lemma \ref{lem: crucial}, we must define some further  data: Fix a vertex $v\in\Delta_{n+1}\backslash\mathcal{X}$, and we know by Theorem \ref{thm: peak} that $v$ is a peak of $\Delta_{n+1}$.\\ 

\noindent Let $D_v$ be the summit at $v$, and let $e_v,f_v$ be two oriented edges of $D_v$ with target $v$.  Let $E_1,\dots,E_q$ (resp. $F_1,\dots,F_q$) be all chambers in $\Delta$ that meet $D$ at $f_v$ (resp. $e_v$), and let $e_i$ (resp. $f_i$) be the oriented edge of $E_i$ (resp. $F_i$) with target $v$, but which is not equal to $f_v$ (resp. $e_v$). The diagram below illustrates this cumbersome statement when $q=2$.

\begin{center}

\tikzset{every picture/.style={line width=0.75pt}} 

\begin{tikzpicture}[x=0.75pt,y=0.75pt,yscale=-1,xscale=1]

\draw   (323.88,194.76) -- (455.34,377.51) -- (192.43,377.51) -- cycle ;
\draw  [color={rgb, 255:red, 0; green, 0; blue, 0 }  ,draw opacity=1 ] (70.91,205.39) -- (323.88,194.76) -- (192.13,377.31) -- cycle ;
\draw  [color={rgb, 255:red, 0; green, 0; blue, 0 }  ,draw opacity=1 ] (576.99,195.29) -- (455.34,377.51) -- (323.95,195.21) -- cycle ;
\draw    (134.41,118.12) -- (152.89,201.23) ;
\draw    (134.41,118.12) -- (323.88,194.76) ;
\draw    (324.22,195) -- (530.47,130.73) ;
\draw    (530.47,130.73) -- (511.62,196.97) ;
\draw  [dash pattern={on 0.84pt off 2.51pt}]  (152.89,201.23) -- (191.16,370.89) ;
\draw  [dash pattern={on 0.84pt off 2.51pt}]  (511.62,196.97) -- (455.34,377.51) ;

\draw (317.56,206) node [anchor=north west][inner sep=0.75pt]   [align=left] {$\displaystyle v$};
\draw (312.59,299.32) node [anchor=north west][inner sep=0.75pt]   [align=left] {$\displaystyle D_{v}$};
\draw (520.88,212.32) node [anchor=north west][inner sep=0.75pt]  [color={rgb, 255:red, 0; green, 0; blue, 0 }  ,opacity=1 ] [align=left] {$\displaystyle E_{1}$};
\draw (244.58,265.89) node [anchor=north west][inner sep=0.75pt]   [align=left] {$\displaystyle e_{v}$};
\draw (387.79,265.04) node [anchor=north west][inner sep=0.75pt]   [align=left] {$\displaystyle f_{v}$};
\draw (483.49,156.91) node [anchor=north west][inner sep=0.75pt]  [color={rgb, 255:red, 0; green, 0; blue, 0 }  ,opacity=1 ] [align=left] {$\displaystyle E_{2}$};
\draw (98.11,213.18) node [anchor=north west][inner sep=0.75pt]  [color={rgb, 255:red, 0; green, 0; blue, 0 }  ,opacity=1 ] [align=left] {$\displaystyle F_{1}$};
\draw (180,150.94) node [anchor=north west][inner sep=0.75pt]  [color={rgb, 255:red, 0; green, 0; blue, 0 }  ,opacity=1 ] [align=left] {$\displaystyle F_{2}$};
\draw (450.21,196.83) node [anchor=north west][inner sep=0.75pt]   [align=left] {$\displaystyle e_{1}$};
\draw (418.17,138.86) node [anchor=north west][inner sep=0.75pt]   [align=left] {$\displaystyle e_{2}$};
\draw (182.16,201.95) node [anchor=north west][inner sep=0.75pt]   [align=left] {$\displaystyle f_{1}$};
\draw (233.79,137.15) node [anchor=north west][inner sep=0.75pt]   [align=left] {$\displaystyle f_{2}$};
\draw (140.4,391.22) node [anchor=north west][inner sep=0.75pt]   [align=left] {\textbf{Figure 6:} The chambers adjacent to the summit at $\displaystyle v$};

\end{tikzpicture}

\end{center}

\noindent Fix any two chambers $E,F$ of $\Delta$ such that $d(E,D_v)=d(F,D_v)=2$, $f_j$ is an edge of $F$ for some $j$, and $e_i$ is an edge of $E$ for some $i$. Then $E$ and $F$ are summits of $\Delta_{n+2}$ by Theorem \ref{thm: d-partite}. Suppose further that the peaks of $E$ and $F$ are joined by an edge. This is illustrated below.

\begin{center}

\tikzset{every picture/.style={line width=0.75pt}} 

\begin{tikzpicture}[x=0.75pt,y=0.75pt,yscale=-1,xscale=1]

\draw   (318.12,194.58) -- (411.35,327.07) -- (224.89,327.07) -- cycle ;
\draw  [color={rgb, 255:red, 245; green, 152; blue, 35 }  ,draw opacity=1 ] (113.11,210.37) -- (317.97,194.07) -- (224.37,326.31) -- cycle ;
\draw  [color={rgb, 255:red, 245; green, 152; blue, 35 }  ,draw opacity=1 ] (519.84,213.92) -- (411.35,327.07) -- (318.14,193.95) -- cycle ;
\draw  [color={rgb, 255:red, 245; green, 152; blue, 35 }  ,draw opacity=1 ][dash pattern={on 4.5pt off 4.5pt}] (481.43,159.69) -- (411.34,327.68) -- (318.13,194.56) -- cycle ;
\draw  [color={rgb, 255:red, 74; green, 144; blue, 226 }  ,draw opacity=1 ] (266.77,47.85) -- (317.42,193.74) -- (153.94,152.8) -- cycle ;
\draw  [color={rgb, 255:red, 74; green, 144; blue, 226 }  ,draw opacity=1 ] (433.79,71.31) -- (521,213.2) -- (318.81,193.27) -- cycle ;
\draw [color={rgb, 255:red, 245; green, 152; blue, 35 }  ,draw opacity=1 ]   (153.94,152.8) -- (176.25,206.81) ;
\draw [color={rgb, 255:red, 245; green, 152; blue, 35 }  ,draw opacity=1 ] [dash pattern={on 4.5pt off 4.5pt}]  (176.25,206.81) -- (224.53,326.22) ;
\draw [color={rgb, 255:red, 208; green, 2; blue, 2 }  ,draw opacity=1 ]   (266.77,47.85) -- (434.37,71.37) ;

\draw (310.22,263.19) node [anchor=north west][inner sep=0.75pt]   [align=left] {$\displaystyle D_{v}$};
\draw (313.84,202.82) node [anchor=north west][inner sep=0.75pt]   [align=left] {$\displaystyle v$};
\draw (154.13,249.86) node [anchor=north west][inner sep=0.75pt]  [color={rgb, 255:red, 245; green, 152; blue, 35 }  ,opacity=1 ] [align=left] {$ $};
\draw (146.08,218.5) node [anchor=north west][inner sep=0.75pt]  [color={rgb, 255:red, 245; green, 152; blue, 35 }  ,opacity=1 ] [align=left] {$\displaystyle F_{1}$};
\draw (174.45,165.19) node [anchor=north west][inner sep=0.75pt]  [color={rgb, 255:red, 245; green, 152; blue, 35 }  ,opacity=1 ] [align=left] {$\displaystyle F_{2}$};
\draw (196.34,204.39) node [anchor=north west][inner sep=0.75pt]  [color={rgb, 255:red, 245; green, 152; blue, 35 }  ,opacity=1 ] [align=left] {$\displaystyle f_{1}$};
\draw (216.61,171.46) node [anchor=north west][inner sep=0.75pt]  [color={rgb, 255:red, 245; green, 152; blue, 35 }  ,opacity=1 ] [align=left] {$\displaystyle f_{2}$};
\draw (404.97,202.04) node [anchor=north west][inner sep=0.75pt]  [color={rgb, 255:red, 245; green, 152; blue, 35 }  ,opacity=1 ] [align=left] {$\displaystyle e_{1}$};
\draw (406.59,173.03) node [anchor=north west][inner sep=0.75pt]  [color={rgb, 255:red, 245; green, 152; blue, 35 }  ,opacity=1 ] [align=left] {$\displaystyle e_{2}$};
\draw (472.69,216.15) node [anchor=north west][inner sep=0.75pt]  [color={rgb, 255:red, 245; green, 152; blue, 35 }  ,opacity=1 ] [align=left] {$\displaystyle E_{1}$};
\draw (447.56,168.33) node [anchor=north west][inner sep=0.75pt]  [color={rgb, 255:red, 245; green, 152; blue, 35 }  ,opacity=1 ] [align=left] {$\displaystyle E_{2}$};
\draw (234.39,114.23) node [anchor=north west][inner sep=0.75pt]  [color={rgb, 255:red, 74; green, 144; blue, 226 }  ,opacity=1 ] [align=left] {$\displaystyle F$};
\draw (424.81,127.56) node [anchor=north west][inner sep=0.75pt]  [color={rgb, 255:red, 74; green, 144; blue, 226 }  ,opacity=1 ] [align=left] {$\displaystyle E$};
\draw (277.36,249.86) node [anchor=north west][inner sep=0.75pt]  [color={rgb, 255:red, 0; green, 0; blue, 0 }  ,opacity=1 ] [align=left] {$\displaystyle e_{v}$};
\draw (347.89,250.65) node [anchor=north west][inner sep=0.75pt]  [color={rgb, 255:red, 0; green, 0; blue, 0 }  ,opacity=1 ] [align=left] {$\displaystyle f_{v}$};
\draw (199.16,351.64) node [anchor=north west][inner sep=0.75pt]   [align=left] {\textbf{Figure 7:} The chambers $\displaystyle E$ and $\displaystyle F$};

\end{tikzpicture}

\end{center}

\noindent For convenience, unless the choice of vertex $v\in\Delta_{n+1}\backslash\mathcal{X}$ is ambiguous, we will often just refer to $D_v,e_v$ and $f_v$ as $D,e$ and $f$.


\begin{definition}\label{defn: H_e and H_f}
We define the subgroups $H_e\leq I_e$ and $H_f\leq I_f$ as: $$H_e:=\{g\in I_e:g\cdot F=F\}$$ and $$H_f:=\{g\in I_f:g\cdot E=E\}$$
\end{definition}

\noindent\textbf{Note:} 1. This definition depends on the choice of the chambers $E$ and $F$, but varying the choice simply yields conjugates of $H_e$ and $H_f$.\\

\noindent 2. Using Proposition \ref{propn: distance fixing} we see that the intersection of $H_e$ (resp. $H_f$) with $I$ does not fix $E_i$ (resp. $F_i$) for any $i$.\\

\noindent In section \ref{subsec: X_2 properties}, we will realise these subgroups $H_e,H_f$ explicitly in the case where $v=v_0$ is the hyperspecial vertex. By symmetry in the building, the results we will prove there carry accross to any choice of $v$. In particular the following properties of $H_e$ and $H_f$ follow immediately from Lemma \ref{lem: order p} and Proposition \ref{propn: generating_subgroup1}:

\begin{proposition}\label{H-properties} $ $
\begin{itemize}
\item \emph{$H_e\cap H_f=I_v$.}

\item \emph{$\text{Stab}_{H_e}(E_i)=\text{Stab}_{H_f}(F_j)=I_v$ for all $i,j=1,\dots p$.}

\item \emph{$H_e/I_v$ and $H_f/I_v$ have order $p$.}

\item \emph{$I_D=\langle H_e,H_f\rangle$.}
\end{itemize}
\end{proposition}

\noindent Using these subgroups $H_e$ and $H_f$, we can now prove the following results, which demonstrate the usefulness of Lemma \ref{lem: crucial}.

\begin{proposition}\label{propn: split}
Let $v\in \Delta_{n+1}\backslash \mathcal{X}$, let $D=D_v$ be the summit of $\Delta_{n+1}$ at $v$, and let the chambers $E_1,\dots,E_p,F_1,\dots,F_p,E,F$, the oriented edges $e_v,f_v,e_1,\dots,e_p,f_1,\dots,f_p$, and the subgroups $H_e,H_f$ be defined as above.\\

\noindent Then there exists $\gamma\in C_2(\Delta,\mathbb{X})$ which is non-zero only on $E_1,\dots,E_p,F_1,\dots,F_p$ such that if $\beta':=\beta+\varepsilon_1(\gamma)$ then $H_e$ acts transitively on $\{\beta'(e_1),\dots,\beta'(e_p)\}$, and $H_f$ acts transitively on $\{\beta'(f_1),\dots,\beta'(f_p)\}$.
\end{proposition}

\begin{proof}




Using Theorem \ref{thm: d-partite}, we see that all chambers adjacent to $e_i,f_j$ not equal to $E_i,F_j$ are summits of $\Delta_{n+2}$, and thus $e_1,\dots,e_p,f_1,\dots,f_p$ lie on the border of $\Delta_{n+1}$ by Definition \ref{defn: border}. Moreover, since $v\notin\mathcal{X}$, the edges $e_1,\dots,e_p,f_1,\dots,f_p$ are not contained in $\mathcal{X}$.

So to find $\gamma\in C_2(\Delta,\mathbb{X})$ satisfying the required condition, we only need to show that the action of $H_e$ and $H_f$ on $\{e_1,\dots,e_p\}$ and $\{f_1,\dots,f_p\}$ satisfy the hypotheses of Lemma \ref{lem: crucial}. By symmetry, it suffices to prove this for $H_e$.\\

\noindent But we also know by Proposition \ref{H-properties} that $H_e/I_v$ has order $p$, and Stab$_{H_e}(e_i)=$ Stab$_{H_f}(f_k)=I_v$ for all $i,k=1,\dots,p$. In particular, Stab$_{H_e}(e_i)=$ Stab$_{H_e}(e_j)$ for all $i,j$, so hypothesis 3 of Lemma \ref{lem: crucial} is satisfied. But since $H_e/I_v$ permutes $e_1,\dots,e_p$, and this action is non-trivial, every non-trivial element of $H_e/I_v$ must act by a $p$-cycle, so it follows that the action is transitive, giving us hypothesis 2.

Moreover, if $g\in H_e$ and $g\cdot e_i=e_i$ for all $i$, then $g\in I_v$. So $g$ must stabilise all chambers adjacent to $e_i$, for $i=1,\dots,p$, and thus $g\in I_{e_i}$ by Proposition \ref{propn: adjacent chambers facet subgroup}, so hypothesis 1 is also satisfied.\\

\noindent Therefore, to apply Lemma \ref{lem: crucial}, it remains only to prove hypothesis 4, i.e. that $H_e/I_v=(I\cap H_e)/K_v$. Again, since $H_e/I_v$ has order $p$, we only need to show that $i\cap H_e\not\subseteq I_v$, and this only requires us to show that $I\cap H_e$ does not stabilise $e_i$ for some $i$.\\ 

\noindent Using Proposition \ref{propn: adjacent chambers facet subgroup}, we can realise $I\cap H_e$ as $$I\cap H_e:=\{g\in I:g\cdot F=F,g\cdot D=D \text{ and }g\cdot F_i=F_i\text{ for all }i\}$$

\noindent But $I$ stabilises $C$, $d(D,C)=r$, $d(C,F_k)=n+1$ and $d(C,F)=n+2$, so it follows from Corollary \ref{cor: gallery fixing} that any element of $I$ that stabilises $F$ will stabilise $F_k$ and $D$, and hence every chamber adjacent to $e$, i.e. $I\cap \text{Stab}_G(F)\subseteq I\cap H_e$ by Proposition \ref{propn: adjacent chambers facet subgroup}. 

But by Proposition \ref{propn: distance fixing}, there exists an element of $I$ that fixes $F$ but does not fix any $E_i$, i.e. there exists an element of $I\cap H_e$ that lies outside $I_v$ as required. Therefore, $H_e\cap I/I_v=H_e/I_v$ as required.\\

\noindent So, applying Lemma \ref{lem: crucial}, there exist $\gamma_e,\gamma_f\in C_2(\Delta_{n+1},\mathbb{X})$ such that $\gamma_e(D')=0$ (resp. $\gamma_f(D')=0$) whenever $D'$ is a chamber not equal to $E_i$ (resp. $F_i$) for any $i$, and if $\beta'=\beta+\varepsilon_1(\gamma_e+\gamma_f)$ then $H_e$ acts transitively on $\{\beta'(e_1),\dots,\beta'(e_q)\}$, and $H_f$ acts transitively on $\{\beta'(f_1)\cdots,\beta'(f_q)\}$ as required.\end{proof}

\begin{corollary}\label{cor: split2}
Let $v\in\Delta_{n+1}\backslash\mathcal{X}$, let $D$ be the summit of $\Delta_{n+1}$ at $v$, and let the set of chambers $\mathcal{Y}:=\{D,E_1,\dots,E_p,F_1,\dots,F_p\}$, and the oriented edges $e,f,e_1,\dots,e_p,f_1,\dots,f_p$ be defined as in Proposition \ref{propn: split}.\\ 

\noindent Then there exists a $\mathcal{Y}$-shift $\beta'$ of $\beta$ such that $$\beta'(e)+\underset{1\leq i\leq p}{\sum}\beta'(e_i)=\beta'(f)+\underset{1\leq j\leq p}{\sum}\beta'(f_j)=0$$
\end{corollary}

\begin{proof}

Applying Proposition \ref{propn: split}, we know that there exists $\gamma''\in C_2(\Delta,\mathbb{X})$ that is non-zero only on $E_{1},\dots,E_{p},F_{1},\dots,F_{p}$ such that if $\beta''=\beta+\varepsilon_1(\gamma'')$ then $H_{e}$ acts transitively on $\{\beta''(e_{1}),\dots,\beta''(e_{p})\}$ and $H_{f}$ acts transitively on $\{\beta''(f_{1}),\dots,\beta''(f_{p})\}$.\\

\noindent Therefore, the sums $\underset{1\leq i\leq p}{\sum}{\beta''(e_{i})}$ and $\underset{1\leq j\leq p}{\sum}{\beta''(f_{j})}$ are respectively $H_{e}$ and $H_{f}$-invariant.\\

\noindent But $v\notin\mathcal{X}$, so we know that $\varepsilon_0(\beta'')(v)=\varepsilon_0(\beta)(v)=0$, which implies that $$\beta''(e)+\underset{1\leq i\leq p}{\sum}{\beta''(e_{i})}=-\beta''(f)-\underset{1\leq j\leq p}{\sum}{\beta''(f_{j})}$$ But $\beta''(e)\in\mathbb{X}^{I_{e}}$ and $\beta''(f)\in\mathbb{X}^{I_{f}}$, so since $H_{e}\subseteq I_{e}$ and $H_{f}\subseteq I_{f}$, this implies that the left hand side of this equality is $H_{e}$-invariant, while the right hand side is $H_{f}$-invariant. So set $\ell:=\beta''(e)+\underset{1\leq i\leq p}{\sum}{\beta''(e_{i})}$, and we see that $\ell$ is invariant under $\langle H_{e},H_{f}\rangle$, which is equal to $I_{D}$ by Proposition \ref{H-properties}.\\

\noindent So, letting $o$ be the orientation of $e$, define $\gamma'\in C_2(\Delta,\mathbb{X})$ by $$\gamma'(E,c):=\begin{cases}
-\ell & (E,c)=(D,o\uparrow^{D})\\
\ell & (E,c)=(D,-\sigma(o)\uparrow^D)\\
0 & \text{otherwise}
\end{cases}$$  Clearly $\gamma'$ is non-zero only on $D$, so let $\gamma:=\gamma'+\gamma''$, and $\gamma$ is non-zero only on $\{D,E_1,\dots,E_p,F_1,\dots,F_p\}$. Define $$\beta':=\beta''+\varepsilon_1(\gamma')=\beta+\varepsilon_1(\gamma)$$ Then $\beta'(e_{i})=\beta''(e_{i})$, $\beta'(f_{i})=\beta''(f_{i})$ for all $i\leq p$, while $\beta'(e)=\beta''(e)-\ell$, $\beta'(f)=\beta''(f)+\ell$. In particular: $$\beta'(e)+\underset{1\leq i\leq p}{\sum}{\beta'(e_{i})}=\beta''(e)+\underset{1\leq i\leq p}{\sum}{\beta''(e_{i})}-\ell=\ell-\ell=0$$ and similarly $\beta'(f)+\underset{1\leq j\leq p}{\sum}{\beta'(f_{j})}=0$ as required.\end{proof}

\noindent Interpreting this statement geometrically, it means we can divide $\beta$ on the region in Figure 6 into a sum of two chains, each non-zero on precisely one side of $D_v$, and the image of both under $\varepsilon_0$ will annihilate $v$.

\subsection{Unbroken regions}

We will assume throughout this section that $\mathcal{X}$ is a complete, $I$-invariant region of $\Delta$, with $\Delta_n\subseteq\mathcal{X}\subseteq\Delta_{n+1}$ and $\beta\in C_1(\Delta_{n+1},\mathbb{X})$ satisfies all the assumptions at the start of section \ref{subsec: key lemma}. Again, we ultimately want to prove that $\beta\in C_1(\mathcal{X},\mathbb{X})+\varepsilon_1(C_2(\Delta,\mathbb{X}))$, i.e. $\beta$ has a shift in $C_1(\mathcal{X},\mathbb{X})$, and we will now see the first cases of when this holds.\\ 





\noindent Recall from section \ref{subsec: Delta decomposition} that we can decompose $\Delta_{n+1}$ as $$\Delta_{n+1}=\Delta_{n}\sqcup\text{ Crown}(X_{0,n+1})\sqcup\text{ Crown}(X_{1,n+1})\sqcup\text{ Crown}(X_{2,n+1})$$ where Crown$(X_{i,n+1})=S_{1,i}^{(n+1)}\sqcup S_{2,i}^{(n+1)}\sqcup\cdots\sqcup S_{m+1,i}^{(n+1)}$, $m:=\lfloor\frac{n+1}{2}\rfloor$. Again, $S_{j,i}^{(n+1)}$ is a set of summits of $\Delta_{n+1}$, and we let $P_{j,i}^{(n+1)}$ be the corresponding set of peaks.\\

\noindent Also, using Lemma \ref{lem: I-invariant complete region}, we know that $\mathcal{X}=\Delta_n\sqcup\underset{(j,i)\in\Gamma(\mathcal{X})}{\bigsqcup}S_{j,i}^{(n+1)}$ for some subset $\Gamma=\Gamma(\mathcal{X})\subseteq\{1,\dots,m+1\}\times\{0,1,2\}$. Note that $\Gamma$ is empty if and only if $\mathcal{X}=\Delta_n$. 

Moreover, we can decompose $\Gamma$ as $$\Gamma(\mathcal{X})=(\Gamma_0(\mathcal{X})\times\{0\})\sqcup(\Gamma_1(\mathcal{X})\times\{1\})\times(\Gamma_2(\mathcal{X})\times\{2\})$$ where $\Gamma_i(\mathcal{X})\subseteq\{1,\dots,m+1\}$.

\begin{definition}\label{defn: unbroken}
We say $\mathcal{X}$ is \emph{unbroken} if for each $i=0,1,2$, $\Gamma_i(\mathcal{X})$ is either empty or is an interval $\{m_i,m_i+1,\dots,M_i\}$ for some $m_i\leq M_i$.
\end{definition}

\noindent \textbf{Note:} If $n=0$, then all complete regions in $\Delta_{n+1}$ are unbroken, since $m=\lfloor\frac{n+1}{2}\rfloor=0$.\\

\noindent For now, we will assume further that $\mathcal{X}$ is unbroken, and write $\Gamma_i(\mathcal{X})=\{m_i,\dots,M_i\}$ for some $1\leq m_i\leq M_i\leq m+1$. Note that $M_i=0$ for all $i$ if and only if $\mathcal{X}=\Delta_n$.\\

\noindent Now, let $R_i=R_i(\beta)$ (resp. $r_i=r_i(\beta)$) be maximal (resp. minimal) such that $\beta$ is non-zero on an edge adjacent to some vertex in $P_{R_i,i}^{(n+1)}$ (resp. $P_{r_i,i}^{(n+1)}$). On the other hand, if no such vertex exists and $\beta$ is zero on Crown$(X_{i,n+1})$, then we say that $R_i=r_i=0$. We will also assume that for each $i\in\{0,1,2\}$, either

\begin{itemize}

\item $M_i=m_i=0$ or

\item $r_i\leq m_i\leq M_i\leq R_i$.

\end{itemize}

\begin{lemma}\label{lem: unbroken advantage}
If $M_i=R_i$ and $m_i=r_i$ for all $i$, then $\beta\in C_1(\mathcal{X},\Delta)$.
\end{lemma}

\begin{proof}

Suppose $\beta$ is non-zero on an edge $e\in\Delta_{n+1}\backslash$, and suppose $e$ contains a vertex $v\notin\mathcal{X}$. Then since $\Delta_n\subseteq\mathcal{X}$, $v$ must be a peak in Crown$(X_{i,n+1})$ for some $i\in\{0,1,2\}$, so $v\in P_{j,i}^{(n+1)}$ for some $j\in\{1,\dots,m+1\}$.

But by the definition of $r_i$ and $R_i$, this implies that $j\geq r_i=m_i$ and $j\leq R_i=M_i$, so $j\in\{m_i,\dots,M_i\}=\Gamma_i$, and thus $S_{j,i}^{(n+1)}\subseteq\mathcal{X}$, which implies that $v\in\mathcal{X}$ -- contradiction.\end{proof}

\noindent So we now want to find a shift $\beta'$ of $\beta$ with $r_i(\beta')=m_i$ and $R_i(\beta')=m_i$. We will first consider the simplest case, when $M_i=0$, which implies that $\mathcal{X}$ contains no peaks in Crown$(X_{i,n+1})$.

\begin{lemma}\label{lem: M=1}

If $M_i=0$ and $r_i=R_i$, then there exists a \emph{Crown}$(X_{i,n+1})$-shift $\beta'$ of $\beta$ with $R_i(\beta')=r_i(\beta')=0$.

\end{lemma}

\begin{proof}

Let $j:=r_i=R_i$. If $j=0$ then we can take $\beta'=\beta$, so assume that $j\geq 1$, and thus there exists $v\in P_{j,i}^{(n+1)}$ such that $\beta$ is non-zero on some edge adjacent to $v$.\\

\noindent But the only oriented edges in $\Delta_{n+1}$ meeting $v$ that are not adjacent to any vertex in $P_{i,j-1}^{(n+1)}$ or $P_{i,j+1}^{(n+1)}$ are the edges $e_v,f_v$ of the summit $D_v$, and we assume without loss of generality that $e_v,f_v$ have target $v$.\\ 

\noindent Since $e_v,f_v$ are the only edges adjacent to $v$ at which $\beta$ can be non-zero, we deduce that $\varepsilon_0(\beta)(v)=\beta(e_v)+\beta(f_v)$. But since $M_i=m_i=0$ we know that $\mathcal{X}$ contains no summit of $\Delta_{n+1}$ in $X_{i,n+1}$, so $v\notin\mathcal{X}$ and thus $\varepsilon_0(\beta)(v)=0$. Therefore, $$\beta(f_v)=-\beta(e_v)\in\mathbb{X}^{I_{e_v}}\cap\mathbb{X}^{I_{f_v}}=\mathbb{X}^{\langle I_{e_v},I_{f_v}\rangle}=\mathbb{X}^{I_{D_v}}$$

\noindent Writing $o$ as the orientation of $e_v$, define $\gamma_v\in C_2(\Delta,\mathbb{X})$ by $$\gamma_v(E,c)=\begin{cases}-\beta(e_v) & (E,o)=(D_v,o\uparrow^{D_v})\\ \beta(e_v) & (E,o)=\sigma(D_v,o\uparrow^{D_v})\\ 0 & \text{otherwise}\end{cases}$$ Let $\gamma$ be the sum of all $\gamma_v$, and $v$ ranges over the peaks in $P_{j,i}^{(n+1)}$, and let $\beta':=\beta+\varepsilon_1(\gamma)$.

Then for each such peak $v\in P_{1,i}^{(n+1)}$, $\beta'(e_v)=\beta(e_v)+\varepsilon_1(\gamma)(e_v)=\beta(e_v)-\beta(e_v)=0$, and similarly $\beta'(f_v)=0$, and it follows that $\beta'$ is zero on Crown$(X_{i,n+1})$, i.e. $n_i(\beta')=0$.\end{proof}

\noindent\textbf{Note:} If $n=0$ then either $M_i=0$ or $M_i=R_i$, so it follows from this lemma that $\beta$ has a shift in $C_1(\mathcal{X},\mathbb{X})$ as we require. So we may assume from now on that $n\geq 1$.

\subsection{Proof of Theorem \ref{letterthm: exact for star}}

Recall how we define the \emph{star} of the hyperspecial vertex $v_0$, i.e. Star$(v_0)$ is the set of all chambers containing $v_0$ (i.e. Star$(v_0)=X_{0,2}$ in our notation). Also recall from the introduction we call an $I$-invariant region $\mathcal{Y}\subseteq\Delta$ \emph{star-bounded} if it is contained in Star$(v_0)$.

We will now state the following important conjecture, which is implicit in the statement of Theorem \ref{letterthm: exact for star}, and which we will assume for the remainder of the section:

\begin{letterconj}\label{conj: did not get it}
If $\mathcal{Y}$ is a star-bounded region in $\Delta$, then $\mathcal{Y}$ is \emph{Star}$(v_0)$-collapsible.
\end{letterconj}

\noindent This conjecture remains the only obstacle to completing a full proof of Conjecture \ref{sur} for $SL_3(K)$, as we will now demonstrate.\\





\noindent Once again, fix $\mathcal{X}$ a complete, $I$-invariant unbroken region of $\Delta$, with $\Delta_n\subseteq\mathcal{X}\subseteq\Delta_{n+1}$ for some $n\geq 1$, and we fix a chain $\beta\in C_1(\Delta_{n+1},\mathbb{X})$ satisfying the assumptions stated at the start of section \ref{subsec: key lemma}.\\

\noindent Given a summit $D$ of $\Delta_{n-1}$, with peak $v$, recall from section \ref{subsec: Delta decomposition} how we define the region $X_D:=\text{Star}(v)$, consisting of all chambers containing $v$, which is isometric with Star$(v_0)$.

Therefore, assuming Conjecture \ref{conj: did not get it}, we can assume that for any $I$-invariant region $\mathcal{L}\subseteq X_D$, $\mathcal{L}$ is $X_D$-collapsible.

\begin{proposition}\label{propn: isolating each summit}

Fix $i\in\{0,1,2\}$, and suppose that $r_i(\beta)<R_i(\beta)-1$ and $M_i<R_i(\beta)-1$ (resp. $m_i> r_i(\beta)+1$). Then there exists an $X_{i,n+1}$-shift $\beta_0$ of $\beta$ such that $R_i(\beta_0)<R_i(\beta)$ (resp. $r_i(\beta_0)>r_i(\beta)$).






\end{proposition}

\begin{proof}

It suffices to prove the statement for $R_i$, and the statement for $r_i$ follows by symmetry. So let $j:=R_i(\beta)$.\\

\noindent Firstly, fix any summit $E\in S_{j-1,i}^{(n+1)}$, and let $v$ be the peak of $E$. Since $j-1>M_i$, it follown that $(j-1,i)\notin \Gamma$, and hence $v\notin\mathcal{X}$. So as in the proof of Proposition \ref{propn: split}, let $\mathcal{Y}_E:=\{E,E_1,\dots,E_p,F_1,\dots,F_p\}$, where $E_1,\dots,E_p,F_1,\dots,F_p\in$ Crown$(X_{0,n+1})$ are adjacent to $E$.

Note that $\mathcal{Y}_{E}$ and $\mathcal{Y}_{E'}$ share no common chamber if $E\neq E'$, since two distinct peaks in $P_{j-2,i}^{(n+1)}$ cannot be joined by an edge by Theorem \ref{thm: d-partite}.\\

\noindent Also, let $e_v,f_v$ be the oriented edges of $E$ with target $v$, where the origin of $f_v$ lies in $P_{j-1,i}^{(n-1)}$. Let $e_{i,v},f_{i,v}$ be the oriented edges of $E_i$ and $F_i$ respectively with target $v$, these edges giving the same arrangement as in Figure 6.\\

\noindent Applying Corollary \ref{cor: split2}, we can find a $\mathcal{Y}_E$-shift $\beta_E$ of $\beta$ such that $$\beta_E(e_v)+\underset{1\leq i\leq p}{\sum}{\beta_E(e_{i,v})}=\beta_E(f_v)+\underset{1\leq i\leq p}{\sum}{\beta_E(f_{i,v})}=0$$ Write $\beta_E:=\beta+\varepsilon_1(\gamma_E)$ for some $\gamma_E\in C_2(\Delta,\mathbb{X})$, zero on all edges outside $\mathcal{Y}_E$.\\

\noindent Define $\gamma$ to be the sum of all $\gamma_E$, as $E$ ranges over all summits in $S_{j-1,i}^{(n+1)}$. Then since the regions $\mathcal{Y}_E$ are mutually disjoint as sets of chambers, it follows $\gamma$ restricts to $\gamma_E$ on each $\mathcal{Y}_E$. So defining $\beta':=\beta+\varepsilon_1(\gamma)$, it follows that for each peak $v\in P_{j-1,i}^{(n+1)}$, we still have the identity $$\beta'(e_v)+\underset{1\leq i\leq p}{\sum}{\beta'(e_{i,v})}=\beta'(f_v)+\underset{1\leq i\leq p}{\sum}{\beta'(f_{i,v})}=0$$

\noindent Now, for each summit $D\in S_{j-1,i}^{(n-1)}$, define $\beta_D\in C_1(\Delta,\mathbb{X})$ by $$\beta_D(e):=\begin{cases} \beta'(e) & e\in X_D\text{ and }e\neq f_v,\sigma(f_v)\text{ for any }v\in P_{j-1,i}^{(n+1)}\cap X_D \\ 0 & \text{otherwise} \end{cases}$$ Then clearly $\beta_D$ is zero on edges outside $X_D$. Moreover, since $f_v$ is never joined to a peak in $P_{j,i}^{(n+1)}$, it is clear that $\beta_D$ agrees with $\beta'$ on all edges in $X_D$ adjacent to peaks in $P_{j,i}^{(n+1)}$. In particular, $\varepsilon_0(\beta_D)$ is zero on these peaks.\\

\noindent Furthermore, for any $v\in P_{j-1,i}^{(n+1)}\cap X_D$, we know that $e_v,e_{k,v}\in X_D$ for all $k=1,\dots, p$, and these are the only edges adjacent to $v$ on which $\beta_D$ can be non-zero, thus $$\varepsilon_0(\beta_D)(v)=\beta_D(e_v)+\underset{1\leq i\leq p}{\sum}{\beta_D(e_{i,v})}=\beta'(e_v)+\underset{1\leq i\leq p}{\sum}{\beta'(e_{i,v})}=0$$ Thus $\beta_D$ is a chain defined on $X_D$ such that $\varepsilon_0(\beta_D)$ is zero outside of $X_D\cap\Delta_n$. But $X_D\cap\Delta_n$ is an $X_D$-collapsible region of $\Delta$ by Conjecture \ref{conj: did not get it}, so applying Proposition \ref{propn: loose end} we see that $\beta_D\in C_1(X_D\cap\Delta_n,\mathbb{X})+\varepsilon_1(C_2(\Delta,\mathbb{X}))\subseteq C_1(\Delta_n,\mathbb{X})+\varepsilon_1(C_2(\Delta,\mathbb{X}))$.\\

\noindent Therefore, we can find $\gamma_D\in C_2(\Delta,\mathbb{X})$ such that $\beta_D+\varepsilon_1(\gamma_D)\in C_1(\Delta_n\cap X_D,\mathbb{X})$. Moreover, since $X_D\subseteq \Delta_2(D):=\{E\in\Delta:d(D,E)\leq 2\}$, it follows from Proposition \ref{propn: exactness1} that we can assume $\gamma_D$ is zero outside of $\Delta_2(D)\subseteq X_{i,n+1}$.\\

\noindent Now, let $\gamma_0$ be the sum of all $\gamma_D$, as $D$ ranges over all summits in $S_{j-1,i}$, and let $\beta_0:=\beta'+\varepsilon_1(\gamma_0)$. Note that $\gamma_0$ is zero outside of the union $\mathcal{L}$ of all $X_D$, and hence $\beta_0$ agrees with $\beta'$ on all edges that are not contained in $\mathcal{L}$. In particular, $\beta_0$ is zero outside $\Delta_{n+1}$, and also on all edges adjacent to peaks in $P_{k,i}^{(n+1)}$ for $k>j$, i.e. $R_i(\beta_0)\leq R_i(\beta)$.

It remains to show that $\beta_0$ is zero on all edges in $\Delta_{n+1}$ adjacent to vertices in $P_{j,i}^{(n+1)}$, and it will follow that $R_i(\beta_0)< R_i(\beta)$.\\

\noindent For each $v\in P_{j,i}^{(n+1)}$, let $e$ be an edge adjacent to $v$, and since $\beta_0$ is zero on all edges containing vertices in $P_{j+1,i}^{(n+1)}$ or outside $\Delta_{n+1}$, we may assume that the second vertex of $e$ lies in $\Delta_n$ or $P_{j-1,i}^{(n+1)}$. Let $E$ be any chamber in $\mathcal{L}$ adjacent to $e$.

The summit at $v$ contains a unique $u\in P_{j-1,i}^{(n-1)}$, so let $D$ be the summit of $u$. The second vertex of $e$ must be joined to $u$, so $e\in X_D$, and $E$ must be contained in $X_D$.\\ 

\noindent But $E\notin\Delta_n$, so for any $D'\in S_{j-1,i}^{(n-1)}$ with $D\neq D'$, $E\notin X_{D'}$ by Corollary \ref{cor: local}. Hence $\gamma_{D'}(E)=0$ for all $D\neq D'$, and thus $\gamma_0(E)=\gamma_D(E)$.\\

\noindent Since this is true for all chambers in $\mathcal{L}$ adjacent to $e$, and since $\beta'(e)=\beta_D(e)$ by the definition of $\beta_D$, we must have that $\varepsilon_1(\gamma_0)(e)=\varepsilon_1(\gamma_D)(e)$, so $$\beta_0(e)=\beta'(e)+\varepsilon_1(\gamma_0)(e)=\beta_D(e)+\varepsilon_1(\gamma_D)(e)=(\beta_D+\varepsilon_1(\gamma_D))(e)$$ and since $\beta_D+\varepsilon_1(\gamma_D)\in C_1(\mathcal{X},\mathbb{X})$ and $e\notin\mathcal{X}$, this must be 0 as required.\end{proof}

\begin{proposition}\label{propn: j=1,2}
If $M_i=0$, then there exists an $X_{i,n+1}$-shift $\beta_0$ of $\beta$ such that $r_i(\beta_0)=R_i(\beta_0)=0$.
\end{proposition}

\begin{proof}

If $r_i(\beta)=R_i(\beta)$, then this follows from Lemma \ref{lem: M=1}, so we may assume that $r_i(\beta)<R_i(\beta)$. If $R_i(\beta)>r_i(\beta)+1$, then since $0=M_i\neq R_i(\beta)-1$ it follows from Proposition \ref{propn: isolating each summit} that we can find an $X_{i,n+1}$-shift $\beta'$ of $\beta$ with $R_i(\beta')<R_i(\beta)$. So replacing $\beta$ with $\beta'$ and repeating this process, we may assume that $R_i(\beta)-1\leq r_i(\beta)<R_i(\beta)$, i.e $r_i(\beta)=R_i(\beta)-1$.\\

\noindent Let $j:=r_i(\beta)$, then the only edges in Crown$(X_{i,n+1})$ at which $\beta$ is non-zero are adjacent to peaks in $P_{j,i}^{(n+1)}$ and $P_{i,j+1}^{(n+1)}$. For each summit $D\in S_{j,i}^{(n-1)}$, define $\beta_D\in C_1(X_D,\mathbb{X})$ by $$\beta_D(e):=\begin{cases}\beta(e) & e\in X_D\\ 0& \text{otherwise}\end{cases}$$ But for any vertex $v\in (P_{j,i}^{(n+1)}\sqcup P_{i,j+1}^{(n+1)})\cap\mathcal{L}_D$, all adjacent vertices to $v$ in $P_{j,i}^{(n+1)}\sqcup P_{j+1,i}^{(n+1)}\sqcup\Delta_n$ lie in $X_D$, and hence $\beta$ is zero on all edges adjacent to $v$ that do not lie in $X_D$, so it follows that $\varepsilon_0(\beta_D)(v)=\varepsilon_0(\beta)(v)=0$.\\

\noindent Therefore, $\varepsilon_0(\beta_D)\in C_1(\mathcal{L}_D\cap\Delta_n,\mathbb{X})$, and $X_D\cap\Delta_n$ is an $X_D$-collapsible region of $\Delta$ by Conjecture \ref{conj: did not get it}, so $\beta_D\in C_1(X_D\cap\Delta_n,\mathbb{X})+\varepsilon_1(C_2(\Delta,\mathbb{X}))$ by Proposition \ref{propn: loose end}.\\

\noindent Therefore, choosing $\gamma_D\in C_2(\Delta,\mathbb{X})$ such that $\beta_D+\varepsilon_1(\gamma_D)\in C_1(X_D\cap\Delta_n,\mathbb{X})\subseteq C_1(\Delta_n,\mathbb{X})$, an identical argument to the proof of Proposition \ref{propn: isolating each summit} shows that we can find an $X_{i,n+1}$-shift $\beta_0$ of $\beta$ that agrees with $\beta_D+\varepsilon_1(\gamma_D)$ on Crown$(X_{i,n+1})\cap\mathcal{L}_D$ for all $D\in S_{j,i}^{(n-1)}$. 

In other words, $\beta_0$ is zero on all edges in Crown$(X_{i,n+1})$, so it follows that $r_i(\beta_0)=R_i(\beta_0)$.\end{proof}

\begin{theorem}\label{thm: future lettered theorem}
If we assume Conjecture \ref{conj: did not get it}, then $\Delta_n$ is a collapsible region of $\Delta$ for all $n\in\mathbb{N}$.
\end{theorem}

\begin{proof}

We need to prove that $\Delta_n$ is $\mathcal{L}$-collapsible for all $I$-invariant complete regions $\mathcal{L}$ containing $\Delta_n$. Choose any such region $\mathcal{L}$, and suppose $\mathcal{L}\subseteq\Delta_{k+1}$ for some $k\geq n$.

Fix $\beta\in C_1(\mathcal{L},\mathbb{X})$ and suppose that $\beta$ is $(I,\Delta_n)$-shift invariant and $\varepsilon_0(\beta)\in C_0(\Delta_n,\mathbb{X})$. By Definition \ref{defn: collapsible}, we only need to prove that $\beta\in C_1(\Delta_n,\mathbb{X})+\varepsilon_1(C_2(\Delta,\mathbb{X}))$, i.e. that $\beta$ has a shift in $C_1(\Delta_n,\mathbb{X})$.\\

\noindent If $k=n$ then $\beta\in C_1(\Delta_{n+1},\mathbb{X})$. Since our region $\mathcal{X}$ is $\Delta_n$, we have that $M_i=0$ for all $i\in\{0,1,2\}$. So if $n=0$ then it follows from Lemma \ref{lem: M=1} that we can find a shift $\beta'$ of $\beta$ with $R_i(\beta')=r_i(\beta')=0$.

Therefore, we may assume $n\geq 1$, and thus we can apply Proposition \ref{propn: j=1,2} to find an $X_{i,n+1}$-shift $\beta_0$ of $\beta$ such that $r_i(\beta_0)=R_i(\beta)=0$, i.e $\beta_0$ is zero on Crown$(X_{i,n+1})$. 

But since $\beta_0$ is an $X_{i,n+1}$-shift and $X_{i,n+1}\cap\text{Crown}(X_{i',n+1})=\varnothing$ for $i'\neq i$, it follows that $\beta_0$ agrees with $\beta$ on Crown$(X_{i,n+1})$. So repeating this argument for all $i\in\{0,1,2\}$, we deduce that $\beta_0 \in C_1(\Delta_n,\mathbb{X})$, and hence $\beta\in C_1(\Delta_n,\mathbb{X})+\varepsilon_1(C_2(\Delta,\mathbb{X}))$.\\

\noindent If $k>n$ then we will apply induction on $k-n$. The same argument shows that $\beta$ has a shift $\beta_0\in C_1(\Delta_k,\mathbb{X})$, so replacing $k$ with $k-1$ we can apply induction to find a shift $\beta_0'$ of $\beta_0$ with $\beta_0'\in C_1(\Delta_n,\mathbb{X})$ as required.\end{proof}

\noindent In light of this result, we will assume from now on that $\mathcal{X}\neq\Delta_n$, and thus there exists $i\in\{0,1,2\}$ such that $M_i\neq 0$. Therefore, we can assume that $r_i(\beta)\leq m_i\leq M_i\leq R_i(\beta)$.

Using Proposition \ref{propn: isolating each summit}, we can assume further that $R_i(\beta)\leq M_i+1$ and $r_i(\beta)\geq m_i-1$.

\begin{proposition}\label{propn: penultimate step}
There exists an $X_{i,n+1}$-shift $\beta_0$ of $\beta$ such that $R_i(\beta_0)\leq M_i$ and $r_i(\beta_0)\geq m_i$.
\end{proposition}

\begin{proof}

By symmetry, we only need to find a shift $\beta_0$ such that $R_i(\beta_0)\leq M_i$, and since $R_i(\beta)\leq M_i+1$, we can assume that $R_i(\beta)=M_i+1$. So let $j:=R_i(\beta)$.\\

\noindent The argument is largely identical to the proof of Proposition \ref{propn: isolating each summit}, so will refer to this proof where necessary. For each summit $D\in S_{j-1,i}^{(n-1)}$, define $\beta_D\in C_1(X_D,\mathbb{X})$ by $$\beta_D(e):=\begin{cases}\beta(e) & e\in X_D\\ 0\ &\text{otherwise}\end{cases}$$ Then for any peak $v\in P_{j,i}^{(n+1)}$, all edges adjacent to $v$ in $P_{j,i}^{(n+1)}\sqcup P_{j-1,i}^{(n+1)}\sqcup\Delta_n$ lie in $X_D$, and since $R_i(\beta)=j$, these are the only edges at which $\beta$ is non-zero, so $\varepsilon_0(\beta_D)(v)=\varepsilon_0(\beta)(v)=0$.

In particular, $\varepsilon_0(\beta_D)$ is non-zero only on vertices in $(\Delta_n\sqcup S_{j-1,i}^{(n+1)})\cap X_D$, so it follows from Conjecture \ref{conj: did not get it} that it is $X_D$-collapsible. Applying Proposition \ref{propn: loose end},  we can find a chain $\gamma_D\in C_2(\Delta,\mathbb{X})$ such that $$\beta_D+\varepsilon_1(\gamma_D)\in C_1((\Delta_n\sqcup S_{j-1,i}^{(n+1)})\cap X_D,\mathbb{X})$$ and since $X_D$ is isometric with a complete, $I$-invariant region of $\Delta$, it follows from Proposition \ref{propn: exactness1} that $\gamma_D\in C_2(X_D,\mathbb{X})\subseteq C_2(X_{i,n+1},\mathbb{X})$. Moreover, since $S_{j-1,i}^{(n+1)}\subseteq\mathcal{X}$, this implies that $\beta_D+\varepsilon_1(\gamma_D)\in C_1(\mathcal{X},\mathbb{X})$.\\

\noindent The argument in the proof of Proposition \ref{propn: isolating each summit} now shows that we can find an $X_{i,n+1}$-shift $\beta_0$ of $\beta$ that agrees with $\beta_D+\varepsilon_1(\gamma_D)$ on Crown$(X_{i,n+1})\cap X_D$ for all $D\in S_{j,i}^{(n-1)}$. In particular, $\beta_0$ is zero on edges adjacent to vertices in $P_{j,i}^{(n+1)}$, so $R_i(\beta_0)<j$. But since $j=M_i+1$, this implies that $R_i(\beta_0)\leq M_i$ as required.\end{proof}

\begin{corollary}\label{cor: final for unbroken}
Suppose $\Delta_n\subsetneq\mathcal{X}\subsetneq\Delta_{n+1}$ is a complete, $I$-invariant, unbroken region of $\Delta$. If we assume Conjecture \ref{conj: did not get it}, then $\mathcal{X}$ is $\Delta_{n+1}$-collapsible.
\end{corollary}

\begin{proof}

We only need to show that if $\beta\in C_1(\Delta_{n+1},\mathbb{X})$ is $(I,\mathcal{X})$-shift invariant and $\varepsilon_0(\beta)\in C_0(\mathcal{X},\mathbb{X})$ then there exists a shift $\beta'\in C_1(\mathcal{X},\mathbb{X})$ of $\beta$.\\

\noindent For each $i\in\{0,1,2\}$, if $M_i=m_i=0$ then the proof of Theorem \ref{thm: future lettered theorem} shows that $\beta$ has a $X_{i,n+1}$-shift which is zero on Crown$(X_{i,n+1})$, so we will assume that $M_i\neq 0$, and hence $r_i(\beta)\leq m_i\leq M_i\leq R_i(\beta)$.\\

\noindent If $r_i=R_i$ then this forces equality, and thus $\beta$ is zero on Crown$(X_{i,n+1})$ outside $\mathcal{X}$. Similarly, if $r_i=R_i-1$ then $R_i\leq M_i+1$ and $r_i\geq m_i-1$, so by Proposition \ref{propn: penultimate step} there exists an $X_{i,n+1}$ shift $\beta_0$ of $\beta$ such that $R_i=M_i$ and $r_i=m_i$. So we may assume from that $r_i<R_i-1$.\\

\noindent Therefore, if $R_i(\beta)>M_i+1$ or $r_i(\beta)<m_i-1$, then we can apply Proposition \ref{propn: isolating each summit} to find an $X_{i,n+1}$-shift $\beta_0$ of $\beta$ with $R_i(\beta_0)<R_i(\beta)$ or $r_i(\beta_0)>r_i(\beta)$, so we may assume that $R_i\leq M_i+1$ and $r_i\geq m_i-1$. So applying Proposition \ref{propn: penultimate step} again, we can find an $X_{i,n+1}$-shift $\beta_1$ of $\beta$ with $r_i(\beta_1)=m_i$ and $R_i(\beta_1)=M_i$.\\

\noindent Therefore, after applying a sequence of $X_{i,n+1}$-shifts, we may assume that $r_i(\beta)=m_i$ and $R_i(\beta)=M_i$, which means that the only edges in Crown$(X_{i,n+1})$ on which $\beta$ can be non-zero are those that are not adjacent to vertices in $P_{k,i}^{(n+1)}$ for $k$ outside $\{m_i,\dots,M_i\}$, i.e. edges in $\mathcal{X}\cap$ Crown$(X_{i,n+1})$. Since this is true for each $i\in\{0,1,2\}$, it follows that $\beta\in C_1(\mathcal{X},\mathbb{X})$.\end{proof}

\noindent The final step in the argument is to drop the assumption that $\mathcal{X}$ is unbroken. In this case, we will need the following result.

\begin{proposition}\label{propn: middle vertex}
If $i\in\{0,1,2\}$, $j\in\{2,\dots,m\}$ and $(j,i)\notin \Gamma(\mathcal{X})$ then there exist chains $\beta^+,\beta^-\in C_1(\Delta_{n+1},\mathbb{X})$ such that

\begin{itemize}

\item $\beta^++\beta^-$ is an $X_{i,n+1}$-shift of $\beta$.

\item $\varepsilon_0(\beta^+)(v)=\varepsilon_0(\beta^-)(v)=0$ for all peaks $v$ of $\Delta_{n+1}$ with $v\notin\mathcal{X}$.

\item $\beta^+$ (resp. $\beta^-$) is zero on all edges adjacent to vertices in $P_{k,i}^{(n+1)}$ for $k<j$ (resp. $k>j$).

\item $\beta^-$ is zero outside Crown$(X_{i,n+1})$.

\end{itemize}

\end{proposition}

\begin{proof}

Firstly, fix any summit $E\in S_{j,i}^{(n+1)}$, let $v$ be the peak of $E$, and since $(j,i)\notin \Gamma$, it follows that $v\notin\mathcal{X}$, so $\varepsilon_0(\beta)(v)=0$.\\ 

\noindent Let $e_v,f_v$ be the oriented edges of $E$ with target $v$, where the origin of $f_v$ (resp. $e_v$) lies in $P_{j+1,i}^{(n+1)}$ (resp. $P_{j-1,i}^{(n+1)}$) if $j<m$ (resp. $j>2$) and is a peak of $X_{i+1,n}$ (resp. $X_{i-1,n}$) if $j=m$ (resp. $j=2$). Let $e_{i,v},f_{i,v}$ be the oriented edges of $E_i$ and $F_i$ respectively with target $v$, these edges giving the arrangement in Figure 6.

An identical argument to the proof of Proposition \ref{propn: isolating each summit} shows that we can find a Crown$(X_{i,n+1})$ shift $\beta'$ of $\beta$ satisfying $$\beta'(e_v)+\underset{1\leq i\leq p}{\sum}{\beta'(e_{i,v})}=\beta'(f_v)+\underset{1\leq i\leq p}{\sum}{\beta'(f_{i,v})}=0$$

\noindent For simplicity, let $\Psi\subseteq$ Crown$(X_{i,n+1})$ be the set of all oriented edges in $\Delta_{n+1}$ adjacent to a peak in $P_{k,i}^{(n+1)}$ for some $k\geq j$, and define $\beta^-\in C_1(\Delta_{n+1},\mathbb{X})$ by $$\beta^-(e):=\begin{cases}\beta'(e) & e\in\Psi\text{ and }e\neq f_v,\sigma(f_v)\text{ for some }v\in P_{j,i}^{(n+1)}\\0 & \text{otherwise}\end{cases}$$ and let $\beta^+:=\beta'-\beta$. Then clearly $\beta^-$ is zero outside Crown$(X_{i,n+1})$ on all edges adjacent to vertices in $P_{k,i}^{(n+1)}$ for $k<j$, and $\beta^+$ is zero on all edges adjacent to vertices in $P_{k,i}^{(n+1)}$ for $k>j$, and since $\beta^+,\beta^-$ either agree with $\beta'$ or are zero, it follows that $\beta^+$ agrees with $\beta'$ (and hence $\beta$) on all edges in Crown$(X_{i',n+1})$ for $i'\neq i$.\\

\noindent Finally, if $v\notin\mathcal{X}$ is a peak of $\Delta_{n+1}$, then if $v\notin P_{k,i}^{(n+1)}$ for some $k\geq j$, then $\beta^-(e)=0$ for all edges adjacent to $v$, so $\varepsilon_0(\beta)(v)=0$, and $\varepsilon_0(\beta^+)(v)=\varepsilon_0(\beta')(v)=0$. Similarly, if $v\in P_{k,i}^{(n+1)}$ for $k>j$ then $\varepsilon_0(\beta^+)(v)=0$ and $\varepsilon_0(\beta^-)(v)=\varepsilon_0(\beta')(v)=0$.

If $v\in P_{j,i}^{n+1}$ then the only edges adjacent to $v$ at which $\beta^-$ is non-zero are $e_v,e_{1,v},\dots,e_{p,v}$, so $$\varepsilon_0(\beta^-)(v)=\beta^-(e_v)+\underset{1\leq i\leq p}{\sum}{\beta^-(e_{i,v})}=\beta'(e_v)+\underset{1\leq i\leq p}{\sum}{\beta'(e_{i,v})}=0$$ and thus $\varepsilon_0(\beta^+)(v)=\varepsilon_0(\beta')(v)-\varepsilon_0(\beta^-)(v)=0$.\end{proof}

\noindent Now, for any integers $r\leq R$, we write $[r,R]$ to mean the interval $\{j\in\mathbb{N}:r\leq j\leq R\}$. Recall that $\Gamma(\mathcal{X})=(\Gamma_0(\mathcal{X})\times\{0\})\sqcup (\Gamma_1(\mathcal{X})\times\{1\})\sqcup (\Gamma_2(\mathcal{X})\times\{2\})$, and we can decompose $\Gamma_i(\mathcal{X})$ into a disjoint union $$\Gamma_i(\mathcal{X})=[a_{i,1},A_{i,1}]\sqcup\dots\sqcup [a_{i,t_i},A_{i,t_i}]$$ where $a_{i,j}\leq A_{i,j}$, $A_{i,j}<a_{i,j+1}$, and note that $\mathcal{X}$ is unbroken if and only if $t_i=t_i(\mathcal{X})=1$ for all $i\in\{0,1,2\}$.

\begin{theorem}\label{thm: final step}
Let $\mathcal{X}$ be a complete, $I$-invariant region of $\Delta$. If we assume Conjecture \ref{conj: did not get it}, then $\mathcal{X}$ is a collapsible region of $\Delta$.
\end{theorem}

\begin{proof}


\noindent Let $t=t(\mathcal{X}):=t_0(\mathcal{X})+t_1(\mathcal{X})+t_2(\mathcal{X})$, and if $t=0$ then $\mathcal{X}=\Delta_n$, which is collapsible by Theorem \ref{thm: future lettered theorem}. So we proceed by induction on $t$.

Moreover, if $t_i=1$ for all $i$, then $\mathcal{X}$ is unbroken, and the result follows from Corollary \ref{cor: final for unbroken}. So we may assume that $t_i>1$ for some $i$, so $A_{i,1}<a_{i,2}$. We will assume without loss of generality that $i=0$.\\

\noindent Since $\mathcal{X}\subseteq\Delta_{n+1}$ and $\Delta_{n+1}$ is collapsible by Theorem \ref{thm: future lettered theorem} again, we only need to show that if $\beta\in C_1(\Delta_{n+1},\mathbb{X})$ is $(I,\mathcal{X})$-shift invariant and $\varepsilon_0(\beta)\in C_0(\mathcal{X},\mathbb{X})$, then $\beta\in C_1(\mathcal{X},\mathbb{X})+\varepsilon_1(C_2(\Delta,\mathbb{X}))$.\\

\noindent Set $j:=A_{0,1}+1\notin\Gamma_i(\mathcal{X})$, and we see that $(j,i)\notin\Gamma$. Thus we can find chains $\beta^+,\beta^-\in C_1(\Delta_{n+1},\mathbb{X})$ satisfying the conditions stated Proposition \ref{propn: middle vertex}. In particular, if we define $$\mathcal{X}^+:=\mathcal{X}\backslash\underset{k<j}{\bigcup}S_{k,0}^{(n+1)}$$ and $$\mathcal{X}^-:=\mathcal{X}\backslash\underset{k>j}{\bigcup}S_{k,0}^{(n+1)}$$ then $\mathcal{X}=\mathcal{X}^+\cup\mathcal{X}^-$, $\varepsilon_0(\beta^+)\in C_0(\mathcal{X}^+,\mathbb{X})$ and $\varepsilon_0(\beta^-)\in C_0(\mathcal{X}^-,\mathbb{X})$.\\

\noindent But $\Gamma_{i'}(\mathcal{X}^+)=\Gamma_{i'}(\mathcal{X})$ and $\Gamma_{i'}(\mathcal{X}^-)=\varnothing$ for $i'\neq 0$, while $\Gamma_{i}(\mathcal{X}^+)=[a_{0,1},A_{0,1}]$ and $\Gamma_i(\mathcal{X}^-)=[a_{0,2},A_{0,2}]\sqcup\dots\sqcup [a_{0,t_0},b_{0,t_0}]$. Thus $t(\mathcal{X}^+)=t-t_0+1<t$ and $t(\mathcal{X}^-)=t-t_1-t_2-1<t$.

By induction $\mathcal{X}^+$ and $\mathcal{X}^-$ are collapsible regions of $\Delta$, so $\beta^+\in C_1(\mathcal{X}^+,\mathbb{X})+\varepsilon_1(C_2(\Delta,\mathbb{X}))$ and $\beta^-\in C_1(\mathcal{X}^-,\mathbb{X})+\varepsilon_1(C_2(\Delta,\mathbb{X}))$ by Proposition \ref{propn: loose end}.\\

\noindent Finally, $\beta^++\beta^-$ is a shift of $\beta$, so $\beta-\beta^+-\beta^-\in\varepsilon_1(C_2(\Delta,\mathbb{X}))$. So since $C_1(\mathcal{X}^+,\mathbb{X}),C_1(\mathcal{X}^-,\mathbb{X})\subseteq C_1(\mathcal{X},\mathbb{X})$ it follows that $\beta\in C_1(\mathcal{X},\mathbb{X})+\varepsilon_1(C_2(\Delta,\mathbb{X})$ as required.\end{proof}

\noindent We can now complete the proof of our main theorem:\\


\noindent\emph{Proof of Theorem \ref{letterthm: exact for star}.} We only need to prove Conjecture \ref{conj: exactness 1} for $G=SL_3(K)$, and the result follows from Theorem \ref{letterthm: implies sur}. In other words, we only need to show that if \textbf{(A)} $\mathcal{X}$ is an $I$-invariant complete region of $\Delta$ or \textbf{(B)} $\mathcal{X}$ consists of a single face of $C$, then the local oriented chain complex $$0\to C_2(\mathcal{X},\mathbb{X})\to C_1(\mathcal{X},\mathbb{X})\to C_0(\mathcal{X},\mathbb{X})\to\mathbb{X}$$ is exact.\\

\noindent In case \textbf{(B)}, this follows from Lemma \ref{lem: (A) implies (B)}, so we may assume that $\mathcal{X}$ is complete and $I$-invariant.\\

\noindent The assumption that the local oriented chain complex is exact for any star bounded region implies that any star bounded region $\mathcal{Y}\subseteq\text{ Star}(v_0)$ is Star$(v_0)$-collapsible by Lemma \ref{lem: one way}, i.e. Conjecture \ref{conj: did not get it} holds. 

Therefore, it follows from Theorem \ref{thm: final step} that $\mathcal{X}$ is collapsible. So since $0\to C_2(\mathcal{X},\mathbb{X})\to C_1(\mathcal{X},\mathbb{X})\to C_0(\Delta,\mathbb{X})$ is exact by Proposition \ref{propn: exactness1}, it follows from Corollary \ref{cor: extended regions} that the local oriented chain complex is exact.\qed

\section{Chains on Star$(v_0)$: Technical results}\label{sec: small cases}

In this section, we will complete the proof of the remaining technical results that we stated in section \ref{sec: orbits}, and also explore some avenues which we hope will ultimately yield a proof of Conjecture \ref{conj: did not get it}. These arguments require us to consider chains on small regions of $\Delta$ more closely. 

Throughout, we will assume that $G=SL_3(K)$. Initially, we make no assumptions on the extension $K/\mathbb{Q}_p$, but later we will assert that it is totally ramified.

\subsection{Notation}\label{subsec: notation}

We will now fix a notation for every vertex, edge and chamber in Star$(v_0)$ that we will refer to until the end of the paper. Again, we let $v_0,v_1,v_2$ be the vertices of $C$, and we now write $s_i$ for the edge $\{v_{i-1},v_{i+1}\}$ (subscripts modulo 3). Let $[q]:=\{1,2,\dots,q\}$)

\begin{itemize}
    \item Label by $P_1,\dots,P_q$ the summits of $\Delta_1$ based at $\{v_0,v_1\}$, $Q_1,\dots,Q_q$ the summits based at $\{v_0,v_2\}$. So Crown$(X_{1,1})=\{P_1,\dots,P_q\}$ and Crown$(X_{2,1})=\{Q_1,\dots,Q_q\}$.
    
    \item For each $i=1,\dots,q$, let $u_i$ be the peak of $P_i$, $w_i$ the peak $Q_i$. Let $e_i$ (resp. $d_i$) be the edge connecting $u_i$ (resp $w_i$) to $v_1$ (resp. $v_2$). Let $h_i$ (resp. $k_i$) be the edge connecting $u_i$ (resp $w_i$) to $v_0$.

    \item For $i,j=1,\dots,q$, label by $P_{j,i}$ (resp. $Q_{j,i}$) the summits of $\Delta_2$ which define $S_1^{(2)}$ (resp. $S_2^{2)}$). Let $u_{j,i}$ (resp. $w_{j,i}$) be their peaks. We can assume that $h_i$ (resp. $k_i$) forns the base of $P_{j,i}$ (resp. $Q_{j,i}$).

\item  Let $e_{j,i}$ (resp. $d_{j,i}$) be the edge connecting $u_{j,i}$ to $u_i$ (resp. $w_{j,i}$ to $w_i$). Let $h_{j,i}$ (resp. $k_{j,i}$) be the edge connecting $u_{j,i}$ (resp. $w_{j,i})$ to $v_0$.

    \item For each pair $(j,i)\in [q]^2$, there are precisely $q$ chambers $D_{(1,j,i)},\dots,D_{(q,j,i)}$ adjacent to $P_{j,i}$ via $h_{j,i}$, and each adjacent to a chamber in $S_2^{(2)}$. 

    \item $r_{k,j,i}$ is the edge of $D_{k,j,i}$ which joins the peak $u_{j,i}\in P_{1}^{(2)}$ to a peak in $P_2^{(2)}$. 

\item For each $(k,j,i)\in [q]^3$, let $Q(k,j,i)$ be the chamber in $S_2^{(2)}$ which is adjacent to $D_{k,j,i}$. Note that $Q(k,j,i)=Q_{\ell,m}$ for some $1\leq\ell,m\leq q$. By Theorem \ref{thm: d-partite}(2), $Q(k,j,i)$ and $Q(k',j,i)$ do not share a base if $k\neq k'$.
\end{itemize}

\noindent\textbf{Note:} In the definition of the chambers $D_{k,j,i}$, we index them via their adjacent chambers in $S_1^{(2)}$, but by symmetry, we could define them (and the edges $r_{k,j,i}$) using their adjacent chambers in $S_2^{(2)}$, it would only amount to a change of indexing.\\

\noindent The diagrams below illustrate this structure in the case where $q=2$. Figure 8 gives an illustration of all the chambers in Star$(v_0)$, labelled by the data above (though we do not include all the chambers $D_{k,j,i}$, and we do not label all edges, as this would become cumbersome). The colours of each chamber indicate their distance from the hyperspecial chamber $C$, and note that the chambers in blue comprise the crown ofStar$(v_0)$.

Figure 9 describes the bipartite graph defined by the peaks of $\Delta_2$ in Star$(v_0)$, and Figure 10 illustrates the chambers of Star$(v_0)$ that lie in a single apartment of $\Delta$, which we will assume to be the standard apartment.

\

\begin{center}
\tikzset{every picture/.style={line width=0.75pt}} 

\begin{tikzpicture}[x=0.75pt,y=0.75pt,yscale=-1,xscale=1]

\draw   (331.42,264.97) -- (462.76,455.97) -- (200.08,455.97) -- cycle ;
\draw  [color={rgb, 255:red, 245; green, 166; blue, 35 }  ,draw opacity=1 ] (78.68,276.09) -- (331.42,264.97) -- (199.78,455.73) -- cycle ;
\draw  [color={rgb, 255:red, 245; green, 166; blue, 35 }  ,draw opacity=1 ] (585.81,267.68) -- (465.24,458.93) -- (331.75,265.21) -- cycle ;
\draw  [color={rgb, 255:red, 245; green, 152; blue, 35 }  ,draw opacity=1 ][dash pattern={on 4.5pt off 4.5pt}] (142.11,184.87) -- (331.74,265.22) -- (200.1,455.97) -- cycle ;
\draw  [color={rgb, 255:red, 245; green, 166; blue, 35 }  ,draw opacity=1 ][dash pattern={on 4.5pt off 4.5pt}] (537.83,198.05) -- (465.24,458.93) -- (331.75,265.21) -- cycle ;
\draw  [color={rgb, 255:red, 74; green, 144; blue, 226 }  ,draw opacity=1 ] (90.1,98.6) -- (331.46,263.37) -- (78.68,276.89) -- cycle ;
\draw  [color={rgb, 255:red, 74; green, 144; blue, 226 }  ,draw opacity=1 ] (570.63,93.35) -- (585.79,267.67) -- (331.44,263.36) -- cycle ;
\draw [color={rgb, 255:red, 74; green, 144; blue, 226 }  ,draw opacity=1 ] [dash pattern={on 0.84pt off 2.51pt}]  (117.89,120.79) -- (78.71,276.85) ;
\draw [color={rgb, 255:red, 74; green, 144; blue, 226 }  ,draw opacity=1 ]   (128.46,81.41) -- (117.89,120.79) ;
\draw [color={rgb, 255:red, 74; green, 144; blue, 226 }  ,draw opacity=1 ]   (128.46,81.41) -- (331.46,263.37) ;
\draw [color={rgb, 255:red, 74; green, 144; blue, 226 }  ,draw opacity=1 ] [dash pattern={on 0.84pt off 2.51pt}]  (172.14,122.66) -- (142.12,184.85) ;
\draw [color={rgb, 255:red, 74; green, 144; blue, 226 }  ,draw opacity=1 ]   (202.94,58.44) -- (172.14,122.66) ;
\draw [color={rgb, 255:red, 74; green, 144; blue, 226 }  ,draw opacity=1 ]   (202.94,58.44) -- (217.5,81.47) ;
\draw [color={rgb, 255:red, 74; green, 144; blue, 226 }  ,draw opacity=1 ] [dash pattern={on 0.84pt off 2.51pt}]  (217.5,81.47) -- (330.45,262.04) ;
\draw [color={rgb, 255:red, 74; green, 144; blue, 226 }  ,draw opacity=1 ] [dash pattern={on 0.84pt off 2.51pt}]  (182.81,129.22) -- (142.96,184.86) ;
\draw [color={rgb, 255:red, 74; green, 144; blue, 226 }  ,draw opacity=1 ]   (182.81,129.22) -- (232.84,61.54) ;
\draw [color={rgb, 255:red, 74; green, 144; blue, 226 }  ,draw opacity=1 ]   (232.84,61.54) -- (330.45,262.04) ;
\draw [color={rgb, 255:red, 74; green, 144; blue, 226 }  ,draw opacity=1 ] [dash pattern={on 0.84pt off 2.51pt}]  (540.35,115.17) -- (585.79,267.67) ;
\draw [color={rgb, 255:red, 74; green, 144; blue, 226 }  ,draw opacity=1 ]   (535.01,101) -- (540.35,115.17) ;
\draw [color={rgb, 255:red, 74; green, 144; blue, 226 }  ,draw opacity=1 ] [dash pattern={on 0.84pt off 2.51pt}]  (478.09,143.26) -- (537.5,196.97) ;
\draw [color={rgb, 255:red, 74; green, 144; blue, 226 }  ,draw opacity=1 ]   (413.78,83.57) -- (478.09,143.26) ;
\draw [color={rgb, 255:red, 74; green, 144; blue, 226 }  ,draw opacity=1 ]   (331.46,263.37) -- (413.78,83.57) ;
\draw [color={rgb, 255:red, 74; green, 144; blue, 226 }  ,draw opacity=1 ] [dash pattern={on 0.84pt off 2.51pt}]  (486.98,133.9) -- (537.5,196.97) ;
\draw [color={rgb, 255:red, 74; green, 144; blue, 226 }  ,draw opacity=1 ]   (439.49,75.04) -- (489.65,137.53) ;
\draw [color={rgb, 255:red, 74; green, 144; blue, 226 }  ,draw opacity=1 ]   (427.39,98.32) -- (439.49,75.04) ;
\draw [color={rgb, 255:red, 74; green, 144; blue, 226 }  ,draw opacity=1 ] [dash pattern={on 0.84pt off 2.51pt}]  (331.46,263.37) -- (427.39,98.32) ;
\draw [color={rgb, 255:red, 208; green, 2; blue, 27 }  ,draw opacity=1 ]   (90.1,98.6) -- (570.63,93.36) ;
\draw [color={rgb, 255:red, 208; green, 2; blue, 2 }  ,draw opacity=1 ]   (90.1,98.6) -- (439.49,75.04) ;
\draw [color={rgb, 255:red, 74; green, 144; blue, 226 }  ,draw opacity=1 ]   (331.45,263.36) -- (535.01,101) ;

\draw (322,280.68) node [anchor=north west][inner sep=0.75pt]   [align=left] {$\displaystyle v_{0}$};
\draw (188.52,467.78) node [anchor=north west][inner sep=0.75pt]   [align=left] {$\displaystyle v_{1}$};
\draw (462.32,465.95) node [anchor=north west][inner sep=0.75pt]   [align=left] {$\displaystyle v_{2}$};
\draw (60.59,277.88) node [anchor=north west][inner sep=0.75pt]  [color={rgb, 255:red, 245; green, 152; blue, 35 }  ,opacity=1 ] [align=left] {$\displaystyle u_{1}$};
\draw (589.27,262.29) node [anchor=north west][inner sep=0.75pt]  [color={rgb, 255:red, 245; green, 152; blue, 35 }  ,opacity=1 ] [align=left] {$\displaystyle w_{1}$};
\draw (320.63,374.66) node [anchor=north west][inner sep=0.75pt]   [align=left] {$\displaystyle C$};
\draw (116.87,288.42) node [anchor=north west][inner sep=0.75pt]  [color={rgb, 255:red, 245; green, 152; blue, 35 }  ,opacity=1 ] [align=left] {$\displaystyle P_{1}$};
\draw (168.35,216.22) node [anchor=north west][inner sep=0.75pt]  [color={rgb, 255:red, 245; green, 152; blue, 35 }  ,opacity=1 ] [align=left] {$\displaystyle P_{2}$};
\draw (531.64,289.97) node [anchor=north west][inner sep=0.75pt]  [color={rgb, 255:red, 245; green, 152; blue, 35 }  ,opacity=1 ] [align=left] {$\displaystyle Q_{1}$};
\draw (490.5,227.55) node [anchor=north west][inner sep=0.75pt]  [color={rgb, 255:red, 245; green, 152; blue, 35 }  ,opacity=1 ] [align=left] {$\displaystyle Q_{2}$};
\draw (117.78,353.5) node [anchor=north west][inner sep=0.75pt]  [color={rgb, 255:red, 245; green, 152; blue, 35 }  ,opacity=1 ] [align=left] {$\displaystyle e_{1}$};
\draw (71.65,76.61) node [anchor=north west][inner sep=0.75pt]  [color={rgb, 255:red, 74; green, 144; blue, 226 }  ,opacity=1 ] [align=left] {$\displaystyle u_{1,1}$};
\draw (116.12,62.57) node [anchor=north west][inner sep=0.75pt]  [color={rgb, 255:red, 74; green, 144; blue, 226 }  ,opacity=1 ] [align=left] {$\displaystyle u_{1,2}$};
\draw (123.73,175.84) node [anchor=north west][inner sep=0.75pt]  [color={rgb, 255:red, 245; green, 152; blue, 35 }  ,opacity=1 ] [align=left] {$\displaystyle u_{2}$};
\draw (227.29,41.79) node [anchor=north west][inner sep=0.75pt]  [color={rgb, 255:red, 74; green, 144; blue, 226 }  ,opacity=1 ] [align=left] {$\displaystyle u_{2,1}$};
\draw (189.94,36.36) node [anchor=north west][inner sep=0.75pt]  [color={rgb, 255:red, 74; green, 144; blue, 226 }  ,opacity=1 ] [align=left] {$\displaystyle u_{2,2}$};
\draw (539.18,189.27) node [anchor=north west][inner sep=0.75pt]  [color={rgb, 255:red, 245; green, 152; blue, 35 }  ,opacity=1 ] [align=left] {$\displaystyle w_{2}$};
\draw (571.59,77.87) node [anchor=north west][inner sep=0.75pt]  [color={rgb, 255:red, 74; green, 144; blue, 226 }  ,opacity=1 ] [align=left] {$\displaystyle w_{1,1}$};
\draw (520.01,71.32) node [anchor=north west][inner sep=0.75pt]  [color={rgb, 255:red, 74; green, 144; blue, 226 }  ,opacity=1 ] [align=left] {$\displaystyle w_{1,2}$};
\draw (396.38,60.09) node [anchor=north west][inner sep=0.75pt]  [color={rgb, 255:red, 74; green, 144; blue, 226 }  ,opacity=1 ] [align=left] {$\displaystyle w_{2,1}$};
\draw (429.29,51.66) node [anchor=north west][inner sep=0.75pt]  [color={rgb, 255:red, 74; green, 144; blue, 226 }  ,opacity=1 ] [align=left] {$\displaystyle w_{2,2}$};
\draw (168.48,288.9) node [anchor=north west][inner sep=0.75pt]  [color={rgb, 255:red, 245; green, 152; blue, 35 }  ,opacity=1 ] [align=left] {$\displaystyle e_{2}$};
\draw (532.13,353.5) node [anchor=north west][inner sep=0.75pt]  [color={rgb, 255:red, 245; green, 152; blue, 35 }  ,opacity=1 ] [align=left] {$\displaystyle d_{1}$};
\draw (482.32,308.56) node [anchor=north west][inner sep=0.75pt]  [color={rgb, 255:red, 245; green, 152; blue, 35 }  ,opacity=1 ] [align=left] {$\displaystyle d_{2}$};
\draw (311.47,99.05) node [anchor=north west][inner sep=0.75pt]  [color={rgb, 255:red, 208; green, 2; blue, 27 }  ,opacity=1 ] [align=left] {$\displaystyle r_{1,1,1}$};
\draw (87.03,124.6) node [anchor=north west][inner sep=0.75pt]  [color={rgb, 255:red, 74; green, 144; blue, 226 }  ,opacity=1 ] [align=left] {$\displaystyle P_{1,1}$};
\draw (127.94,101.19) node [anchor=north west][inner sep=0.75pt]  [color={rgb, 255:red, 74; green, 144; blue, 226 }  ,opacity=1 ] [align=left] {$\displaystyle P_{1,2}$};
\draw (60.47,162.39) node [anchor=north west][inner sep=0.75pt]  [color={rgb, 255:red, 74; green, 144; blue, 226 }  ,opacity=1 ] [align=left] {$\displaystyle e_{1,1}$};
\draw (97.83,193.58) node [anchor=north west][inner sep=0.75pt]  [color={rgb, 255:red, 74; green, 144; blue, 226 }  ,opacity=1 ] [align=left] {$\displaystyle e_{1,2}$};
\draw (429.27,84.5) node [anchor=north west][inner sep=0.75pt]  [color={rgb, 255:red, 74; green, 144; blue, 226 }  ,opacity=1 ] [align=left] {$\displaystyle Q_{2,2}$};
\draw (402.59,95.85) node [anchor=north west][inner sep=0.75pt]  [color={rgb, 255:red, 74; green, 144; blue, 226 }  ,opacity=1 ] [align=left] {$\displaystyle Q_{2,1}$};
\draw (307.28,61.54) node [anchor=north west][inner sep=0.75pt]  [color={rgb, 255:red, 208; green, 2; blue, 27 }  ,opacity=1 ] [align=left] {$\displaystyle r_{1,1,2}$};
\draw (470.42,100.02) node [anchor=north west][inner sep=0.75pt]  [color={rgb, 255:red, 74; green, 144; blue, 226 }  ,opacity=1 ] [align=left] {$\displaystyle d_{2,2}$};
\draw (441.08,124.08) node [anchor=north west][inner sep=0.75pt]  [color={rgb, 255:red, 74; green, 144; blue, 226 }  ,opacity=1 ] [align=left] {$\displaystyle d_{2,1}$};
\draw (165.67,491.54) node [anchor=north west][inner sep=0.75pt]   [align=left] {\textbf{Figure 8}: The region $\displaystyle \text{Star}(v_0)$ in the $\displaystyle \tilde{A}_{2}(\mathbb{Q}_{2})$ building};
\draw (319.78,457.34) node [anchor=north west][inner sep=0.75pt]   [align=left] {$\displaystyle s_{0}$};
\draw (252.19,339.72) node [anchor=north west][inner sep=0.75pt]   [align=left] {$\displaystyle s_{2}$};
\draw (395.38,338.82) node [anchor=north west][inner sep=0.75pt]   [align=left] {$\displaystyle s_{1}$};

\end{tikzpicture}
\end{center}

\begin{center}

\tikzset{every picture/.style={line width=0.75pt}} 

\begin{tikzpicture}[x=0.75pt,y=0.75pt,yscale=-1,xscale=1]

\draw  [color={rgb, 255:red, 0; green, 0; blue, 0 }  ,draw opacity=1 ][fill={rgb, 255:red, 0; green, 0; blue, 0 }  ,fill opacity=1 ] (65,62.79) .. controls (65,60.7) and (66.54,59) .. (68.43,59) .. controls (70.33,59) and (71.87,60.7) .. (71.87,62.79) .. controls (71.87,64.89) and (70.33,66.58) .. (68.43,66.58) .. controls (66.54,66.58) and (65,64.89) .. (65,62.79) -- cycle ;
\draw  [color={rgb, 255:red, 0; green, 0; blue, 0 }  ,draw opacity=1 ][fill={rgb, 255:red, 0; green, 0; blue, 0 }  ,fill opacity=1 ] (176.22,64.31) .. controls (176.22,62.21) and (177.76,60.52) .. (179.66,60.52) .. controls (181.55,60.52) and (183.09,62.21) .. (183.09,64.31) .. controls (183.09,66.4) and (181.55,68.1) .. (179.66,68.1) .. controls (177.76,68.1) and (176.22,66.4) .. (176.22,64.31) -- cycle ;
\draw  [color={rgb, 255:red, 0; green, 0; blue, 0 }  ,draw opacity=1 ][fill={rgb, 255:red, 0; green, 0; blue, 0 }  ,fill opacity=1 ] (313.54,62.79) .. controls (313.54,60.7) and (315.07,59) .. (316.97,59) .. controls (318.87,59) and (320.4,60.7) .. (320.4,62.79) .. controls (320.4,64.89) and (318.87,66.58) .. (316.97,66.58) .. controls (315.07,66.58) and (313.54,64.89) .. (313.54,62.79) -- cycle ;
\draw  [color={rgb, 255:red, 0; green, 0; blue, 0 }  ,draw opacity=1 ][fill={rgb, 255:red, 0; green, 0; blue, 0 }  ,fill opacity=1 ] (424.76,64.31) .. controls (424.76,62.21) and (426.3,60.52) .. (428.19,60.52) .. controls (430.09,60.52) and (431.63,62.21) .. (431.63,64.31) .. controls (431.63,66.4) and (430.09,68.1) .. (428.19,68.1) .. controls (426.3,68.1) and (424.76,66.4) .. (424.76,64.31) -- cycle ;
\draw  [color={rgb, 255:red, 0; green, 0; blue, 0 }  ,draw opacity=1 ][fill={rgb, 255:red, 0; green, 0; blue, 0 }  ,fill opacity=1 ] (66.37,191.69) .. controls (66.37,189.6) and (67.91,187.9) .. (69.81,187.9) .. controls (71.7,187.9) and (73.24,189.6) .. (73.24,191.69) .. controls (73.24,193.79) and (71.7,195.48) .. (69.81,195.48) .. controls (67.91,195.48) and (66.37,193.79) .. (66.37,191.69) -- cycle ;
\draw  [color={rgb, 255:red, 0; green, 0; blue, 0 }  ,draw opacity=1 ][fill={rgb, 255:red, 0; green, 0; blue, 0 }  ,fill opacity=1 ] (177.6,193.21) .. controls (177.6,191.11) and (179.13,189.42) .. (181.03,189.42) .. controls (182.93,189.42) and (184.46,191.11) .. (184.46,193.21) .. controls (184.46,195.3) and (182.93,197) .. (181.03,197) .. controls (179.13,197) and (177.6,195.3) .. (177.6,193.21) -- cycle ;
\draw  [color={rgb, 255:red, 0; green, 0; blue, 0 }  ,draw opacity=1 ][fill={rgb, 255:red, 0; green, 0; blue, 0 }  ,fill opacity=1 ] (314.91,191.69) .. controls (314.91,189.6) and (316.45,187.9) .. (318.34,187.9) .. controls (320.24,187.9) and (321.78,189.6) .. (321.78,191.69) .. controls (321.78,193.79) and (320.24,195.48) .. (318.34,195.48) .. controls (316.45,195.48) and (314.91,193.79) .. (314.91,191.69) -- cycle ;
\draw  [color={rgb, 255:red, 0; green, 0; blue, 0 }  ,draw opacity=1 ][fill={rgb, 255:red, 0; green, 0; blue, 0 }  ,fill opacity=1 ] (426.13,193.21) .. controls (426.13,191.11) and (427.67,189.42) .. (429.57,189.42) .. controls (431.46,189.42) and (433,191.11) .. (433,193.21) .. controls (433,195.3) and (431.46,197) .. (429.57,197) .. controls (427.67,197) and (426.13,195.3) .. (426.13,193.21) -- cycle ;
\draw [color={rgb, 255:red, 208; green, 2; blue, 2 }  ,draw opacity=1 ]   (68.43,62.79) -- (69.81,187.9) ;
\draw [color={rgb, 255:red, 208; green, 2; blue, 2 }  ,draw opacity=1 ]   (68.43,62.79) -- (429.57,193.21) ;
\draw    (69.81,191.69) -- (313.54,62.79) ;
\draw    (179.66,64.31) -- (181.03,193.21) ;
\draw    (179.66,64.31) -- (318.34,191.69) ;
\draw    (316.97,62.79) -- (318.34,191.69) ;
\draw    (184.46,193.21) -- (428.19,64.31) ;
\draw    (428.19,64.31) -- (429.57,193.21) ;

\draw (487,51) node [anchor=north west][inner sep=0.75pt]   [align=left] {$\displaystyle P_{1}^{(2)}$};
\draw (484,189) node [anchor=north west][inner sep=0.75pt]   [align=left] {$\displaystyle P_{2}^{(2)}$};
\draw (51,34) node [anchor=north west][inner sep=0.75pt]   [align=left] {$\displaystyle u_{1,1}$};
\draw (166,36) node [anchor=north west][inner sep=0.75pt]   [align=left] {$\displaystyle u_{1,2}$};
\draw (303,37) node [anchor=north west][inner sep=0.75pt]   [align=left] {$\displaystyle u_{2,1}$};
\draw (415,38) node [anchor=north west][inner sep=0.75pt]   [align=left] {$\displaystyle u_{2,2}$};
\draw (53,202) node [anchor=north west][inner sep=0.75pt]   [align=left] {$\displaystyle w_{1,1}$};
\draw (167,196) node [anchor=north west][inner sep=0.75pt]   [align=left] {$\displaystyle w_{1,2}$};
\draw (307,201) node [anchor=north west][inner sep=0.75pt]   [align=left] {$\displaystyle w_{2,1}$};
\draw (417,201) node [anchor=north west][inner sep=0.75pt]   [align=left] {$\displaystyle w_{2,2}$};
\draw (35,110) node [anchor=north west][inner sep=0.75pt]  [color={rgb, 255:red, 208; green, 2; blue, 2 }  ,opacity=1 ] [align=left] {$\displaystyle r_{1,1,1}$};
\draw (349.33,141.76) node [anchor=north west][inner sep=0.75pt]  [color={rgb, 255:red, 208; green, 2; blue, 2 }  ,opacity=1 ] [align=left] {$\displaystyle r_{1,1,2}$};
\draw (150,239) node [anchor=north west][inner sep=0.75pt]   [align=left] {\textbf{Figure 9}: The peaks of $\displaystyle \text{Star}(v_0)$ when $q=2$};

\end{tikzpicture}

\

\tikzset{every picture/.style={line width=0.75pt}} 

\begin{tikzpicture}[x=0.75pt,y=0.75pt,yscale=-1,xscale=1]

\draw   (320,210) -- (435,379) -- (205,379) -- cycle ;
\draw  [color={rgb, 255:red, 245; green, 152; blue, 35 }  ,draw opacity=1 ] (81.27,209.45) -- (319.81,209.36) -- (204.36,378.04) -- cycle ;
\draw  [color={rgb, 255:red, 245; green, 152; blue, 35 }  ,draw opacity=1 ] (551.39,209.35) -- (434.8,379.13) -- (319.81,209.36) -- cycle ;
\draw  [color={rgb, 255:red, 74; green, 144; blue, 226 }  ,draw opacity=1 ] (200.54,40.45) -- (319.81,209.36) -- (81.27,209.36) -- cycle ;
\draw  [color={rgb, 255:red, 74; green, 144; blue, 226 }  ,draw opacity=1 ] (435.6,40.45) -- (551.39,209.35) -- (319.81,209.35) -- cycle ;
\draw [color={rgb, 255:red, 208; green, 2; blue, 2 }  ,draw opacity=1 ]   (200.54,40.45) -- (435.6,40.45) ;

\draw (312,300) node [anchor=north west][inner sep=0.75pt]   [align=left] {$\displaystyle C$};
\draw (312,225) node [anchor=north west][inner sep=0.75pt]   [align=left] {$\displaystyle v_{0}$};
\draw (189,383) node [anchor=north west][inner sep=0.75pt]   [align=left] {$\displaystyle v_{1}$};
\draw (436.8,382.13) node [anchor=north west][inner sep=0.75pt]   [align=left] {$\displaystyle v_{2}$};
\draw (186,256) node [anchor=north west][inner sep=0.75pt]  [color={rgb, 255:red, 245; green, 152; blue, 35 }  ,opacity=1 ] [align=left] {$\displaystyle P_{1}$};
\draw (442,246) node [anchor=north west][inner sep=0.75pt]  [color={rgb, 255:red, 245; green, 152; blue, 35 }  ,opacity=1 ] [align=left] {$\displaystyle Q_{1}$};
\draw (119,283) node [anchor=north west][inner sep=0.75pt]  [color={rgb, 255:red, 245; green, 152; blue, 35 }  ,opacity=1 ] [align=left] {$\displaystyle e_{1}$};
\draw (510,285) node [anchor=north west][inner sep=0.75pt]  [color={rgb, 255:red, 245; green, 152; blue, 35 }  ,opacity=1 ] [align=left] {$\displaystyle d_{1}$};
\draw (54,198.19) node [anchor=north west][inner sep=0.75pt]  [color={rgb, 255:red, 245; green, 152; blue, 35 }  ,opacity=1 ] [align=left] {$\displaystyle u_{1}$};
\draw (560,199.19) node [anchor=north west][inner sep=0.75pt]  [color={rgb, 255:red, 245; green, 152; blue, 35 }  ,opacity=1 ] [align=left] {$\displaystyle w_{1}$};
\draw (181,133) node [anchor=north west][inner sep=0.75pt]  [color={rgb, 255:red, 74; green, 144; blue, 226 }  ,opacity=1 ] [align=left] {$\displaystyle P_{1,1}$};
\draw (423,131) node [anchor=north west][inner sep=0.75pt]  [color={rgb, 255:red, 74; green, 144; blue, 226 }  ,opacity=1 ] [align=left] {$\displaystyle Q_{1,1}$};
\draw (185,15.19) node [anchor=north west][inner sep=0.75pt]  [color={rgb, 255:red, 74; green, 144; blue, 226 }  ,opacity=1 ] [align=left] {$\displaystyle u_{1,1}$};
\draw (426,14.19) node [anchor=north west][inner sep=0.75pt]  [color={rgb, 255:red, 74; green, 144; blue, 226 }  ,opacity=1 ] [align=left] {$\displaystyle w_{1,1}$};
\draw (111,98) node [anchor=north west][inner sep=0.75pt]  [color={rgb, 255:red, 74; green, 144; blue, 226 }  ,opacity=1 ] [align=left] {$\displaystyle e_{1,1}$};
\draw (499,101) node [anchor=north west][inner sep=0.75pt]  [color={rgb, 255:red, 74; green, 144; blue, 226 }  ,opacity=1 ] [align=left] {$\displaystyle d_{1,1}$};
\draw (301,12) node [anchor=north west][inner sep=0.75pt]  [color={rgb, 255:red, 208; green, 2; blue, 2 }  ,opacity=1 ] [align=left] {$\displaystyle r_{1,1,1}$};
\draw (301,86) node [anchor=north west][inner sep=0.75pt]  [color={rgb, 255:red, 208; green, 2; blue, 2 }  ,opacity=1 ] [align=left] {$\displaystyle D_{1,1,1}$};
\draw (101,414) node [anchor=north west][inner sep=0.75pt]   [align=left] {\textbf{Figure 10}: The intersection of $\displaystyle \text{Star}(v_0)$ with the standard apartment};
\draw (379,277) node [anchor=north west][inner sep=0.75pt]   [align=left] {$\displaystyle s_{1}$};
\draw (243,278) node [anchor=north west][inner sep=0.75pt]   [align=left] {$\displaystyle s_{2}$};
\draw (307,383) node [anchor=north west][inner sep=0.75pt]   [align=left] {$\displaystyle s_{0}$};
\draw (225,104) node [anchor=north west][inner sep=0.75pt]  [color={rgb, 255:red, 74; green, 144; blue, 226 }  ,opacity=1 ] [align=left] {$\displaystyle h_{1,1}$};
\draw (388,112) node [anchor=north west][inner sep=0.75pt]  [color={rgb, 255:red, 74; green, 144; blue, 226 }  ,opacity=1 ] [align=left] {$\displaystyle k_{1,1}$};
\draw (189,211) node [anchor=north west][inner sep=0.75pt]  [color={rgb, 255:red, 245; green, 152; blue, 35 }  ,opacity=1 ] [align=left] {$\displaystyle h_{1}$};
\draw (432,212) node [anchor=north west][inner sep=0.75pt]  [color={rgb, 255:red, 245; green, 152; blue, 35 }  ,opacity=1 ] [align=left] {$\displaystyle k_{1}$};

\end{tikzpicture}

\end{center}

\noindent\textbf{Remark:} 1. It is a relatively straightforward exercise to show that the peaks of Star$(v_0)$ form the bipartite graph given in Figure 9 when $q=2$, since any other possible graph which agrees with our conditions would result in a cycle of length 4 in the chambers $D_{k,j,i}$, contradicting Lemma \ref{lem: no-cycles}. Describing the graph when $q>2$ becomes difficult to achieve by hand.\\

\noindent 2. Figure 9 reveals that the chambers $D_{k,j,i}$ form an octagonal arrangement, centred at $v_0$, when $q=2$. This arrangement cannot lie in a single apartment of $\Delta$, and it demonstrates that cycles exist in $\Delta$ of a very different nature to those that exist in the standard $\widetilde{A}_2$ Coxeter complex.\\

\noindent For the purposes of calculation, it is helpful for us to realise the vertices of $\Delta$ as the set of rank 3 $\mathcal{O}$-lattices in $K^3$ modulo scaling. Setting $\{e_1,e_2,e_3\}$ as the standard basis for $K^3$, the standard apartment $\mathcal{A}_0$ is the lattices of the form $\langle\alpha_1e_1,\alpha_2e_2,\alpha_3 e_3\rangle$ for some $\alpha_1,\alpha_2,\alpha_3\in K\backslash\{0\}$.

We can realise the hyperspecial vertex as $v_0=\mathcal{O}^3=\langle e_1,e_2,e_3\rangle$, and the hyperspecial chamber $C$ consists of $v_0$ together with $v_1=\langle e_1,e_2,\pi e_3\rangle$, $v_2=\langle e_1,\pi e_2,\pi e_3\rangle$, all of which lie in $\mathcal{A}_0$. We can also take $u_1=\langle\pi e_1,e_2,\pi e_3\rangle$, $w_1=\langle e_1,\pi e_2,e_3\rangle$, $u_{1,1}=\langle\pi e_1,e_2,e_3\rangle$, $w_{1,1}=\langle\pi e_1,\pi e_2,e_3\rangle$.\\

\noindent When considering \emph{oriented} edges, we will adopt the following convention.\\

\noindent\textbf{Convention:} Given any edge $e$ in Star$(v_0)$, we will denote by $\overset{\rightarrow}{e}$ and $\overset{\leftarrow}{e}$ the two corresponding oriented edges, so $\overset{\leftarrow}{e}=\sigma(\overset{\rightarrow}{e})$. When we realise the region pictorially, as in Figure 8, we write $\overset{\rightarrow}{e}$ when we are considering the orientation of $e$ where the origin is to the left of the target, and $\overset{\leftarrow}{e}$ when the origin is to the right of the target, e.g. $o(\overset{\to}{s_0})=v_1$ and $o(\overset{\leftarrow}{s_0})=v_1$.

\subsection{$G$-orbits in Star$(v_0)$}\label{subsec: X_2 properties}

\noindent It is clear that $\text{Stab}_G(v_0)=SL_3(\mathcal{O})$. Moreover, since $SL_3$ is semisimple, we know by \cite[Proposition 4.6.32]{Bruhat-Tits2} that an edge in $\Delta$ is fixed by an element of $G=SL_3(K)$ if and only if both its adjacent vertices are, thus we can realise the stabilisers of the edges $s_1=\{v_0,v_2\},s_2=\{v_0,v_2\}$ as

$$\text{Stab}_G(s_1)=\text{Stab}_G(v_0)\cap \text{Stab}_G(v_2)=\left\{\left(\begin{array}{ccc}
    a &   b &   c\\
    \pi d & e & f \\
    \pi g & h & i
\end{array}\right)\in SL_3(K):a,b,c,d,e,f,g,i\in \mathcal{O}\right\}$$ 

$$\text{Stab}_G(s_2)=\text{Stab}_G(v_0)\cap \text{Stab}_G(v_1)=\left\{\left(\begin{array}{ccc}
    a &  b &  c\\
    d & e &  f \\
    \pi g & \pi h & i
\end{array}\right)\in SL_3(\mathcal{O}):a,b,c,d,e,f,g,i\in \mathcal{O}\right\}$$

\noindent For convenience, from now on we will write sets of this form as a single matrices, with variable entries in $\mathcal{O}$. The subgroups $I_{s_1}$ and $I_{s_2}$ arise as the preimage of the unipotent radical of these stabilisers modulo $\pi$, so they can be realised as 

$$I_{s_1}=\left(\begin{array}{ccc}
    1+\pi a &  b &  c\\
    \pi d & 1+\pi e &  \pi f \\
    \pi g & \pi h & 1+\pi i
\end{array}\right)$$

$$I_{s_2}=\left(\begin{array}{ccc}
    1+\pi a &  \pi b &  c\\
    \pi d & 1+\pi e &   f \\
    \pi g & \pi h & 1+\pi i
\end{array}\right)$$

\noindent Note that for any facet $F$ in $\Delta$ there exists a matrix $g\in GL_3(K)$ such that $F=g\cdot F'$ for some face $F'$ of $C$, and we can caluculate $I_F=gI_{F'}g^{-1}$. For example, let $g:=\left(\begin{array}{ccc}
    \pi &  0 &  0\\
    0 & 1 &  0 \\
    0 & 0 & 1
\end{array}\right)$ and $k:=\left(\begin{array}{ccc}
    \pi &  0 &  0\\
    0 & 1 &  0 \\
    0 & 0 & \pi
\end{array}\right)$, then $e_1=h\cdot s_1$ and $e_{1,1}=g\cdot s_2$, so we can calculate


$$I_{e_1}=kI_{s_1}k^{-1}=\left(\begin{array}{ccc}
    1+\pi a &  \pi b &  c\\
    d & 1+\pi e &  f \\
    \pi g & \pi^2 h & 1+\pi i
\end{array}\right)$$


$$I_{e_{1,1}}=gI_{s_2}g^{-1}=\left(\begin{array}{ccc}
    1+\pi a &  \pi^2 b &  \pi c\\
    d & 1+\pi e &  f \\
    g & \pi h & 1+\pi i
\end{array}\right)$$


\noindent Also, let $I:=I_C$ be the pro-$p$ Iwahori subgroup of $G$, and let $K_1=I_{v_0}$ be the first congruence kernel of $SL_3(\mathcal{O})$. We can realise these subgroups explicitly as $$I=\left(\begin{array}{ccc}
    1+\pi a &  b &  c\\
    \pi d & 1+\pi e &  f \\
    \pi g & \pi h & 1+\pi i
\end{array}\right),K_1=\left(\begin{array}{ccc}
    1+\pi a &  \pi b &  \pi c\\
    \pi d & 1+\pi e &  \pi f \\
    \pi g & \pi h & 1+\pi i
\end{array}\right)$$ Moreover, $I$ is the unique Sylow $p$-subgroup of $\text{Stab}_G(C)$, while $K_1$ is precisely the stabiliser in $G$ of all vertices in Star$(v_0)$. 

\begin{lemma}\label{propn: order p}
If $D$ is a chamber of $\Delta$ and $e$ is an edge of $D$, then $I_D/I_e\cong (\mathbb{F}_q,+)$. In particular, if $q=p$ then $I_D/I_e$ is a cyclic group of order $p$.
\end{lemma}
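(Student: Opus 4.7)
The plan is to reduce, using $G$-equivariance of the assignment $F \mapsto I_F$, to the case where $D = C$ is the hyperspecial chamber and $e$ is one of its three edges, and then perform a direct matrix computation using the explicit descriptions of $I$, $I_{s_1}$, $I_{s_2}$ recorded just above the lemma. First recall from the inclusion chain preceding Proposition \ref{propn: adjacent chambers facet subgroup} that $I_e \subseteq I_D$ whenever $e$ is a face of $D$, and that $I_e$ is normal in $J_e \supseteq I_D$, so the quotient $I_D/I_e$ is a well-defined group. Moreover $g I_F g^{-1} = I_{g\cdot F}$ for any $g \in G$, so conjugation by $g$ induces a group isomorphism $I_D/I_e \cong I_{g\cdot D}/I_{g\cdot e}$. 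Since $G$ acts transitively on chambers, after replacing $(D,e)$ by a $G$-translate I may assume $D = C$ and $e \in \{s_0, s_1, s_2\}$.

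For $e = s_1$ and $e = s_2$, the explicit matrix formulas show that $I_{s_j}$ is obtained from $I$ by requiring a single specified off-diagonal entry (the $(2,3)$ entry for $s_1$; the $(1,2)$ entry for $s_2$) to lie in $\pi\mathcal{O}$ rather than in $\mathcal{O}$. The reduction modulo $\pi$ of that entry therefore defines a set-theoretic surjection $\phi \colon I \to \mathbb{F}_q$ with kernel exactly $I_{s_j}$. The crux is to check that $\phi$ is a group homomorphism: expanding the relevant entry $(AB)_{ij}$ of a product for $A, B \in I$, two of the three summands pick up at least one lower-triangular off-diagonal factor of $A$ or $B$, hence lie in $\pi\mathcal{O}$ and vanish mod $\pi$, while the remaining summand uses only diagonal factors which are $\equiv 1 \pmod{\pi}$, and so reduces to $A_{ij} + B_{ij} \pmod{\pi}$. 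Surjectivity is clear by choosing elementary matrices, giving $I/I_{s_j} \cong (\mathbb{F}_q, +)$.

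For the third edge $e = s_0 = \{v_1, v_2\}$ the description of $I_{s_0}$ has not been recorded above, but is obtained by the same method: since $\mathrm{Stab}_G(v_i) = h_i\, SL_3(\mathcal{O})\, h_i^{-1}$ for the diagonal matrices $h_1 = \mathrm{diag}(1,1,\pi)$ and $h_2 = \mathrm{diag}(1,\pi,\pi)$, the intersection $J_{s_0} = \mathrm{Stab}_G(v_1) \cap \mathrm{Stab}_G(v_2)$ may be read off as an explicit matrix subgroup of $G$, and $I_{s_0}$ is its preimage of the unipotent radical of the reduction modulo $\pi$. The outcome is that $I_{s_0}$ again differs from $I$ by strengthening a single lower-triangular off-diagonal entry (from $\pi\mathcal{O}$ to $\pi^2\mathcal{O}$), and the same modulo-$\pi$ argument gives $I/I_{s_0} \cong (\mathbb{F}_q, +)$. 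The ``in particular'' is then immediate since $(\mathbb{F}_p,+) \cong \mathbb{Z}/p\mathbb{Z}$.

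The main (and really only) obstacle is the homomorphism verification in the matrix computation; the reduction via equivariance is immediate, and the nuisance for $s_0$ amounts to nothing more than writing out its matrix description in parallel to those already given for $s_1$ and $s_2$.
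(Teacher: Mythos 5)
Your proof is correct and follows essentially the same route as the paper: reduce to the hyperspecial chamber by $G$-equivariance of $F\mapsto I_F$ and read off $I_D/I_e$ from the explicit matrix descriptions. You are a little more thorough than the paper in two respects. First, the paper only records the computation for $e=s_2$ and asserts ``without loss of generality'' that all $(D,e)$ pairs reduce to this case; this implicitly uses conjugation by an element of $GL_3(K)$ normalising the Iwahori (a ``rotation'' of $C$), which is true but not cited, whereas you only invoke $G$-transitivity on chambers and then treat all three edges of $C$ separately, which is cleaner. Second, you spell out why the map extracting the distinguished entry mod $\pi$ is a group homomorphism, where the paper just writes ``so clearly.'' Your derivation of $I_{s_0}$ is also correct: conjugating $K_1$ by $\mathrm{diag}(1,1,\pi)$ and $\mathrm{diag}(1,\pi,\pi)$ and taking the generated subgroup (cf.\ Proposition \ref{propn: generating_subgroup2}) shows that $I_{s_0}$ differs from $I$ exactly by sharpening the $(3,1)$-entry from $\pi\mathcal{O}$ to $\pi^2\mathcal{O}$.

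One expository slip in the homomorphism check: expanding the relevant entry $(AB)_{ij}$ into its three summands, it is \emph{one} summand (the one picking up a lower-triangular off-diagonal factor from each of $A$ and $B$) that dies mod $\pi$, while the other \emph{two} — one pairing $A_{ij}$ against a diagonal entry of $B$, the other pairing $B_{ij}$ against a diagonal entry of $A$ — survive and together give $A_{ij}+B_{ij}$. You wrote the opposite count (two vanish, one survives and somehow produces a two-term sum), which does not parse as stated; but the intended computation is clear and correct, so this is a purely cosmetic issue and does not affect the validity of the argument.
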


\begin{proof}
We may assume without loss of generality that $D=C$ and $e=s_2=(v_0,v_1)$, so $I_e=\left(\begin{array}{ccc}
    1+\pi a &  \pi b &  c\\
    \pi d & 1+\pi e &   f \\
    \pi g & \pi h & 1+\pi i
\end{array}\right)$ and $I_D=\left(\begin{array}{ccc}
    1+\pi a &  b &   c\\
    \pi d & 1+\pi e &    f \\
    \pi g & \pi h & 1+\pi i
\end{array}\right)$, so clearly $I_D/I_e$ is isomorphic to $\left\{\left(\begin{array}{ccc}
    1 &  b &  0\\
    0 & 1 &   0 \\
     0 & 0 & 1
\end{array}\right):b\in\mathcal{O}/\pi\mathcal{O}\right\}\cong (\mathbb{F}_q,+)$ as required.\end{proof}

\noindent From now on, we will assume that $q=p$, i.e. the residue field of $K$ is $\mathbb{F}_p$ (so $K/\mathbb{Q}_p$ is totally ramified). We can now prove the following technical results regarding the action of $I$ on Star$(v_0)$.

\begin{lemma}\label{lem: transitivity}
$I/K_1$ acts faithfully and transitively on $S_1^{(2)}$ and $S_2^{(2)}$.
\end{lemma}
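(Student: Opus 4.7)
The plan is to reduce the action to $I/K_1$, compute stabilisers via an explicit calculation on the three-dimensional $\mathbb{F}_p$-vector space $V := v_0/\pi v_0$, and then read off both transitivity (by orbit--stabiliser) and faithfulness (by intersecting stabilisers). First, observe that $K_1$ fixes every vertex of distance at most one from $v_0$, hence every chamber of $X_{0,2}$; in particular it fixes every element of $S_1^{(2)} \cup S_2^{(2)}$, so both actions factor through $I/K_1$. From the explicit matrix presentations of $I$ and $K_1$ above one reads off an isomorphism $I/K_1 \cong \mathbb{F}_p^3$ sending a matrix to the residues $(b,c,f) \bmod \pi$ of its three super-diagonal entries; in particular $|I/K_1| = p^3$.

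Next, every vertex of $X_{0,2}$ is a lattice $\mathcal{L}$ with $\pi v_0 \subseteq \mathcal{L} \subseteq v_0$ and is recovered uniquely from its image $\mathcal{L}/\pi v_0 \subseteq V$, where $V$ has basis $(\bar e_1, \bar e_2, \bar e_3)$. Reducing modulo $\pi$, the action of $I/K_1$ on $V$ is by the unipotent upper triangular matrix with parameters $(b,c,f)$. The plan is now to parametrise the peaks $u_i$ of the $P_i$ by scalars $c_i \in \mathbb{F}_p$ via $u_i/\pi v_0 = \langle c_i \bar e_1 + \bar e_2\rangle$ (with $v_2/\pi v_0 = \langle \bar e_1\rangle$ being the missing, ``$C$-case'' line), and then parametrise the further peaks $u_{j,i}$ by $t_j \in \mathbb{F}_p$ via $u_{j,i}/\pi v_0 = \langle c_i \bar e_1 + \bar e_2,\ t_j \bar e_1 + \bar e_3\rangle$. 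A short linear algebra computation in $V$ shows that a class $(b,c,f) \in I/K_1$ stabilises $u_i$ if and only if $b = 0$, and stabilises $u_{j,i}$ if and only if additionally $c = f c_i$. Thus $I_{P_{j,i}}/K_1$ is precisely the line in $\mathbb{F}_p^3$ cut out by $b = 0,\ c = fc_i$, and has order exactly $p$.

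Transitivity of $I/K_1$ on $S_1^{(2)}$ then follows by orbit--stabiliser, since the orbit of $P_{j,i}$ has size $p^3/p = p^2 = |S_1^{(2)}|$ by Theorem \ref{thm: d-partite}(6). For faithfulness, the kernel is $\bigcap_{j,i} I_{P_{j,i}}/K_1$; since $i$ ranges over $\{1,\dots,p\}$ with pairwise distinct values $c_i \in \mathbb{F}_p$, and $p \geq 2$ forces the existence of two such $i \neq i'$, the simultaneous constraints $c = f c_i$ and $c = f c_{i'}$ give $f(c_i - c_{i'}) = 0$ in $\mathbb{F}_p$, so $f = 0$ and hence $c = 0$. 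Only the identity class of $I/K_1$ survives, so the action is faithful. The assertions for $S_2^{(2)}$ follow by the symmetric argument, interchanging the roles of $v_1$ and $v_2$ and replacing the middle super-diagonal parameter $b$ with the bottom-right parameter $f$. The only substantive step is the clean identification of the $u_{j,i}$ and $w_{j,i}$ with explicit subspaces of $V$; once this is in place the computation is routine, and no real obstacle remains.
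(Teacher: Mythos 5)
Your argument is correct and takes a genuinely different route from the paper. The paper proves faithfulness geometrically: assuming $g$ fixes $S_2^{(2)}$ but moves some $P_{r,s}\in S_1^{(2)}$, it manufactures a 4-cycle among the $D_{k,j,i}$'s, contradicting Lemma \ref{lem: no-cycles}; and it proves transitivity by observing that a pro-$p$ group acting nontrivially on $p$ objects must act transitively. You instead linearise everything: identify vertices of $X_{0,2}$ with subspaces of $V=v_0/\pi v_0\cong\mathbb F_p^3$, write down the action of $I/K_1$ in reduced unipotent matrix form, compute the stabiliser of a peak $u_{j,i}$ as $\{b=0,\,c=fc_i\}$ (which happens to be independent of $j$), and read off transitivity from orbit--stabiliser and faithfulness from intersecting over two distinct values of $c_i$. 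This is arguably cleaner and more self-contained, since it bypasses the cycle lemma entirely, at the cost of being special to the explicit $SL_3$ lattice model rather than using the building-theoretic machinery the paper relies on elsewhere.

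Two small points worth repairing. First, $I/K_1$ is the Heisenberg group over $\mathbb F_p$, not $\mathbb F_p^3$: the $(1,3)$ entry picks up a $b_1f_2$ cross-term under multiplication, so $(b,c,f)\mapsto(b,c,f)$ is a bijection of sets, not a group isomorphism. Your ``line'' $\{b=0,\,c=fc_i\}$ is indeed a subgroup (one checks the cross-term vanishes on it), so the order count and the orbit--stabiliser step survive, but the phrasing should be corrected. Second, you silently pass from ``stabilises the vertex $u_{j,i}$'' to ``stabilises the chamber $P_{j,i}$''; this is true but needs a sentence --- since $g\in I$ fixes $C$, Lemma \ref{lem: chamber for facet}(2) gives $C(g\cdot u_{j,i})=g\cdot P_{j,i}$, so fixing the peak fixes the summit, and conversely a stabiliser of $P_{j,i}$ must fix its unique vertex outside $\Delta_1$ because $I$ preserves $\Delta_1$ and $\Delta_2$ by Lemma \ref{lem: preservation}. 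Relatedly, $I_{P_{j,i}}/K_1$ should read $\mathrm{Stab}_I(P_{j,i})/K_1$; these coincide with $I\cap I_{P_{j,i}}$ by Proposition \ref{propn: adjacent chambers facet subgroup}(2), but they are not the same as $I_{P_{j,i}}$ itself. Finally, you should cite Lemma \ref{lem: preservation} for the fact that the $I$-orbit of $P_{j,i}$ stays inside $S_1^{(2)}$, which is what lets the orbit--stabiliser count close the transitivity argument.
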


\begin{proof}

By Lemma \ref{lem: preservation}, we know that $I=I_C$ acts on $S_1^{(2)}$ and $S_2^{(2)}$, and we know that $K_1$ fixes all vertices in these sets. We will prove the statement for $S_2^{(2)}$, the result follows for $S_1^{(2)}$ by symmetry.\\

\noindent To prove faithfulness, suppose $g\in I$ and $g$ fixes the chambers in $S_1^{(2)}$. Fix a pair $(r,s)$ with $1\leq r,s\leq p$, then again by Lemma \ref{lem: preservation}, $g\cdot P_{r,s}=P_{r',s'}$ for some $r',s'$, and assume for contradiction that $P_{r,s}\neq P_{r',s'}$.\\

\noindent We know that $P_{r,s}$ is adjacent to all chambers in $\{D_{r,s,k}:k=1,\dots,p\}$, and similarly $P_{r',s'}$ is adjacent to all chambers in $\{D_{r',s',k'}:k'=1,\dots,p\}$. Fix $k,k'$ with $1\leq k,k'\leq p$, and there are unique chambers $Q,Q'\in S_2^{(2)}$ such that $D_{r,s,k}$ is adjacent to $Q$ and $D_{r',s',k'}$ is adjacent to $Q'$. 

Moreover, we can assume that $Q\neq Q'$, since our choice of $k,k'$ was arbitrary, and distinct elements of $\{D_{r,s,k}:k=1,\dots,p\}$ are adjacent to distinct chambers in $S_2^{(2)}$.\\

\noindent But $g\cdot Q=Q$, $g\cdot Q'=Q'$, $g\cdot D_{r,s,k}$ is adjacent to $g\cdot P_{r,s}=P_{r',s'}$ and $g^{-1}\cdot D_{r',s',k'}$ is adjacent to $P_{r,s}$. Moreover, $D_{r,s,k}$ and $g^{-1}\cdot D_{r',s',k'}$ share an edge, so they are adjacent chambers, as are $D_{r',s',k'}$ and $g\cdot D_{r,s,k}$. 

But $D_{r,s,k}$ and $g\cdot D_{r,s,k}$ are both adjacent to $Q=g\cdot Q$ by the same edge, so this gives us a cycle $$D_{r,s,k}\sim g\cdot D_{r,s,k}\sim D_{r',s',k'}\sim g^{-1}\cdot D_{r',s',k'}$$ of length 4 in $\Delta$, contradicting Lemma \ref{lem: no-cycles}.\\

\noindent So we conclude that $g\cdot P_{r,s}=P_{r,s}$. Since our choice of $r,s$ was arbitrary, it follows that $g$ fixes all chambers $P_{i,j},Q_{i,j}$, i.e. all chambers in Star$(v_0)$ of distance 2 from $C$. By Corollary \ref{cor: adjacent to facet}, it follows that $g$ fixes all chambers of Star$(v_0)$, and hence all vertices adjacent to $v_0$, which implies that $g\in K_1$ as required.\\

\noindent To prove transitivity, for any two chambers $Q_{j,i}$, $Q_{k,\ell}\in S_2^{(2)}$, we want to show that there exists $g\in I$ such that $g\cdot Q_{j,i}=Q_{k,\ell}$. Let us first suppose that $i=\ell$, i.e. $Q_{j,i},Q_{k,i}$ are both adjacent to $Q_i$. Since the action of the pro-$p$ group $I$ permutes the $p$ chambers $\{Q_{i,s}:1\leq s\leq p\}$ non-trivially, and the size of each orbit divides $p$, it follows that the action is transitive.

If $i\neq \ell$, then similarly there exists $g\in I$ such that $g\cdot Q_k=Q_i$, so replacing $Q_{k,\ell}$ with $g\cdot Q_{k,\ell}$, we can apply the same argument.\end{proof}

\begin{lemma}\label{lem: adjacent transitive}
If for all $i,j=1,\dots,p$, $\text{Stab}_I(w_{j,i})$ acts transitively on the set of vertices in $P_1^{(2)}$ adjacent to $w_{j,i}$
\end{lemma}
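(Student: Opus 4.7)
The plan is to reduce the statement to the single case $w_{1,1}$ via Lemma \ref{lem: transitivity}, then exploit the fact that $\text{Stab}_I(w_{1,1})$ is pro-$p$ acting on a set of size $p$ to reduce transitivity to the mere existence of a non-trivial permutation, and finally exhibit an explicit unipotent element of $\text{Stab}_I(w_{1,1})$ which visibly moves a distinguished neighbor.

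For the reduction, by Lemma \ref{lem: transitivity} there exists $g\in I$ with $g\cdot Q_{1,1}=Q_{j,i}$, so $g\cdot w_{1,1}=w_{j,i}$, and conjugation by $g$ gives an isomorphism $\text{Stab}_I(w_{1,1})\xrightarrow{\sim}\text{Stab}_I(w_{j,i})$. By Lemma \ref{lem: preservation}, $g$ preserves $P_1^{(2)}$ and graph adjacency, so it carries the neighbors of $w_{1,1}$ in $P_1^{(2)}$ bijectively and equivariantly onto the neighbors of $w_{j,i}$. Hence it suffices to treat the case $(j,i)=(1,1)$.

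By Theorem \ref{thm: d-partite}(5) together with the standing hypothesis $q=p$, the vertex $w_{1,1}$ has exactly $p$ neighbors in $P_1^{(2)}$. Since $\text{Stab}_I(w_{1,1})$ is closed in the pro-$p$ group $I$, it is itself pro-$p$, and so every orbit of its action on this $p$-element set has size a power of $p$, hence either $1$ or $p$. Therefore transitivity is equivalent to producing a single element of $\text{Stab}_I(w_{1,1})$ which does not fix every neighbor pointwise. To produce one, I would use the explicit matrix description: writing $w_{1,1}=h_{1,1}\cdot v_0$ with $h_{1,1}=\text{diag}(\pi,\pi,1)$, we have $\text{Stab}_G(w_{1,1})=h_{1,1}SL_3(\mathcal{O})h_{1,1}^{-1}$, and intersecting with $I$ one checks directly that the unipotent element $g_a:=\mathrm{id}+aE_{1,2}$ lies in $\text{Stab}_I(w_{1,1})$ for every $a\in\mathcal{O}$. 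Taking the distinguished neighbor $u_{1,1}=\langle\pi e_1,e_2,e_3\rangle$, a direct lattice computation gives
$$g_a\cdot u_{1,1}=\langle\pi e_1,\;e_2+ae_1,\;e_3\rangle,$$
which for $a\in\mathcal{O}^{\times}$ is an index-$p$ sublattice of $v_0$ distinct from $u_{1,1}$ (indeed it fails to contain $e_2$), so $g_a$ acts non-trivially on the neighbor set.

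The main, and essentially only, obstacle lies in the bookkeeping for the third step: one must verify that $u_{1,1}$ really is a neighbor of $w_{1,1}$ in $P_1^{(2)}$ (which it is, since $\pi u_{1,1}\subseteq w_{1,1}\subseteq u_{1,1}$ with cyclic quotient $\mathbb{F}_p$), and that the moved lattice $g_a\cdot u_{1,1}$ still lies in $P_1^{(2)}$ rather than drifting into some other region of $\Delta$. The latter is automatic because $g_a\in I$ and $I$ preserves $P_1^{(2)}$ by Lemma \ref{lem: preservation}. Once a single non-identity permutation of the neighbor set is exhibited, the pro-$p$ orbit constraint from the previous paragraph forces that orbit to have size $p$, which (since the full set also has size $p$) means the action is transitive, completing the proof.
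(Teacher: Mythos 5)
Your proof is correct, and it reaches the conclusion via a genuinely different argument from the paper's. Both proofs share the opening moves: since $\text{Stab}_I(w_{j,i})$ is pro-$p$ and acts on the $p$-element neighbor set, orbits have size $1$ or $p$, so transitivity reduces to exhibiting a single non-fixing element; and both use Lemma \ref{lem: transitivity} to conjugate everything to the $(1,1)$ case. Where you diverge is in how you produce the non-fixing element. You do it directly: you observe that the unipotent $g_a=\mathrm{id}+aE_{1,2}$ commutes with $h_{1,1}=\mathrm{diag}(\pi,\pi,1)$, hence lies in $h_{1,1}SL_3(\mathcal{O})h_{1,1}^{-1}\cap I=\text{Stab}_I(w_{1,1})$, and then compute that $g_a\cdot u_{1,1}=\langle\pi e_1,\,ae_1+e_2,\,e_3\rangle\neq u_{1,1}$ for $a\in\mathcal{O}^\times$. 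The paper instead argues by contradiction with a propagation step: if $\text{Stab}_I(w_{j,i})$ fixed all $p$ neighbors in $P_1^{(2)}$, one could bounce back and forth along the bipartite graph of peaks (again using the orbit-size-$1$-or-$p$ dichotomy at each step) to conclude it fixes every vertex of $P_1^{(2)}\sqcup P_2^{(2)}$, forcing $\text{Stab}_I(w_{j,i})\subseteq K_1$ by the faithfulness part of Lemma \ref{lem: transitivity}; this contradicts $I_{d_{1,1}}\not\subseteq K_1$, read off the matrix descriptions and transported by conjugation. What each approach buys: yours is shorter and more concrete, producing an explicit witness and requiring only one lattice computation; the paper's avoids pinning down any particular adjacent peak of $w_{1,1}$ and instead leverages the combinatorial rigidity of the bipartite peak graph, which makes the mechanism of the contradiction (the propagation step) reusable and more visibly structural. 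Both correctly invoke Lemma \ref{lem: preservation} to keep the action confined to $P_1^{(2)}$, and your observation that $\pi u_{1,1}\subseteq w_{1,1}\subseteq u_{1,1}$ with $\mathbb{F}_p$ quotient correctly verifies that $u_{1,1}$ is indeed a building-adjacent neighbor of $w_{1,1}$.
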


\begin{proof}
There are precisely $p$ vertices in $P_1^{(2)}$ adjacent to $w_{j,i}$, and since $\text{Stab}_I(w_{j,i})\leq I$ is a pro-$p$ group,  the size of the orbit divides $p$. So either $\text{Stab}_I(w_{j,i})$ acts transitively, or it fixes every vertex adjacent to $w_{j,i}$ in $P_1^{(2)}$.

Assume for contradiction that $\text{Stab}_I(w_{j,i})$ fixes every vertex in $P_1^{(2)}$ adjacent to $w_{j,i}$. Fix such a vertex $u_{\ell,k}$, and it follows that $\text{Stab}_I(w_{j,i})$ must permute the $p$ vertices in $S_2^{(2)}$ adjacent to $u_{k,\ell}$. So again, either it permutes them transitively or fixes all of them. But we know it fixes $w_{j,i}$, so it cannot act transitively, so it must fix them all, so applying this reasoning inductively, we deduce that $\text{Stab}_I(w_{j,i})$ fixes all vertices in $S_1^{(1)}\sqcup S_2^{(1)}$.\\ 

\noindent Using Lemma \ref{lem: transitivity}, it follows that $\text{Stab}_I(w_{j,i})\subseteq K_1$, and hence $I_{d_{j,i}}\subseteq K_1$. But from the matrix descriptions of $I_{d_{1,1}}$ and $K_1$, we know that $I_{d_{1,1}}$ is not contained in $K_1$. So since there exists $h\in I$ such that $h\cdot Q_{1,1}=Q_{j,i}$ by Lemma \ref{lem: transitivity}, it follows that $I_{d_{j,i}}=hI_{d_{1,1}}h^{-1}\not\subseteq K_1$, a contradiction.\end{proof}

\begin{proposition}\label{propn: non-trivial intersection}
For all $i,j$, $I_{P_{i}}=\langle I_{h_i},I_{e_i}\cap I\cap SL_3(\mathbb{Q}_p\rangle$ and $I_{P_{j,i}}=\langle I_{h_{j,i}},I_{e_{j,i}}\cap I\cap SL_3(\mathbb{Q}_p)\rangle$.
\end{proposition}

\begin{proof}

We will use the description of vertices in the standard apartment to prove that $I_{e_{1}}\cap I\cap SL_3(\mathbb{Q}_p)$ and $I_{e_{1,1}}\cap I\cap SL_3(\mathbb{Q}_p)$ generate $I_{P_1}/I_{h_1}$ and $I_{P_{1,1}}/I_{h_{1,1}}$ respectively. Since the sets $\{P_i:i\leq p\}$ and $\{P_{i,j}:i,j\leq p\}$ each form a single orbit under the action of $GL_3(\mathbb{Z}_p)$, the result will follow for all $i,j$.\\

\noindent Using Lemma \ref{propn: order p}, we see that $I_{P_1}/I_{h_1}$ and $I_{P_{1,1}}/I_{h_{1,1}}$ have order $p$, so it remains only to prove that $I_{e_{1,1}}\cap I\cap SL_3(\mathbb{Q}_p)\not\subseteq I_{h_1}$ and $I_{e_{1,1}}\cap I\cap SL_3(\mathbb{Q}_p)\not\subseteq I_{h_{1,1}}$.\\

\noindent We have seen that $$I_{e_1}=kI_{s_1}k^{-1}=\left(\begin{array}{ccc}
    1+\pi a &  \pi b &  c\\
    d & 1+\pi e &  f \\
    \pi g & \pi^2 h & 1+\pi i
\end{array}\right)$$ and $$I_{e_{1,1}}=gI_{s_2}g^{-1}\left(\begin{array}{ccc}
    1+\pi a &  \pi^2 b &  \pi c\\
    d & 1+\pi e &  f \\
     g & \pi h & 1+\pi i
\end{array}\right)$$ where $g:=\left(\begin{array}{ccc}
    \pi &  0 &  0\\
    0 & 1 &  0 \\
    0 & 0 & 1
\end{array}\right)$ and $k:=\left(\begin{array}{ccc}
    \pi &  0 &  0\\
    0 & 1 &  0 \\
    0 & 0 & \pi
\end{array}\right)$. We also see immediately that $h_{1,1}=g\cdot s_1$, so $$I_{h_{1,1}}=gI_{s_1}g^{-1}=\left(\begin{array}{ccc}
    1+\pi a &  \pi b &  \pi c\\
    d & 1+\pi e &  \pi f \\
     g & \pi h & 1+\pi i
\end{array}\right)$$ so $I_{e_{1,1}}\cap I\cap SL_3(\mathbb{Q}_p)$ contains $\left(\begin{array}{ccc}
    1 &  0 &  0\\
    0 & 1 &   1 \\
    0 & 0 & 1
\end{array}\right)$, which does not lie in $I_{h_{1,1}}$.\\

\noindent Finally, we calculate $I_{h_1}$ directly. We know that $$\text{Stab}_G(h_1)=\text{Stab}_G(v_0)\cap\text{Stab}_G(u_1)=\left(\begin{array}{ccc}
    a &  \pi b &   c\\
    d &  e &   f \\
     g & \pi h &  i
\end{array}\right)$$ and we deduce immediately that $$I_{h_1}=\left(\begin{array}{ccc}
    1+\pi a &  \pi b &   \pi c\\
    d &  1+\pi e &   f \\
     \pi g & \pi h &  1+\pi i
\end{array}\right)$$ so $I_{e_1}\cap I\cap SL_3(\mathbb{Q}_p)$ contains $\left(\begin{array}{ccc}
    1 &  0 &  1\\
    0 & 1 &   0 \\
    0 & 0 & 1
\end{array}\right)$, which does not lie in $I_{h_{1}}$.\end{proof}

\begin{lemma}\label{lem: key ramified}
If $K\neq\mathbb{Q}_p$, $g\in I\cap SL_3(\mathbb{Q}_p)$ and $g$ stabilises all vertices adjacent to $v_0$, then $g\in K_2$. In particular, $g\in I_e$ for all edges $e$ in Star$(v_0)$.
\end{lemma}

\begin{proof}
We are assuming that $g\in K_1$, so $$g=\left(\begin{array}{ccc}
    1+\pi a_{1,1} &  \pi a_{1,2} &  \pi a_{1,3}\\
    \pi a_{2,1} & 1+\pi a_{2,2} &   \pi a_{2,3} \\
    \pi a_{3,1} & \pi a_{3,2} & 1+\pi a_{3,3}
\end{array}\right)$$ for some $a_{i,j}\in\mathcal{O}$. But we are also assuming that $g\in SL_3(\mathbb{Q}_p)$, so $\pi a_{i,j}\in\mathbb{Q}_p\cap\mathcal{O}=\mathbb{Z}_p$ for each $i,j$.\\ 

\noindent But $v_{\pi}(\pi a_{i,j})>0$, so $\pi a_{i,j}$ is not a unit in $\mathbb{Z}_p$, which implies that $v_p(\pi a_{i,j})\geq 1$, and thus $v_{\pi}(\pi a_{i,j})\geq 2$ since the extension is ramified, and hence $v_{\pi}(a_{i,j})\geq 1$, i.e. $a_{i,j}\in\pi\mathcal{O}$.

Write $b_{i,j}:=\pi^{-1}a_{i,j}$, and we see that $g=\left(\begin{array}{ccc}
    1+\pi^2 b_{1,1} &  \pi^2 b_{1,2} &  \pi^2 b_{1,3}\\
    \pi^2 b_{2,1} & 1+\pi^2 b_{2,2} &   \pi^2 b_{2,3} \\
    \pi^2 b_{3,1} & \pi^2 b_{3,2} & 1+\pi^2 b_{3,3}
\end{array}\right)\in K_2$.\\

\noindent But if $e$ is an edge in Star$(v_0)$, then for any chamber $D$ containing $e$, all vertices of $D$ have distance no more than 2 from $v_0$, hence they are fixed by every element of $K_2$, so in particular by $g$, thus $g\in I_e$ by Proposition \ref{propn: adjacent chambers facet subgroup}.
\end{proof}




\subsection{The subgroups $H_1$ and $H_2$}

Recall from Definition \ref{defn: H_e and H_f} how we defined the subgroups $H_e\subseteq I_e$ and $H_f\subseteq I_f$, where $e,f$ are oriented edges in a summit $D$ of $\Delta_n$, whose target is the peak $v\in D$. We will now explore these subgroups in more detail when $n=0$, $v$ is the hyperspecial vertex, $e=\overset{\to}{s_2}$ and $f=\overset{\leftarrow}{s_1}$.\\

\noindent Firstly for each $i,j,k,\ell=1,\dots,p$, let $$H_{1,j,i}:=\{g\in I_{s_2}:g\cdot P_{j,i}=P_{j,i}\}$$ and $$H_{2,\ell,k}:=\{g\in I_{s_1}:g\cdot Q_{k,\ell}=Q_{k,\ell}\}$$ Then $H_{1,j,i}$ and $H_{2,j,i}$ can be realised as $H_{s_2}$ and $H_{s_1}$ respectively if $u_{j,i}$ is joined by an edge to $w_{k,\ell}$.

\begin{lemma}\label{lem: order p}
For all $i,j,k,\ell,n,m=1,\dots,p$, 
\begin{itemize}
\item $\text{\emph{Stab}}_{G}(Q_n)\cap H_{1,j,i}=\text{\emph{Stab}}_{G}(P_m)\cap H_{2,k,\ell}=K_1$

\item $H_{1,j,i}/K_1$ and $H_{2,k,\ell}/K_1$ have order $p$.
\end{itemize}
\end{lemma}

\begin{proof}

Let $T:=\text{Stab}_{G}(Q_n)\cap H_{1,j,i}$. Then since $H_{1,j,i}$ is a pro-$p$ group which permutes $Q_1,\dots,Q_p$, it follows that $T$ must fix $Q_1,\dots,Q_p$.\\ 

\noindent But if $h\in T$ then $h\cdot P_{j,i}=P_{j,i}$ by the definition of $H_{1,j,i}$. So since $u_{j,i}$ is adjacent to $p$ vertices in $S_2^{(1)}$, and no two neighbours of $u_{j,i}$ have summits with the same base by Theorem \ref{thm: d-partite}, it follows that every chamber $Q_1,\dots,Q_p$ is adjacent via $k_1,\dots,k_p$ respectively to a summit in $S_2^{(2)}$ whose peak is connected to $u_{j,i}$.

In other words, $D_{j,i,1},\dots,D_{j,i,p}$ are adjacent only to $Q_{1,k_1},\dots, Q_{p,k_p}$ with $k_r\neq k_s$ if $r\neq s$, and $h$ fixes the base of $Q_{r,k_r}$ for each $r$.\\

\noindent Since $h\cdot P_{j,i}=P_{j,i}$, it follows that $h$ permutes $D_{j,i,1},\dots,D_{j,i,p}$, and also $Q_{1,k_1},\dots, Q_{p,k_p}$. But since $h$ fixes the base of every $Q_{r,k_r}$, this implies that $h$ fixes each $Q_{r,k_r}$.

But again, for each $r$, $T$ fixes $Q_r$, so it must permute $Q_{r,1},\dots,Q_{r,p}$. This once again implies that $h$ fixes all of them, i.e. it fixes every chamber in $S_2^{(1)}$, and hence $h\in K_1$ by Lemma \ref{lem: transitivity}.\\

\noindent So $T\subseteq K_1$, and since $K_1\subseteq H_{1,j,i}$ and $K_1$ fixes every vertex in $B(v_0,1)$, it follows that $K_1=T$. A symmetric argument shows that $K_1=\text{Stab}_{G}(P_m)\cap H_{2,k,\ell}$.\\

\noindent Moreover, since $H_{1,j,i}/T$ is a $p$-group acting on $Q_1,\dots,Q_p$, it can have size either 1 or $p$. But the action of $H_{1,j,i}$ on $Q_1,\dots,Q_p$ is non-trivial by Lemma \ref{lem: transitivity}, so it follows that $H_{1,j,i}/T=H_{1,j,i}/K_1$ has order $p$. Again, a symmetric argument shows that $H_{2,\ell,k}/K_1$ also has order $p$.\end{proof}

\noindent With this lemma, and the following result, we can now complete the proof of Proposition \ref{H-properties}.

\begin{proposition}\label{propn: generating_subgroup1}
If $u_{j,i}$ is joined to $w_{\ell,k}$ then $H_{1,j,i}\cap H_{2,\ell,k}=K_1$, and $I=\langle H_{1,j,i},H_{2,\ell,k}\rangle$.
\end{proposition}

\begin{proof}

Let us first assume that $i=j=k=\ell=1$. Since every element of $I_{s_2}$ fixes $u_1$ by Proposition \ref{propn: adjacent chambers facet subgroup}, $H_{1,1,1}$ is the set of all $g\in I_{s_2}$ that fix $u_{1,1}$. Similarly, $H_{2,1,1}$ is the set of all $g\in I_{s_1}$ that fixes $w_{1,1}$, so we can write them explicitly. $$H_{1,1,1}=\left(\begin{array}{ccc}
    1+\pi a &  \pi b &  c\\
    \pi d & 1+\pi e &   f \\
    \pi g & \pi h & 1+\pi i
\end{array}\right)\cap Stab(u_{1,1})=\left(\begin{array}{ccc}
    1+\pi a &  \pi b &  \pi c\\
    \pi d & 1+\pi e &   f \\
    \pi g & \pi h & 1+\pi i
\end{array}\right)$$

$$H_{2,1,1}=\left(\begin{array}{ccc}
    1+\pi a &  b &  c\\
    \pi d & 1+\pi e &  \pi f \\
    \pi g & \pi h & 1+\pi i
\end{array}\right)\cap Stab(w_{1,1})=\left(\begin{array}{ccc}
    1+\pi a &  b &  \pi c\\
    \pi d & 1+\pi e &  \pi f \\
    \pi g & \pi h & 1+\pi i
\end{array}\right)$$

\noindent It is clear that the intersection of these two subgroups is $\left(\begin{array}{ccc}
    1+\pi a &  \pi b &  \pi c\\
    \pi d & 1+\pi e &  \pi f \\
    \pi g & \pi h & 1+\pi i
\end{array}\right)=K_1$, and it is straightforward to see that any matrix in $SL_3(\mathcal{O})$ that is unipotent upper triangular modulo $\pi$ can be written as a product of matrices in these subgroups, so it follows that $I=\langle H_{1,1,1},H_{2,1,1}\rangle$.\\

\noindent In the general case, we can apply Lemma \ref{lem: transitivity} to find an element $h_0\in I$ such that $h_0\cdot Q_{1,1}=Q_{\ell,k}$, and applying Lemma \ref{lem: adjacent transitive} we can choose $h_1\in I$ such that $$h_1\cdot Q_{\ell,k}=Q_{\ell,k}\text{ and }h_1\cdot h_0P_{1,1}=P_{j,i}$$ Let $h:=h_1h_0$. Then for any $g\in G$, $g\cdot P_{j,i}=P_{j,i}$ if and only if $h^{-1}gh\cdot P_{1,1}=P_{1,1}$, so $T_{1,j,i}=hT_{1,1,1}h^{-1}$.\\ 

\noindent On the other hand, since $Q_{\ell,k}=h_0\cdot Q_{1,1}$ and $h_1\cdot Q_{\ell,k}=Q_{\ell,k}$, it follows that $h\cdot Q_{1,1}=Q_{\ell,k}$, so we similarly deduce that $H_{2,\ell,k}=hH_{2,1,1}h^{-1}$, so $H_{1,j,i}\cap H_{2,\ell,k}=h(H_{1,1,1}\cap H_{2,1,1})h^{-1}=K_1$ and $\langle H_{1,j,i},H_{2,\ell,k}\rangle=h\langle H_{1,1,1},H_{2,1,1}\rangle h^{-1}=I$.\end{proof}

\noindent We can also use the subgroups $H_1$ and $H_2$ to prove the following result, which we used in the proof of Lemma \ref{lem: (A) implies (B)}.

\begin{proposition}\label{propn: generating_subgroup2}
If $s=s_0=(v_1,v_2)$, then $I_{s}$ is generated $I_{v_1}$ and $I_{v_2}$.
\end{proposition}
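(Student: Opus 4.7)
The plan is to compute explicit matrix realisations of $I_s$, $I_{v_1}$, and $I_{v_2}$ as subgroups of $SL_3(K)$, and then, via the Iwahori factorisation of $I_s$, to verify that each root subgroup comprising $I_s$ sits inside one of $I_{v_1}$ or $I_{v_2}$.

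First, by Proposition~\ref{propn: adjacent chambers facet subgroup}(2), $I_s$ is the set of pro-$p$ elements of $G$ fixing every chamber containing $s = (v_1, v_2)$. Since $I_s \subseteq I_C = I$, I parameterise a general $g \in I$ by its matrix entries and impose the condition that $g$ fixes each of the $p+1$ chambers through $s$. Enumerating these chambers in the lattice model (namely $C$ together with $p$ others of the form $(v_1, v_2, w)$ as $w$ ranges over the vertices adjacent to both $v_1$ and $v_2$ distinct from $v_0$), a brief computation of the action of $g$ on the third vertex reveals that the sole additional constraint is $g_{31} \in \pi^2\mathcal{O}$, giving
\[I_s = \begin{pmatrix} 1+\pi a & b & c \\ \pi d & 1+\pi e & f \\ \pi^2 g & \pi h & 1+\pi i\end{pmatrix}.\]
For $I_{v_1}$ and $I_{v_2}$, I take $g_1 = \mathrm{diag}(1, 1, \pi)$ and $g_2 = \mathrm{diag}(1, \pi, \pi)$ in $GL_3(K)$, which act on $\Delta$ by automorphisms and satisfy $g_i \cdot v_0 = v_i$. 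Equivariance of the construction of $I_F$ under building automorphisms gives $I_{v_i} = g_i K_1 g_i^{-1}$, and explicit conjugation yields
\[I_{v_1} = \begin{pmatrix} 1+\pi a & \pi b & c \\ \pi d & 1+\pi e & f \\ \pi^2 g & \pi^2 h & 1+\pi i\end{pmatrix}, \qquad I_{v_2} = \begin{pmatrix} 1+\pi a & b & c \\ \pi^2 d & 1+\pi e & \pi f \\ \pi^2 g & \pi h & 1+\pi i\end{pmatrix}.\]
Entry-by-entry comparison confirms that $I_{v_1}, I_{v_2} \subseteq I_s$, so $\langle I_{v_1}, I_{v_2}\rangle \subseteq I_s$.

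For the reverse inclusion, I invoke the Iwahori factorisation of the parahoric $I_s$: every element decomposes as $u^- t u^+$ with $u^- \in U^-_s$, $t \in T^0$, $u^+ \in U^+_s$, where $U^+_s = U_{12}(\mathcal{O}) U_{13}(\mathcal{O}) U_{23}(\mathcal{O})$ is the strictly upper-triangular unipotent part, $U^-_s = U_{21}(\pi\mathcal{O}) U_{31}(\pi^2\mathcal{O}) U_{32}(\pi\mathcal{O})$ the strictly lower-triangular part, and $T^0$ the pro-$p$ part of the diagonal torus. Direct inspection of the matrix forms above confirms that $U_{12}(\mathcal{O}), U_{32}(\pi\mathcal{O}) \subseteq I_{v_2}$, $U_{23}(\mathcal{O}), U_{21}(\pi\mathcal{O}) \subseteq I_{v_1}$, and $U_{13}(\mathcal{O}), U_{31}(\pi^2\mathcal{O}), T^0 \subseteq I_{v_1} \cap I_{v_2}$. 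Hence every element of $I_s$ is a product of elements of $I_{v_1} \cup I_{v_2}$, yielding $I_s \subseteq \langle I_{v_1}, I_{v_2}\rangle$ and thus equality.

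The main technical obstacle is deriving the matrix form of $I_s$ itself, since the edge $s = (v_1, v_2)$ does not contain the hyperspecial vertex $v_0$ and so $I_s$ is not among the $I_{s_i}$ explicitly given earlier in the paper; once the form of $I_s$ is in hand (either by direct chamber enumeration as above, or equivalently by conjugating $I_{s_2}$ by an element of $GL_3(K)$ cycling $v_0 \to v_2 \to v_1 \to v_0$), the remainder reduces to routine matrix verification analogous in style to the proof of Proposition~\ref{propn: generating_subgroup1}.
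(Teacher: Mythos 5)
Your proof is correct, and it takes a genuinely different route from the paper's. The paper's argument is building-theoretic: it observes that $B := I_{v_2}$ acts non-trivially on the $p$ edges $e_1,\dots,e_p$ while $A := I_{v_1}$ fixes them, reduces via the pro-$p$ structure to the index-$p$ stabiliser $T := \text{Stab}_{I_s}(e_1)$, shows $T/A$ has order $p$ by transporting the problem to the subgroup $S_{\ell,k}$ of Lemma \ref{lem: order p} via a building isometry, and finally verifies by a short matrix computation that $B\cap T \not\subseteq A$. Your argument instead computes $I_s$ explicitly (I checked: the chamber-enumeration constraint $g_{31}\in\pi^2\mathcal{O}$ is correct, and matches $\omega I_{s_1}\omega^{-1}$ for $\omega$ the cyclic rotation sending $v_0\mapsto v_2\mapsto v_1\mapsto v_0$), verifies the two containments entry-by-entry, and then closes the reverse inclusion by the Iwahori/LU-type factorisation $I_s = U^-_s\, T^0\, U^+_s$ and sorting the root subgroups into $I_{v_1}$ and $I_{v_2}$. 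I verified each of your sorting claims against the displayed matrix forms; they hold. Your approach is more self-contained and eliminates the appeal to Lemma \ref{lem: order p} and the isometry transport, at the cost of carrying the Iwahori factorisation as an ingredient (which you invoke rather than prove, but which does follow from straightforward Gaussian elimination given that the diagonal minors of any element of $I_s$ are units). Two small remarks: calling $I_s$ a "parahoric" is slightly loose (it is the pro-$p$ radical of the parahoric $J_s$, though the factorisation holds just the same); and in your parenthetical aside, conjugating $I_{s_2}$ by an element cycling $v_0\to v_2\to v_1\to v_0$ actually produces $I_{s_1}$, not $I_{s_0}$ --- you would want the opposite cycle $v_0\to v_1\to v_2\to v_0$ (or to conjugate $I_{s_1}$ instead). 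Since your main derivation of the matrix form of $I_s$ is by chamber enumeration rather than conjugation, this does not affect the validity of the proof.
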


\begin{proof}
For convenience, let $A:=I_{v_1}$, $B:=I_{v_2}$. Then $A$ and $B$ are both $GL_3(K)$-conjugate to $I_{v_0}=K_1$, so they are both pro-$p$ subgroups of $SL_3(K)$, normal in the stabiliser of $v_1$ and $v_2$ respectively. Thus $A\subseteq I_{e_i}$ and $B\subseteq I_{d_i}$ for each $i$, and $A,B\subseteq I_s$ by Proposition \ref{propn: adjacent chambers facet subgroup}. It remains to prove that $A$ and $B$ generate $I_s$.\\

\noindent Firstly, note that $B$ acts non-trivially on $e_1,\dots,e_p$, since $K_1\cong B$ does not fix any edge outside Star$(v_0)$. So since $B\subseteq I_s$, $I_s$ acts non-trivially on $e_1,\dots,e_p$. Again, since $I_s$ is a pro-$p$ group, it follows that  $I_s/\text{Stab}_{I_s}(e_i)$ has order $p$ for each $i$.

Therefore, fixing $i=1$, $T:=\text{Stab}_{I_s}(e_1)$, $I_s/T=B/T$, so it remains to prove that $T$ is generated by $A\cap T$ and $B\cap T$.\\

\noindent Since $A$ acts trivially on $e_1,\dots,e_p$, we know that $A\subseteq T$, so it suffices to show that $T/A$ has order $p$, and that $B\cap T$ is not contained in $A$. For the former statement, note that we can realise $T$ as $$T=\{g\in I_{s}:g\cdot P_1=P_1\}$$ If we perform an isometry of the building which sends $C$ to $Q_1$, fixing $v_2$, sending $v_1$ to $v_0$ and $v_0$ to $u_1$, then this subgroup coincides with $H_{2,\ell,k}$, where $Q_{k,\ell}$ is the image of $P_1$ under this isometry. Furthermore, $A$ coincides with $K_1$ (the stabiliser of all vertices of distance 1 from $v_0$), so it follows from Lemma \ref{lem: order p} that $T/A$ has order $p$.\\

\noindent To show that $B\cap T$ is not contained in $A$, by applying the same isometry, this is equivalent to showing that $\{g\in H_{2,\ell,k}:g\cdot v=v$ if $d(v,v_2)\leq 1\}$ is not contained in $K_1$.\\

\noindent Without loss of generality, we may assume that $k=\ell=1$, and as in the proof of Proposition \ref{propn: generating_subgroup1}, we see that $H_{2,\ell,k}=\left(\begin{array}{ccc}
    1+\pi a &  b &  \pi c\\
    \pi d & 1+\pi e &   \pi f \\
    \pi g & \pi h & 1+\pi i
\end{array}\right)$.\\ 

\noindent Moreover, since the matrix $h=\left(\begin{array}{ccc}
    1 &  0 &  0\\
    0 & \pi &  0 \\
    0 & 0 & \pi
\end{array}\right)\in GL_3(K)$ sends $v_0$ to $v_2$, we see that $$\left\{g\in G:g\cdot v=v\text{ if }d(v,v_2)\leq 1\right\}=hK_1h^{-1}=\left(\begin{array}{ccc}
    1+\pi a &  b &  c\\
    \pi^2 d & 1+\pi e &   \pi f \\
    \pi^2 g & \pi h & 1+\pi i
\end{array}\right)$$

\noindent And thus 
\begin{align*}
\left\{g\in H_{2,\ell,k}:g\cdot v=v\text{ if }d(v,v_2)\leq 1\right\}&=\left(\begin{array}{ccc}
    1+\pi a &  b &  \pi c\\
    \pi d & 1+\pi e &   \pi f \\
    \pi g & \pi h & 1+\pi i
\end{array}\right)\cap \left(\begin{array}{ccc}
    1+\pi a &  b &  c\\
    \pi^2 d & 1+\pi e &   \pi f \\
    \pi^2 g & \pi h & 1+\pi i
\end{array}\right)\\
&=\left(\begin{array}{ccc}
    1+\pi a &   b &  \pi c\\
    \pi^2 d & 1+\pi e &   \pi f \\
    \pi^2 g & \pi h & 1+\pi i
\end{array}\right)
\end{align*}

\noindent and this is not contained in $K_1=\left(\begin{array}{ccc}
    1+\pi a &  \pi b &  \pi c\\
    \pi d & 1+\pi e &   \pi f \\
    \pi g & \pi h & 1+\pi i
\end{array}\right)$\end{proof}

\subsection{Approaching Conjecture \ref{conj: did not get it}.}\label{subsec: isolation}

To complete the proof of exactness of the local oriented chain complex, it remains only to prove Conjecture \ref{conj: did not get it}, i.e. to prove that any $I$-invariant region $\mathcal{X}$ of Star$(v_0)$ is Star$(v_0)$-collapsible.

We will conclude with a proof of this in the simplest case, when $\mathcal{X}=\Delta_0$, i.e. $\mathcal{X}$ consists only of the hyperspecial chamber $C$.\\ 

\noindent Fix a chain $\beta\in C_1(\text{Star}(v_0),\mathbb{X})$ which is $(I,\Delta_0)$ shift invariant and $\varepsilon_0(\beta)\in C_0(\Delta_0,\mathbb{X})$, i.e. $\varepsilon_0(\beta)$ is non-zero only on the vertices of $C$. As per Definition \ref{defn: collapsible}, we want to prove that $\beta\in C_1(\Delta_0,\mathbb{X})+\varepsilon_1(C_2(\Delta,\mathbb{X}))$, i.e. $\beta$ has a shift $\beta'$ which is non-zero only on the edges of $C$.

\begin{definition}\label{defn: weak isolation}
We say that $\beta$ satisfies the \emph{isolation property} if $\beta(e)=0$ for all oriented edges $e$ of $X_{0,2}$ which contain $v_0$, but are not contained in $C$. The diagram below illustrates this when $p=2$:

\begin{center}

\tikzset{every picture/.style={line width=0.75pt}} 

\begin{tikzpicture}[x=0.75pt,y=0.75pt,yscale=-1,xscale=1]

\draw   (323,209) -- (438,378) -- (208,378) -- cycle ;
\draw [color={rgb, 255:red, 245; green, 152; blue, 35 }  ,draw opacity=1 ]   (61,242) -- (208,378) ;
\draw [color={rgb, 255:red, 245; green, 152; blue, 35 }  ,draw opacity=1 ]   (159,198) -- (208,378) ;
\draw [color={rgb, 255:red, 245; green, 152; blue, 35 }  ,draw opacity=1 ]   (438,378) -- (476,202) ;
\draw [color={rgb, 255:red, 245; green, 152; blue, 35 }  ,draw opacity=1 ]   (438,378) -- (598,235) ;
\draw [color={rgb, 255:red, 74; green, 144; blue, 226 }  ,draw opacity=1 ]   (145,56) -- (59,244) ;
\draw [color={rgb, 255:red, 74; green, 144; blue, 226 }  ,draw opacity=1 ]   (210,77) -- (61,242) ;
\draw [color={rgb, 255:red, 74; green, 144; blue, 226 }  ,draw opacity=1 ]   (253,34) -- (159,198) ;
\draw [color={rgb, 255:red, 74; green, 144; blue, 226 }  ,draw opacity=1 ]   (296,82) -- (159,198) ;
\draw [color={rgb, 255:red, 74; green, 144; blue, 226 }  ,draw opacity=1 ]   (484,92) -- (599,234) ;
\draw [color={rgb, 255:red, 74; green, 144; blue, 226 }  ,draw opacity=1 ]   (457,136) -- (599,234) ;
\draw [color={rgb, 255:red, 74; green, 144; blue, 226 }  ,draw opacity=1 ]   (362,55) -- (476,202) ;
\draw [color={rgb, 255:red, 74; green, 144; blue, 226 }  ,draw opacity=1 ]   (476,202) -- (378,27) ;
\draw [color={rgb, 255:red, 208; green, 2; blue, 2 }  ,draw opacity=1 ]   (210,77) -- (457,136) ;
\draw [color={rgb, 255:red, 208; green, 2; blue, 2 }  ,draw opacity=1 ]   (210,77) -- (378,27) ;
\draw [color={rgb, 255:red, 208; green, 2; blue, 2 }  ,draw opacity=1 ]   (145,56) -- (484,92) ;
\draw [color={rgb, 255:red, 208; green, 2; blue, 2 }  ,draw opacity=1 ]   (145,56) -- (362,55) ;
\draw [color={rgb, 255:red, 208; green, 2; blue, 2 }  ,draw opacity=1 ]   (296,82) -- (457,136) ;
\draw [color={rgb, 255:red, 208; green, 2; blue, 2 }  ,draw opacity=1 ]   (296,82) -- (362,55) ;
\draw [color={rgb, 255:red, 208; green, 2; blue, 2 }  ,draw opacity=1 ]   (253,34) -- (378,27) ;
\draw [color={rgb, 255:red, 208; green, 2; blue, 2 }  ,draw opacity=1 ]   (253,34) -- (484,92) ;

\draw (315,299) node [anchor=north west][inner sep=0.75pt]   [align=left] {$\displaystyle C$};
\draw (315,223) node [anchor=north west][inner sep=0.75pt]   [align=left] {$\displaystyle v_{0}$};
\draw (122,282) node [anchor=north west][inner sep=0.75pt]  [color={rgb, 255:red, 245; green, 152; blue, 35 }  ,opacity=1 ] [align=left] {$ $};
\draw (407,355) node [anchor=north west][inner sep=0.75pt]   [align=left] {$\displaystyle v_{2}$};
\draw (223,354) node [anchor=north west][inner sep=0.75pt]   [align=left] {$\displaystyle v_{1}$};
\draw (45,402) node [anchor=north west][inner sep=0.75pt]   [align=left] {\textbf{Figure 11:} The isolation property implies that $\displaystyle \beta $ is non-zero only on the visible edges.};
\draw (382,278) node [anchor=north west][inner sep=0.75pt]   [align=left] {$\displaystyle s_{1}$};
\draw (311,378) node [anchor=north west][inner sep=0.75pt]   [align=left] {$\displaystyle s_{0}$};
\draw (248,278) node [anchor=north west][inner sep=0.75pt]   [align=left] {$\displaystyle s_{2}$};

\end{tikzpicture}

\end{center}

\end{definition}

\begin{proposition}\label{propn: final step}

Suppose $\beta$ satisfies the isolation property. Then there exists a shift of $\beta$ in $C_1(\Delta_0,\mathbb{X})$.
    
\end{proposition}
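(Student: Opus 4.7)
I would construct a 2-chain $\gamma \in C_2(\Delta, \mathbb{X})$ supported on the chambers $\{P_{j,i}, Q_{\ell,m}, D_{k,j,i}\}$ so that $\beta' := \beta + \varepsilon_1(\gamma)$ vanishes on every oriented edge of $X_{0,2}$ lying outside $\Delta_1$. Since $\gamma$ is supported in $X_{0,2}$, every non-zero contribution of $\varepsilon_1(\gamma)$ also lies on edges of $X_{0,2}$, so this is enough to place $\beta'$ in $C_1(\Delta_1,\mathbb{X})$. The edges of $X_{0,2}\setminus\Delta_1$ split into three families: the horizontal edges $e_{j,i}, d_{\ell,m}$; the vertical edges $h_{j,i}, h'_{\ell,m}$ joining $v_0$ to a peak of $\Delta_2$; and the diagonal edges $r_{k,j,i}$ between two peaks of $\Delta_2$. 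The first clause of the isolation property already kills $\beta$ on every vertical edge.

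To extract the compatibility identities, I use that $u_{j,i}\notin\Delta_1$ gives $\varepsilon_0(\beta)(u_{j,i}) = 0$, and combined with $\beta(\overset{\to}{h_{j,i}}) = 0$ this forces
\[
\beta(\overset{\to}{e_{j,i}}) = -\sum_{k=1}^{p}\beta(\overset{\to}{r_{k,j,i}}),
\]
with the analogous identity at each $w_{\ell,m}$. Guided by this I set
\[
\gamma(D_{k,j,i}) := -\beta(\overset{\to}{r_{k,j,i}}),\qquad \gamma(P_{j,i}) := \sum_k \beta(\overset{\to}{r_{k,j,i}}) = -\beta(\overset{\to}{e_{j,i}}),\qquad \gamma(Q_{\ell,m}) := -\beta(\overset{\to}{d_{\ell,m}}),
\]
with chamber orientations chosen consistently so that $\varepsilon_1(\gamma)$ exactly cancels $\beta$ on every horizontal and diagonal edge. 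The vanishing of $\beta'$ on each vertical edge then follows automatically from $\varepsilon_0\circ\varepsilon_1 = 0$: the identity $\varepsilon_0(\beta')(u_{j,i}) = 0$, combined with $\beta'$ already vanishing on the other edges with target $u_{j,i}$, forces $\beta'(\overset{\to}{h_{j,i}}) = 0$, and similarly at each $w_{\ell,m}$. The only remaining contributions of $\varepsilon_1(\gamma)$ land on the edges $h_i, h'_\ell$ of $\Delta_1$, which is harmless.

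The substantive step is to verify that the prescribed values genuinely define a chain in $C_2(\Delta,\mathbb{X})$, i.e.\ that $\gamma(D_{k,j,i})\in\mathbb{X}^{I_{D_{k,j,i}}}$, $\gamma(P_{j,i})\in\mathbb{X}^{I_{P_{j,i}}}$, and $\gamma(Q_{\ell,m})\in\mathbb{X}^{I_{Q_{\ell,m}}}$. The key input is the $A$-orbit clause of the isolation property, which, via the construction underlying Lemma \ref{lem: crucial}, supplies the equivariance $g\cdot\beta(\overset{\to}{r_{k,j,i}}) = \beta(g\cdot\overset{\to}{r_{k,j,i}})$ for every $g\in A$. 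For $\gamma(D_{k,j,i})$, the automatic $I_{r_{k,j,i}}$-invariance combined with the $A\cap I_{D_{k,j,i}}$-invariance (immediate from the equivariance, since such $g$ fixes $r_{k,j,i}$) should generate full $I_{D_{k,j,i}}$-invariance once $A\cap I_{D_{k,j,i}}$ is shown to surject onto the order-$p$ quotient $I_{D_{k,j,i}}/I_{r_{k,j,i}}$. For $\gamma(P_{j,i})$, invariance under $I_{e_{j,i}}$ is immediate from $\gamma(P_{j,i}) = -\beta(\overset{\to}{e_{j,i}})$, while the sum representation combined with $I_{h_{j,i}}\subseteq I_{D_{k,j,i}}$ and the $A$-equivariance gives $A\cap I_{h_{j,i}}$-invariance; Proposition \ref{propn: non-trivial intersection}, asserting that $I_{e_{j,i}}\cap A$ generates $I_{P_{j,i}}/I_{h_{j,i}}$, is designed precisely to bridge the remaining gap to full $I_{P_{j,i}}$-invariance.

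The main obstacle is precisely this invariance check. The $A$-orbit equivariance only provides invariance under the pro-$p$ subgroups $A\cap I_D$, whereas membership in $C_2(\Delta,\mathbb{X})$ requires invariance under the full chamber stabilisers $I_D$. Closing this gap is where the hypothesis $K\neq\mathbb{Q}_p$ enters essentially, through Lemma \ref{lem: key ramified} and Proposition \ref{propn: non-trivial intersection}: these density results ensure that $A$ intersects the relevant edge-stabiliser subgroups non-trivially modulo the smaller ones, so that $A$-invariance propagates to $I_D$-invariance. Without the ramification hypothesis, the isolation property alone would be too weak to yield a valid 2-chain.
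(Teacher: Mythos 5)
Your approach differs genuinely from the paper's: you aim to write down an explicit $\gamma\in C_2(\Delta,\mathbb{X})$ supported on $\{D_{k,j,i},P_{j,i},Q_{\ell,m}\}$ that cancels $\beta$ on all edges of $X_{0,2}$ outside $\Delta_1$, whereas the paper first constructs a candidate $\beta'\in C_1(\Delta_1,\mathbb{X})$ supported only on the edges $h_i$, $k_i$ with $\varepsilon_0(\beta')=\varepsilon_0(\beta)$, and then appeals to global exactness to conclude that $\beta'$ is a shift of $\beta$. The crucial difference is that the paper only needs to show that the aggregate quantity $\varepsilon_0(\beta)(u_i)=\sum_{j,k}\beta(\overset{\leftarrow}{r_{k,j,i}})$ lies in $\mathbb{X}^{I_{h_i}}$, while your route demands $I_D$-invariance of each individual value $\beta(\overset{\to}{r_{k,j,i}})$ — a strictly stronger claim.

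This is where the gap lies. For your $\gamma$ to be a valid $2$-chain you need $\gamma(D_{k,j,i})=-\beta(\overset{\to}{r_{k,j,i}})\in\mathbb{X}^{I_{D_{k,j,i}}}$, but the data only gives $\beta(\overset{\to}{r_{k,j,i}})\in\mathbb{X}^{I_{r_{k,j,i}}}$, and $I_{D_{k,j,i}}/I_{r_{k,j,i}}$ has order $p$ by Lemma \ref{propn: order p}. Your proposed remedy is to show that $A\cap I_{D_{k,j,i}}$ surjects onto this order-$p$ quotient, using the $A$-equivariance, but this cannot work. By Lemma \ref{lem: transitive1}, $A$ acts transitively on the $p^3$ chambers $D_{k,j,i}$, so $\mathrm{Stab}_A(D_{k,j,i})$ has index $p^3$ in $A$; since $A\cap K_1$ also has index $p^3$ (because $AK_1=I$) and is contained in each stabiliser, we get $A\cap I_{D_{k,j,i}}=A\cap K_1$. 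But Lemma \ref{lem: key ramified} — the very result you cite in support — forces $A\cap K_1\subseteq K_2\subseteq I_e$ for every edge $e$ of $X_{0,2}$, in particular $A\cap I_{D_{k,j,i}}\subseteq I_{r_{k,j,i}}$. So the image of $A\cap I_{D_{k,j,i}}$ in $I_{D_{k,j,i}}/I_{r_{k,j,i}}$ is trivial, not the whole group, and the $A$-equivariance supplies no invariance beyond what was already automatic. The cited lemma actually refutes the step it is supposed to support. The paper sidesteps this entirely: it works with the vertex $u_i$ rather than the chamber $D_{k,j,i}$, uses the subgroups $H_{e_i}$ and $H_{h_i}$ from Properties \ref{H-properties} together with Lemma \ref{lem: Q_p intersections} (not Proposition \ref{propn: non-trivial intersection}) to upgrade $A_i$-invariance of the sum to $I_{P_i}$-invariance, and then never needs any pointwise statement about the individual $\beta(\overset{\to}{r_{k,j,i}})$.
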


\begin{proof}

First, let $\beta_1$ be the chain defined by $$\beta_1(e):=\begin{cases}\beta(e) & e\in\Delta_0\\ 0 & \text{otherwise}\end{cases}$$ and let $\beta_2:=\beta-\beta_1$. Then $\beta_2$ is non-zero only on edges outside $\Delta_0$, and its images on these edges agree with the images of $\beta$. In particular, $\beta_2$ also satisfies the isolation property.

It suffices to show that we can find a shift $\beta_2'\in C_1(\Delta_0,\mathbb{X})$ of $\beta_2$, and since $\beta_1\in C_1(\Delta_0,\mathbb{X})$, it will  follow that $\beta':=\beta_1+\beta_2'$ is a shift of $\beta$ in $C_1(\Delta_0,\mathbb{X})$.\\ 

\noindent Replace $\beta$ with $\beta_2$, and we can now assume (in light of the isolation property) that $\beta$ is non-zero only on the edges $\{e_i,d_i,e_{j,i},d_{j,i},r_{k,j,i}:1\leq i,j,k\leq p\}$. In particular, for each $i,j=1,\dots,p$, $$\varepsilon_0(\beta)(v_1)=\underset{1\leq i\leq p}{\sum}{\beta(\overset{\to}{e_i})}$$ 

$$\varepsilon_0(\beta)(u_i)=\beta(\overset{\leftarrow}{e_i})+\underset{1\leq j\leq p}{\sum}{\beta(\overset{\leftarrow}{e_{j,i}})}$$ and $$\varepsilon_0(\beta)(u_{j,i})=\beta(\overset{\to}{e_{j,i}})+\underset{1\leq k\leq p}{\sum}{\beta(\overset{\leftarrow}{r_{k,j,i}})}$$ Moreover, we know that $\varepsilon_0(\beta)$ is zero outside of $\Delta_0$, so we know that $\varepsilon_0(u_{i})=\varepsilon_0(u_{j,i})=\varepsilon_0(w_{j,i})=0$ for all $i,j\leq p$, so we can write $\beta(\overset{\leftarrow}{e_i})=-\underset{1\leq j\leq p}{\sum}{\beta(\overset{\leftarrow}{e_{j,i}})}$ and $\beta(\overset{\to}{e_{j,i}})=-\underset{1\leq k\leq p}{\sum}{\beta(\overset{\leftarrow}{r_{k,j,i}})}$ and hence 
\begin{equation}\label{eqn: weak isolation equation}
\varepsilon_0(\beta)(v_1)=\underset{1\leq i\leq p}{\sum}{\beta(\overset{\to}{e_i})}=-\underset{1\leq i\leq p}{\sum}{\beta(\overset{\leftarrow}{e_i})}=-\underset{1\leq j\leq p}{\sum}{\beta(\overset{\leftarrow}{e_{j,i}})}=\underset{1\leq j\leq p}{\sum}{\beta(\overset{\to}{e_{j,i}})}=\underset{1\leq j,k\leq p}{\sum}{\beta(\overset{\leftarrow}{r_{k,j,i}})}
\end{equation}

\noindent Symmetrically, we deduce that $\varepsilon_0(\beta)(v_2)=\underset{1\leq j,k\leq p}{\sum}{\beta(\overset{\to}{r_{k,j,i}})}=-\varepsilon_0(\beta)(v_1)$, thus $\varepsilon_0(\beta)(v_1)\in\mathbb{X}^{I_{v_1}}\cap\mathbb{X}^{I_{v_2}}=\mathbb{X}^{\langle I_{v_1},I_{v_2}\rangle}=\mathbb{X}^{I_{s_0}}$ by Proposition \ref{propn: generating_subgroup2}.\\

\noindent Finally, define $\beta'\in C_1(\Delta_0,\mathbb{X})$ by $$\beta'(e)=\begin{cases}
    \varepsilon_0(\beta)(v_1) & e=\overset{\leftarrow}{s_0}\\\varepsilon_0(\beta)(v_2)=-\varepsilon_0(\beta)(v_1) & e=\overset{\to}{s_0}\\ 0 & \text{otherwise}
\end{cases}$$ and we see immediately that $\varepsilon_0(\beta')(v)=\varepsilon_0(\beta)(v)$ for all vertices $v$ of $C$, and hence $\varepsilon_0(\beta)=\varepsilon_0(\beta')$, and thus $\beta'$ is a shift of $\beta$ as required.\end{proof}

\noindent So it remains to prove that there exists a shift of $\beta$ which satisfies the isolation property. From now on, we will make the further assumption that $K\neq\mathbb{Q}_p$, and we will set $A:=I\cap SL_3(\mathbb{Q}_p)$. We will consider the action of $A$ on Star$(v_0)$.

\begin{lemma}\label{lem: ramified}
Let $\mathcal{Y}:=\{D_{k,j,i}:1\leq i,j,k\leq p\}$, then there exists a $\mathcal{Y}$-shift $\beta'$ of $\beta$ such that $\beta'(g\cdot \overset{\to}{r_{k,j,i}})=g\beta'(\overset{\to}{r_{k,j,i}})$ for all $g\in A$, $i,j,k\leq p$.
\end{lemma}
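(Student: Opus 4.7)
The plan is to apply Lemma~\ref{lem: crucial} with the set of oriented edges $Y := \{\overset{\to}{r_{k,j,i}} : 1 \le i,j,k \le p\}$, subgroup $A := I \cap SL_3(\mathbb{Q}_p)$, and $S := I$ (so that $A \cap S = A$ and hypothesis 4 of the lemma is trivially satisfied). I will fix the orientation convention so that the origin of $\overset{\to}{r_{k,j,i}}$ lies in $P_1^{(2)}$ and its target in $P_2^{(2)}$; this partition is preserved by all of $I$ via Lemma~\ref{lem: preservation}, so every element of $A$ acts on $Y$ without reversing orientation. Using Theorem~\ref{thm: d-partite}, each $r_{k,j,i}$ is the base of a summit of $\Delta_3$, and hence lies on the border of $\Delta_2$ in the sense of Definition~\ref{defn: border}.

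The first substantive step is transitivity (hypothesis 2). By Lemma~\ref{lem: transitive1}, $A$ acts transitively on the $p^3$ chambers $\{D_{k,j,i}\}$. Each such chamber contains a unique edge joining its two peak vertices (namely $r_{k,j,i}$), so $A$ acts transitively on the unoriented edges, and by the orientation argument above, on the set $Y$ itself.

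Next I would verify hypotheses 1 and 3 simultaneously by computing the stabiliser $\mathrm{Stab}_A(\overset{\to}{r_{k,j,i}})$ explicitly. If $g \in A$ fixes $\overset{\to}{r_{k,j,i}}$, then $g$ fixes both endpoints $u_{j,i}$ and $w_{\ell,m}$; combined with $g \cdot v_0 = v_0$ (since $g \in I$), this forces $g$ to fix all three vertices of $D_{k,j,i}$, hence $D_{k,j,i}$ itself. Combining this with the orbit-stabiliser count from Lemma~\ref{lem: transitive1}, we obtain $\mathrm{Stab}_A(D_{k,j,i}) = A \cap K_1$, so $\mathrm{Stab}_A(\overset{\to}{r_{k,j,i}}) \subseteq A \cap K_1$. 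Conversely, $A \cap K_1$ fixes every vertex adjacent to $v_0$, hence fixes both endpoints of $r_{k,j,i}$ and therefore the oriented edge itself. So $\mathrm{Stab}_A(\overset{\to}{r_{k,j,i}}) = A \cap K_1$ for every triple $(k,j,i)$, which is both normal in $A$ and independent of $(k,j,i)$, giving hypothesis 3. Hypothesis 1 then becomes the inclusion $A \cap K_1 \subseteq I_{r_{k,j,i}}$, and this is precisely what Lemma~\ref{lem: key ramified} provides for any edge in $X_{0,2}$. This is the single place in the argument where the assumption $K \neq \mathbb{Q}_p$ (i.e.\ the existence of nontrivial ramification) is genuinely used, and it is the main obstacle; without it, $A \cap K_1$ would not in general be contained in $I_{r_{k,j,i}}$, and the hypothesis of Lemma~\ref{lem: crucial} would fail.

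With all four hypotheses of Lemma~\ref{lem: crucial} verified, together with the standing $(I,1)$-shift invariance of $\beta$, we obtain a chain $\gamma \in C_2(\Delta_2, \mathbb{X})$ vanishing on any chamber containing no $r_{k,j,i}$, such that $\beta' := \beta + \varepsilon_1(\gamma)$ satisfies the property that $\{\beta'(\overset{\to}{r_{k,j,i}}) : 1 \le i,j,k \le p\}$ forms a single $A$-orbit. Finally, since each $r_{k,j,i}$ lies on the border of $\Delta_2$, Lemma~\ref{lem: border characterisation} shows that the only chamber of $\Delta_2$ containing $r_{k,j,i}$ is $D_{k,j,i}$, so $\gamma$ is supported inside $\mathcal{Y} = \{D_{k,j,i} : 1 \le i,j,k \le p\}$, exhibiting $\beta'$ as a $\mathcal{Y}$-shift of $\beta$ of the required form.
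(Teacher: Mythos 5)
Your proof is correct and follows essentially the same route as the paper: apply Lemma~\ref{lem: crucial} with $Y=\{\overset{\to}{r_{k,j,i}}\}$, $A=I\cap SL_3(\mathbb{Q}_p)$ and $S=I$, verify transitivity via Lemma~\ref{lem: transitive1}, compute the common stabiliser $A\cap K_1$, and use the ramification hypothesis through Lemma~\ref{lem: key ramified} to obtain hypothesis~1. Your verification is in fact a little more careful than the paper's in two respects: you explicitly note that the $A$-action preserves the orientation convention on $Y$ (via Lemma~\ref{lem: preservation}), and you correctly write the stabiliser as $A\cap K_1$ where the paper's proof abbreviates to $K_1$.
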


\begin{proof}

Let $Y:=\{r_{k,j,i}:1\leq i,j,k\leq p\}$, and since the edges in $Y$ all lie on the border on $\Delta_2$, and outside of $\mathcal{X}$, it remains to prove that the action of $A$ on these edges satisfies all the hypotheses of Lemma \ref{lem: crucial} to deduce the existence of such a shift $\beta'$.\\

\noindent We will first prove hypothesis 2, i.e. that the action of $A$ on $Y$ is transitive. We saw in section \ref{subsec: X_2 properties} that we can realise $I/K_1$ as the group of unipotent, upper triangular matrices in $M_3\left(\mathcal{O}/\pi\mathcal{O}\right)=M_3(\mathbb{F}_p)$, and this agrees with $(SL_3(\mathbb{Q}_p)\cap I)K_1/K_1$, so we only need to prove that $I/K_1$ acts transitively on $Y=\{r_{k,j,i}:1\leq i,j,k\leq p\}$.

But $Y$ contains precisely $p^3$ edges, and $I/K_1$ has order $p^3$, so it suffices to show that the stabiliser of any edge in $Y$ under $I/K_1$ is trivial. But if $g\in I$ fixes $r_{k,j,i}$, then $g\cdot D_{k,j,i}=D_{k,j,i}$, and applying Corollary \ref{cor: gallery fixing} we see that $g\cdot P_{j,i}=P_{j,i}$ and $g\cdot P_i=P_i$. Since $g$ must also stabilise all the chambers in $\{Q_{k,\ell}:1\leq k,\ell\leq p\}$ that are adjacent to $D_{k,j,i}$, it follows from Lemma \ref{lem: transitivity} that $g\in K_1$ as required.\\

\noindent This also proves that Stab$_A(r_{k,j,i})=K_1$ for all $1\leq i,j,k\leq p$, which implies hypothesis 3 of Lemma \ref{lem: crucial}. Moreover, since $A\subseteq I$, it is clear that $A/K_1=(A\cap I)/K_1$, which is precisely hypothesis 4, so it remains only to prove hypothesis 1, i.e. that if $g\in A$ and $g\cdot r_{k,j,i}=r_{k,j,i}$ for some $i,j,k$, then $g\in I_{r_{k,j,i}}$. 

But we know that if $g\cdot r_{k,j,i}=r_{k,j,i}$ then $g\in K_1\cap SL_3(\mathbb{Q}_p)$, i.e. $g$ stabilises all vertices adjacent to $v_0$. So since $K\neq\mathbb{Q}_p$ it follows from Lemma \ref{lem: key ramified} that $g\in I_{r_{k,j,i}}$ for all $i,j,k$.\end{proof}

\noindent In light of this result, we replace $\beta$ with $\beta'$ and now assume that $\beta'(g\cdot \overset{\to}{r_{k,j,i}})=g\beta'(\overset{\to}{r_{k,j,i}})$ for all $g\in A$, and hence $\{\beta(\overset{\to}{r_{k,j,i}}):1\leq i,j,k\leq p\}$ forms a single $A$-orbit.

\begin{proposition}\label{propn: top and bottom}
There exists a Star$(v_0)$-shift $\beta'$ of $\beta$ which satisfies the isolation property.
\end{proposition}

\begin{proof}

\noindent First, fix $i,j\leq p$, and let $A_{j,i}:=$ Stab$_A e_{j,i}$. Then $A_{j,i}$ permutes $Y_{j,i}:=\{r_{k,j,i}:1\leq k\leq p\}$, so $\{\beta(\overset{\leftarrow}{r_{k,j,i}}):1\leq k\leq p\}$ must form a single $A_{j,i}$-orbit, and thus the sum $\underset{1\leq k\leq p}{\sum}{\beta(\overset{\leftarrow}{r_{k,j,i}})}$ is $A_{j,i}$-invariant.\\

\noindent But we know that 
\begin{equation}\label{eqn: isolation1}
0=\varepsilon_0(\beta)(u_{j,i})=\beta(\overset{\rightarrow}{e_{j,i}})+\underset{1\leq k\leq p}{\sum}{\beta(\overset{\leftarrow}{r_{k,j,i}})}+\beta(\overset{\leftarrow}{h_{j,i}})
\end{equation}
\noindent and since $\beta(\overset{\rightarrow}{e_{j,i}})$ is $I_{e_{j,i}}$-invariant, clearly it is $A_{j,i}\cap I_{e_{j,i}}$-invariant.  So we conclude that $\beta(\overset{\leftarrow}{h_{j,i}})$ is  $A_{j,i}\cap I_{e_{j,i}}$-invariant.

But $\beta(\overset{\leftarrow}{h_{j,i}})\in\mathbb{X}^{I_{h_{j,i}}}$, so it is invariant under the subgroup generated by $A_{j,i}\cap I_{e_{j,i}}$ and $I_{h_{j,i}}$. But we know that $I_{h_{j,i}}$ and $A_{j,i}\cap I_{e_{j,i}}=I\cap SL_3(\mathbb{Q}_p)\cap I_{e_{j,i}}$ generate $I_{P_{j,i}}$ by Proposition \ref{propn: non-trivial intersection}, so it follows that $\beta(\overset{\leftarrow}{h_{j,i}})\in\mathbb{X}^{I_{P_{j,i}}}$\\

\noindent Therefore, define $\gamma_1\in C_2(\Delta,\mathbb{X})$ by $$\gamma_1(D,c):=\begin{cases}
    -\beta(\overset{\leftarrow}{h_{j,i}}) & (D,c)=(P_{j,i},c_{j,i})\text{ for some }1\leq i,j\leq p\\
    \beta(\overset{\leftarrow}{h_{j,i}}) & (D,c)=(P_{j,i},-c_{j,i})\text{ for some }1\leq i,j\leq p\\
    0 & \text{otherwise}
\end{cases}$$ where $c_{j,i}$ is the orientation of $P_{j,i}$ which agrees with the orientation of $\overset{\leftarrow}{h_{j,i}}$. Let $\beta_1:=\beta+\varepsilon_1(\gamma_2)$, so that $\beta_1(\overset{\leftarrow}{h_{j,i}})=\beta(\overset{\leftarrow}{h_{j,i}})-\beta(\overset{\leftarrow}{h_{j,i}})=0$ for all $1\leq i,j\leq p$.\\

\noindent Moreover, $\beta_1$ is a Star$(v_0)$ shift of $\beta$, and $\beta_1$ agrees with $\beta$ on the edges $\{r_{k,j,i}:i,j,k\leq p\}$, so we can replace $\beta$ with $\beta_1$ and assume from now on that $\beta(\overset{\to}{h_{j,i}})=0$ for all $i,j$.

In particular, using (\ref{eqn: isolation1}), we know that $\beta(\overset{\to}{e_{j,i}})=-\underset{1\leq k\leq p}{\sum}{\beta(\overset{\leftarrow}{r_{k,j,i}})}$, and we also calculate \begin{equation}\label{eqn: isolation2}
0=\varepsilon_0(\beta)(u_{i})=\beta(\overset{\leftarrow}{e_{i}})+\underset{1\leq j\leq p}{\sum}{\beta(\overset{\leftarrow}{e_{j,i}})}+\beta(\overset{\leftarrow}{h_{i}})
\end{equation}

\noindent But $\underset{1\leq j\leq p}{\sum}{\beta(\overset{\leftarrow}{e_{j,i}})}=\underset{1\leq j,k\leq p}{\sum}{\beta(\overset{\leftarrow}{r_{k,j,i}})}$ is invariant under $A_i:=$ Stab$_A(e_i)$, so since $\beta(\overset{\leftarrow}{e_{i}})$ is invariant under $I_{e_i}$ we deduce that $\beta(\overset{\leftarrow}{h_{i}})$ is invariant under $A_i\cap I_{e_i}=I_{e_i}\cap I\cap SL_3(\mathbb{Q}_p)$.

But we know that $\beta(\overset{\leftarrow}{h_{i}})\in\mathbb{X}^{I_{h_i}}$, so it follows that it is invariant under $\langle I_{h_i},I_{e_i}\cap I\cap SL_3(\mathbb{Q}_p)\rangle$, which is equal to $I_{P_i}$ by Proposition \ref{propn: non-trivial intersection} again. So $\beta(\overset{\leftarrow}{h_{i}})\in\mathbb{X}^{I_{P_i}}$ for all $i\leq p$.\\

\noindent Therefore, define $\gamma_2\in C_2(\Delta,\mathbb{X})$ by $$\gamma_2(D,c):=\begin{cases}
    -\beta(\overset{\leftarrow}{h_{i}}) & (D,c)=(P_{i},c_{i})\text{ for some }1\leq i\leq p\\
    \beta(\overset{\leftarrow}{h_{i}}) & (D,c)=(P_{i},-c_{i})\text{ for some }1\leq i\leq p\\
    0 & \text{otherwise}
\end{cases}$$ where $c_{i}$ is the orientation of $P_{i}$ which agrees with the orientation of $\overset{\leftarrow}{h_{i}}$. Let $\beta_2:=\beta+\varepsilon_1(\gamma_2)$, so that $\beta_2(\overset{\leftarrow}{h_{i}})=\beta(\overset{\leftarrow}{h_{i}})-\beta(\overset{\leftarrow}{h_{i}})=0$ for all $1\leq i\leq p$.\\

\noindent So we have now defined a Star$(v_0)$ shift $\beta_2$ of $\beta$ that is zero on the edges $h_i$ and $h_{j,i}$, but which agrees with $\beta$ on the edges $r_{k,j,i},k_{j,i}$ and $k_i$. So applying a symmetric argument to $\beta_2$, we find a Star$(v_0)$-shift $\beta'$ of $\beta_2$ that is zero on $h_i,k_i,h_{j,i}$ and $k_{j,i}$ for all $i,j$, i.e. $\beta'$ satisfies the isolation property.\end{proof}

\noindent So, finally, we deduce the following case of Conjecture \ref{conj: did not get it}

\begin{theorem}\label{thm: final}
$\Delta_0$ is a \emph{Star}$(v_0)$-collapsible region of $\Delta$.    
\end{theorem}

\begin{proof}

This follows immediately from Proposition \ref{propn: final step} and Proposition \ref{propn: top and bottom}.\end{proof}

\noindent To complete a full proof of Conjecture \ref{conj: did not get it} (and hence of Conjecutre \ref{sur} for $SL_3(K)$), we would need to establish that the remaining star-bounded regions are collapsible. In fact, it suffices to prove this for the following two regions.

\begin{itemize}
    \item $\mathcal{X}_1=\{C,P_1,\dots,P_p,Q_1,\dots,Q_p\}$.

    \item $\mathcal{X}_2=\mathcal{X}_1\cup\{P_{j,i}:1\leq i,j\leq p\}$.
\end{itemize}

\noindent We can prove that $\mathcal{X}_1$ is Star$(v_0)$ collapsible when $p=2$ (and using the argument in section \ref{sec: orbits}, this is enough to show that $\Delta_n$ is collapsible for all $n\in\mathbb{N}$), but we have not yet been able to establish a similar result for the asymmetric region $\mathcal{X}_2$.

It is very possible that a completely new idea is needed to generalise this argument. In which case, we hope these preliminary results will reignite interest in this project within the community, and new ideas may be presented which will lift them to a full proof for $G$ of type $\widetilde{A}_2$, and perhaps generalise them to arbitrary types.

\bibliographystyle{abbrv}

\begin{small}

\end{small}

\end{document}